\theoremstyle{plain}
\newtheorem{mainthm}{Theorem}
\newtheorem{thm}{Theorem}[subsection]
\newtheorem{cor}[thm]{Corollary}
\newtheorem{lem}[thm]{Lemma}
\newtheorem{prop}[thm]{Proposition}
\theoremstyle{definition}
\newtheorem{dfn}[thm]{Definition}
\theoremstyle{remark}
\newtheorem{rem}[thm]{Remark}
\newtheorem{rems}[thm]{Remarks}
\theoremstyle{plain}
\newcommand{\Id}{{{\mathchoice {\rm 1\mskip-4mu l} {\rm 1\mskip-4mu l}
      {\rm 1\mskip-4.5mu l} {\rm 1\mskip-5mu l}}}}
\newcommand{\cobto}{\leadsto}
\newcommand{\id}{\textnormal{id}}
\newcommand{\cobc}{\mathcal{C}ob}
\newcommand{\fk}{\mathcal{F}uk}
\newcommand{\lag}{\mathcal{L}}
\newcommand{\gcob}{G_{cob}(M)}
\newcommand{\R}{\mathbb{R}}
\newcommand{\Z}{\mathbb{Z}}
\newcommand{\C}{\mathbb{C}}
\newcommand{\La}{\Lambda}
\newcommand{\cob}{\mathcal{C}ob}
\newcommand{\fuk}{\mathcal{F}uk}
\newcommand{\fukd}{\mathcal{F}uk^d}
\newcommand{\fukcb}{\mathcal{F}uk^d_{cob}}
\newcommand{\fukcben}{\mathcal{F}uk^d_{{cob, 1/2}}}
\newcommand{\mor}{{\textnormal{Mor\/}}}
\newcommand{\form}{\Theta}
\newcommand{\pbred}{}
\newcommand{\pbredb}{}
\newcommand{\ocgreen}{}
\newcommand{\corr}{}
\newcommand{\pbcorr}{}
\newcommand{\pbaddress}{biran@math.ethz.ch}
\newcommand{\ocaddress}{cornea@dms.umontreal.ca}
\begin{document}

\title[Lagrangian cobordism and Fukaya categories]{Lagrangian
  cobordism and Fukaya categories} \date{\today}

\thanks{The second author was supported by an NSERC Discovery grant
  and a FQRNT Group Research grant}

\author{Paul Biran and Octav Cornea}

\address{Paul Biran, Department of Mathematics, ETH-Z\"{u}rich,
  R\"{a}mistrasse 101, 8092 Z\"{u}rich, Switzerland}
\email{\pbaddress}

\address{Octav Cornea, Department of Mathematics and Statistics
  University of Montreal C.P. 6128 Succ.  Centre-Ville Montreal, QC
  H3C 3J7, Canada} \email{\ocaddress}

\bibliographystyle{alphanum}

%


\maketitle

%
%

\tableofcontents 


\section{Introduction} \label{s:intro} The purpose of this paper is to
show that geometric Lagrangian cobordisms translate algebraically, in
a functorial way, into iterated triangular decompositions in the
derived Fukaya category.

Fix a symplectic manifold $M$ as well as a class $\lag$ of Lagrangian
submanifolds of $M$ (the precise class will be made explicit
in~\S\ref{sb:monotonicity}).  Floer homology, introduced in Floer's
seminal work~\cite{Fl:Morse-theory} and extended in subsequent
works~\cite{Oh:HF1, FO3:book-vol1, FO3:book-vol2}, associates to a
pair of Lagrangians $L, L'$ in our class a $\Z_{2}$-vector space
$HF(L,L')$.  Assuming for the moment that $L$ is transverse to $L'$,
this is the homology of a chain complex $CF(L,L')$, called the Floer
complex, whose generators are the intersection points of $L$ and $L'$.
The differential counts ``strips'' $u:\R\times [0,1]\to M$ that join
these intersection points, have boundaries along $L$ and $L'$ and
satisfy a Cauchy-Riemann type equation $\overline{\partial}_{J}u = 0$
with respect to an almost complex structure $J$ on $M$ which is
compatible with the symplectic structure. The fact that for generic
$J$ such $J$-holomorphic curves form well-behaved moduli spaces
originates in the fundamental work of Gromov~\cite{Gr:phol}
(see~\cite{McD-Sa:jhol} for the foundations of the theory). The Floer
complex thus depends on additional choices, in particular on $J$,
however its homology $HF(L,L')$ is invariant.  Floer homology together
with other additional structures, also based on counting
$J$-holomorphic curves, are central tools in symplectic topology with
wide-reaching applications.
 
In what Lagrangian topology is concerned, the most efficient way to
aggregate these structures is provided by the derived Fukaya category
whose definition we now sketch - a rigorous, detailed treatment that
serves as foundation for our paper is contained in Seidel's book
\cite{Se:book-fukaya-categ} - see also \S\ref{sec:fuk-M}. Given a
third Lagrangian $L''$ in our class, there is a product, due to
Donaldson: $CF(L,L')\otimes CF(L',L'')\to CF(L,L'')$. This is defined
by counting $J$-holomorphic triangles whose edges are mapped to $L$,
$L'$ and $L''$. This operation descends to homology where it is
associative. It is therefore possible to define a category, called the
Donaldson category of $M$, whose objects are the Lagrangians in
$\lag$ and with morphisms $\mor_{Don}(L,L')=HF(L,L')$. The
composition of morphisms is given by the triangle product. It was
discovered by Fukaya~\cite{Fu:morse-homotopy-ai, Fu:Morse-homotopy}
that, by taking into account the chain level data involving moduli
spaces of $J$-holomorphic polygons with arbitrary number of edges, one
can define a much richer algebraic structure, nowadays called the
Fukaya $A_{\infty}$-category, $\fuk(M)$.  The objects are the same as
those of the Donaldson category, however, this is no longer a category
in the classical sense (bur rather an $A_{\infty}$-category) because
the triangle product is not associative at the chain level.  Moreover,
while the data contained in the Fukaya $A_{\infty}$-category is
extremely rich, working directly with this $A_{\infty}$-category
itself is quite difficult.  Kontsevich~\cite{Ko:ICM-HMS} discovered
that there is a triangulated completion of the Donaldson category to
a true category called the derived Fukaya category and denoted by
$D\fk(M)$. Moreover, the derived Fukaya category is independent of the
auxiliary structures used to define it up to appropriate equivalences
and some of the finer information present at the level of $\fuk(M)$ survives
the passage to $D\fk(M)$.

Starting from the Fukaya $A_{\infty}$-category, the construction of
$D\fk(M)$ is algebraic, based on the fact that at the
$A_{\infty}$-level it is possible to define cone-attachments (or in a
different terminology, exact triangles) by a formula similar to the
definition of the cone over a chain map in classical homological
algebra.  As a consequence, the triangulated structure of the derived
category is somewhat mysterious and non-geometric in its definition.
At the same time, it is precisely this triangulated structure that is
often useful in the study of the Lagrangian submanifolds of $M$.

Lagrangian cobordism was introduced by Arnold~\cite{Ar:cob-1,
  Ar:cob-2}, see also~\S\ref{sub:cob-def} for the specific variant
used here. Consider one such cobordism $$(V; L_{1}\cup\ldots \cup
L_{k}, L)~.~$$ This is a Lagrangian submanifold of $V \subset
\mathbb{R}^2\times M$ with $k+1$-cylindrical ends so that there are
$k$ negative ends, each identified with $(-\infty,0]\times\{i\}\times
L_{i}$, $i=1, \ldots, k$, and one positive end identified with
$[1,\infty)\times \{1\}\times L$. The projection of such a cobordism
to $\mathbb{R}^2$ is like in Figure \ref{fig:MorphCob2b}.

\begin{figure}[htbp]
   \begin{center} 
      \epsfig{file=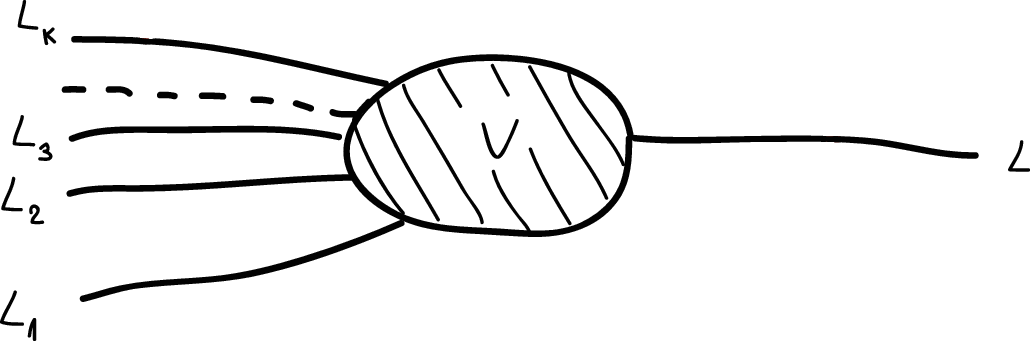, width=0.7\linewidth}
   \end{center}
   \caption{A cobordism $V\subset \mathbb{R}^2\times M$ 
     with a single positive end $L$ and multiple
     negative ends $ L_{1},\ldots, L_{k}$  projected
     on $\mathbb{R}^2$.
     \label{fig:MorphCob2b}}
\end{figure}

Of course, we will have to further restrict the class of Lagrangian
cobordisms, the relevant constraints coming from the class
$\lag$ of Lagrangian submanifolds of $M$ that we have already fixed.
We denote the class of admissible cobordisms by $\lag_{cob}$. Its
precise definition will be given in~\S\ref{subsec:category-cob}.

In this paper we establish the following fundamental correspondence between
cobordism and the triangulated structure of the derived Fukaya
category:
\begin{mainthm} \label{cor:multiple-triang} If $V$ is a Lagrangian
   cobordism as above, then there exist $k$ objects $Z_1, \ldots, Z_k$
   in $D\fk(M)$ with $Z_1 = L_1$ and $Z_k \simeq L$ which fit into
   $k-1$ exact triangles as follows:
   $$L_{i}[1] \to Z_{i-1}\to Z_{i} \to L_i, \ \forall \,  2\leq i \leq k.$$
   In particular, $L$ belongs to the triangulated subcategory of
   $D\fk(M)$ generated by $L_{1}, L_{2}, \ldots, L_{k}$.
\end{mainthm}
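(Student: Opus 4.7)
My plan is to argue by induction on $k$, the number of negative ends, reducing to the base case $k=2$ which is treated in the authors' earlier work on Lagrangian cobordism and Floer homology. In the base case, a cobordism $V : (L_1, L_2) \leadsto L$ produces an exact triangle $L_2[1] \to L_1 \to L \to L_2$ in $D\fk(M)$, so setting $Z_1 = L_1$ and $Z_2 \simeq L$ gives the required triangle.

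For the inductive step, I would decompose $V$ geometrically by slicing its projection to $\mathbb{R}^2$. Choose a smooth embedded curve $\gamma \subset \mathbb{R}^2$ separating the negative ends corresponding to $L_1, \ldots, L_{k-1}$ from the end corresponding to $L_k$, and transverse to the projection of $V$. After a Hamiltonian isotopy putting $V$ in a standard form near $\gamma$, the intersection $V \cap (\gamma \times M)$ determines a new Lagrangian $Z_{k-1} \subset M$ and exhibits $V$ as the concatenation of two admissible cobordisms: a ``lower'' one $V_1 : (L_1, \ldots, L_{k-1}) \leadsto Z_{k-1}$ and an ``upper'' one $V_2 : (Z_{k-1}, L_k) \leadsto L$. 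Applying the inductive hypothesis to $V_1$ yields objects $Z_1, \ldots, Z_{k-1}$ with $Z_1 = L_1$ and the first $k-2$ exact triangles $L_i[1] \to Z_{i-1} \to Z_i \to L_i$ for $2 \leq i \leq k-1$. Applying the base case to $V_2$ then produces the final triangle $L_k[1] \to Z_{k-1} \to Z_k \to L_k$ with $Z_k \simeq L$, and concatenation of these triangles is the desired iterated cone decomposition.

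The main obstacle lies in making the slicing procedure rigorous and choice-independent. One must verify that the pieces $V_1, V_2$ lie in $\lag_{cob}$ under the monotonicity assumptions of \S\ref{sb:monotonicity}, which is essentially a local check near $\gamma$, and, more importantly, that the sliced Lagrangian $Z_{k-1}$ is well-defined up to quasi-isomorphism in $D\fk(M)$ independently of the choice of $\gamma$ and of the isotopies used to standardize $V$ nearby. The cleanest way to achieve both is probably to interpret $V$ itself as an object of a cobordism Fukaya category $\fukcb$ with end-evaluation functors $\fukcb \to \fk(M)$ that are exact; an iterated cone decomposition of $V$ inside $\fukcb$ in terms of the model cobordisms $\gamma_i \times L_i$ then transports to the required triangle decomposition of $L$ in $D\fk(M)$, and functoriality of the construction automatically handles independence of choices.
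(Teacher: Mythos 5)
Your plan hinges on the claim that slicing $V$ by a planar curve $\gamma$ transverse to $\pi(V)$ produces a genuine Lagrangian submanifold $Z_{k-1}\subset M$ and exhibits $V$ as a concatenation of two admissible cobordisms. This is the fatal gap. The intersection $V\cap(\gamma\times M)$ is an $n$-dimensional submanifold of the coisotropic $\gamma\times M$, and its image under the reduction $\gamma\times M\to M$ is in general \emph{not} an embedded Lagrangian submanifold of $M$ — it typically has self-intersections, need not be closed or monotone, and need not satisfy~\eqref{eq:Hlgy-vanishes}. Indeed, this cannot be fixed by a better choice of $\gamma$: the objects $Z_i$ in the statement of Theorem~\ref{cor:multiple-triang} are iterated cones inside $D\fk(M)$, and for a general cobordism there is no reason that these cones are representable by geometric Lagrangians in $M$ at all. (Contrast the special situation of a suspension cobordism, where slicing does produce the isotoped Lagrangian — but general cobordisms are precisely not of this form.) So the inductive decomposition $V=V_2\circ V_1$ you want does not exist in $\lag_{cob}$, and the induction cannot get off the ground.

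A second, independent issue is the base case. The result of~\cite{Bi-Co:cob1} that you appeal to is weaker than you need: it produces, for each fixed test Lagrangian $N$, a cone-attachment relating the complexes $CF(N,L_i)$, but it does \emph{not} produce a single exact triangle $L_2[1]\to L_1\to L\to L_2$ in $D\fk(M)$. Passing from the per-$N$ statement to a statement in the derived Fukaya category requires showing that these cone attachments are natural in $N$ and compatible with all the higher $A_\infty$-operations — and that is precisely the content of this paper, not something one can assume. The paper's actual route is the one you gesture at in your last sentence: build the Fukaya category of cobordisms $\fuk^d_{cob}(\mathbb{C}\times M)$, compose inclusion functors $\mathcal{I}_{\gamma}:\fuk^d(M)\to\fuk^d_{cob}(\mathbb{C}\times M)$ with Yoneda to produce $\fuk^d(M)$-modules $\mathcal{M}_{V,\gamma_j,h_j}$, prove these fit into exact triangles of modules~\eqref{eq:funct-exact}, and then observe that $\mathcal{M}_{V,\gamma_m,h_m}\simeq\mathcal{Y}(L)$ while $\mathcal{M}_{V,\gamma_1,h_1}=\mathcal{Y}(L_1)$. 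There the $Z_i$ are $A_\infty$-modules, never geometric Lagrangians, and no slicing of $V$ is needed or invoked; the curves $\gamma_j$ serve as test objects whose Floer complexes against $V$ one filters, not as cutting loci. You would need to develop that machinery — it is not a finishing touch but the whole argument.
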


The notation $L_i[1]$ stands for a shift by one in the grading for the
object $L_i$. In fact we will work in an ungraded
setting (thus $L_i[1]$ is the same as $L_i$). We left the grading
shift in the notation, only in order to indicate the expected
statement in the graded framework.

The first indication that a result like
Theorem~\ref{cor:multiple-triang} holds appeared in~\cite{Bi-Co:cob1}
where we showed that for any fixed $N\in \lag$, the Floer complexes
$$CF(N,L_{1}),\ldots, CF(N,L_{i}), \ldots, CF(N,L_{k}), CF(N,L)$$ fit
into a sequence of chain cone-attachments as implied by
Theorem~\ref{cor:multiple-triang}.

We deduce Theorem~\ref{cor:multiple-triang} as an immediate
consequence of a stronger result, conjectured in~\cite{Bi-Co:cob1},
that provides a more complete and conceptual description of the
relationship between cobordisms and triangulations. In particular, we
will see that not only cobordisms provide triangular decompositions
but, moreover, concatenation of cobordisms corresponds to refinement
of the respective decompositions.  More elaboration is needed to
formulate this stronger result more precisely.

We use a category $\cobc(M)$ from
\cite{Bi-Co:cob1}\footnote{{$\cobc(M)$ was denoted by
    $\mathcal{C}ob^d_0(M)$ in~\cite{Bi-Co:cob1} as it will be in the rest
    of the paper, starting with~\S\ref{sec:prerequisites}. The role of
    the decorations $d$ and $0$ will be explained
    in~\S\ref{sec:prerequisites}.}}. We remark that an alternative,
somewhat different categorical point of view on Lagrangian cobordism
has been independently introduced by Nadler and Tanaka
in~\cite{Na-Ta}. 

The objects of $\cobc(M)$ are finite ordered families of Lagrangian
submanifolds of $M$ that belong to the class $\lag$. The morphisms are
isotopy classes of certain Lagrangian cobordisms, possibly
multi-ended. (Again, the precise definitions are given
in~\S\ref{subsec:category-cob}.) In particular, the cobordism $V$
considered earlier represents such a morphism.

The geometric category $\cobc(M)$ is monoidal under (essentially)
disjoint union but is not triangulated.  To relate the morphisms in
$\cobc(M)$ to the triangular decompositions in $D\fk(M)$ we consider a
category $T^{S}D\fk(M)$ that is obtained from $D\fk(M)$ by a general
construction, introduced in \cite{Bi-Co:cob1} and further detailed
in~\S\ref{subsec:cones}, that associates to any triangulated category
$\mathcal{C}$ a new category $T^{S}\mathcal{C}$ that is monoidal and
whose morphisms sets, $\hom(x, -)$, parametrize the ways in which $x$
can be resolved by iterated exact triangles.  The main purpose
of $T^{S} D\fk(M)$ is to encode the triangular decompositions in
$D\fk(M)$ as morphisms in a category that can serve as target to a
functor defined on $\cobc(M)$.

Here is the main result of the paper.
\begin{mainthm} \label{thm:main} There exists a monoidal functor
   $$\widetilde{\mathcal{F}}: \cobc(M) \longrightarrow
   T^{S}D\fk(M),$$ with the property that $\widetilde{\mathcal{F}}(L)
   = L$ for every Lagrangian submanifold $L \in \lag$.
\end{mainthm}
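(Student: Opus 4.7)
The plan is to construct $\widetilde{\mathcal{F}}$ by leveraging a Fukaya $A_\infty$-category of cobordisms, $\fukcb(M)$, whose objects include admissible cobordisms $V \subset \R^2\times M$ and whose morphisms count pseudoholomorphic strips in $\R^2\times M$ with boundary on pairs of cobordisms. On objects, the functor sends a family $(L_1,\ldots,L_k)$ to the corresponding ordered family viewed as an object of $T^S D\fk(M)$; in particular, a single Lagrangian $L \in \lag$ is sent to itself. The definition on morphisms is the heart of the construction: it promotes the iterated triangular decomposition provided by Theorem~A to a genuine morphism in $T^S D\fk(M)$.

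Concretely, to a cobordism $V:(L_1,\ldots,L_k)\cobto L$ one associates a family of ``inclusion-of-ends'' $A_\infty$-functors $\mathcal{I}_\gamma : \fukcb(M) \to \fuk(M)$, obtained by probing $V$ with horizontal Lagrangians of the form $\gamma\times N$ for $N\in\lag$ and $\gamma\subset\R^2$ a profile curve traversing a prescribed subset of the cylindrical ends of $V$. Choosing a nested sequence of profile curves $\gamma_1,\gamma_2,\ldots,\gamma_k$ that successively pick up the negative ends $L_1,\; L_1\cup L_2,\; \ldots$ produces a tower $Z_1 \to Z_2 \to \cdots \to Z_k$ in $D\fk(M)$ whose successive cones are $L_1,\ldots,L_k$ and whose top is quasi-isomorphic to $L$. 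This is precisely the datum of a morphism in $T^S D\fk(M)$ from $(L_1,\ldots,L_k)$ to $L$, the coherences between consecutive probes being furnished by continuation maps associated to homotopies among the $\gamma_i$.

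The main obstacle lies in verifying two independent properties of this assignment. First, one must show that the resulting morphism in $T^S D\fk(M)$ is independent of all auxiliary choices (the profile curves, the almost complex structures, the representative $V$ in its isotopy class); the standard strategy is to realize two competing sets of choices as the endpoints of a path and produce a canonical quasi-isomorphism in $T^S D\fk(M)$ interpolating between them. Second, and more substantial, is the verification of functoriality under concatenation: gluing a cobordism $V':(L'_1,\ldots,L'_j)\cobto L_i$ into the $i$-th negative end of $V$ should correspond, under $\widetilde{\mathcal{F}}$, to refining the $i$-th stage of the tower for $V$ by the tower for $V'$. Geometrically this reduces to a planar gluing argument in $\R^2$: choose probe curves detecting the ends of $V\circ_i V'$ simultaneously and argue, via a neck-stretching / moving-the-probe-through-the-gluing-region analysis, that the holomorphic strip counts on the concatenation reproduce the categorical composition in $T^S D\fk(M)$. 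This is the ``refinement equals concatenation'' principle previewed in the introduction and is the technical core of the theorem.

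Finally, the monoidal property with respect to disjoint union is largely formal once the above is in place: $V_1\sqcup V_2\subset \R^2\times M$ can be arranged, after translation in the plane, so that any single probe curve meets the ends of $V_1$ and $V_2$ in disjoint horizontal strips, whence all the relevant Floer moduli spaces split as products and one obtains the required identification $\widetilde{\mathcal{F}}(V_1\sqcup V_2)\simeq \widetilde{\mathcal{F}}(V_1)\otimes \widetilde{\mathcal{F}}(V_2)$.
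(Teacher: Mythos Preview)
Your overall architecture matches the paper's: a Fukaya category of cobordisms, probe curves $\gamma_j$ picking up successively more negative ends, a tower of $\fuk^d(M)$-modules $\mathcal{M}_{V,\gamma_j}$ with successive cones the $\mathcal{Y}(L_j)$, a quasi-isomorphism of the top with $\mathcal{Y}(L)$, and a concatenation argument for functoriality. Two points, however, are misstated in ways that matter.

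First, the inclusion functor goes the other way: in the paper $\mathcal{I}_\gamma:\fuk^d(M)\to\fuk^d_{cob}(\C\times M)$ sends $N\mapsto\gamma\times N$, and the relevant module is the \emph{pull-back} $\mathcal{M}_{V,\gamma}=\mathcal{I}_\gamma^*\,\mathcal{Y}(V)$. This is not merely cosmetic: it is what makes $\mathcal{M}_{V,\gamma}$ an honest $\fuk^d(M)$-module whose higher operations are computed by polygons in $\C\times M$ with one boundary arc on $V$ and the others on various $\gamma\times N_i$.

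Second, and more seriously, the exact triangles do \emph{not} arise from ``continuation maps associated to homotopies among the $\gamma_i$''. The curves $\gamma_{j-1}$ and $\gamma_j$ are not horizontally isotopic (their negative ends sit at different heights), so there is no such continuation map; a homotopy between them would cross an end of $V$ and the naive moving-boundary morphism is not defined. What the paper does instead is choose the $\gamma_j$ to coincide below a certain height, so that $\mathcal{M}_{V,\gamma_{j-1}}(N)\subset\mathcal{M}_{V,\gamma_j}(N)$ as vector spaces, and then \emph{prove} that this inclusion is a morphism of $A_\infty$-modules and that the quotient is $\mathcal{Y}(L_j)$. This is the technical heart (Proposition~\ref{prop:str-mod} and Lemma~\ref{lem:technique2}): one shows, using the naturality transformation $u\mapsto v$ and the open mapping theorem applied to $\pi\circ v$, that a polygon whose last input projects to the intersection with the $s$-th end must have its output project to an intersection with an end of index $\leq s$. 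This ``bottleneck'' argument is what forces the module to be an iterated cone; it requires the specific profile-function perturbations set up in~\S\ref{sec:FukCob}, whose construction (compactness via bottlenecks, energy bounds, regularity at the odd intersection points) you have entirely elided. Likewise, the quasi-isomorphism $\mathcal{Y}(L)\simeq\mathcal{M}_{V,\gamma_m}$ is produced by a horizontal translation of $V$ past $\gamma_m$ (Proposition~\ref{prop:comp-morphisms}), and the composition compatibility (Proposition~\ref{prop:compat-comp}) uses a specific horizontal Hamiltonian isotopy sliding $V'$ far to the left and the same bottleneck filtration argument, rather than a generic neck-stretching.
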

Given that $V$ represents a morphism in $\cobc(M)$,
Theorem \ref{cor:multiple-triang} follows immediately from Theorem
\ref{thm:main} and the definition of $T^{S}(-)$, the sequence of
exact triangles in the statement being provided by
$\widetilde{\mathcal{F}}(V)$.

\subsubsection*{Organization of the paper}
The plan for the rest of the paper is as follows:
~\S\ref{sec:prerequisites} contains extensive prerequisites, most
importantly the basic cobordism definitions in~\S\ref{sub:cob-def}, a
short review in \S\ref{sec:fuk-M} of the construction of the Fukaya
$A_{\infty}$-category basically following~\cite{Se:book-fukaya-categ}
and, in~\S\ref{subsec:cones}, the definition of the category
$T^{S}(-)$.  Additionally, for completeness, in
Appendix~\ref{subsec:more-alg} we recall basic $A_{\infty}$-category
notions.  In~\S\ref{sec:FukCob} we set up a Fukaya
$A_{\infty}$-category whose objects are cobordisms in
$\mathbb{R}^2\times M$. As it will be explained below, this is an
essential step in the proof of Theorem~\ref{thm:main}. It might also
be of independent interest.  The proof of Theorem~\ref{thm:main}
appears in~\S\ref{s:proof-main}.
 
In the remainder of the introduction we pursue with some more
technical remarks and corollaries of Theorem~\ref{thm:main}. We then
summarize the main steps in the proof Theorem~\ref{thm:main}.
  
We refer to~\cite{Bi-Co:cob1} for more extensive background and
examples of Lagrangian cobordisms.
 
\subsection{Further context and Corollaries of Theorem~\ref{thm:main}}
\label{subsec:cor}

To further outline the properties of the functor
$\widetilde{\mathcal{F}}$ from Theorem~\ref{thm:main} it is useful to
consider the commutative diagram below.

\begin{eqnarray}\label{eq:commut-diag1}
   \begin{aligned}
      \xymatrix{ \cobc(M)\ar[r]^{\widetilde{\mathcal{F}}}
        \ar[d]_{\mathcal{P}}
        & T^{S}D\fk(M)\ar[d]^{\mathcal{P}}  \\
        S\cobc(M) \ar[r]^{\mathcal{F}}
        \ar[rd]_{\mathcal{H}F_{N}} & D\fk(M)\ar[d]^{\hom(N,-)}\\
        & (\mathcal{V},\times)}
   \end{aligned}
\end{eqnarray}
 
We explain next the ingredients in this diagram.

\subsubsection{A simple version of $\widetilde{\mathcal{F}}$ and the
  top square in~\eqref{eq:commut-diag1}}
\label{subsubsec:simplification}

We describe here the functor $\mathcal{F}$, that appears in the middle
row in Diagram~\eqref{eq:commut-diag1}.  For this, we introduce
another cobordism category, denoted $S\cobc(M)$, which is simpler than
$\cobc(M)$. Its objects are Lagrangians in the class $\lag$ and the
morphisms relating two such objects, $L$ and $L'$, are horizontal
isotopy classes of cobordisms $V$ in $\mathcal{L}_{cob}$ (see
Definitions~\ref{def:Lcobordism} and~\ref{d:isotopies}) so that $L$ is
the unique ``positive'' end and $L'$ is the ``top'' negative end of
$V$, as in Figure~\ref{fig:MorphCob2b} (with $L'=L_{k}$).  There is a canonical functor
$\mathcal{P}:\cobc(M) \to S \cobc(M)$ that associates to a family
$(L_{1}, \ldots, L_{k})$ the last Lagrangian in the family, $L_{k}$,
and has a similar action on morphisms. Directly out of the definition
of $T^{S}D\fk(M)$ - see \S\ref{subsec:cones}, there also is a
projection functor $\mathcal{P}:T^{S}D\fk(M)\to D\fk(M)$ that again
associates to each family of Lagrangians the last object in the
family. The construction of $\widetilde{\mathcal{F}}$ implies that
there is an induced functor $\mathcal{F}:S\cobc(M)\to D\fk(M)$
which is the identity on objects and makes the top square in the
Diagram (\ref{eq:commut-diag1}) commutative.

As it will be seen in more detail in~\S\ref{sb:diag-cor}, the functor $\mathcal{F}$
has the advantage that it can be explicitly described on morphisms as
follows.  Fix a cobordism $V$ representing a morphism between $L$ and
$L'$ in $S\cobc(M)$.  The class $\mathcal{F}([V])\in HF(L,L')=\hom_{D\fuk}(L,L')$ is the
image of the unity in $HF(L,L)$ (induced by the fundamental class of
$L$) through a morphism $\phi_{V}:HF(L,L)\to HF(L',L)$ that is given
by counting Floer strips in $\mathbb{R}^2\times M$ with boundary
conditions along $V$ on one side and on $\gamma\times L$ on the other
side, $\mathcal{F}([V])=\phi_{V}([L])$.  Here $\gamma\subset
\mathbb{R}^2$, $V$ are as in Figure~\ref{fig:MorphCob2}
(again with $L'=L_{k}$) where are also depicted the planar projections of
the strips whose count provides the morphism $\phi_{V}$.
\begin{figure}[htbp]
   \begin{center} 
      \epsfig{file=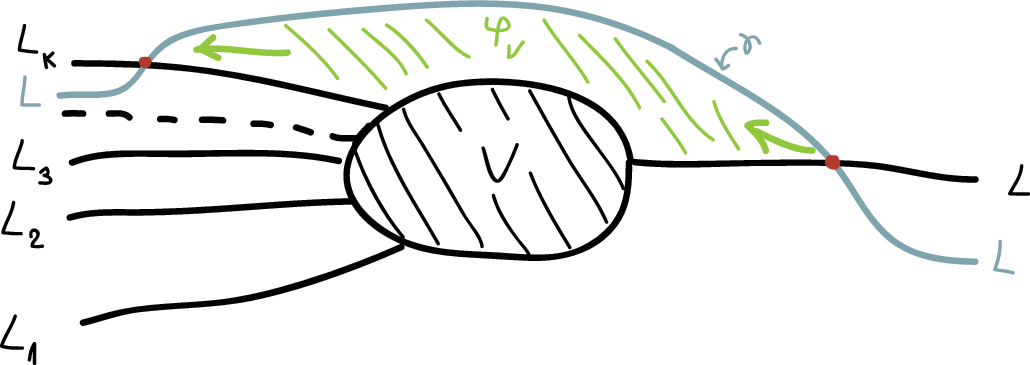, width=0.7\linewidth}
   \end{center}
   \caption{A cobordism $V\subset \mathbb{R}^2\times M$ 
     with a positive end $L$ and with $L'=L_{k}$ together with the projection of the 
     $J$-holomorphic strips that define the morphism $\phi_{V}$.
     \label{fig:MorphCob2}}
\end{figure}
\begin{rem} \label{rem:ham-Seidel-rep} a. It has been verified by
   Charette-Cornea (\S 3.4 in \cite{Cha:thesis}) that $\phi_{V}$ is an
   extension of the Lagrangian version of the Seidel morphism
   \cite{Se:pi1} introduced by Hu-Lalonde \cite{Hu-La:Seidel-morph}
   (see also \cite{Leclercq:spectral} and \cite{Hu-La-Le:monodromy})
   to which $\phi_{V}$ reduces when $V$ is the Lagrangian suspension
   associated to a Hamiltonian isotopy.  Thus, from this perspective,
   Theorem \ref{thm:main} shows that the Seidel morphism extends to a
   natural correspondence $V \to \phi_{V}$ that satisfies the
   properties needed to define the functor $\mathcal{F}$.

   b. \pbred{In fact a stronger version of the remark at point a is
     true.  It is proved in~\cite{Ch-Co:cob-Seidel} that the Seidel
     representation admits a categorification in the following sense:
     the fundamental groupoid of $Ham (M)$, $\Pi (Ham(M))$, viewed as
     a monoidal category, acts on both $\cobc(M)$ and $T^{S}D\fk(M)$
     and $\widetilde{\mathcal{F}}$ is equivariant with respect to this
     action.}
\end{rem}

We use the functor $\mathcal{F}$ to illustrate Theorem
\ref{cor:multiple-triang} in a particular case where we can also make
the statement more precise by identifying the morphisms involved.
\begin{cor}\label{cor:special-short} 
   If the Lagrangian cobordism $(V; L_{1}\cup L_{2}, L)$ has just two
   negative ends $L_{1}, L_{2}\subset M$ (for instance, this happens
   if $L$ is obtained by surgery on $L_{1}$ and $L_{2}$
   \cite{Bi-Co:cob1}), then there is an exact triangle in $D\fuk(M)$
   \begin{eqnarray} \label{eq:simple-exact}
      \begin{aligned}
         \xymatrix{ L_{2} \ar[dd]_{\mathcal{F}(V'')} & & \\
           &  &L \ar[ull]_{\mathcal{F}(V)}\\
           L_{1} \ar[urr]_{\mathcal{F}(V')} & &}
      \end{aligned}
   \end{eqnarray}
   where $V'$ and $V''$ are the cobordisms obtained by bending the
   ends of $V$ as in the Figure \ref{fig:hairy-cob} below.

   \begin{figure}[htbp]
      \begin{center} \vspace{-0.4in} \epsfig{file=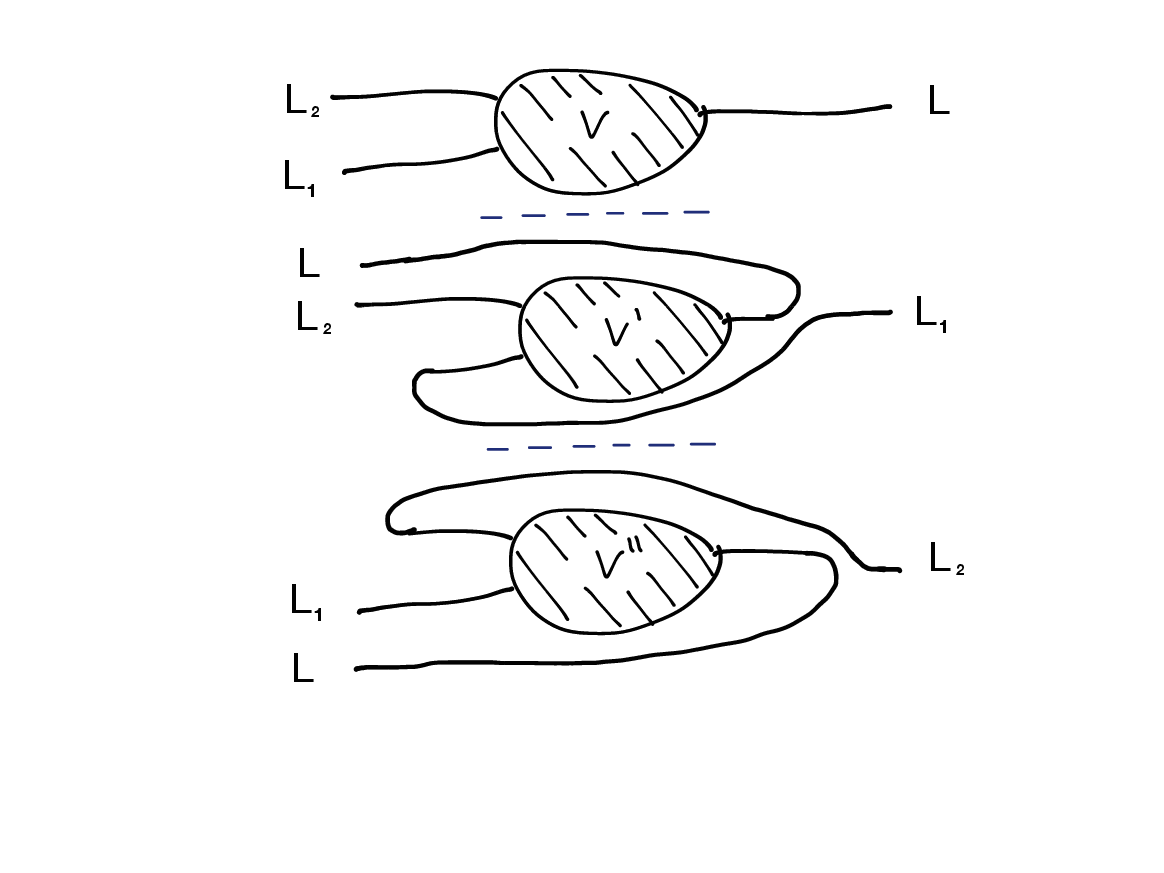,
           width=1\linewidth, height=0.8\linewidth }
      \end{center}
      \vspace{-1in}
      \caption{ The cobordisms $V$ and $V'$, $V''$ obtained by bending
        the ends of $V$ as indicated. \label{fig:hairy-cob}}
   \end{figure}
\end{cor}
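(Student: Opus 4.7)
The existence of an exact triangle of the required shape is immediate from Theorem~\ref{cor:multiple-triang} applied to $V$ with $k=2$: that theorem produces objects $Z_1 = L_1$, $Z_2 \simeq L$ and a single exact triangle $L_2[1] \to L_1 \to L \to L_2$, which in the ungraded setting is precisely the triangle~\eqref{eq:simple-exact}. The substantive content of the corollary is therefore the identification of the three arrows with $\mathcal{F}(V)$, $\mathcal{F}(V')$ and $\mathcal{F}(V'')$.

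The arrow $L \to L_2$ is $\mathcal{F}(V)$ essentially by construction: viewing $V$ as a morphism from $L$ to $L_2$ in $S\cobc(M)$ (positive end to top negative end), commutativity of the upper square of~\eqref{eq:commut-diag1} gives $\mathcal{P}(\widetilde{\mathcal{F}}(V)) = \mathcal{F}(V)$, and by the definition of $T^{S}D\fk(M)$ the projection $\mathcal{P}$ extracts exactly the ``positive-to-top-negative'' edge of the iterated triangle encoded by $\widetilde{\mathcal{F}}(V)$, which here is the single triangle itself.

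For the remaining two arrows I would use that $V'$ and $V''$, being themselves cobordisms with one positive and one top-negative end (after the bending in Figure~\ref{fig:hairy-cob}), define morphisms in $S\cobc(M)$ from $L_1$ to $L$ and from $L_2$ to $L_1$ respectively; their $\mathcal{F}$-images are computed by the Floer strip count recalled in~\S\ref{subsubsec:simplification} (cf.\ Figure~\ref{fig:MorphCob2}). The plan is to match each with the corresponding edge of the triangle via a naturality argument: the planar bending that produces $V'$ (resp.\ $V''$) from $V$ can be realized, after simultaneously adjusting the auxiliary profile arc used to define $\phi_{V}$, by a compactly supported Hamiltonian isotopy in $\mathbb{R}^2 \times M$ together with a relabeling of ends. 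Since $\widetilde{\mathcal{F}}$ is well-defined on horizontal isotopy classes and is functorial, the elements of $T^{S}D\fk(M)$ produced from $V$, $V'$, $V''$ agree, upon $\mathcal{P}$, with the three rotated forms of one and the same exact triangle.

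I expect the main obstacle to be the precise geometric statement that the bending of Figure~\ref{fig:hairy-cob} implements the rotation axiom of a triangulated category at the level of $\cobc(M)$, i.e.\ checking that $V'$ and $V''$ indeed represent morphisms in $\cobc(M)$ whose $\widetilde{\mathcal{F}}$-images, projected to $D\fk(M)$, coincide with the two remaining edges of the triangle extracted from $\widetilde{\mathcal{F}}(V)$. Once this compatibility between geometric bending and categorical rotation is in place, the identification of all three arrows follows formally from Theorem~\ref{thm:main} together with the commutative diagram~\eqref{eq:commut-diag1}.
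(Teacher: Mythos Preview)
Your identification of the arrow $L\to L_2$ with $\mathcal{F}(V)$ is correct and matches the paper. The gap is in the treatment of the other two arrows. You correctly isolate the obstacle --- showing that bending implements categorical rotation --- but then essentially assume it: the claim that ``the elements of $T^{S}D\fk(M)$ produced from $V,V',V''$ agree, upon $\mathcal{P}$, with the three rotated forms of one and the same exact triangle'' is precisely what has to be proved, and it does \emph{not} follow from functoriality or horizontal isotopy invariance of $\widetilde{\mathcal{F}}$. The cobordisms $V,V',V''$ are not horizontally isotopic (they have different positive/negative ends), so there is no formal mechanism identifying $\widetilde{\mathcal{F}}(V')$ with a rotation of $\widetilde{\mathcal{F}}(V)$; a ``relabeling of ends'' has no status in $\cobc(M)$.

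The paper proceeds differently and more concretely. It never tries to compare $\widetilde{\mathcal{F}}(V)$ with $\widetilde{\mathcal{F}}(V')$ or $\widetilde{\mathcal{F}}(V'')$. Instead it keeps $V$ fixed throughout and exploits the explicit strip--counting description of $\mathcal{F}$ from~\S\ref{subsubsec:simplification} (points a--c of~\S\ref{sb:diag-cor}). The two remaining arrows of the exact triangle coming from Theorem~\ref{cor:multiple-triang} are first shown to admit their own strip--counting descriptions, still with boundary on $V$: the map $L_2\to L_1$ is by construction a count of strips along $(\gamma_2\times L_2,\,V)$, while the map $L_1\to L$, written as $\sigma^{-1}\circ i$ with $\sigma$ the moving--boundary quasi-isomorphism, is identified (via Remark~\ref{rem:comparison-mor-discussion}) with a count of strips along $(\lambda\times L_1,\,V)$ for an ``inverted hat'' curve~$\lambda$. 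The bending of Figure~\ref{fig:hairy-cob} then enters only as a deformation of the auxiliary planar curves $\gamma_2$ and $\lambda$: after a suitable transformation of each curve, the pair $(\gamma_2,V)$ becomes the configuration computing $\mathcal{F}(V'')$, and $(\lambda,V)$ the one computing $\mathcal{F}(V')$. The missing idea in your proposal is this intermediate step --- that the other edges of the triangle can themselves be written as Floer strip counts against the \emph{same} $V$ with modified auxiliary curves --- which is what makes the comparison with $\mathcal{F}(V'),\mathcal{F}(V'')$ accessible.
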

The proof of Corollary~\ref{cor:special-short} is given
in~\S\ref{subsubsec:proof-cor}. 

\subsubsection{Floer homology and the bottom triangle
  in~\eqref{eq:commut-diag1}}

Let $N$ be an object in $D\fk(M)$.  There is an obvious functor
$$h_{N}:D\fk(M)\xrightarrow{\hom(N, -)} (\mathcal{V}, \times)$$ 
with values in the monoidal category $(\mathcal{V}, \times)$ of
ungraded vector spaces over $\mathbb{Z}_2$, with the monoidal
structure $\times$ being direct product. (We thus ignore all the
issues related to grading and orientations.) The functor
$\mathcal{H}F_{N}$ in the diagram is the composition
$\mathcal{H}F_{N}=h_{N}\circ \mathcal{F}$.  Assuming now that $N\in
\lag$, we remark that the functor $\mathcal{H}F_{N}$ associates to
each object $L$ in $S\cobc(M)$ the Floer homology $HF(N,L)$.  Thus the
functor $\mathcal{H}F_{N}$ encodes Floer homology as a sort of
``Lagrangian Quantum Field Theory'': it is a vector space valued
functor defined on a cobordism category that associates to each
Lagrangian $L\in\lag$ the Floer homology $\mathcal{H}F_{N}(L)=HF(N,L)$
(one could also complete $S\cobc(M)$ to a monoidal
category over which $\mathcal{H}F_{N}$ extends monoidally thus
bringing the formal properties of $\mathcal{H}F_{N}$ even closer to
the axioms of a TQFT).

From this perspective the existence of Diagram \ref{eq:commut-diag1}
can be seen as a statement concerning the properties of the Floer
homology functor. In particular, the existence of  $\mathcal{F}$ reflects the 
naturality properties of $\mathcal{H}F_{N}$
with respect to $N$. Further properties involve the triangulated structure of $D\fk(M)$ and they translate into the existence of the lift $\widetilde{\mathcal{F}}$.

In more concrete terms, as a consequence of Diagram \eqref{eq:simple-exact},
of Remark \ref{rem:ham-Seidel-rep} and
Theorem~\ref{cor:multiple-triang}, we immediately see that:
\begin{cor} \label{cor:super-simple}For any $N\in
   \lag$ the Floer homology functor
   $$\mathcal{H}F_{N}:S\cobc(M)\to  (\mathcal{V},\times)$$ 
   defined above has the following three properties:
   \begin{itemize}
     \item[i.]$\mathcal{H}F_{N}$ restricts to the Seidel
      representation on those cobordisms $V$ that are given as the
      Lagrangian suspension associated to a Hamiltonian isotopy acting
      on a given Lagrangian submanifold of $M$.
     \item[ii.]If $V$ has just two negative ends $L_{1}$, $L_{2}$ and
      $V'$, $V''$ are as in Corollary~\ref{cor:special-short}, then
      there is a long exact sequence that only depends on the
      horizontal isotopy type of $V$
      $$\ldots \longrightarrow \mathcal{H}F_{N}(L_{2})
      \xrightarrow{\mathcal{H}F_{N}(V'')} \mathcal{H}F_{N}(L_{1})
      \xrightarrow{\mathcal{H}F_{N}(V')} \mathcal{H}F_{N}(L)
      \xrightarrow{\mathcal{H}F_{N}(V)} \mathcal{H}F_{N}(L_{2})
      \longrightarrow \ldots$$ and this long exact sequence is natural
      in $N$.
     \item[iii.] \pbredb{More generally, if $V$ has negative ends
        $L_{1}, L_{2},\ldots, L_{k}$ with $k\geq 2$, then there exists
        a spectral sequence $\mathcal{E}_{N}(V) = \{E^r_p, d^r: E^r_p
        \longrightarrow E^r_{p-1}\}_{r \geq 0, p \in \mathbb{Z}}$,
        each page of which is graded by a single index, with $E^r_p = 0$
        for $p\leq 0$, and so that:
      \begin{itemize}
        \item[a.] the first page of the spectral sequence satisfies:
         $E^1_p = HF(N, L_p)$ for every $p \geq 1$. Moreover, the
         differential $d^1: E^1_p \longrightarrow E^1_{p-1}$ is given
         by $d^1 = H(\pi_{p-1}) \circ H(\psi_p)$, where $H(\psi_p)$
         and $H(\pi_{p-1})$ are the Floer homological maps induced by
         the morphisms $\psi_p$ and $\pi_{p-1}$ from the exact
         triangles in Theorem~\ref{cor:multiple-triang} for $i=p-1, p$
         \begin{equation*}
            \begin{aligned}
               L_{p-1}[1] \xrightarrow{\, \psi_{p-1}\, } & Z_{p-2}
               \longrightarrow Z_{p-1}
               \xrightarrow{\, \pi_{p-1}\, } L_{p-1}, \\
               L_{p}[1] \xrightarrow{\, \psi_p \,} & Z_{p-1} \longrightarrow
               Z_{p} \xrightarrow{\, \pi_{p} \, } L_p.
            \end{aligned}
         \end{equation*}
        \item[b.]  from the first page on, the terms of the spectral
         sequence only depend on $N$ and the horizontal isotopy type
         of $V$. Furthermore, the sequence is natural in $N$.
        \item[c.] $\mathcal{E}_{N}(V)$ collapses at page $r = k$ and
         converges to $\mathcal{H}F_{N}(L)$.
      \end{itemize}
      }
   \end{itemize}
\end{cor}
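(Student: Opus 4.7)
The plan is to obtain all three items as formal consequences of the functoriality of $h_N = \hom(N,-)$ applied to results already in hand: Remark \ref{rem:ham-Seidel-rep}(a) for item (i), Corollary \ref{cor:special-short} for item (ii), and Theorem \ref{cor:multiple-triang} together with the $T^S$-construction of \S\ref{subsec:cones} for item (iii). In all three cases, $\mathcal{H}F_N$ is already pre-constructed as the composition $h_N \circ \mathcal{F}$ in Diagram~\eqref{eq:commut-diag1}, so the task reduces to transcribing previously established statements in $D\fk(M)$ through the cohomological functor $h_N$.

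For item (i), I would simply unfold the definition $\mathcal{H}F_N = h_N \circ \mathcal{F}$ together with the description of $\mathcal{F}([V])$ in \S\ref{subsubsec:simplification} as $\phi_V([L]) \in HF(L,L')$. Applying $h_N = \hom(N,-)$ identifies $\mathcal{H}F_N(V) \colon HF(N,L) \to HF(N,L')$ with the map induced by the morphism $\phi_V$ in $D\fk(M)$. When $V$ is the Lagrangian suspension of a Hamiltonian isotopy, Remark \ref{rem:ham-Seidel-rep}(a) asserts that $\phi_V$ coincides with the Hu--Lalonde Lagrangian version of the Seidel morphism, and so $\mathcal{H}F_N(V)$ recovers the Seidel representation on $HF(N,-)$.

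For item (ii), I would apply the cohomological functor $h_N$ to the exact triangle \eqref{eq:simple-exact} produced by Corollary \ref{cor:special-short}. Any exact triangle in $D\fk(M)$ yields a long exact sequence of morphism spaces under $\hom(N,-)$, and the connecting arrows are by construction $h_N(\mathcal{F}(V))$, $h_N(\mathcal{F}(V'))$, $h_N(\mathcal{F}(V''))$, which are precisely $\mathcal{H}F_N(V)$, $\mathcal{H}F_N(V')$, $\mathcal{H}F_N(V'')$. Dependence only on the horizontal isotopy class of $V$ follows from the fact that $\mathcal{F}$ is a functor on $S\cobc(M)$ whose morphisms are horizontal isotopy classes, together with the observation that the horizontal isotopy class of $V$ determines those of $V'$ and $V''$ up to further horizontal isotopy. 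Naturality in $N$ is inherited from the bifunctoriality of $\hom_{D\fk(M)}(-,-)$.

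For item (iii), the key point is that the iterated cone-decomposition $L \simeq Z_k$ with $Z_1 = L_1$ and exact triangles $L_i[1] \to Z_{i-1} \to Z_i \to L_i$ from Theorem \ref{cor:multiple-triang} lifts, by the very construction of $T^S D\fk(M)$ in \S\ref{subsec:cones}, to a twisted complex over $L_1, \ldots, L_k$ in $\fk(M)$. Applying the $A_\infty$-module $\hom_{\fk}(N,-)$ yields a chain complex on $\bigoplus_{p=1}^{k} CF(N,L_p)$ filtered by $F_p = \bigoplus_{q \leq p} CF(N,L_q)$, whose standard filtration spectral sequence I would take as $\mathcal{E}_N(V)$. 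The associated $E^0$-page is $\bigoplus_p CF(N, L_p)$ with the internal Floer differentials, so $E^1_p = HF(N, L_p)$; the $d^1$ differential is induced by the degree-one components of the twisted complex, which are exactly the morphisms $\psi_p$ and $\pi_{p-1}$ from Theorem \ref{cor:multiple-triang}, giving $d^1 = H(\pi_{p-1}) \circ H(\psi_p)$. Vanishing for $p \leq 0$ is built into the filtration, collapse at $r = k$ is automatic since the filtration has length $k$, and convergence to $HF(N, Z_k) = HF(N,L) = \mathcal{H}F_N(L)$ is the standard convergence statement for bounded filtrations. Horizontal-isotopy invariance from page one on and naturality in $N$ are inherited from the corresponding properties of the twisted complex, which are outputs of $\widetilde{\mathcal{F}}$.

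The main obstacle is largely absorbed into the preceding results; the only substantive verification is that the iterated exact triangles from Theorem \ref{cor:multiple-triang} are represented, at the $A_\infty$-level, by an honest twisted complex whose differential has the components $\psi_p, \pi_{p-1}$ predicted by the statement. This is precisely what the $T^S(-)$ construction in \S\ref{subsec:cones} is designed to encode, so the identification of $d^1$ in item (iii) should follow directly from chasing the definitions rather than from any further geometric input.
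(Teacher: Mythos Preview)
Your proposal is correct and matches the paper's approach: the paper presents this corollary as an immediate consequence of Remark~\ref{rem:ham-Seidel-rep}, Corollary~\ref{cor:special-short} (i.e.\ diagram~\eqref{eq:simple-exact}), and Theorem~\ref{cor:multiple-triang}, without further proof, and your three items unpack exactly those three citations through the functor $h_N = \hom(N,-)$.

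One small refinement on item~(iii): you do not actually need the iterated cone decomposition to lift to an honest twisted complex in $\fk(M)$. The spectral sequence, together with the identification $d^1 = H(\pi_{p-1}) \circ H(\psi_p)$, follows formally from the tower of exact triangles in $D\fk(M)$ alone---this is the standard spectral sequence associated to a Postnikov system (or filtered object) in a triangulated category, obtained by splicing the long exact sequences coming from $\hom(N,-)$ applied to each triangle. So your last paragraph overstates the remaining obstacle: no chain-level verification is required beyond what Theorem~\ref{cor:multiple-triang} already gives at the derived level. That said, the paper's underlying construction (Proposition~\ref{prop:str-mod} and Corollary~\ref{cor:exact-triangles}) does in fact produce a genuine filtration of the $\fuk^d(M)$-module $\mathcal{M}_{V,\gamma_m,h_m}$ at the chain level, so your phrasing is not wrong---just stronger than necessary.
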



\subsection{Relation to $K$-theory} \label{sbsb:K-intro}

The cobordism category $\cobc$ of a symplectic manifold $M$ gives rise
to a group $\gcob$ somewhat analogous to cobordism groups in
differential topology. For this end we first consider the free abelian
group $F_{\lag}$ generated by the Lagrangian submanifolds $L \in
\lag$. We then define a subgroup of relations $R_{\lag} \subset
F_{\lag}$ as the subgroup generated by all elements of the form $L_1 +
\cdots + L_r$ for which there exists a Lagrangian cobordism $V \in
\lag_{cob}$ without any positive end and whose negative ends consist
of $(L_1, \ldots, L_r)$ (i.e. $(L_1, \ldots, L_r)$ is null Lagrangian
cobordant). We define $\gcob = F_{\lag} / R_{\lag}$.

One can alter the above and make other meaningful definitions. For
example, one can consider also a non-abelian version of $G_{cob}$ in
which the order of the Lagrangians $(L_1, \ldots, L_r)$ on the
positive end plays a role (see e.g.~\cite{Bi-Co:cob1}). Another
possible variation is to consider the Lagrangians $L \in \lag$
together with additional structures such as orientations, spin
structures, grading, local systems etc. One defines then the relations
as above by requiring in addition that these structures extend over
the cobordism $V$.

Next consider the $K$-theory group (or Grothendieck group)
$K_0(D\fk(M))$ of the derived Fukaya category of $M$. Recall that this
group is the abelian group generated by the objects of $D\fk(M)$
modulo the following collection of relations: every exact triangle
$$X \longrightarrow Y \longrightarrow Z \longrightarrow X[-1]$$ contributes the 
relation $X - Y + Z = 0$. (In our case the Fukaya category is not
graded, hence $Y = -Y$ for every object $Y$.)

\begin{cor} \label{c:K-groups} The mapping $\lag \longrightarrow
   K_0(D\fk(M))$ given by $L \longmapsto L$ induces a well defined
   homomorphism of groups
   \begin{equation} \label{eq:theta-K-groups}
      \Theta : \gcob \longrightarrow K_0(D\fk(M)).
   \end{equation}
\end{cor}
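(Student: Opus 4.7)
The plan is to extend the assignment $L \mapsto [L]$ linearly to a homomorphism $\tilde{\Theta}: F_{\lag} \to K_0(D\fk(M))$ and then check that it vanishes on the subgroup $R_{\lag}$; this will yield a well-defined $\Theta$ on the quotient $\gcob$. So I would take $V \in \lag_{cob}$ to be a Lagrangian cobordism with no positive end and with negative ends $L_1, \ldots, L_r$, and aim to show $\sum_{i=1}^r [L_i] = 0$ in $K_0(D\fk(M))$.

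The first step is to reduce this to a cobordism that fits the hypothesis of Theorem~\ref{cor:multiple-triang}. Using the same bending trick that produces $V'$ and $V''$ from $V$ in Corollary~\ref{cor:special-short} (see Figure~\ref{fig:hairy-cob}), I bend the topmost negative end of $V$ upward and convert it into a positive end, obtaining a cobordism $V' \in \lag_{cob}$ whose unique positive end is $L_r$ and whose negative ends are $L_1, \ldots, L_{r-1}$. Verifying that this planar modification stays inside $\lag_{cob}$ (and meets the monotonicity/admissibility constraints defined in~\S\ref{subsec:category-cob}) is the main technical point; it is essentially the same kind of operation already used for Corollary~\ref{cor:special-short}.

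Applying Theorem~\ref{cor:multiple-triang} to $V'$ then yields objects $Z_1 = L_1, Z_2, \ldots, Z_{r-1} \simeq L_r$ of $D\fk(M)$ together with $r-2$ exact triangles
\[
L_i[1] \longrightarrow Z_{i-1} \longrightarrow Z_i \longrightarrow L_i, \qquad 2 \le i \le r-1.
\]
Passing to $K_0(D\fk(M))$, each such triangle gives the relation $[Z_i] = [Z_{i-1}] - [L_i[1]]$. Since the Fukaya category here is ungraded we have $[L_i[1]] = [L_i]$, and combining this with the standard relation $[X[1]] = -[X]$ forces $2[X] = 0$ for every object $X$. Telescoping the relations for $i = 2, \ldots, r-1$ and using this $2$-torsion to absorb signs, I get
\[
[L_r] \;=\; [Z_{r-1}] \;=\; [L_1] + [L_2] + \cdots + [L_{r-1}],
\]
whence $\sum_{i=1}^r [L_i] = 2[L_r] = 0$ in $K_0(D\fk(M))$, as required. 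The degenerate cases are immediate: the case $r=0$ contributes no relation, and when $r=1$ the same bending converts $V$ into a cobordism whose unique positive end is $L$ and which has no negative ends, so $L$ belongs to the triangulated envelope of the empty family and hence $[L]=0$.

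The principal obstacle is not algebraic but geometric: one must confirm that the bending procedure can always be carried out without leaving $\lag_{cob}$ and without spoiling the hypotheses of Theorem~\ref{cor:multiple-triang}. Once this is granted, the argument in $K_0$ is a clean telescoping computation that crucially exploits the $2$-torsion forced by working in the ungraded setting.
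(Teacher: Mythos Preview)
Your argument is correct, but it takes a slightly longer route than the paper's. The paper applies Theorem~\ref{cor:multiple-triang} \emph{directly} to the cobordism $V$ with empty positive end: since the void Lagrangian $\emptyset$ is admitted as an object (see~\S\ref{subsec:category-cob}) and corresponds to the zero object in $D\fk(M)$, Theorem~\ref{cor:multiple-triang} yields objects $Z_1 \simeq L_1, Z_2, \ldots, Z_r \simeq 0$ together with exact triangles $L_i[1] \to Z_{i-1} \to Z_i \to L_i$ for $2 \le i \le r$. Telescoping then gives $0 = [Z_r] = [L_1] + \cdots + [L_r]$ immediately.

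By contrast, you first bend the top end of $V$ to manufacture a nonempty positive end $L_r$, apply Theorem~\ref{cor:multiple-triang} to the resulting $V'$ to get $[Z_{r-1}] = [L_r]$, and then invoke the $2$-torsion in $K_0$ one extra time to conclude $\sum_i [L_i] = 2[L_r] = 0$. This works, and the bending is indeed legitimate (it is the same operation used for $V'$, $V''$ in Corollary~\ref{cor:special-short}), so your stated ``principal obstacle'' is not really an obstacle. But the detour is unnecessary: once you allow $\emptyset$ as the positive end, the geometric step disappears and the $K_0$ computation shortens by one line. The paper's approach buys simplicity; yours buys nothing extra, though it does have the minor virtue of only invoking Theorem~\ref{cor:multiple-triang} in the form literally stated (with a genuine Lagrangian on the positive end).
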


The proof follows immediately from Theorem~\ref{cor:multiple-triang}.
Indeed, let $L_1 + \cdots + L_r$ be a generator of $R_{\lag}$, so
that we have a Lagrangians cobordism $V$ without positive ends and
with negative ends $(L_1, \ldots, L_r)$. By
Theorem~\ref{cor:multiple-triang} we have objects $Z_1 \simeq L_1$,
$Z_2, \ldots, Z_{r-1}$, $Z_r \simeq 0$ in $D\fk(M)$ and a sequence of
exact triangles:
$$L_i[1] \longrightarrow Z_{i-1} \longrightarrow Z_i \longrightarrow
L_i, \quad \forall \; 2 \leq i \leq r.$$ From this we obtain the
following identities in $K_0(D\fk(M))$:
$$Z_1 = L_1, \quad Z_i = L_i + Z_{i-1} \;\; 
\forall \; 2 \leq i \leq r, \quad Z_r=0.$$ Summing these identities
up, it readily follows that $L_1 + \cdots + L_r = 0$ in $K_0(D
\fk(M))$.  This proves that $R_{\lag}$ is sent to $0 \in K_0(D\fk(M))$
under the mapping $F_{\lag} \longrightarrow K_0(D\fk(M))$, induced by
$L \longmapsto L$, hence the homomorphism $\Theta$ is well defined.

An interesting question is when is the homomorphism $\Theta$ an
isomorphism. This question can sometimes be studied with the help of
homological mirror symmetry at least in those cases where it has been
established. More precisely, if there exists a triangulated
equivalence between an appropriate completion $D^c\fk(M)$ of $D\fk(M)$
and the bounded derived category of coherent sheaves $D^b
Coh(M^{\vee})$ on the mirror manifold $M^{\vee}$, then there is an
isomorphism between the associated $K$-groups: $K_0(D^c\fuk(M)) \cong
K(M^{\vee})$. The point is that in some cases the latter group is well
known. One can then combine such algebro-geometric information
together with constructions of Lagrangian cobordisms on the
``$M$''-side, e.g. surgery (see~\S6 in~\cite{Bi-Co:cob1}), in order to
study the homomorphism $\Theta$.

For such an approach one needs sometimes to adjust a bit the
definitions of $\cobc$ and $D\fk(M)$ to include more structures, as
indicated above, or to work with particular classes of Lagrangian
submanifolds $\lag$. The simplest non-trivial example seems to be $M =
\mathbb{T}^2$ (and $M^{\vee} = $ elliptic curve). \pbred{Recent
  results of Haug~\cite{Haug:K-cob, Haug:PhD-thesis} show} that in
this case an appropriate version of $\Theta$ (defined for a suitable
class $\lag$) is indeed an isomorphism.

\subsection{Outline of the proof of Theorem~\ref{thm:main}}
\label{subsec:outline-proof}
 
The proof has essentially two main steps.
 
\subsubsection{The Fukaya category of cobordisms}

As mentioned before, the first step is to define a Fukaya category of
cobordisms in $\mathbb{R}^2\times M$, which we denote
$\fk_{cob}(\mathbb{R}^2\times M)$. The construction follows the set-up
in Seidel's book~\cite{Se:book-fukaya-categ}.  In particular, the
regularity of the relevant moduli spaces is insured by perturbing the
Cauchy-Riemann equation by Hamiltonian terms.  Compared to the
construction in~\cite{Se:book-fukaya-categ}, there are two additional
major issues that have to be addressed in our setting: the first is
that we work in a monotone situation and no longer an exact one. The
second is that Lagrangian cobordisms are embedded in a non-compact
ambient manifold, $\mathbb{R}^2\times M$, and the total
spaces of these cobordisms are non compact Lagrangians. Thus we need
to deal with compactness issues as well as with regularity at
$\infty$. Adapting the construction from the exact setting to the
monotone one is in fact non-problematic: it uses the same type of
arguments as in our previous work~\cite{Bi-Co:rigidity}. The
non-compactness issue turns out to be considerably more delicate.  As
in~\cite{Bi-Co:cob1}, the main tool that we use to insure the
compactness of moduli spaces of perturbed $J$-holomorphic curves
$\mathbf{u}$ is based on the open mapping theorem for holomorphic
functions in the plane. There is however a difficulty in implementing
this strategy directly. On one hand, the
Hamiltonian perturbations needed to construct the
$A_{\infty}$-category have to be picked in such a way as to insure
regularity, including at infinity, which requires perturbations that
are not compactly supported. On the other hand, to apply the
compactness argument based on the open mapping theorem we need that,
outside of a compact in $\mathbb{R}^2\times M$, the curves ${\bf u}$
satisfy a horizontally homogeneous equation in the sense that the
projections to $\mathbb{R}^2 \cong \mathbb{C}$ of the curves ${\bf u}$
are {\em holomorphic}. These two constraints: perturbations that are
non-trivial at $\infty$ and horizontally homogeneous equations are in
general incompatible!  To deal with this point we define the relevant
moduli spaces using curves ${\bf u}$ that satisfy perturbed
$J$-holomorphic equations with Hamiltonian perturbation terms that do
not vanish at $\infty$ but that have a special behavior away from a
fixed, sufficiently big compact.  The Hamiltonian perturbations are so
that the curves ${\bf u}$ transform by a specific change of variable - also
known as a naturality transformation - to curves ${\bf v}$ that are
horizontally homogeneous at infinity.  Compactness for the curves
${\bf v}$ implies then the desired compactness for the curves ${\bf
  u}$.  The naturality transformation can be implemented in a coherent
way along all the moduli spaces used to define the
$A_{\infty}$-multiplications (see~\S\ref{subsec:strip-ends}) but then
two further problems arise.  First, the boundary conditions for the
curves ${\bf u}$ are slightly different from those considered e.g.
in~\cite{Se:book-fukaya-categ} and thus energy bounds have to be
verified explicitly as they do not directly result from the
calculations in~\cite{Se:book-fukaya-categ}. A second and more serious
issue is that the boundary conditions for the curves ${\bf v}$ are not
fixed but rather moving ones. As a consequence, proving compactness
for the curves ${\bf v}$ is not quite immediate and requires
additional precision in the choice of perturbations. This is
implemented in~\S\ref{subsec:strip-ends} and~\S\ref{subsec:energy},
where we use the term {\em bottleneck} to indicate the particular
profile of the Hamiltonians that are adapted to this purpose. (See
e.g. Figure~\ref{fig:kinks} in~\S\ref{subsec:equation}.) These choices
of particular Hamiltonian perturbations come back with a vengeance and
complicate to a large extent the proofs of various properties of the
resulting $A_{\infty}$-category such as invariance.

\subsubsection{Inclusion, triangles and
  $\widetilde{\mathcal{F}}$}\label{subsubsec:intro-triang}

To construct the functor $\widetilde{\mathcal{F}}$ we first compare
the two categories $\fk(M)$, $\fk_{cob}(\mathbb{R}^2\times M)$.
Namely, we show that if $\gamma:\R\to \mathbb{R}^2$ is a curve in the
plane with horizontal ends, then there is an induced functor of
$A_{\infty}$-categories:

$$\mathcal{I}_{\gamma}:\fk(M)\to \fk_{cob}(\mathbb{R}^2\times M)$$
defined on the corresponding Fukaya category of $M$.  On objects this
functor is defined by $\mathcal{I}_{\gamma}(L)=\gamma\times L$.

Denote $\mathcal{A}=\fk(M)$ and
$\widetilde{\mathcal{A}}=\fk_{cob}(\mathbb{R}^2\times M)$. There is a
Yoneda embedding functor $$\mathcal{Y}:\mathcal{A}\to \textnormal{\sl
  fun}(\mathcal{A},Ch^{opp}),$$ where the right-hand side stands for
$A_{\infty}$-functors from $\mathcal{A}$ to the opposite category of
chain complexes viewed as a dg-category. A similar Yoneda embedding is
defined also on $\widetilde{\mathcal{A}}$.

Fix now a cobordism $(V; L_{1}\cup \ldots \cup L_{k}, L)$ as in
Figure~\ref{fig:cob-curv}.  Given any curve $\gamma$ as above, there
is a functor $\mathcal{M}_{V,\gamma}:\mathcal{A}\to Ch^{opp}$ defined
by
$$\mathcal{M}_{V,\gamma}=\mathcal{Y}(V)\circ \mathcal{I}_{\gamma}~.~$$
At the derived level, this functor only depends on the horizontal
isotopy classes of $V$ and $\gamma$.  We consider a particular set of
curves $\alpha_{1},\ldots, \alpha_{k}\subset \mathbb{R}^2$ basically
as in Figure \ref{fig:cob-curv}.  Therefore, we get a sequence of
functors $\mathcal{M}_{V,i}:=\mathcal{M}_{V,\alpha_{i}}$, $i=1,
\ldots, k$.
\begin{figure}[htbp]
   \begin{center}
      \epsfig{file=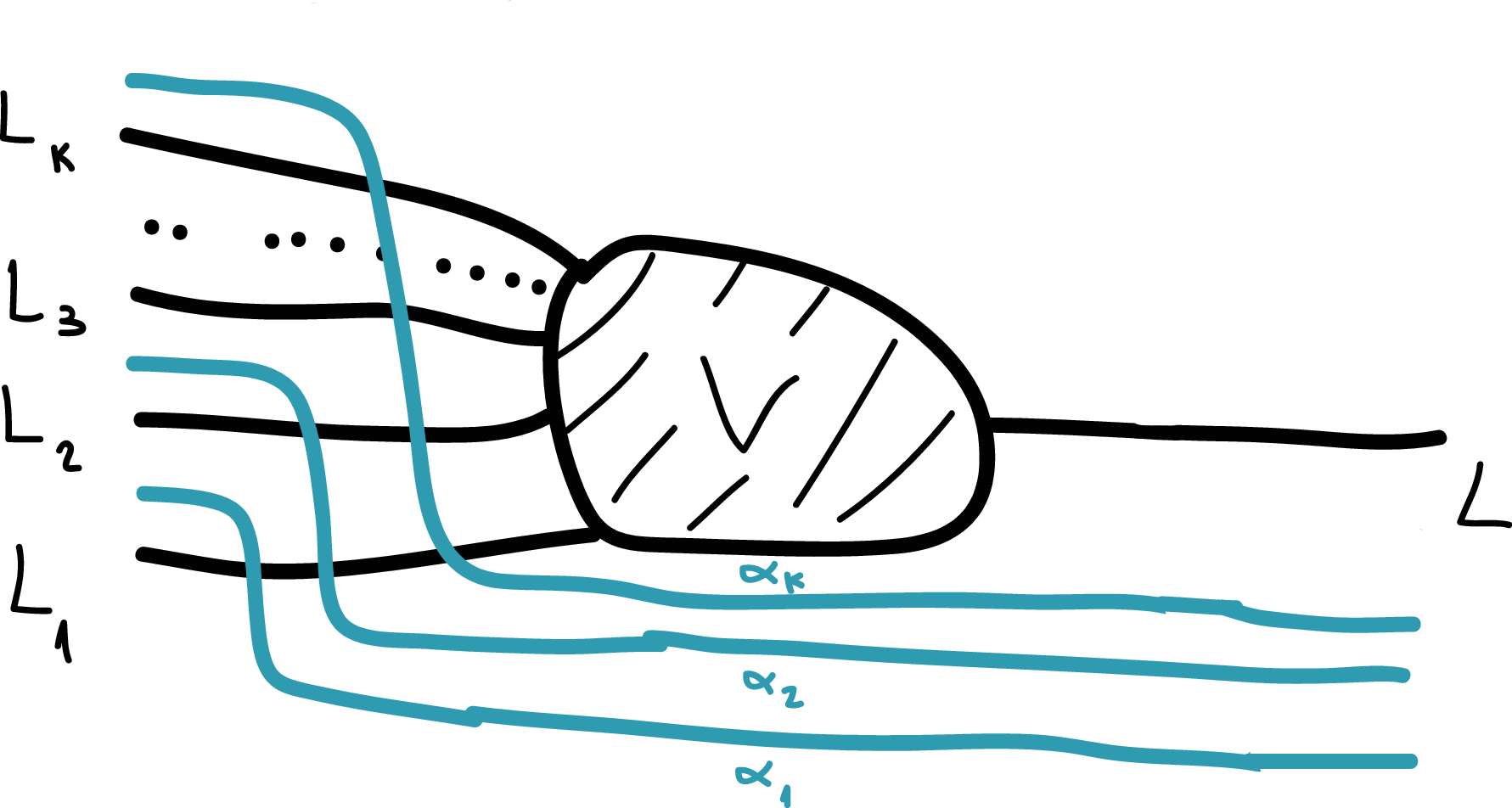, width=0.6\linewidth}
   \end{center}
   \caption{\label{fig:cob-curv} A cobordism $V$ together with curves
     of the type $\alpha_{i}$'s.}
\end{figure}

We then show that these functors are related by exact triangles
(in the sense of triangulated $A_{\infty}$ categories):
\begin{equation} \label{eq:funct-exact} T^{-1}\mathcal{Y}(L_{s})\to
   \mathcal{M}_{V,s-1}\to \mathcal{M}_{V,s}\to \mathcal{Y}(L_{s}) \;\;
   \forall\ 2 \leq s \leq k.
\end{equation}
Moreover, there is a quasi-isomorphism $\phi_{V}:\mathcal{Y}(L)\to
\mathcal{M}_{V,k}$. The proof of this fact requires the same type of
arguments that appeared earlier in constructing the Fukaya cobordism
category together with some new geometric ingredients.  \ocgreen{In
  particular, the key ingredient to show the existence of the exact
  triangles (\ref{eq:funct-exact}) is the fact that, with appropriate
  choices of data, the relevant perturbed holomorphic curves
  $\mathbf{u}$ that contribute to the $A_{\infty}$ operations
  transform by naturality into curves $\mathbf{v}$ whose projection is
  holomorphic around the intersections of the curves $\alpha_{i}$ and
  the projection of $V$. By taking into account orientations and using
  again the open mapping theorem it then follows that if such a curve
  (viewed as punctured polygon) has as entries intersection points
  involving some of the first $s$ ends of $V$, then it has as exit an
  intersection point also involving one of these ends. The exact
  sequences (\ref{eq:funct-exact}) are an algebraic translation of
  this fact.}

With the exact sequences (\ref{eq:funct-exact}) established, the
definition of $\widetilde{\mathcal{F}}$ is relatively direct, by
translating the preceding structure to the derived setting.  Finally,
we verify that $\widetilde{\mathcal{F}}$ respects composition which is
again a non-trivial step.

\begin{rem} 
   a. Apriori, a different approach to the construction of the Fukaya
   category of cobordisms, that avoids the difficult perturbation
   issues above, would be to use for the definition of all the
   relevant moduli spaces only horizontally homogeneous equations.  In
   this case, compactness is automatic but the algebraic output of the
   construction is not an $A_{\infty}$-category but a weaker structure
   sometimes called a pre-$A_{\infty}$-category. For instance, the
   Floer complex $CF(V,V')$ for two cobordisms $V$ and $V'$ is only
   defined if $V$ and $V'$ are distinct at $\infty$.  This leads to a
   plethora of further complications. It is not clear whether this
   other approach can lead to a proof of Theorem \ref{thm:main} and,
   even more, to one shorter than the proof here.

   b. \ocgreen{As explained in \S\ref{subsubsec:intro-triang},
     transforming the curves $\mathbf{u}$ by naturality to curves
     $\mathbf{v}$ whose projection is holomorphic outside of a certain
     compact $\subset \C$ is important for the proof of Theorem
     \ref{thm:main} not only to define the Fukaya category of
     cobordisms but also in the second step, where specific properties
     of planar holomorphic curves enter the argument.  Indeed, we
     actually need at that point rather fine control on the region of
     holomorphicity of the projection of $\mathbf{v}$ in the sense
     that it is not sufficient for holomorphicity to take place at
     infinity but also in regions where various cylindrical
     projections of the Lagrangians involved intersect.
   
     Fukaya categories in a variety of other non-compact situations
     have appeared before in the literature, in particular
     in~\cite{AbouSeidel:wrapped} and in~\cite{Se:Lefschetz-Fukaya}.
     The construction in Seidel's paper \cite{Se:Lefschetz-Fukaya} is
     closest to the construction here and a number of results from
     that paper are used here. Moreover, a rather straightforward
     adaptation of the methods in \cite{Se:Lefschetz-Fukaya} leads to
     a category with objects cobordisms with ends only on one side
     (that is cobordisms of the type $V:\emptyset \to (L_{1},\ldots,
     L_{k})$).  Compactness, is insured in \cite{Se:Lefschetz-Fukaya}
     by a variant of the maximum principle for harmonic functions and,
     while it does require a special form of ``disjoining'' planar
     hamiltonian perturbations, all naturality issues are bypassed.
     However, this setup is not applicable, at least directly, to the
     proof of Theorem \ref{thm:main} not only because we need to deal
     with cobordisms with arbitrary ends but, more importantly,
     because implementing the second step of the proof in this setup
     does not seem immediate.  In short, our choice here is to
     construct the category $\fuk_{cob}(\C\times M)$ in a form that is
     directly applicable to the proof of Theorem \ref{thm:main}.  The
     construction in itself provides an alternative approach to that
     in \cite{Se:Lefschetz-Fukaya} and is potentially of some
     independent interest.  }
\end{rem}

\subsection*{Acknowledgments} Part of this work was accomplished
during a stay at the Institute for Advanced Study in Princeton. We
thank Helmut Hofer and the IAS for their gracious hospitality.
\pbred{We would also like to thank the referee for a very careful reading
  of the paper and for making many comments helping to improve the
  quality of the exposition.}


\section{Prerequisites}\label{sec:prerequisites}

Here we fix the setting of the paper, in particular the definition of
the Lagrangian cobordism category that we use, the relevant Fukaya
category as well as all the auxiliary constructions and conventions
needed in the paper. {{Note that from now on and through the remainder
    of the paper, a part of the notation from the Introduction will
    change. Namely, the class of Lagrangian submanifolds $\lag$ will
    be denoted $\mathcal{L}^*_d$, the class of admissible cobordisms
    by $\lag_d(\mathbb{C} \times M)$, the category $\cobc$ will be
    denoted by $\cob^d_0$ and $\fk$ by $\fuk^d$. The meaning of the
    decorations $d$, $*$ and $0$ in this notation will be explained
    below.}}

We assume here that the manifold $(M^{2n}, \omega)$ is compact.
Lagrangian submanifolds $L \subset M$ will be generally assumed to be
closed unless otherwise indicated.

The
subsections~\S\ref{sb:monotonicity},~\S\ref{sub:cob-def},~\S\ref{subsec:category-cob}
and~\S\ref{sb:rev-HF} are just recalls of various definitions and
constructions from~\cite{Bi-Co:cob1} and \S\ref{sec:fuk-M} concerns the Fukaya category. 
 Subsection~\ref{subsec:cones}
contains a description of the $T^{S}(-)$ construction that is more
detailed and precise than the one in~\cite{Bi-Co:cob1}.

\subsection{Monotonicity} \label{sb:monotonicity} 

All families of Lagrangian submanifolds in our constructions have to
satisfy a monotonicity condition in a uniform way as described below.
Given a Lagrangian submanifold $L \subset M$ let
$$\omega : \pi_{2}(M,L)\to \R \ , \ \mu:\pi_{2}(M,L)\to \Z$$
be the morphism given, respectively, by integration of $\omega$ and by
the Maslov index. The Lagrangian $L$ is \emph{monotone} if there
exists a positive constant $\rho>0$ so that for all $\alpha\in
\pi_{2}(M,L)$ we have $\omega(\alpha)=\rho\mu(\alpha)$ and moreover
the minimal Maslov number $$N_{L}: =\min\{\mu(\alpha) : \alpha\in
\pi_{2}(M,L) \ ,\ \omega(\alpha)>0\}$$ satisfies $N_{L}\geq 2$.

We will use $K=\mathbb{Z}_2$ as the ground ring. 
However, we mention here that most of the discussion
generalizes to arbitrary rings under additional assumptions on the Lagrangians. 

For a  closed, monotone Lagrangian $L$ there is an associated basic 
Gromov-Witten type invariant $d_{L}\in \Z_{2}$ given 
as the number (mod $2$) of $J$-holomorphic disks of Maslov
index $2$ going through a generic point $P\in L$ for $J$ a generic
almost complex structure that is compatible with $\omega$.

A family of Lagrangian submanifolds $L_{i}$, $i\in I$, is 
uniformly monotone if each $L_{i}$ is monotone and the
following condition is satisfied: there exists $d\in K$ so that for
all $i\in I$ we have $d_{L_{i}}=d$ and there
exists a positive real constant $\rho$ so that the monotonicity
constant of $L_{i}$ equals $\rho$ for all $i\in I$.
All the Lagrangians used in the
paper will be assumed monotone and, similarly, the Lagrangian families
will be assumed uniformly monotone.

For $d\in \Z_{2}$ and $\rho\geq 0$, we let 
$\mathcal{L}_{d}(M)$ be the family of closed, connected Lagrangian
submanifolds $L\subset M$ that are monotone with monotonicity constant
$\rho$ and with $d_{L}=d$ (we thus suppress $\rho$ from the notation).

\subsection{Cobordism: main definitions}\label{sub:cob-def}
The plane $\mathbb{R}^2$  is endowed with the symplectic
structure $\omega_{\mathbb{R}^2} = dx \wedge dy$, $(x,y) \in
\mathbb{R}^2$.  The product $\widetilde{M}=\mathbb{R}^2 \times M$ is endowed with 
the symplectic form $\omega_{\mathbb{R}^2} \oplus \omega$. We denote by
$\pi: \mathbb{R}^2 \times M \to \mathbb{R}^2$ the projection. For a
subset $V \subset \mathbb{R}^2 \times M$ and $S \subset \mathbb{R}^2$
we let $V|_{S} = V \cap \pi^{-1}(S)$.

\begin{dfn}\label{def:Lcobordism}
   Let $(L_{i})_{1\leq i\leq k_{-}}$ and $(L'_{j})_{1\leq j\leq
     k_{+}}$ be two families of closed Lagrangian submanifolds of
   $M$. We say that that these two (ordered) families are Lagrangian
   cobordant, $(L_{i}) \simeq (L'_{j})$, if there exists a smooth
   compact cobordism $(V;\coprod_{i} L_{i}, \coprod_{j}L'_{j})$ and a
   Lagrangian embedding $V \subset ([0,1] \times \mathbb{R}) \times M$
   so that for some $\epsilon >0$ we have:
   \begin{equation} \label{eq:cob_ends}
      \begin{aligned}
         V|_{[0,\epsilon)\times \mathbb{R}} = & \coprod_{i} 
         ([0, \epsilon) \times \{i\})  \times L_i \\
         V|_{(1-\epsilon, 1] \times \mathbb{R}} = 
         & \coprod_{j} ( (1-\epsilon,1]\times \{j\}) \times L'_j~.~
      \end{aligned}
   \end{equation}
   The manifold $V$ is called a Lagrangian cobordism from the
   Lagrangian family $(L'_{j})$ to the family $(L_{i})$. We 
   denote such a cobordism by $V:(L'_{j}) \cobto (L_{i})$ or
   $(V; (L_{i}), (L'_{j}))$.
\end{dfn}
\begin{figure}[htbp]
   \begin{center}
      \epsfig{file=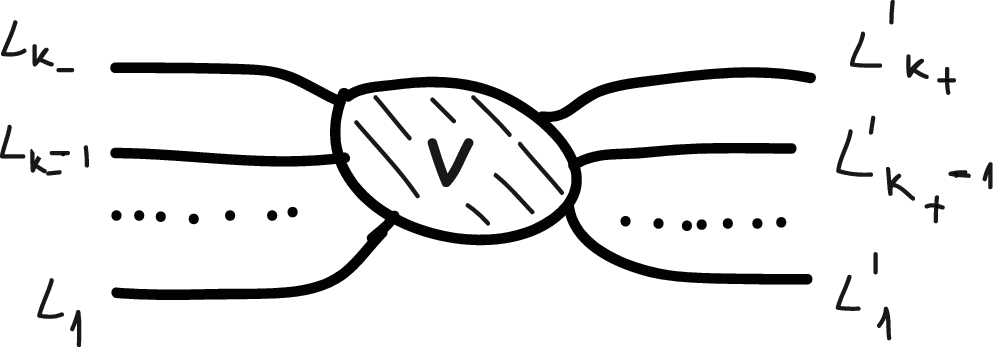, width=0.6\linewidth}
   \end{center}
   \caption{A cobordism $V:(L'_{j})\cobto (L_{i})$ projected
     on $\mathbb{R}^2$.}
\end{figure}

 A cobordism is called {\em monotone} if
$$V\subset ([0,1]\times \mathbb{R})\times M$$ is a 
monotone Lagrangian submanifold.    

It is
often more convenient to view cobordisms as embedded in $\mathbb{R}^2
\times M$. Given a cobordism $V\subset ([0,1] \times \mathbb{R})
\times M$ as in Definition~\ref{def:Lcobordism} we can extend
trivially its negative ends towards $-\infty$ and its positive ends to
$+\infty$ thus getting a Lagrangian $\overline{V} \subset \mathbb{R}^2
\times M$. We will in general not distinguish between $V$ and
$\overline{V}$ but if this distinction is needed we will call
\begin{equation}\label{eq:R-extension}
   \overline{V} = \Bigl(\coprod_{i} (-\infty, 0] \times \{i\} 
   \times L_i \Bigr) \cup V \cup 
   \Bigl(\coprod_{j} [1,\infty) \times \{j\} 
   \times L'_j \Bigr)
\end{equation}
The $\mathbb{R}$-extension of $V$.  At certain points in the paper we will 
also use Lagrangians in $\R^{2}\times M$ that are $\R$-extensions of cobordisms
$V\subset ([a,b]\times\R)\times M$. The definition of such cobordims is identical
with the one above except with the interval $[a,b]$ replacing $[0,1]$.

More generally, by a {\em Lagrangian submanifold with cylindrical ends} we mean a
Lagrangian submanifold $\overline{V} \subset \widetilde{M}$ without
boundary that has the following properties:
\begin{enumerate}
  \item[i.] For every $a<b$ the subset $\overline{V}|_{[a,b] \times
     \mathbb{R}}$ is compact.
  \item[ii.] There exists $R_+$ such that $$\overline{V}|_{[R_+, \infty)
     \times \mathbb{R}} = \coprod_{i=1}^{k_+} [R_+, \infty) \times
   \{a^+_i\} \times L^{+}_i$$ for some $a^+_1 < \cdots < a^+_{k_+}$
   and some Lagrangian submanifolds $L^+_1, \ldots, L^+_{k_+} \subset
   M$.
  \item[iii.] There exists $R_{-} \leq R_+$ such that
   $$\overline{V}|_{(-\infty, R_{-}] \times
     \mathbb{R}} = \coprod_{i=1}^{k_-} (-\infty, R_-] \times \{a^-_i\}
   \times L^{-}_i$$ for some $a^{-}_1 < \cdots < a^-_{k_-}$ and some
   Lagrangian submanifolds $L^{-}_1, \ldots, L^{-}_{k_-} \subset M$.
\end{enumerate}
We allow $k_+$ or $k_-$ to be $0$ in which case $\overline{V}|_{[R_+,
  \infty)\times \mathbb{R}}$ or $\overline{V}|_{(-\infty, R_{-}]
  \times \mathbb{R}}$ are void.

For every $R \geq R_+$ write $E^+_{R}(\overline{V}) =
\overline{V}|_{[R, \infty) \times \mathbb{R}}$ and call it a positive
cylindrical end of $\overline{V}$. Similarly, we have for $R \leq
R_{-}$ a negative cylindrical end $E^{-}_{R}(\overline{V})$.

If $\overline{W}$ is a Lagrangian submanifold with
cylindrical ends then by an obvious modification of the ends (and a
possible symplectomorphism on the $\mathbb{R}^2$ component) it is easy
to obtain a Lagrangian cobordism between the families of Lagrangians
corresponding to the positive and negative ends of $\overline{W}$.

In order to simplify terminology, we will say that a Lagrangian with
cylindrical ends $\overline{V}$ is cylindrical outside of a compact
subset $K \subset \mathbb{R}^2$ if $\overline{V}|_{\mathbb{R}^2
  \setminus K}$ consists of horizontal ends, i.e. it is of the form
$E^{-}_{R_{-}}(\overline{V}) \cup E^+_{R_+}(\overline{V})$.

We will also need the following notion.
\begin{dfn} \label{d:cyl-dist} Two Lagrangians with cylindrical ends
   $\overline{V}, \overline{W} \subset \widetilde{M}$ are said to be
   cylindrically distinct at infinity if there exists $R>0$ such that
   $\pi(E^+_R(\overline{V})) \cap \pi(E^+_R(\overline{W})) =
   \emptyset$ and $\pi(E^-_{-R}(\overline{V})) \cap
   \pi(E^-_{-R}(\overline{W})) = \emptyset$.
\end{dfn}

Finally, here is a class of Hamiltonian isotopies that will be
useful in the following.
\begin{dfn}[Horizontal isotopies] \label{d:isotopies} Let
   $\{\overline{V_t}\}_{t \in [0,1]}$ be an isotopy of Lagrangian
   submanifolds of $\widetilde{M}$ with cylindrical ends. We call this
   isotopy horizontal if there exists a (not necessarily compactly
   supported) Hamiltonian isotopy $\{\psi_t\}_{t \in [0,1]}$ of
   $\widetilde{M}$ with $\psi_0 = \Id$ and with the following
   properties:
   \begin{enumerate}
     \item[i.] $\overline{V_t} = \psi_t(\overline{V_0})$ for all $t
      \in [0,1]$.
     \item[ii.] There exist real numbers $R'_{-}< R_- < R_+<R'_{+}$ such that for all
      $t\in [0,1]$, $x\in E^{\pm}_{R'_{\pm}}(\overline{V_0})$ we have
      $\psi_t(x)\in E^{\pm}_{R_{\pm}}(\overline{V_0})$.
     \item[iii.] There is a constant $K>0$ so that for all $x\in
      E^{\pm}_{R_{\pm}}(\overline{V_0})$, $|d\pi_{x}(X_t(x))|< K$.  Here
      $X_t$ is the (time dependent) vector field of the flow
      $\{\psi_t\}_{t \in [0,1]}$.
   \end{enumerate}
   We say that two Lagrangians $\overline{V},
   \overline{V'} \subset \widetilde{M}$ with cylindrical ends are {\em
     horizontally} isotopic if there exists an isotopy as above
   $\{\overline{V_t}\}_{t \in [0,1]}$ with
   $\overline{V_0}=\overline{V}$ and $\overline{V_1} = \overline{V'}$.
   We will sometimes say that an ambient Hamiltonian isotopy
   $\{\psi_t\}_{t \in [0,1]}$ as above is horizontal with respect to
   $\overline{V_0}$.
\end{dfn}

\subsection{The category $\mathcal{C}ob^{d}_{0}(M)$}
\label{subsec:category-cob}
Consider first the following category
$\widetilde{{\mathcal{C}ob}^d}(M)$, $d \in K$. Its objects are
families $(L_{1}, L_{2},\ldots, L_{r})$ with $r \geq 1$, $L_{i}\in
\mathcal{L}_{d}(M)$. (Recall that $\mathcal{L}_d(M)$ stands for
the class of uniformly monotone Lagrangians $L$ with $d_L = d$
 with the same monotonicity constant $\rho$ which is
omitted from the notation.) We will denote by $\mathcal{L}_{d}(\C\times M)$
the Lagrangians in $\C\times M$ that satisfy the same conditions: they 
are uniformly monotone with the same $d_{V}=d$ and the same monotonicity
constant $\rho$.

To describe the morphisms in this category we proceed in two steps.
First, for any two horizontal isotopy classes of cobordisms $[V]$ and
$[U]$ with $V:(L'_{j}) \cobto (L_{i})$ (as in
Definition~\ref{def:Lcobordism}) and $U:(K'_{s})\cobto (K_{r})$ we
define the sum $[V]+[U]$ to be the horizontal isotopy class of a  cobordism 
$W:(L'_{j})+(K'_{s})\cobto (L_{i})+(K_{r})$ so that $W=V \coprod
\widetilde{U}$ with $\widetilde{U}$ a suitable translation up the 
$y$-axis of a cobordism
horizontally isotopic to $U$ so that $\widetilde{U}$ is disjoint
from $V$.

The morphisms in $\widetilde{{\mathcal{C}ob}^d}(M)$ are now defined as
follows. A morphism $$[V] \in \mor \bigl( (L'_{j})_{1 \leq j \leq S},
(L_{i})_{1\leq i\leq T} \bigr)$$ is a horizontal isotopy class that
is written as a sum $[V]=[V_{1}] + \cdots + [V_{S}]$ with each
$V_{j}\in \mathcal{L}_{d}(\C\times M)$ a cobordism from the Lagrangian
family formed by the {\em single} Lagrangian $L'_{j}$ and a subfamily
$(L_{r(j)}, \ldots, L_{r(j)+s(j)})$ of the $(L_{i})$'s, and so that
$r(j)+s(j)+1=r(j+1)$.  In other words, $V$ decomposes as a union of
$V_{i}$'s each with a single positive end but with possibly many
negative ones. We will often denote such a morphism by $V:
(L'_{j})\longrightarrow (L_{i})$.

The composition of morphisms is induced by concatenation followed by a
rescaling to reduce the ``width'' of the cobordism to the interval
$[0,1]$. 

We consider here the void set as a Lagrangian of arbitrary
dimension. We now intend to
factor both the objects and the morphisms in this category by
equivalence relations that will transform this category in a strict
monoidal one. For the objects the equivalence relation is induced by
the relations
\begin{equation}
(L, \emptyset) \sim (\emptyset,L) \sim (L).
\label{eq:eq-objects}
\end{equation}  
 
At the level of the morphisms a bit more care is needed. For each $L
\in \mathcal{L}_{d}(M)$ we will define two particular cobordisms
$\Phi_{L}:(\emptyset, L)\cobto (L,\emptyset)$ and $\Psi_{L}:
(L,\emptyset)\cobto (\emptyset,L)$ as follows. Let $\gamma: [0,1] \to
[0,1]$ be an increasing, surjective smooth function, strictly
increasing on $(\epsilon,1-\epsilon)$ and with $\gamma'(t)=0$ for
$t\in [0,\epsilon]\cup [1-\epsilon, 1]$. We now let
$\Phi(L)=graph(\gamma)\times L$ and $\Psi(L)=graph(1-\gamma)\times L$.
The equivalence relation for morphisms is now induced by the following
two identifications:
\begin{enumerate}[(Eq 1)]
  \item For every cobordism $V$ we identify $V+\emptyset \sim
   \emptyset+V \sim V$, where $\emptyset$ is the void cobordism
   between two void Lagrangians. \label{i:eq-1}
  \item If $V:L \longrightarrow (L_{1},..., L_{i}, \emptyset,
   L_{i+2},\ldots, L_{k})$, then we identify $V \sim V' \sim V''$,
   where $V'=\Phi_{L_{i+2}} \circ V$, $V''=\Psi_{L_{i}}\circ V.$
   \label{i:eq-2}
\end{enumerate}

\begin{figure}[htbp]
   \begin{center}
      \epsfig{file=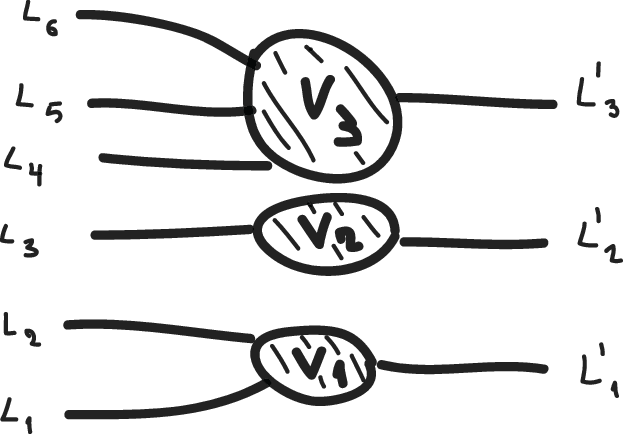, width=0.5\linewidth}
   \end{center}
   \caption{A morphism $V:(L'_{1},L'_{2},L'_{3}) \longrightarrow
     (L_{1}, \ldots, L_{6})$, $V=V_{1}+V_{2}+V_{3}$, projected to
     $\mathbb{R}^2$.}\label{fig:MorphCob}
\end{figure}

To construct the category $\mathcal{C}ob^{d}(M)$ we now consider the
full subcategory $\mathcal{S}\subset \widetilde{\cob}^{d}(M)$ obtained
by restricting the objects only to those families $(L_{1}, \ldots,
L_{k})$ with $L_{i}$ non-narrow for all $1\leq i\leq k$. Recall that a
monotone Lagrangian is non-narrow if its quantum homology $QH(L)$
(with $\La=\Z_{2}[t,t^{-1}]$ coefficients) does not vanish.  Then
$\mathcal{C}ob^{d}(M)$ is obtained by the quotient of the objects of
$\mathcal{S}$ by the equivalence relation in~\eqref{eq:eq-objects} and
the quotient of the morphisms of $\mathcal{S}$ by the equivalence
relation in~(Eq~\ref{i:eq-1}), (Eq~\ref{i:eq-2}).

This category is called the {\em Lagrangian cobordism category} of
$M$.  As mentioned before, it is a strict monoidal category, {where
  the monoidal structure is defined on objects by concatenating
  \pbred{tuples} of Lagrangians and on morphisms by taking disjoint
  unions of cobordisms, possibly after a suitable Hamiltonian
  isotopy.}

The Theorem~\ref{thm:main} requires an additional assumption on all
the Lagrangians in our constructions.  Every Lagrangian $L$ is
required to satisfy:
\begin{equation}\label{eq:Hlgy-vanishes}
    \pi_{1}(L)\stackrel{i_{\ast}}{\longrightarrow} 
   \pi_{1}(M) \ 
   \ \ \mathrm{vanishes},
\end{equation}
where $i_{\ast}$ is induced by the inclusion $L\subset M$. 
Alternatively, in case the first Chern class $c_{1}$ and $\omega$ are proportional
as morphisms defined on $H_{2}(M;\Z)$ (and not only on $\pi_{2}(M)$) it is enough
to assume that the image of $i_{\ast}$ is torsion.
 An
analogous constraint is imposed also to the Lagrangian cobordisms
involved. 

We denote by $\mathcal{L}^{\ast}_{d}(M)$ the Lagrangians in
$\mathcal{L}_{d}(M)$ that are non-narrow and additionally
satisfy~\eqref{eq:Hlgy-vanishes}. There is a subcategory of
$\mathcal{C}ob^{d}(M)$, that will be denoted by
$\mathcal{C}ob^{d}_{0}(M)$, whose objects consist of families of
Lagrangians each one belonging to $\mathcal{L}^{\ast}_{d}(M)$ and
whose morphisms are represented by Lagrangian cobordisms $V$
satisfying the analogous condition to~\eqref{eq:Hlgy-vanishes}, but in
$\mathbb{R}^2 \times M$. This is again a strict monoidal category.

\subsection{Floer homology}
\label{sb:rev-HF} 
In this subsection we recall some basic notation and  definitions concerning
Lagrangian Floer homology. We refer the
reader to~\cite{Oh:HF1, Oh:HF1-add, Oh:spectral} for the foundations
of Floer homology for monotone Lagrangians, and
to~\cite{FO3:book-vol1, FO3:book-vol2} for the general case. 

\subsubsection{Lagrangian Floer homology} \label{sbsb:lag-hf} Let
$L_0, L_1 \subset M$ be two monotone Lagrangian submanifolds with
$d_{L_0} = d_{L_1} = d$. We assume in addition that
$L_0$ and $L_1$ have the same monotonicity constant (or in other words
that the pair $(L_0, L_1)$ is uniformly monotone).

We assume that condition (\ref{eq:Hlgy-vanishes}) is satisfied by all
the Lagrangians and cobordisms in the paper.  An observation due to Oh
\cite{Oh:HF1} shows that in this case one can construct Floer
complexes and the associated homology with coefficients in $\Z_{2}$ as
summarized below.

Denote by $\mathcal{P}(L_0, L_1) = \{ \gamma \in C^0([0,1],M) \mid
\gamma(0) \in L_0, \gamma(1) \in L_1\}$ the space of paths in $M$
connecting $L_0$ to $L_1$. For  $\eta \in \pi_0(\mathcal{P}(L_0,
L_1))$ we denote the path connected component of $\eta$ by
$\mathcal{P}_{\eta}(L_0,L_1)$. 

Fix $\eta \in \pi_0(\mathcal{P}(L_0, L_1))$ and let $H: [0,1]\times M
\to \mathbb{R}$ be a Hamiltonian function with Hamiltonian flow
$\psi_t^H$. We assume that $\psi_1^H(L_0)$ is transverse to $L_1$.
Denote by $\mathcal{O}_{\eta}(H)$ the set of paths $\gamma \in
\mathcal{P}_{\eta}(L_0, L_1)$ which are orbits of the flow $\psi^H_t$.
Finally, we choose also a generic $1$-parametric family of almost
complex structures $\mathbf{J} = \{J_t\}_{t \in [0,1]}$ compatible
with $\omega$.

The Floer complex
$CF(L_0, L_1; \eta; H, \mathbf{J})$ with coefficients in
$\Z_{2}$ is
generated as a $\Z_{2}$-vector space by the elements of
$\mathcal{O}_{\eta}(H)$. The Floer differential $$\partial: CF(L_0,
L_1; \eta; H, \mathbf{J}) \longrightarrow CF(L_0, L_1; \eta; H,
\mathbf{J})$$ is defined as follows. For a generator $\gamma_{-} \in
\mathcal{O}_{\eta}(H)$ we put
$$\partial(\gamma_{-}) = 
\sum_{\scriptscriptstyle \gamma_+ \in \mathcal{O}_{\eta}(H)} \#_{2}(
\mathcal{M}_0(\gamma_{-}, \gamma_{+};H, \mathbf{J})) \gamma_{+}.$$
Here $\mathcal{M}_0(\gamma_{-}, \gamma_{+}; H, \mathbf{J})$ stands for
the $0$-dimensional components of the moduli space of finite energy strips
$u:\mathbb{R}\times [0,1] \longrightarrow M$ connecting $\gamma_{-}$
to $\gamma_{+}$ that satisfy the Floer equation
\begin{equation}\label{eq:Floer-equation}
\partial_{s}u+J\partial_{t}u+\nabla H(t,u)=0
\end{equation}
modulo the $\mathbb{R}$-action coming from translation in the $\R$
coordinate; the number of elements in $\mathcal{M}_0(\gamma_{-},
\gamma_{+}; H, \mathbf{J})$ is finite due to condition
(\ref{eq:Hlgy-vanishes}) and is counted over $\Z_{2}$.

\begin{rem}\label{rem:non-grading}
   In this paper the Floer complexes, $CF(-)$, are defined over
   $\Z_{2}$ and are not graded. Hence the associated Floer homology
   $HF(-)$ is also un-graded. In special situations one can endow
   $CF(-)$ with some grading though not always over $\mathbb{Z}$ (e.g.
   when $L_0$ and $L_1$ are both oriented, then there is a
   $\mathbb{Z}_2$-grading).  See~\cite{Se:graded} for a systematic
   approach to these grading issues.
\end{rem}

Standard arguments show that the homology $HF(L_0, L_1; \eta; H,
\mathbf{J})$ is independent of the additional structures $H$ and
$\mathbf{J}$ up to canonical isomorphisms. We will therefore omit $H$
and $\mathbf{J}$ from the notation.

We often consider all components $\eta \in
\pi_0(\mathcal{P}(L_0,L_1))$ together i.e. take the direct sum complex
\begin{equation} \label{eq:cf-all-eta} CF(L_0,L_1;H,\mathbf{J}) =
   \bigoplus_{\eta} CF(L_0,L_1;\eta; H,\mathbf{J})
\end{equation}
with total homology which we denote $HF(L_0,L_1)$. There is an obvious
inclusion map $i_{\eta}: HF(L_0,L_1;\eta) \longrightarrow
HF(L_0,L_1)$.

\begin{rems} \label{r:auton-J} 
 If $L_0$ and $L_1$ are transverse we can take
      $H=0$ in $CF(L_0, L_1; H, \mathbf{J})$. 
      When $H=0$ we will omit it from the notation and
      just write $CF(L_0, L_1; \mathbf{J})$. We will sometimes omit
      $\mathbf{J}$ too when its choice is obvious.
  \end{rems}

\subsubsection{Moving boundary conditions}
\label{subsubsec:movingbdry}

Assume that $L_{0}$ and $L_{1}$ are two transverse
Lagrangians. Fix the component $\eta$ and the almost complex structure
$\mathbf{J}$. We also fix once and for all a path $\gamma_{0}$ in the
component $\eta$. Now let $\varphi = \{\varphi_t\}_{t \in [0,1]}$ be a
Hamiltonian isotopy starting at $\varphi_0 = \Id$. The isotopy
$\varphi$ induces a map $$\varphi_* : \pi_0(\mathcal{P}(L_0, L_1))
\longrightarrow \pi_0(\mathcal{P}(L_0, \varphi_1(L_1)))$$ as follows.
If $\eta \in \pi_0(\mathcal{P}(L_0, L_1))$ is represented by
$\gamma:[0,1] \to M$ then $\varphi_* \eta$ is defined to be the
connected component of the path $t \mapsto \varphi_t(\gamma(t))$ in
$\mathcal{P}(L_0, \varphi_1(L_1))$.

The isotopy $\varphi$ induces a canonical isomorphism
\begin{equation} \label{eq:iso-c-phi}
   c_{\varphi}: HF(L_0, L_1; \eta)
   \longrightarrow HF(L_0, \varphi_1(L_1); \varphi_* \eta)
\end{equation}
coming from a chain map defined using moving boundary conditions (see
e.g.~\cite{Oh:HF1}). The isomorphism $c_{\varphi}$ depends only on the
homotopy class (with fixed end points) of the isotopy $\varphi$.

\begin{rem} These constructions also apply without modification to cases
   when $M$ is not compact (but e.g. tame), if we have some way to
    insure that all solutions $u$ of finite energy as above have
   their image inside a fixed compact set $K \subset M$.
Finally, the constructions recalled here can also be adapted to the case of
non-compact Lagrangians with cylindrical ends. This will be pursued in much more detail in this
paper following essentially the approach from \cite{Bi-Co:cob1}.
Other variants appear in slightly different settings in the literature (see for
instance the works of Seidel~\cite{Se:book-fukaya-categ},
Abouzaid~\cite{Abouzaid:homog-coord},
Auroux~\cite{Aur:fuk-cat-sym-prod}, as well as earlier work of
Oh~\cite{Oh:hf-non-compact}).
\end{rem}


\subsection{The Fukaya category $\fuk^{d}(M)$} \label{sec:fuk-M} In
this section we discuss the Fukaya $A_{\infty}$-category $\fuk^{d}(M)$
of uniformly monotone Lagrangians in $M$. We \pbred{refer}
to~\S\ref{subsec:more-alg} for the basic algebraic background on
$A_{\infty}$ categories. We emphasize that we work here in an ungraded
context and over $\Z_{2}$.  We also recall that
$\mathcal{L}^{\ast}_{d}(M)$ is the set of the Lagrangian submanifolds
$L$ of $M$ that are uniformly monotone with $d_{L}=d$, so that $L$ is
non-narrow (in other words, $QH(L)\not=0$) and, moreover,
condition~\eqref{eq:Hlgy-vanishes} is satisfied.  We will use the
Floer constructions with the conventions in~\S\ref{sbsb:lag-hf}.

In the paper we follow the definition and construction of the Fukaya
$A_{\infty}$-category from~\cite{Se:book-fukaya-categ} with the
following differences:
\begin{itemize}
  \item[i.]The objects of $\fuk^{d}(M)$ are the elements of
   $\mathcal{L}^{\ast}_{d}(M)$. Thus we work with monotone Lagrangians
   rather than exact ones.
  \item[ii.] The morphism space $\hom_{\fuk^{d}(M)}(L_0,L_1)$ between $L_0,
   L_1 \in \mathcal{L}^{\ast}_{d}(M)$ is taken as
   in~\cite{Se:book-fukaya-categ} to be the Floer complex $CF(L_0,
   L_1; H_{L_0, L_1}, J_{L_0, L_1})$. However, there are two
   differences concerning the $A_{\infty}$ operations. First,
   unlike~\cite{Se:book-fukaya-categ} we work with homology rather
   than cohomology. Thus our Floer differential and higher composition
   maps $\mu_k$ differ from~\cite{Se:book-fukaya-categ} in the
   following way. If $\gamma_1, \ldots, \gamma_k$ are Hamiltonian
   chords, $\gamma_i \in CF(L_{i-1}, L_i)$, then $\mu_k(\gamma_1,
   \ldots, \gamma_k) \in CF(L_0, L_k)$ counts perturbed holomorphic
   disks $u: D \setminus \{z_1, \ldots, z_{k+1}\} \longrightarrow M$
   with negative punctures at $z_i \in \partial D$, $i=1, \ldots, k$
   (corresponding to in-going strip like ends) asymptotically
   emanating from the chords $\gamma_1, \ldots, \gamma_k$, and one
   positive puncture at $z_{k+1} \in \partial D$ corresponding to the
   output chord counted by $\mu_k(\gamma_1, \ldots, \gamma_k)$. In
   contrast, in~\cite{Se:book-fukaya-categ} the punctures $z_1,
   \ldots, z_k$ are positive and $z_0$ is negative. As we work in an
   ungraded framework, our homological conventions have no effect on
   grading. The second difference is that we place the punctured points $z_1,
   z_1, \ldots, z_{k+1} \in \partial D$ in clockwise order, whereas
   in~\cite{Se:book-fukaya-categ} the ordering is counterclockwise.
   (Also note that we number the punctures with indices $1, \ldots,
   k+1$ rather than $0, \ldots, k$.) In our case, the arc connecting
   $z_i$ to $z_{i+1}$ (clockwise oriented) is mapped by $u$ to $L_i$,
   $i=1, \ldots, k$, and the arc connecting $z_{k+1}$ to $z_1$ is
   mapped by $u$ to $L_0$.
  \item[iii.] In the definition of the Floer data $(H_{L_{0}, L_{1}},
   J_{L_{0}, L_{1}})$ for each pair of Lagrangians $L_{0}, L_{1}$
   (see~\cite{Se:book-fukaya-categ}~Chapter 9,~(9j)) we add the
   following requirement. Write $J_{L_0, L_1} = \{J(t)\}_{t \in
     [0,1]}$. Let $\mathcal{M}_{1}(L_{i};\alpha, J(i))$, $i=0,1$, be
   the moduli space of $J(i)$-holomorphic disks with boundary on
   $L_{i}$ belonging to the homotopy class $\alpha\in
   \pi_{2}(M,L_{i})$ and with one marked point on the boundary. Let
   $ev_1^{i}:\mathcal{M}_{1}(L_{i};\alpha, J(i)) \to L_{i}$ be the
   evaluation at the marked point. Recall that the generators of the
   Floer complex $CF(L_{0},L_{1}; H_{L_{0}, L_{1}}, J_{L_{0}, L_{1}}
   )$ are Hamiltonian chords $\gamma:[0,1]\to M$ of $H_{L_{0},L_{1}}$
   with $\gamma(i)\in L_{i}$, $i=0,1$.  We require that
   $J_{L_{0},L_{1}}$ be so that for all $\alpha\in \pi_{2}(M,L_{i})$
   with $\mu(\alpha)=2$ and all chords $\gamma$, the points
   $\gamma(i)$, $i=0,1$, are regular values of the evaluation map
   $ev_1^{i}$. A generic choice of Floer datum $(H_{L_{0}, L_{1}},
   J_{L_{0}, L_{1}})$ satisfies this constraint.
  \item[iv.] The definition of the $\mu_{k}$ composition maps of the
   $A_{\infty}$-category is given in terms of counts of maps from
   punctured disks with boundary conditions along a sequence of
   Lagrangians $L_{0}, \ldots, L_{k}\in \mathcal{L}^{\ast}_{d}(M)$
   that satisfy perturbed Cauchy-Riemann equations. In the
   verification of the relations among the $\mu_{k}$'s appear only
   moduli spaces of such disks of dimension $1$ or less. The condition
   $N_{L}\geq 2$ implies that, in the moduli spaces involved in the
   construction of the $\mu_{k}$'s and in verifying the $A_{\infty}$
   relations, no bubbling of disks or of spheres is possible except
   for one case: moduli spaces of Floer strips of Maslov index $2$
   with the $-\infty$ end coinciding with the $+\infty$ end.  In this
   case, assuming that the side Lagrangians are $L_{0}$ and $L_{1}$,
   there are two types of ``bubbled'' configurations: a disk of Maslov
   $2$ with boundary on $L_{0}$ that passes through the start of a
   Hamiltonian chord $\gamma$ or a similar disk with boundary on
   $L_{1}$ that passes through the end of $\gamma$. In both cases, one
   should view the configuration as a pair consisting of a degenerate
   Floer strip $u$ concentrated on $\gamma$ (i.e. $u(s,t) =
   \gamma(t)$) together with a bubbled holomorphic disk, with boundary
   either on $L_0$ or on $L_1$.  The fact that the maps $ev_1^{0}$ and $ev_{1}^{1}$
   above are of the same degree $d=d_{L_{0}}=d_{L_{1}}$ implies that the
   exceptional ``bubble'' configurations discussed above cancel out
   algebraically so that $\mu_{1}$ remains a differential.
\end{itemize}
More details on our specific conventions appear in~\S\ref{sec:FukCob}
where part of the construction of the Fukaya category is described in
more detail (and in a more general situation).

Once the geometric constructions above are accomplished this leads to
an $A_{\infty}$-category which is homologically unital. We denote this
$A_{\infty}$-category  by $\fuk^d(M)$.

Of course, $\fuk^d(M)$ depends on many choices of auxiliary structures
(e.g. the perturbation data etc.). Thus we have here in fact a family
of $A_{\infty}$-categories, parametrized by a huge collection of
choices of data. However, any two such categories are quasi-isomorphic
by a quasi-isomorphism which is canonical in homology. In particular
the associated derived categories are equivalent. In the next
subsection we will briefly discuss these equivalences
following~\cite{Se:book-fukaya-categ}. The construction will be
repeated in more detail later on in~\S\ref{sb:inv-fuk-2} when we
discuss the same issues for the Fukaya category of cobordisms.

\subsubsection{Invariance properties of $\fuk^d(M)$}
\label{sbsb:inv-fuk-1}

Here we summarize the construction from Chapter~10
of~\cite{Se:book-fukaya-categ}, where more details and proofs can be
found. See also~\S\ref{sbsb:families-A-infty}.

The Fukaya category constructed above depends on a choice of auxiliary
structures such as a choice of strip-like ends, Floer and perturbation
data etc. We denote by $\mathcal{I}$ the collection of all admissible
such choices. For every $i \in \mathcal{I}$ we denote by $\fuk^d(M;i)$
the corresponding Fukaya category. As explained
in~\cite{Se:book-fukaya-categ} one can construct one big
$A_{\infty}$-category $\fuk^d(M)^{\textnormal{tot}}$ together with a
family of full and faithful embeddings $\fuk^d(M;i) \to
\fuk^d(M)^{\textnormal{tot}}$, $i \in \mathcal{I}$. The outcome is
that the family $\fuk^d(M;i)$, $i \in \mathcal{I}$, becomes a coherent
system of $A_{\infty}$-categories. Moreover, in this case the
comparison functors $\mathcal{F}^{i_1,i_0}: \fuk^d(M;i_0) \to
\fuk^d(M;i_1)$ are in fact quasi-isomorphisms acting as identity on
objects and their corresponding homology functors $F^{i_1, i_0}:
H(\fuk^d(M;i_0)) \to H(\fuk^d(M;i_1))$ are canonical.

We will go into more details of this type of construction
in~\S\ref{sb:inv-fuk-2} when dealing with comparison between different
Fukaya categories of cobordisms.

In view of the above we denote by abuse of notation any of the
categories above $\fuk^d(M;i)$ by $\fuk^d(M)$ omitting the choice of
structures $i$ from the notation.

\subsubsection{The derived Fukaya category} \label{sbsb:derived-fuk}

We continue to use here the notation from~\S\ref{sbsb:inv-fuk-1}. 

We denote by $D\fuk^{d}(M;i)$ the derived category associated to
$\fuk^{d}(M;i)$, $i \in \mathcal{I}$, following the construction
recalled in~\S\ref{sbsb:triang-derived}.  Namely, we take the
triangulated closure $\fuk^{d}(M;i)^{\wedge}$ inside
$mod(\fuk^{d}(M;i))$ under the Yoneda image of $\fuk^{d}(M;i)$. The
derived Fukaya category $D\fuk^{d}(M;i)$ is now obtained from
$\fuk^{d}(M;i)^{\wedge}$ by replacing the morphisms with their values
in homology.  These are triangulated categories in the usual
sense (no longer just $A_{\infty}$ ones). Moreover, the functors
$F^{i_1, i_0}$ from~\S\ref{sbsb:inv-fuk-1} extend to canonical
isomorphisms of the derived categories $F^{i_1, i_0}:D\fuk^d(M;i_0)
\to D\fuk^d(M;i_1)$. (See also~\S\ref{sbsb:families-A-infty}.) The
outcome is a strict system of categories $D\fuk^d(M;i)$, $i \in
\mathcal{I}$, in the sense of~\cite{Se:book-fukaya-categ}.

In view of the above we denote all these derived Fukaya categories by
$D\fuk^d(M)$, omitting the choice of auxiliary structures $i$.

\begin{rem} We emphasize that our variant of the derived Fukaya category does not
involve completion with respect to idempotents.
\end{rem}

\subsection{Cone decompositions over a triangulated category}
\label{subsec:cones}
The purpose of the construction discussed in this section (again
following \cite{Bi-Co:cob1}) is to parametrize the various ways to
decompose an object by iterated exact triangles inside a given
triangulated category.  This will be applied later in the paper to the
category $D\fuk^{d}(M)$.

We recall~\cite{Weibel:book-hom-alg} that a triangulated category
$\mathcal{C}$ - that we fix from now on - is an additive category
together with a translation automorphism $T:\mathcal{C} \to
\mathcal{C}$ and a class of triangles called {\em exact triangles}
$$ T^{-1}X\stackrel{u}{\longrightarrow}X\stackrel{v}{\longrightarrow}Y
\stackrel{w}{\longrightarrow}Z$$ that
satisfy a number of axioms due to Verdier and to Puppe (see
e.g.~\cite{Weibel:book-hom-alg}).
  
A cone decomposition of length $k$ of an object $A\in \mathcal{C}$ is
a sequence of exact triangles:
$$T^{-1}X_{i}\stackrel{u_{i}}{\longrightarrow}Y_{i}
\stackrel{v_{i}}{\longrightarrow}Y_{i+1}\stackrel{w_{i}}{\longrightarrow}
X_{i}$$ with $1\leq i\leq k$, $Y_{k+1}=A$, $Y_{1}=0$. (Note that
$Y_2\cong X_1$.) Thus $A$ is obtained in $k$ steps from $Y_{1}=0$.  To
such a cone decomposition we associate the family $l(A)=(X_{1},
X_{2},\dots , X_{k})$ and we call it the {\em linearization} of the
cone decomposition. This definition is an abstract form of the
familiar iterated cone construction in case $\mathcal{C}$ is the
homotopy category of chain complexes. In that case $T$ is the
suspension functor $TX = X[-1]$ and the cone decomposition simply
means that each chain complex $Y_{i+1}$ is obtained from $Y_i$ as the
mapping cone of a morphism coming from some chain complex, in other
words $Y_{i+1} = \textnormal{cone}(X_{i}[1]
\stackrel{u_i}{\longrightarrow} Y_i)$ for every $i$, and $Y_1=0$,
$Y_{k+1}=A$.  Two cone decompositions $\{T^{-1}X_{i}\to Y_{i}\to
Y_{i+1}\to X_{i}\}_{1\leq i\leq k}$ and $\{T^{-1}X_{i}\to Y'_{i}\to
Y'_{i+1}\to X_{i}\}_{1\leq i\leq k}$ of two different objects $A$ and,
respectively, $A'$, are said equivalent if there are isomorphisms
$I_{i}:Y_{i}\to Y'_{i}$, $1\leq i \leq k+1$, making the squares in the
diagram below commutative:

\begin{equation} \label{eq:cones-equiv}
   \begin{aligned}
      \xymatrix@-2pt{ T^{-1}X_{i}\ar[d]^{id}\ar[r]&
        Y_{i}\ar[d]^{I_{i}}\ar[r] & Y_{i+1}\ar[d]^{I_{i+1}}
        \ar[r] & X_{i}\ar[d]^{id} \\
        T^{-1}X_{i}\ar[r] & Y'_{i}\ar[r] & Y'_{i+1}\ar[r]& X_{i} }
   \end{aligned}
\end{equation}
Such a family of isomorphisms is called an isomorphism of
cone-decompositions. We say that an isomorphism $I: A\to A'$ extends
to an isomorphism of the respective cone-decompositions if there is
\pbred{an} isomorphism of cone-decomposition\pbred{s} with the last
term $I_{k+1}=I$.  In particular, two equivalent cone decompositions
have the same linearization.

We will now define a category $T^{S} \mathcal{C}$ called the {\em
  category of (stable) triangle (or cone) resolutions over}
$\mathcal{C}$.  The objects in this category are finite, ordered
families $(x_{1}, x_{2},\ldots, x_{k})$ of objects $x_{i}\in
\mathcal{O}b(\mathcal{C})$.

We will first define the morphisms in $T^{S} \mathcal{C}$ with  domain being
a family formed by a single object $x \in \mathcal{O}b(\mathcal{C})$
and target $(y_1, \ldots, y_q)$, $y_{i}\in \mathcal{O}b(\mathcal{C})$.
For this, consider triples $(\phi, a, \eta)$, where $a \in
\mathcal{O}b(\mathcal{C})$, $\phi: x \to T^s a$ is an isomorphism (in
$\mathcal{C}$) for some index $s$ and $\eta$ is a cone decomposition
of the object $a$ with linearization $(T^{s_1}y_1,
T^{s_{2}}y_{2},\ldots, T^{s_{q-1}}y_{q-1}, y_q)$ for some family of
indices $s_1, \ldots, s_{q-1}$. Below we will also sometimes denote by $s_{q}$
 the shift index attached to the last element $y_{q}$ with the
understanding that $s_{q}=0$.  A morphism $\Psi: x \longrightarrow
(y_1, \ldots, y_q)$ is an equivalence class of triples $(\phi,a,\eta)$
as before up to the equivalence relation given by $(\phi,a,\eta)\sim
(\phi',a',\eta')$ if there is an isomorphism $I_{a}:a\to a'$ which
extends to an isomorphism of the cone decompositions $\eta$ and
$\eta'$ and so that $\phi'=(T^{s}I_{a})\circ \phi$.

The identity morphism $\id : x \to x$, where $x \in
\mathcal{O}b(\mathcal{C})$, is given by $(\id, x, \eta_x)$, where
$\eta_x$ is the trivial cone decomposition of $x$ given by the obvious
exact triangle $x \stackrel{\id}{\longrightarrow} x \longrightarrow 0
\longrightarrow Tx$.

We now define the morphisms between two general objects.  
A morphism
$$\Phi\in\mor_{T^{S}\mathcal{C}}((x_{1},\ldots x_{m}),
(y_{1},\ldots, y_{n}))$$ is a sum $\Phi =\Psi_{1}\oplus \cdots \oplus
\Psi_{m}$ where $\Psi_{j}\in \mor_{T^{S}\mathcal{C}}(x_{j},
(y_{\alpha(j)},\ldots, y_{\alpha(j)+\nu(j)}))$, and $\alpha(1)=1$,
$\alpha(j+1) = \alpha(j) + \nu(j) + 1$, $\alpha(m) + \nu(m) = n$. The
sum $\oplus$ means here the obvious concatenation of morphisms. With
this definition this category is strict monoidal, the unit element
being given by the void family.

Next we make explicit the composition of the morphisms in
$T^{S}\mathcal{C}$.  We consider first the case of two morphisms
$\Phi'$, $\Phi$,
\begin{equation} \label{eq:Phi-circ-Phi'-1}
   \begin{aligned}
      & \Phi': x \longrightarrow
      (y_{1},\ldots, y_{k}), \quad \Phi'=(\phi',a',\eta'), \\
      & \Phi:
      (y_1, \ldots, y_{h-1}, y_h, y_{h+1}, \ldots, y_k) \longrightarrow
      (y_1, \ldots, y_{h-1}, z_1, \ldots, z_l, y_{h+1}, \ldots, y_k),
   \end{aligned}
\end{equation}
where $$\Phi = \id \oplus \cdots \oplus\id \oplus \Psi_h \oplus \id \oplus \cdots \oplus \id, \quad
\Psi_h : y_h \longrightarrow (z_1, \ldots, z_l), \quad \Psi_h = (\phi,
a, \eta).$$

We will now define $\Phi'' = \Phi \circ \Phi'$.  We will assume for
simplicity that the families of shifting degrees indices for both
$\Phi'$ and $\Phi$ are $0$ (the general case is a straightforward
generalization of the argument below). From the morphism $\Phi'$ we
get an isomorphism $\phi': x \to a'$ and a cone decomposition of $a'$
with linearization $(y_1, \ldots,y_k)$, i.e. objects $a'_1=0, a'_2,
a'_3, \ldots, a'_k, a'_{k+1}=a'$ and exact triangles:
\begin{equation} \label{eq:ex-tr-eta'} T^{-1}y_i \longrightarrow a'_i
   \longrightarrow a'_{i+1} \longrightarrow y_i, \quad i=1, \ldots, k.
\end{equation}
For $i=h-1, h, h+1$ we have:
\begin{equation} \label{eq:ex-tr-yh} 
   \begin{aligned}
      & T^{-1}y_{h-1} \longrightarrow a'_{h-1}
      \longrightarrow a'_{h} \longrightarrow y_{h-1} \\
      & T^{-1}y_{h} \longrightarrow a'_{h}
      \longrightarrow a'_{h+1} \longrightarrow y_{h} \\
      & T^{-1}y_{h+1} \longrightarrow a'_{h+1} \longrightarrow
      a'_{h+2} \longrightarrow y_{h+1}.
   \end{aligned}
\end{equation}
Similarly, from the morphism $\Phi$ we obtain an isomorphism $\phi:y_h
\to a$, a sequence of objects $a_1=0, a_2, a_3, \ldots, a_l,
a_{l+1}=a$ and exact triangles:
\begin{equation} \label{eq:ex-tr-a_j} T^{-1}z_j \longrightarrow a_j
   \longrightarrow a_{j+1} \longrightarrow z_j, \quad j=1, \ldots, l.
\end{equation}
The composition $\Phi'' = \Phi \circ \Phi': x \longrightarrow (y_1,
\ldots, y_{h-1}, z_1, \ldots, z_l, y_{h+1}, \ldots, y_k)$ is defined
as the triple $(\phi'',a'',\eta'')$ given as follows.  First, there is
an isomorphism of the middle line in (\ref{eq:ex-tr-yh}) with the
exact triangle:
\begin{equation}\label{eq:triang-new}
   T^{-1}a\to a'_{h} \to a''_{h+1}\to a
\end{equation}
where the map $T^{-1}a\to a'_{h}$ is defined as $T^{-1}a
\stackrel{T^{-1}\phi^{-1}}{\longrightarrow} T^{-1}y_{h}\to a'_{h}$.
Using (\ref{eq:triang-new}) we construct a sequence of triangles
\begin{equation}\label{eq:traing-new2}
   T^{-1}y_{j}\to a''_{j}\to a''_{j+1}\to y_{j}
\end{equation}
that are isomorphic with the respective triangles in
(\ref{eq:ex-tr-eta'}) as follows: for $j<h$ these coincide with the
triangles in (\ref{eq:ex-tr-eta'}); for $j=h$ we use
(\ref{eq:triang-new}); finally for $j>h$ these triangles are
constructed inductively by using as the first map in each of these
triangles the composition $T^{-1}y_{j}\to a'_{j}\to a''_{j+1}$ where
$a'_{j}\to a''_{j}$ is the isomorphism constructed at the previous
stage. This map can be completed to an exact triangle by the axioms of
a triangulated category.  We then put $a''=a''_{k+1}$. This is endowed
with an isomorphism $\phi''_{0}:a'\to a''$ and we let
$\phi''=\phi''_{0}\circ \phi'$. It remains to describe the cone
decomposition $\eta''$.

We will construct below
new objects $a'_{h,1}, \ldots, a'_{h,l+1}$ with $a'_{h,1} = a'_h=a''_{h}$,
$a'_{h,l+1} = a''_{h+1}$ and exact triangles:
\begin{equation} \label{eq:triangles-a'_hq} T^{-1} z_q \longrightarrow
   a'_{h,q} \longrightarrow a'_{h,q+1} \longrightarrow z_q, \quad q=1,
   \ldots, l.
\end{equation}
With this at hand the cone decomposition $\eta''$ is defined by taking
the cone decomposition (\ref{eq:traing-new2}) and replacing the line
$i=h$ in it (i.e.  \eqref{eq:triang-new}) by the list of triangles
from~\eqref{eq:triangles-a'_hq}.

We now turn to the construction of the objects $a'_{h,q}$ and the
triangles~\eqref{eq:triangles-a'_hq}.  Given $1 \leq q \leq l$, let
$\beta_q: a_q \to a$ be the morphism obtained by successive
composition of the middle arrows of~\eqref{eq:ex-tr-a_j} for $j=q,
\ldots, l$.  Consider now the composition $\alpha_q = u'_h \circ
(T^{-1}\phi^{-1}) \beta_q$ of the following three morphisms:
$$\alpha_q \, : \, T^{-1}a_q \stackrel{\beta_q}{\longrightarrow}T^{-1}a 
\xrightarrow{T^{-1}\phi^{-1}} T^{-1}y_h
\stackrel{u'_h}{\longrightarrow} a'_h,$$ where the last arrow $u'_l$
here is the first arrow in the middle line of~\eqref{eq:ex-tr-yh}.

By the axioms of a triangulated category the morphism $\alpha_q$ can
be completed into an exact triangle, i.e. there exists an object
$a'_{h,q} \in \mathcal{O}b(\mathcal{C})$ and an exact triangle:
\begin{equation} \label{eq:ex-tr-alpha_q}
   T^{-1}a_q \stackrel{\alpha_q}{\longrightarrow} a'_h \longrightarrow 
   a'_{h,q} \longrightarrow a_q.
\end{equation}
By the octahedral axiom and standard results on triangulated
categories (see e.g.~\cite{Weibel:book-hom-alg}) the triangles
in~\eqref{eq:ex-tr-alpha_q} (for $q$ and $q+1$) and those
in~\eqref{eq:ex-tr-a_j} (for $j=q$ and after a shift) fit into the
following diagram:
\begin{equation}\label{eq:squares-diag}
   \begin{aligned}
      \xymatrix@-2pt{
        T^{-2}z_{q}\ar[r]\ar[d]& 0\ar[d]\ar[r]&
        T^{-1}z_{q}\ar[r]\ar[d]&T^{-1}z_{q}\ar[d]\\
        T^{-1}a_{q}\ar[r]^{\alpha_{q}}\ar[d]& 
        a'_{h}\ar[d]\ar[r]&a'_{h,q}\ar[d]\ar[r]&a_{q}\ar[d]\\
        T^{-1}a_{q+1}\ar[r]^{\alpha_{q+1}}\ar[d] &
        a'_{h}\ar[r]\ar[d]&a'_{h,q+1}\ar[r]\ar[d]&a_{q+1}\ar[d]\\
        T^{-1}z_{q}\ar[r] &0 \ar[r] \ar[r] &z_{q}\ar[r]&z_{q}}
   \end{aligned}
\end{equation}
in which all rows and columns are exact triangles.  (In fact, the
diagram is determined up to isomorphism by the upper left square, from
which the upper two triangles and left two triangles are extended. The
existence of the third column follows from octahedral axioms applied a
few times.)  Moreover, all squares in the diagram are commutative (up to a sign).

Note that the third column is precisely the triangle that we needed
in~\eqref{eq:triangles-a'_hq} in order to complete the construction of
the composition in~\eqref{eq:Phi-circ-Phi'-1}.

This definition is seen to immediately pass to equivalence classes and it 
remains to define the composition $\Phi \circ \Phi'$ of more
general morphisms than~\eqref{eq:Phi-circ-Phi'-1}.  The case when the
domain of $\Phi'$ consists of a tuple of objects in $\mathcal{C}$ and
$\Phi$ is as in~\eqref{eq:Phi-circ-Phi'-1} is an obvious
generalization of the preceding construction. Next, the case when the
rest of the components of $\Phi$ are not necessarily $\id$ (but rather
general cone decompositions too) is done by reducing to the case
discussed above by successive compositions. Namely, assume that
$\Phi': x \to (y_1, \ldots, y_k)$ and $\Phi = \Psi_1 + \cdots +
\Psi_k$ with $\Psi_j: y_j \to w_j$, where $x, w_j$ are tuples of objects in $\mathcal{C}$. We define
$$\Phi \circ \Phi' = (\Psi_1 + \id + \cdots + \id) \circ \cdots \circ 
(\id + \cdots + \id + \Psi_k) \circ \Phi',$$ noting that each step of
this composition is of the type already defined.

This completes the definition of composition of morphisms in the
category $T^S \mathcal{C}$. It is not hard to see that the this
composition of morphisms is associative - here it is important that we are
in fact using equivalence classes of cone-decompositions. 

To conclude this discussion we remark that there is a projection
functor
\begin{equation} \label{eq:proj} \mathcal{P}:T^{S}\mathcal{C}
   \longrightarrow \Sigma\mathcal{C}
\end{equation}
Here $\Sigma\mathcal{C}$ stands for the {\em stabilization category} of $\mathcal{C}$: 
$\Sigma\mathcal{C}$ has the same objects as
$\mathcal{C}$ and the morphisms in $\Sigma \mathcal{C}$ from $a$ to
$b\in \mathcal{O}b(\mathcal{C})$ are morphisms in $\mathcal{C}$ of the
form $a\to T^{s} b$ for some integer $s$.

The definition of $\mathcal{P}$ is as follows:
$\mathcal{P}(x_{1},\ldots x_{k})=x_{k}$ and on morphisms it associates to
$\Phi\in\mor_{T^{S}\mathcal{C}}(x, (x_{1},\ldots, x_{k}))$,
$\Phi=(\phi,a,\eta)$, the composition:
$$\mathcal{P}(\Phi): x\stackrel{\phi}{\longrightarrow} T^{s}a 
\stackrel{w_{k}}{\longrightarrow} T^{s}x_{k}$$ with $w_{k}:a \to x_{k}$
defined by the last exact triangle in the cone decomposition $\eta$ of
$a$,
$$T^{-1}x_{k}\longrightarrow a_{k}
\longrightarrow a\stackrel{w_{k}}{\longrightarrow} x_{k}~.~$$ A
straightforward verification shows that $\mathcal{P}$ is indeed a
functor. Moreover, we see that any isomorphism $\phi: x\to a$ in
$\mathcal{C}$ is in the image of $\mathcal{P}$: if
$\overline{\phi}=(\phi, a,\eta_{a} ): x\to a$ is the morphism in
$T^{S}(\mathcal{C})$ defined by putting $\eta_{a}$ to be the cone
decomposition formed by a single exact triangle $T^{-1}a\to 0 \to a
\to a$, then $\mathcal{P}(\overline{\phi})=\phi$.


\section{The Fukaya category of cobordisms} \label{sec:FukCob}

The purpose of this section is to construct a Fukaya type category
$\fuk^{d}_{cob}(\C\times M)$ whose objects are cobordisms $V\subset
[0,1]\times\R \times M$ as defined in Definition~\ref{def:Lcobordism}
that satisfy the condition~\eqref{eq:Hlgy-vanishes} and are uniformly
monotone with $d_{V}=d$ and with the same monotonicity constant $\rho$
fixed before.  Most of the time we identify such a cobordism $V$ and
its $\R$-extension $\overline{V}\subset \C\times R$.  We denote the
collection of these cobordisms by $\mathcal{CL}_{d}(\mathbb{C} \times
M)\subset\mathcal{L}_{d}(\C\times M)$.  We follow in this construction
Seidel's scheme from~\cite{Se:book-fukaya-categ} -- as recalled
in~\S\ref{sec:fuk-M} -- but with some significant modifications that
are necessary to deal with compactness issues due to the fact that our
Lagrangians are non-compact.

\subsection{Strip-like ends and an associated family of transition
  functions} \label{subsec:strip-ends}
We first recall the notion of a consistent choice of strip-like ends
from~\cite{Se:book-fukaya-categ}. Fix $k \geq 2$. Let
$\mathrm{Conf}_{k+1}(\partial D)$ be the space of configurations of
$(k+1)$ distinct points $(z_1, \ldots, z_{k+1})$ on $\partial D$ that
are ordered clockwise. Denote by $Aut(D) \cong PLS(2,\mathbb{R})$ the
group of holomorphic automorphisms of the disk $D$. Put
$\mathcal{R}^{k+1}=\mathrm{Conf}_{k+1}(\partial D)/Aut(D)$.  Next, put
$$\widehat{\mathcal{S}}^{k+1}=\bigl(\mathrm{Conf}_{k+1}(\partial
D)\times D\bigr)/Aut(D).$$ The projection $\widehat{\mathcal{S}}^{k+1}
\to \mathcal{R}^{k+1}$ has the following sections $\zeta_i[z_1,
\ldots, z_{k+1}] = [(z_1, \ldots, z_{k+1}), z_i]$, $i=1, \ldots, k+1$.
Put $\mathcal{S}^{k+1} = \widehat{\mathcal{S}}^{k+1} \setminus
\bigcup_{i=1}^{k+1} \zeta_i(\mathcal{R}^{k+1}).$ The fibre bundle
$$\mathcal{S}^{k+1} \to \mathcal{R}^{k+1}$$ is called a universal family of 
$(k+1)$-pointed disks. Its fibres $S_r$, $r \in \mathcal{R}^{k+1}$,
are called $(k+1)$-pointed (or punctured) disks.

Let $Z^{+}= [0,\infty)\times [0,1]$, $Z^{-}=(-\infty, 0]\times [0,1]$
be the two infinite semi-strips. Let $S$ be a $(k+1)$ pointed disk
with punctures at $(z_1, \ldots, z_{k+1})$. A choice of strip-like
ends for $S$ is a collection of embeddings: $\epsilon^{S}_{i}:Z^{-}\to
S$, $ 1\leq i \leq k$, $\epsilon^{S}_{k+1}:Z^{+}\to S$ that are proper
and holomorphic and
\begin{equation*} \label{eq:strip-like-ends}
   \begin{aligned}
      (\epsilon^{S}_{i})^{-1}(\partial S) & = (-\infty, 0] \times \{0,
      1\}, \quad \lim_{s \to -\infty}
      \epsilon^{S}_{i}(s, t) = z_i, \quad \forall \, 1 \leq i \leq k, \\
      (\epsilon^{S}_{k+1})^{-1}(\partial S) & = [0, \infty) \times
      \{0, 1\}, \quad \lim_{s \to \infty} \epsilon^{S}_{k+1}(s, t) =
      z_{k+1}.
   \end{aligned}
\end{equation*}
Moreover, we require the $\epsilon^{S}_i$, $i=1, \ldots, k+1$, to have
pairwise disjoint images. A universal choice of strip-like ends for
$\mathcal{S}^{k+1}\to \mathcal{R}^{k+1}$ is a choice of $k+1$ proper
embeddings $\epsilon^{\mathcal{S}}_i: \mathcal{R}^{k+1} \times Z^- \to
\mathcal{S}^{k+1}$, $i=1, \ldots, k$,\; $\epsilon^{\mathcal{S}}_{k+1}:
\mathcal{R}^{k+1} \times Z^+ \to \mathcal{S}^{k+1}$ such that for every $r
\in \mathcal{R}^{k+1}$ the restrictions ${\epsilon^{\mathcal{S}}_i}|_{r
  \times Z^{\pm}}$ consists of a choice of strip-like ends for $S_r$.
See~\cite{Se:book-fukaya-categ} for more details.

We extend the definition for the case $k=1$, by setting
$\mathcal{R}^2 = \textnormal{pt}$ and $\mathcal{S}^{2} = D \setminus
\{-1,1\}$. We endow $D \setminus \{-1, 1\}$ by strip-like ends with 
identifying it holomorphically with the strip $\mathbb{R} \times
[0,1]$ endowed with its standard complex structure.

Pointed disks endowed with strip-like ends can be glued in the
following way. Let $\rho \in (0,\infty)$ and let $S'$, $S''$ be two
pointed disks with punctures at $(z'_1, \ldots, z'_{k'+1})$ and
$(z''_1, \ldots, z''_{k''+1})$ respectively. Fix $q \in \{1, \ldots,
k''\}$. Define a new $(k'+k'')$-pointed surface $S' \#_{\rho} S''$ by
taking the disjoint union
$$S' \setminus \epsilon^{S'}_{k'+1}([\rho, \infty) \times [0,1]) \coprod
S'' \setminus \epsilon^{S''}_{q}((-\infty, -\rho] \times [0,1])$$ and
identifying $\epsilon^{S'}_{k'+1}(s, t) \sim
\epsilon^{S''}_{q}(s-\rho, t)$ for $(s,t) \in (0,\rho)\times[0,1]$.
The family of glued surfaces $S' \#_{\rho} S''$ inherits $k'+k''$
punctures on the boundary from $S'$ and $S''$. The disks $S'$, $S''$
induce in an obvious way a complex structure on each $S' \#_{\rho}
S''$, $\rho>0$, so that $S'\#_{\rho} S''$ is biholomorphic to a
$(k'+k'')$-pointed disk. Thus we can holomorphically identify each $S'
\#_{\rho} S''$, $\rho>0$, in a unique way, with a fibre $S_{r(\rho)}$
of the universal family $\mathcal{S}^{k'+k''} \to
\mathcal{R}^{k'+k''}$.  Finally, there are strip-like ends on $S'
\#_{\rho} S''$ which are induced from those of $S'$ and $S''$ by
inclusion in the obvious way.

The space $\mathcal{R}^{k+1}$ has a natural compactification
$\overline{\mathcal{R}}^{k+1}$ described by parametrizing the elements
of $\overline{\mathcal{R}}^{k+1}\backslash \mathcal{R}^{k+1}$ by
trees~\cite{Se:book-fukaya-categ}. The family $\mathcal{S}^{k+1} \to
\mathcal{R}^{k+1}$ admits a partial compactification
$\overline{\mathcal{S}}^{k+1} \to \overline{\mathcal{R}}^{k+1}$ which
can be endowed with a smooth structure. Moreover, the fixed choice of
universal strip-like ends for $\mathcal{S}^{k+1}\to \mathcal{R}^{k+1}$
admits an extension to $\overline{\mathcal{S}}^{k+1}\to
\overline{\mathcal{R}}^{k+1}$. Further, these choices of universal
strip-like ends for the spaces $\mathcal{R}^{k+1}$ for different $k$'s
can be made in a way that is consistent with these compactifications
(see Lemma~9.3 in~\cite{Se:book-fukaya-categ}). 

For every $k \geq 2,$ we fix a Riemannian metric $\rho_{k+1}$ on
$\overline{\mathcal{S}}^{k+1}$ so that this metric reduces on each
surface $S_r$ to a metric compatible with all the splitting/gluing
operations - it is consistent with respect to these operations in the
same sense as the choice of strip-like ends.  Moreover we require the
metrics $\rho_{k+1}$ to have the following property: for every $k \geq
2$ there exists a constant $A_{k+1}$ such that
\begin{equation} \label{eq:metric-rho}
   \textnormal{length}_{\rho_{k+1}}(\partial S_r) \leq A_{k+1}, \;
   \forall r \in \overline{\mathcal{R}}^{k+1}.
\end{equation}
This requirement will be useful later for energy estimates as in
Lemma~\ref{lem:energy-bound1} below.

Our construction requires an additional auxiliary structure which can
be defined once a choice of universal strip-like ends is fixed as
above. This structure is a smooth function $\mathbf{a}:
\mathcal{S}^{k+1} \to [0,1]$ with some properties which we describe
now.  We start with $k=1$. In this case $\mathcal{S}^2 = D \setminus
\{-1, 1\} \cong \mathbb{R} \times [0,1]$ and we define
$\mathbf{a}(s,t) = t$, where $(s,t)\in \R\times [0,1]$. To describe
$\mathbf{a}$ for $k \geq 2$ write $a_r := \mathbf{a}|_{S_r}$, $r \in
\mathcal{R}^{k+1}$. We require the functions $a_r$ to satisfy the
following for every $r \in \mathcal{R}^{k+1}$ - see
Figure~\ref{fig:transition}:
\begin{figure}[htbp]
   \begin{center}
      \epsfig{file=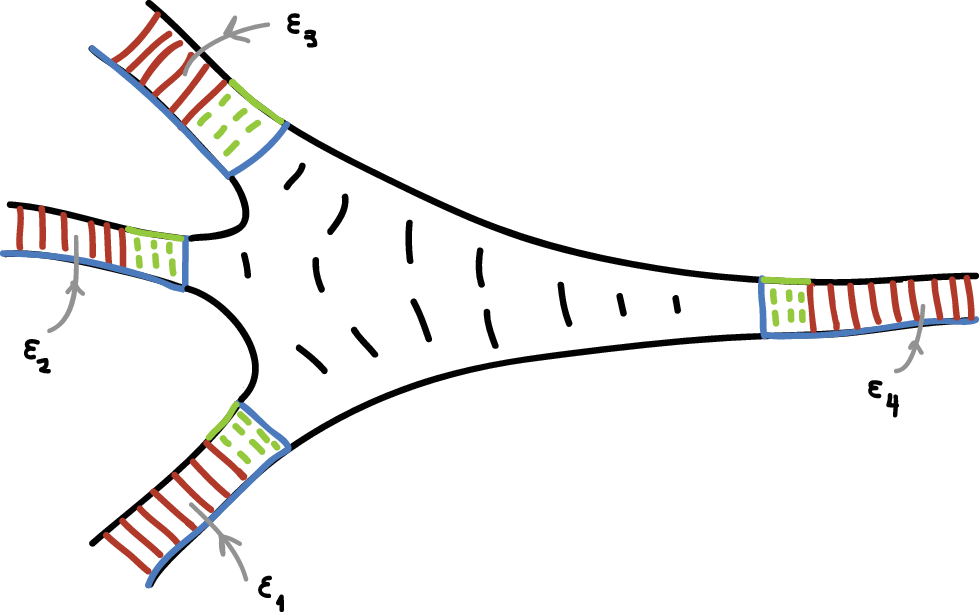, width=0.8\linewidth}
   \end{center}
   \caption{\label{fig:transition} The constraints imposed on a
     transition function for a domain with three entries and one exit:
     in the red region the function $a$ equals $(s,t)\to t$; along the
     blue arcs the function $a$ vanishes; the green region is a
     transition region. There  are no additional constraints in the
     black region.}
\end{figure}

\begin{itemize}
  \item[i.] For each entry strip-like end $\epsilon_{i}:Z^{-}\to
   S_{r}$, $1\leq i\leq k$, we have:
   \begin{itemize}
     \item[a.]$a_{r} \circ \epsilon_{i}(s,t)=t$, $\forall \ (s,t)\in
      (-\infty, -1]\times [0,1]$.
     \item[b.] $\frac{\partial }{\partial
        s}(a_{r}\circ\epsilon_{i})(s,1)\leq 0$ for $s\in [-1,0]$.
     \item[c.] $a_{r}\circ \epsilon_{i}(s,t)=0$ for $(s,t)\in
      ((-\infty, 0]\times \{0\})\cup (\{0\}\times [0,1])$.
   \end{itemize}
  \item[ii.] For the exit strip-like end $\epsilon_{k+1}:Z^{+}\to
   S_{r}$ we have:
   \begin{itemize}
     \item[a'.]$a_{r} \circ \epsilon_{k+1}(s,t)=t$, $\forall \
      (s,t)\in [1, \infty)\times [0,1]$.
     \item[b'.] $\frac{\partial }{\partial
        s}(a_{r}\circ\epsilon_{k+1})(s,1)\geq 0$ for $s\in [0,1]$.
     \item[c'.] $a_{r}\circ \epsilon_{k+1}(s,t)=0$ for $(s,t)\in
      ([0,+\infty)\times \{0\}) \cup (\{0\}\times [0,1])$.
   \end{itemize}
\end{itemize} 
The total function $\mathbf{a}: \mathcal{S}^{k+1} \to [0,1]$ will be
called a global transition function. When it is important to emphasize
its dependence on $k$ we will denote it also by $\mathbf{a}^{k+1}$.

Because the strip-like ends are picked consistently, the functions
$\mathbf{a}^{k+1}$ can also be picked consistently for different
values of $k$ . This means that $\mathbf{a}$ extends smoothly to
$\overline{\mathcal{S}}^{k+1}$ and moreover along the boundary
$\partial \overline{\mathcal{S}}^{k+1}$ it coincides with the
corresponding pairs (or tuples) of functions $\mathbf{a}^{k'+1}:
\mathcal{S}^{k'+1} \to [0,1]$, $\mathbf{a}^{k''+1}:\mathcal{S}^{k'+1}
\to [0,1]$ with $k'+k'' = k+1$, corresponding to trees of split
pointed disks. The proof of this follows the same principle as that
used in~\cite{Se:book-fukaya-categ} to show consistency for the
strip-like ends. The key point is compatibility with gluing/splitting.
In other words, given two pointed disks $S'$ and $S''$, with $k'$ and
$k''$ punctures respectively, consider the family of glued pointed
disks $S(\rho)= S' \#_{\rho} S''$, where the gluing is done between
the positive puncture of $S'$ and the $q$'th negative puncture of
$S''$.  We need to define the transition function $a^{(\rho)}: S(\rho)
\to [0,1]$ for large $\rho$, given the two transition functions $a'$
and $a''$ associated to $S'$ and $S''$. For this note that $a'$ and
$a''$ satisfy $a'\circ \epsilon^{S'}_{k'+1}(s,t) = t$ for $s>1$ and
$a'' \circ \epsilon^{S''}_q (s,t) = t$ for $s<-1$. Thus the functions
$a'$ and $a''$ glue together to form a new function $a^{(\rho)}:
S(\rho) \to [0,1]$ which restricts to $a'$ and $a''$ when $\rho \to
\infty$.  This procedure defines the function $\mathbf{a}$ near the
boundary of $\overline{\mathcal{S}}^{k'+k''}$, and one can extend it
to the rest of $\mathcal{S}^{k'+k''}$ keeping the requirements
i(a)-i(c) and ii(a')-ii(c') above. It follows that global transition
functions exist and extend to $\mathbf{a}:
\overline{\mathcal{S}}^{k+1}\to [0,1]$ for all $k \geq 1$, in a way
which is consistent with splitting/gluing.  Note that given a fixed
consistent choice of universal strip-like ends, the choices of
associated global transition functions form a contractible set.

The following result will be of use later in the paper.

\begin{lem} \label{lem:der-bound} Let $\mathbf{a}^{j+1}:
   \overline{\mathcal{S}}^{j+1}\to [0,1]$, $1 \leq j \leq k$, be a
   choice of consistent global transition functions. Then there exists
   a constant $C^{k}_{\mathbf{a}} > 0$ (which depends on the metric
   $\rho_{k+1}$) so that for any $r\in \overline{\mathcal{R}}^{k+1}$
   and any tangent vector $\xi \in T(\partial S_{r})$ we have:
   $$|da^{k+1}_{r}(\xi)|\leq  C^{k}_{\mathbf{a}}\ |\xi|_{\rho_{k+1}}~.~$$
\end{lem}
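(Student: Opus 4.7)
The plan is to exploit the compactness of $\overline{\mathcal{R}}^{k+1}$ together with the explicit prescribed behavior of $a_r$ on the strip-like ends, which forces $da_r|_{T(\partial S_r)}$ to vanish outside a compact subset of each fiber.

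First I would analyze $a_r$ along $\partial S_r$ on the strip-like ends. On the arc $\epsilon_i((-\infty,0]\times\{0\})$, condition (i)(c) (resp.\ (ii)(c')) gives $a_r \equiv 0$, so its boundary differential vanishes identically there. On the arc $\epsilon_i((-\infty,0]\times\{1\})$ of an entry end, condition (i)(a) yields $a_r \equiv 1$ for $s \leq -1$ and condition (i)(c) yields $a_r = 0$ at $s = 0$, so the entire transition of $a_r$ along this boundary arc is confined to the compact subinterval $\epsilon_i([-1,0]\times\{1\})$; the analogous statement holds for the exit end $\epsilon_{k+1}$ via (ii)(a') and (ii)(c'). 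Consequently there is a subset $K \subset \overline{\mathcal{S}}^{k+1}$, consisting of the union over $r$ of these compact transition arcs together with the portions of $\partial S_r$ lying outside the images of the strip-like ends, such that $da_r(\xi) = 0$ for every tangent vector $\xi \in T(\partial S_r)$ based outside $K$.

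Second, by the consistency of the universal strip-like ends, the global transition functions $\mathbf{a}^{j+1}$ for $1 \leq j \leq k$, and the metrics $\rho_{k+1}$, all three data extend smoothly across the boundary strata of $\overline{\mathcal{R}}^{k+1}$. Hence $K$ may be chosen as a compact subset of the compact total space $\overline{\mathcal{S}}^{k+1}$, and the continuous function
$$
(r,p,\xi) \;\longmapsto\; |da_r(\xi)|/|\xi|_{\rho_{k+1}}, \qquad r \in \overline{\mathcal{R}}^{k+1},\ p \in K \cap \partial S_r,\ 0 \neq \xi \in T_p(\partial S_r),
$$
attains a finite supremum on the compact unit sphere bundle of $T(\partial S_{(-)})|_K$; take this supremum to be $C^k_{\mathbf{a}}$. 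For $\xi$ based in $K$ the required inequality is the definition of $C^k_{\mathbf{a}}$, and for $\xi$ based outside $K$ it reduces to $0 \leq C^k_{\mathbf{a}}|\xi|_{\rho_{k+1}}$.

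The main obstacle is checking that $\mathbf{a}^{k+1}$ genuinely remains smooth, with bounded derivatives, as $r$ approaches the nodal strata $\overline{\mathcal{R}}^{k+1} \setminus \mathcal{R}^{k+1}$. This is guaranteed by the construction sketched before the lemma: near such a stratum $\mathbf{a}^{k+1}$ on $S' \#_\rho S''$ is literally the gluing of the previously chosen $\mathbf{a}^{k'+1}$ and $\mathbf{a}^{k''+1}$ (which match on the gluing strip, where both equal $(s,t) \mapsto t$), so the smooth extension and the accompanying derivative bound on the transition arcs are inherited, for instance by induction on $k$, from the corresponding bounds on the lower-dimensional moduli spaces. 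Once smoothness up to the boundary of $\overline{\mathcal{R}}^{k+1}$ is in hand, the compactness argument above delivers the uniform constant $C^k_{\mathbf{a}}$.
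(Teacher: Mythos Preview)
Your proof is correct and follows essentially the same approach as the paper, which lists as its ingredients exactly the facts you use: vanishing of $da_r|_{T(\partial S_r)}$ on the far parts of the strip-like ends, compatibility of $\mathbf{a}$ and $\rho_{k+1}$ with gluing/splitting, and compactness of $\overline{\mathcal{S}}^{k+1}\setminus U$ for $U$ a small neighborhood of all punctures (including the nodal ones). One small slip worth correcting: $\overline{\mathcal{S}}^{k+1}$ is not itself compact (its fibers are punctured disks), though this does not affect your argument since what you actually need---and correctly establish via the gluing discussion in your last paragraph---is compactness of $K$.
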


The proof of this lemma is straightforward and is based on the
following facts:
\begin{itemize}
  \item[-] The metrics $\rho_{k+1}$ are compatible with
   gluing/splitting. Note that the property~\eqref{eq:metric-rho} of
   the metrics $\rho_{k+1}$ plays no role here.
  \item[-] For $x\in \bigcup_{i=1}^k\epsilon_{i}((-\infty, -1]\times
   \{0,1\}) \cup \epsilon_{k+1}([1,\infty)\times \{0,1\})$ and $\xi\in
   T_x(\partial S_{r})$ we have $da_{r}(\xi)=0$.
  \item[-] $\overline{\mathcal{R}}^{k+1}$ is compact so that if
   $U\subset \overline{\mathcal{S}}^{k+1}$ is an arbitrarily small
   neighborhood of the union of all the punctures - both those
   associated to the strata in $\partial \overline{\mathcal{R}}^{k+1}$
   as well as those corresponding to the entries and the exit - then
   $\overline{\mathcal{S}}^{k+1}\backslash U$ is compact.
  \item[-] The strip-like ends as well as the global transition
   function $\mathbf{a}$ are compatible with respect to gluing so that
   if the neighborhood $U$ is sufficiently small, then
   $da_{r}(\xi)=0$ for all $x\in \partial S_{r} \cap U$, $\xi\in
   T_x(\partial S_{r})$.
\end{itemize}
We leave the details of the argument to the reader.

\subsection{$J$-holomorphic curves} \label{subsec:equation}
This section describes the particular perturbed $J$-holomorphic curves
that will be used in the definition of the higher compositions in the
category $\fuk^{d}_{cob}(\C\times M)$. To simplify notation we write
$\widetilde{M} = \mathbb{C} \times M$ and endow it with the symplectic
form $\widetilde{\omega} = \omega_{\mathbb{R}^2} \oplus \omega$, where
$\omega$ is the symplectic structure of $M$. We denote by $\pi:
\widetilde{M} \to \mathbb{C}$ the projection.

We fix a function $h:\R^{2}\to \R$ so that (see also
Figure~\ref{fig:kinks}):
\begin{itemize}
  \item[i.] The support of $h$ is contained in the union of the sets
   $$W_{i}^{+}= [2,\infty)\times [i-\epsilon, i+\epsilon] 
   \quad \textnormal{and } \; W_{i}^{-}= (-\infty, -1]\times
   [i-\epsilon, i+\epsilon], \; i \in \mathbb{Z}~,~$$ where $0<
   \epsilon<1/4$.
  \item[ii.] The restriction of $h$ to each set
   $T_{i}^{+}=[2,\infty)\times [i-\epsilon/2, i+\epsilon/2]$ and
   $T_{i}^{-}=(-\infty, -1] \times [i-\epsilon/2, i+\epsilon/2]$ is
   respectively of the form $h(x,y)=h_{\pm}(x)$, where the smooth
   functions $h_{\pm}$ satisfy:
   \begin{itemize}
     \item[a.] $h_{-}:(-\infty, -1]\to \R$ has a single critical point
      in $(-\infty,-1]$ at $-\frac{3}{2}$ and this point is a
      non-degenerate local maximum. Moreover, for all $x\in (-\infty,
      -2)$, we have $h_{-}(x)=\alpha^{-}x+\beta^{-}$ for some
      constants $\alpha^{-},\beta^{-}\in\R$ with $\alpha^->0$.
     \item[b.] $h_{+}:[2,\infty)\to \R$ has a single critical point in
      $[2,\infty)$ at $\frac{5}{2}$ and this point is also a
      non-degenerate maximum.  Moreover, for all $x\in (3,\infty)$ we
      have $h_{+}(x)=\alpha^{+}x+\beta^{+}$ for some constants
      $\alpha^{+},\beta^{+}\in\R$ with $\alpha^+ < 0$.
   \end{itemize}
  \item[iii.] The Hamiltonian isotopy $\phi_t^h: \mathbb{R}^2 \to
   \mathbb{R}^2$ associated to $h$ exists for all $t \in \mathbb{R}$.
   Moreover, the derivatives of the functions $h_{\pm}$ are
   sufficiently small so that the Hamiltonian isotopy $\phi^{h}_{t}$
   keeps the sets $[2,\infty)\times \{i\}$ and $(-\infty, -1]\times
   \{{i}\}$ inside the respective $T_{i}^{\pm}$ for $-1\leq t\leq 1$.
  \item[iv.] The Hamiltonian isotopy $\phi_t^h$ preserves the strip
   $[-\tfrac{3}{2}, \tfrac{5}{2}] \times \mathbb{R}$ for all $t$, in
   other words $\phi_t^h \bigl([-\tfrac{3}{2}, \tfrac{5}{2}] \times
   \mathbb{R} \bigr) = [-\tfrac{3}{2}, \tfrac{5}{2}] \times
   \mathbb{R}$ for every $t$.
\end{itemize}
\begin{figure}[htbp]\vspace{-0.9in}
   \begin{center}
      \epsfig{file=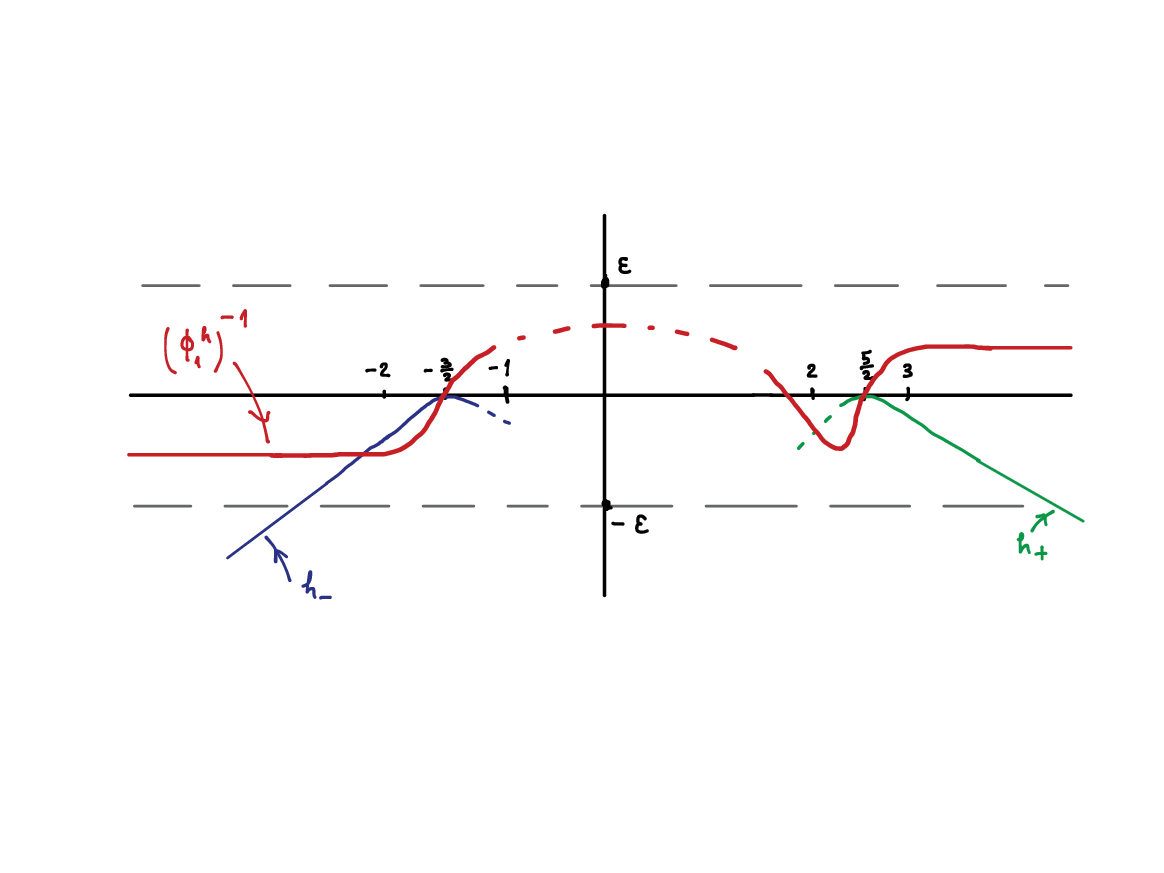, width=1\linewidth}
   \end{center}
   \vspace{-1.5in}\caption{ \label{fig:kinks} The graphs of $h_{-}$
     and $h_{+}$ and the image of $\R$ by the Hamiltonian
     diffeomorphism $(\phi^{h}_{1})^{-1}$. The profile of the
     functions $h_{-}$ at $-3/2$ and $h_{+}$ at $5/2$ are the
     ``bottlenecks''.}
\end{figure}
For example, we can define $h(x,y) = h_{-}(x)\sigma_i(y)$ on
$W_i^{-}$, where $h_-$ is a function as above with the constant
$\beta_-$ adjusted so that $h_{-}(-\tfrac{3}{2}) = 0$ and
$\sigma_i:[i-\epsilon, i+\epsilon] \to \mathbb{R}$ is a function that
vanishes near $i \pm \epsilon$ and equals $1$ near $i$. We define $h$
on $W_{i}^+$ in an analogous way. Next extend $h$ by $0$ on the rest
of $(-\infty, -1] \times \mathbb{R}$ and the rest of $[2, \infty)
\times \mathbb{R}$. Finally extend $h$ to the rest of $[-1,2]\times \mathbb{R}$
 in an arbitrary way (so that $\phi_t^h$ exists for all
$t$). It is easy to see that such an $h$ has properties i-iv above.

Notice that there are infinitely many sets $W_{i}^{\pm}$, two for each
integer $i$. Also, the signs of the constants $\alpha^{+}$,
$\alpha^{-}$ are prescribed by the points ii.a and ii.b. It follows
from the description of the functions $h_{\pm}$ that for every $i \in
\mathbb{Z}$ and $t \in [-1,1]$ we have:
\begin{equation*}
   \begin{aligned}
      & \phi_{t}^{h}\bigl( (2,\infty)\times\{i\} \bigr) \cap \bigl(
      (2,\infty)\times\{i\} \bigr) = (\tfrac{5}{2}, i), \\
      & \phi^{h}_{t} \bigl( (-\infty, -1)\times \{i\} \bigr) \cap
      \bigl( (-\infty, -1)\times \{i\} \bigr)= (-\tfrac{3}{2}, i).
   \end{aligned}
\end{equation*}
These properties of the functions $h_{\pm}$ will be used in
establishing compactness for $J$-holomorphic curves. The function $h$
itself will be referred to in the following as a {\em profile
  function} and the two critical points $(-3/2, i)$ and $(5/2,i)$ will
be called its {\em bottlenecks}.

\pbred{The functions $h$, $h_{\pm}$ are an essential ingredient in
  defining Floer perturbation data for a pair of cobordisms $(V, V')$.
  The construction of these perturbations will be explained in detail
  further below. For the moment, we explain in the next remark the role of
  the particular shape of $h$.}

\begin{rem}\label{rem:bottlenecks}
   The behavior of $h$ at the points $-3/2$ and $5/2$ is essential in
   two respects. On one hand, the shape of $h_{\pm}$ at these points
   will play the role of a ``bottleneck'', limiting the behavior of
   holomorphic curves in the neighborhood of $(-3/2, i)$ and
   $(5/2,i)$. This insures compactness for the relevant perturbed
   $\widetilde{J}$-holomorphic curves for certain restricted almost
   complex structures $\widetilde{J}$ on $\C\times M$.  On the other
   hand, the fact that the critical points of $h_{-}$ and $h_{+}$ are
   {\em maxima} implies that the almost complex structures
   $\widetilde{J}$ in question are regular in the sense required to
   define the $A_{\infty}$-category.
   \pbred{Additionally, the self Floer homology $HF(V,V)$, defined
     using a perturbation based on $h$, is isomorphic to the relative
     Lagrangian quantum homology $QH(V, \partial V)$ and is therefore
     unital. See Remark~\ref{rem:comp-Floer-compl} for more on that.}
\end{rem}

For each pair of cobordisms $V,V'\in \mathcal{CL}_{d}(\widetilde{M})$
we pick a {\em Floer datum} $\mathscr{D}_{V,V'} = (\bar{H}_{V, V'},
J_{V, V'})$ consisting of a Hamiltonian $\bar{H}_{V,V'}:[0,1]\times
\widetilde{M} \to \R$ and a (possibly time dependent) almost complex
structure $J_{V,V'}$ on $\widetilde{M}$ which is compatible with
$\widetilde{\omega}$.  We will also assume that each Floer datum
$(\bar{H}_{V,V'},J_{V,V'})$ satisfies the following conditions:
\begin{itemize}
  \item[i.] $\phi^{\bar{H}_{V,V'}}_{1}(V)$ is transverse to $V'$.
  \item[ii.] Write points of $\widetilde{M} = \mathbb{C} \times M$ as
   $(x,y,p)$ with $x+iy \in \mathbb{C}$, $p \in M$. We require that
   there exists a compact set $K_{V,V'}\subset
   (-\frac{5}{4},\frac{9}{4})\times \R \subset \C$ so that
   $\bar{H}_{V,V'}(t,(x,y,p))= h(x,y)+H_{V,V'}(t,p)$ for $(x+iy, p)$
   outside of $K_{V,V'}\times M$, for some $H_{V,V'}: [0,1]\times M\to
   \R$.
  \item[iii.] The projection $\pi: \widetilde{M} \to \mathbb{C}$ is
   $(J_{V,V'}(t), (\phi_t^h)_* i)$-holomorphic outside of $K_{V, V'}
   \times M$ for every $t \in [0,1]$.
\end{itemize}

Note that these conditions imply that the time-$1$ Hamiltonian chords
$\mathcal{P}_{\bar{H}_{V,V'}}$ of $\bar{H}_{V,V'}$ that start on $V$
and end on $V'$, form a finite set. Indeed, the chords that project to
$(-\frac{3}{2}, i)$ or to $(\frac{5}{2}, i)$ are finite in number due
to condition i. Further, if $\gamma$ is a chord that does not project
to one of these points, it follows from conditions ii and iii in the
definition of $h$, that such a chord $\gamma$ is contained in
$[-\frac{5}{4},\frac{9}{4}]\times \R\times M$ and so there can only be
a finite number of such chords too.  Thus the number of elements in
$\mathcal{P}_{\bar{H}_{V,V'}}$ is finite.

For a $(k+1)$-pointed disk $S_{r}$, we denote by $C_{i}\subset
\partial S_{r}$ the connected components of $\partial S_{r}$ indexed
so that $C_{1}$ goes from the exit to the first entry, $C_{i}$ goes
from the $(i-1)$-th entry to the $i$, $1\leq i\leq k$, and $C_{k+1}$
goes from the $k$-th entry to the exit. (Recall that we order the punctures
on the boundary of $S_{r}$ in a clockwise orientation.)

In order to describe the version of the Cauchy-Riemann equation
relevant for our purposes we need to choose additional perturbation
data. We follow partially the scheme from~\cite{Se:book-fukaya-categ}.
To every collection $V_{i}\in \mathcal{CL}_{d}(\mathbb{C} \times M)$,
$1\leq i\leq k+1$ we choose a perturbation datum $\mathscr{D}_{V_1,
  \ldots, V_{k+1}} = (\form, \mathbf{J})$ consisting of:
\begin{enumerate}
  \item[i.] A family $\form = \{ \form^r \}_{r \in
     \mathcal{R}^{k+1}}$, where $\form^r \in \Omega^{1}(S_{r},
   C^{\infty}(\widetilde{M}))$ is a $1$-form on $S_r$ with values in
   smooth functions on $\widetilde{M}$ (considered as autonomous
   Hamiltonian functions).  We write $\form^r(\xi): \widetilde{M} \to
   \mathbb{R}$ for the value of $\form^r$ on $\xi \in T S_r$. When
   there is no risk of confusion we write $\form$ instead of
   $\form^r$.
  \item[ii.] $\mathbf{J} = \{J_z \}_{z \in \mathcal{S}^{k+1}}$ is a family
   of $\widetilde{\omega}$-compatible almost complex structure on
   $\widetilde{M}$, parametrized by $z \in S_r$, $r \in
   \mathcal{R}^{k+1}$. We will sometimes write $J(z,\widetilde{p})$
   for the complex structure $J_z$ acting on $T_{\widetilde{p}}
   \widetilde{M}$.
\end{enumerate}
The forms $\form^r$ induce forms $Y^r=Y^{\form^r} \in
\Omega^{1}(S_{r}, C^{\infty}(T\widetilde{M}))$ with values in
(Hamiltonian) vector fields on $\widetilde{M}$ via the relation
$Y(\xi)=X^{\form(\xi)}$ for each $\xi\in T S_{r}$ (in other words,
$Y(\xi)$ is the Hamiltonian vector field on $\widetilde{M}$ associated
to the autonomous Hamiltonian function $\form(\xi): \widetilde{M} \to
\mathbb{R}$).

The perturbation data $\mathscr{D}_{V_1, \ldots, V_{k+1}}$ is required
to satisfy additional conditions which we will explain shortly.
However, before doing so, here is the relevant Cauchy-Riemann equation
associated to $\mathscr{D}_{V_1, \ldots, V_{k+1}}$
(see~\cite{Se:book-fukaya-categ} for more details):
\begin{equation}\label{eq:jhol1} \ u:S_{r}\to \C\times M, \quad Du +
   J(z,u)\circ Du\circ j = Y + J(z,u)\circ Y\circ j, \quad 
   u(C_{i})\subset V_{i} ~.~
\end{equation}
Here $j$ stands for the complex structure on $S_r$. The $i$-th entry
of $S_r$ is labeled by a time-$1$ Hamiltonian orbit $\gamma_{i}\in
\mathcal{P}_{\bar{H}_{V_{i},V_{i+1}}}$ and the exit is labeled by a
time-$1$ Hamiltonian orbit $\gamma_{k+1}\in
\mathcal{P}_{\bar{H}_{V_{1}, V_{k+1}}}$. The map $u$ verifies
$u(C_{i})\subset V_{i}$ and $u$ is required to be asymptotic - in the
usual Floer sense - to the Hamiltonian orbits $\gamma_{i}$ on each
respective strip-like end.

The perturbation data $\mathscr{D}_{V_1, \ldots, V_{k+1}}$ are
constrained by a number of additional conditions that we now list.
Most of these conditions are similar to the analogous ones in the
setting of~\cite{Se:book-fukaya-categ} but there are also some
significant differences so that we go through this part in detail. We
indicate the different points with $^{\ast}$. For further use we
denote by $s_{V_{1}, \ldots, V_{k+1}}\in \mathbb{N}$ the smallest $l
\in \mathbb{N}$ so that $\pi(V_1 \bigcup \cdots \bigcup
V_{k+1})\subset \R\times (-l,l)$. Write $\bar{h} = h \circ \pi:
\widetilde{M} \to \mathbb{R}$, where $h: \mathbb{R}^2 \to \mathbb{R}$
is the function described at the beginning of this section. We also
write
\begin{equation*}
   \begin{aligned}
      & U^r_i = \epsilon_i^{S_r} \bigl( (-\infty, -1] \times [0,1]
      \bigr) \subset S_r, \quad i = 1, \ldots, k, \\
      & U^r_{k+1} = \epsilon_{k+1}^{S_r} \bigl( [1, \infty) \times [0,1]
      \bigr) \subset S_r, \\
      & \mathcal{W}^r = \bigcup_{i=1}^{k+1} U^r_i.
   \end{aligned}
\end{equation*}

The conditions on $\mathscr{D}_{V_1, \ldots, V_{k+1}}$ are the
following:
\begin{itemize}
  \item[i.] {\em Asymptotic conditions.} For every $r \in
   \mathcal{R}^{k+1}$ we have $\form|_{U_i^r} = \bar{H}_{V_{i},
     V_{i+1}} dt$, $i=1, \ldots, k$ and $\form|_{U_{k+1}^r} =
   \bar{H}_{V_{1}, V_{k+1}} dt$. (Here $(s,t)$ are the coordinates
   parametrizing the strip-like ends.) Moreover, on each $U_i^r$,
   $i=1, \ldots, k$, $J_z$ coincides with $J_{V_{i},V_{i+1}}$ and on
   $U_{k+1}^r$ it coincides with $J_{V_{1}, V_{k+1}}$, i.e.
   $J_{\epsilon^{\mathcal{S}_r}_i(s,t)} = J_{V_i, V_{i+1}}(t)$ and
   similarly for the exit end. Thus, over the part of the strip-like
   ends $\mathcal{W}^r$ the perturbation datum $\mathscr{D}_{V_1,
     \ldots, V_{k+1}}$ is compatible with the Floer data
   $\mathscr{D}_{V_i, V_{i+1}}$, $i=1, \ldots, k$ and
   $\mathscr{D}_{V_1, V_{k+1}}$.
  \item[ii.$^{\ast}$ ]{\em Special expression for $\form$.} The
   restriction of $\form$ to $S_r$ equals $$\form|_{S_r} = da_r \otimes
   \bar{h} +\form_0$$ for some $\form_0 \in \Omega^{1}(S_{r},
   C^{\infty}(\widetilde{M}))$ which depends smoothly on $r \in
   \mathcal{R}^{k+1}$. Here $a_r:S_r \to \mathbb{R}$ are the
   transition functions from~\S\ref{subsec:strip-ends}. The form
   $\form_0$ is required to satisfy the following two conditions:
   \begin{itemize}
     \item[a.] $\form_0(\xi)=0$ for all $\xi\in TC_{i}\subset
      T\partial S_{r}$.
     \item[b.]  There exists a compact set $K_{V_{1},\ldots ,
        V_{k+1}}\subset (-\frac{3}{2},\frac{5}{2})\times \R$ which is
      independent of $r \in \mathcal{R}^{k+1}$ so that
      $K_{V_{1},\ldots , V_{k+1}}\times M$ contains all the sets
      $K_{V_{i}, V_{j}}$ involved in the Floer datum
      $\mathscr{D}_{V_i, V_j}$, and with
      $$K_{V_{1},\ldots, V_{k+1}}\supset\ ([-\frac{5}{4},
      \frac{9}{4}]\times [-s_{V_{1},\ldots, V_{k+1}},+s_{V_{1},\ldots,
        V_{k+1}}])$$ so that outside of $K_{V_{1},\ldots ,
        V_{k+1}}\times M$ we have $D \pi (Y_0)=0$ for every $r$, where
      $Y_{0}=X^{\form_0}$.
   \end{itemize}
  \item[iii$^{\ast}$] Outside of $K_{V_{1},\ldots , V_{k+1}}\times M$
   the almost complex structure $\mathbf{J}$ has the property that the
   projection $\pi$ is
   $(J_z,(\phi_{a_r(z)}^{h})_{\ast}(i))$-holomorphic for every $r \in
   \mathcal{R}^{k+1}$, $z \in S_r$.
\end{itemize}

\begin{rem}
   \begin{itemize}
     \item[a.]  The form $Y_{0}$ coincides with $dt\otimes
      X^{H_{V_{i},V_{i+1}}}$ (respectively $dt\otimes
      X^{H_{V_{1},V_{k+1}}}$) on each strip-like end. \pbred{This will
        play a role when proving compactness for Floer polygons (i.e.
        solutions of equation~\eqref{eq:jhol1}), for example in
        Lemma~\ref{lem:compact1}. The point is that after applying a
        naturality transformation - as given explicitely in \eqref{eq:nat-v} - to solutions
        of~\eqref{eq:jhol1} we would like to obtain maps $v:S_r
        \longrightarrow \mathbb{C} \times M$ whose projection to
        $\mathbb{C}$ is holomorphic outside a prescribed region in the
        plane. The fact that $Y_0$ is vertical along the strip-like
        ends (i.e. $D\pi(Y_0)=0$) is important in this respect.}
     \item[b.] The only difference of substance concerning the point
      ii$^{\ast}$ in comparison with the setting
      in~\cite{Se:book-fukaya-categ} is that {\em we do not require}
      $Y$ to satisfy $Y(\xi)=0$ for all $\xi\in T(\partial S_{r})$.
      In fact, the form $da\otimes X^{\bar{h}}$ already does not, in
      general, satisfy this condition on the strip-like ends.  Still,
      the usual Fredholm theory remains valid in this case.  There are
      some changes however related to the treatment of compactness. We
      will discuss explicitly this point further below.
     \item[c.] The role of the transition functions
      $\mathbf{a}:\mathcal{S}^{k+1} \to [0,1]$ is to identify the
      solutions of equation~\eqref{eq:jhol1} to solutions of an
      equation for which compactness can be easily verified. In
      particular, this involves the constraint on the almost complex
      structure at the point iii$^{\ast}$. The notation there means
      $(\phi^h_{a_r(z)})_*(i) := D \phi^h_{a_r(z)} \circ i \circ (D
      \phi^h_{a_r(z)})^{-1}$.
   \end{itemize}
\end{rem}

Using the above choices of data we will construct the
$A_{\infty}$-category $\fuk^{d}_{cob}(\C\times M)$ by the usual method
from~\cite{Se:book-fukaya-categ}. The objects of this category are
Lagrangians cobordisms $V\subset \C\times M$, the morphisms space
between the objects $V$ and $V'$ will be $CF(V, V';
\mathscr{D}_{V,V'})$, the $\Z_{2}$-vector space generated by the
Hamiltonian chords $\mathcal{P}_{\bar{H}_{V,V'}}$.  The $A_{\infty}$
structural maps $\mu_k$, $k \geq 1$, are defined by counting pairs
$(r,u)$ with $r \in \mathcal{R}^{k+1}$ and $u$ a solution
of~\eqref{eq:jhol1} as in~\cite{Se:book-fukaya-categ}.  There are a
few more ingredients needed for this construction to work in our
setting: we need to establish apriori energy estimates and prove that
they are sufficient for Gromov compactness to apply. We also need to
show that it is possible to choose the various data involved so as to
insure regularity. We will deal in~\S\ref{subsec:energy} with
compactness and in~\S\ref{subsec:transversality} with regularity.  As
mentioned in Remark \ref{rem:bottlenecks}, both points depend on our
choice of bottlenecks.  We sum up the construction
in~\S\ref{subsec:fin-cob}.
 
\subsection{Energy bounds and compactness}\label{subsec:energy}
We begin with the following general Proposition which will be useful
in the sequel.

\begin{prop} \label{p:open-map} Let $\Sigma, \Gamma$ be Riemann
   surfaces, not necessarily compact, $\Sigma$ possibly with boundary
   and $\Gamma$ without boundary. Let $w: \Sigma \to \Gamma$ be a
   continuous map, and $U \subset \Gamma$ an open connected subset.
   Assume that:
   \begin{enumerate}
     \item $\textnormal{image\,}(w) \cap U \neq \emptyset$.
     \item $w$ is holomorphic over $U$, i.e. $w|_{w^{-1}(U)} :
      w^{-1}(U) \to U$ is holomorphic.
     \item $w(\partial \Sigma) \cap U = \emptyset$.
     \item $\bigl(\overline{\textnormal{image\,}(w)} \setminus
      \textnormal{image\,}(w) \bigr) \cap U = \emptyset$.
   \end{enumerate}
   Then $\textnormal{image\,}(w) \supset U$. In particular, if
   $\overline{\textnormal{image\,}(w)} \subset \Gamma$ is compact then
   so is $\overline{U}$.
\end{prop}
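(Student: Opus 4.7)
The plan is to set $A := \textnormal{image}(w) \cap U$ and deduce $A = U$ using connectedness of $U$: $A$ is nonempty by hypothesis~(1), so it suffices to prove that $A$ is both open and closed in $U$. The final sentence of the proposition then comes for free, since $\overline{U} \subset \overline{\textnormal{image}(w)}$, and a closed subset of a compact set is compact.

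For closedness of $A$ in $U$ the argument is direct: if $p \in U$ is a limit of points of $A$, then $p \in U \cap \overline{\textnormal{image}(w)}$, and hypothesis~(4) forces $p \in \textnormal{image}(w)$, hence $p \in A$.

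For openness, I would take $p = w(x) \in A$ and observe that $x \in w^{-1}(U)$, which is open by continuity, and that $x \notin \partial \Sigma$ --- otherwise $p$ would lie in $w(\partial \Sigma) \cap U$, contradicting~(3). Hence $x$ has an open disc neighbourhood $D \subset w^{-1}(U)$, on which $w$ is holomorphic by~(2); as long as $w|_D$ is not locally constant, the open mapping theorem for holomorphic maps between Riemann surfaces provides an open neighbourhood $w(D)$ of $p$ inside $\textnormal{image}(w)$, so $p$ lies in the interior of $A$. The main (essentially only) point that requires care is ruling out the possibility that $w$ is locally constant at $x$. By the identity theorem this would force $w \equiv p$ on the whole connected component $C$ of $x$ inside $w^{-1}(U)$, and then continuity at any point of the topological boundary $\partial^{\Sigma} C$ --- which is contained in $\Sigma \setminus w^{-1}(U)$ and so has image outside $U$ --- would contradict $w \equiv p \in U$, unless $\partial^{\Sigma} C = \emptyset$. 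In the latter degenerate case $C$ is a union of whole connected components of $\Sigma$, and~(3) forces each such component to have empty boundary; this configuration does not occur for the connected bordered disc domains $\Sigma$ of interest in the paper, so the open mapping step always applies and $A$ is open.
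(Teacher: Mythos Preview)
Your argument follows exactly the paper's strategy --- show $\textnormal{image}(w)\cap U$ is nonempty, closed, and open in the connected set $U$ --- with closedness coming from hypothesis~(4) and openness from the open mapping theorem together with hypothesis~(3). You are in fact more careful than the paper about the locally-constant edge case, which the paper's short proof glosses over; your observation that it is excluded whenever $\Sigma$ is connected with nonempty boundary (as in all of the paper's applications, where $\Sigma = S_r$) is correct.
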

\begin{proof}
   We have $\textnormal{image\,}(w) \cap U =
   \overline{\textnormal{image\,}(w)} \cap U =$ closed subset of $U$.
   At the same time, our assumptions and the open mapping theorem for
   holomorphic functions imply that $\textnormal{image\,}(w) \cap U =
   w(\textnormal{Int\,} \Sigma) \cap U =$ open subset of $U$. As $U$
   is connected, we obtain $\textnormal{image\,}(w) \cap U = U$.
\end{proof}
We will mainly use Proposition~\ref{p:open-map} with $\Sigma = S_r$ or
an open subset of $S_r$ and $\Gamma = \mathbb{C}$.

We now turn to compactness in our specific situation. The role of the
transition functions $\mathbf{a}:\mathcal{S}^{k+1} \to [0,1]$ in our
construction is reflected in the following result.
\begin{lem}\label{lem:compact1} 
   Let $V_{1},\ldots V_{k+1}\in \mathcal{CL}_{d}(M)$ be $k+1$
   cobordisms. Fix Floer and perturbation data as
   in~\S\ref{subsec:equation}. Then there exists a constant $C =
   C_{V_1, \ldots, V_{k+1}}$ that depends only on the cobordisms $V_1,
   \ldots, V_{k+1}$, the Floer data and the perturbation data, such
   that for every $r \in \mathcal{R}^{k+1}$ and every solution
   \pbred{$u:S_{r}\to \mathbb{C} \times M$} of~\eqref{eq:jhol1}, we have
   $u(S_{r})\subset B_{V_1, \ldots, V_{k+1}} \times M$, where $B_{V_1,
     \ldots, V_{k+1}} = [-\tfrac{3}{2}, \tfrac{5}{2}] \times [-C, C]$.
\end{lem}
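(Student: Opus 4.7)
The plan is to use a naturality transformation to convert the perturbed solution $u$ into a map $v$ whose projection to $\mathbb{C}$ is genuinely $j$-holomorphic outside of $K_{V_1,\ldots,V_{k+1}} \times M$, and then apply the open mapping theorem (Proposition~\ref{p:open-map}). Explicitly, define $v(z) := (\phi^{\bar{h}}_{a_r(z)})^{-1} \circ u(z)$, where $\bar{h} = h \circ \pi$ and $\phi^{\bar{h}}_t$ acts on $\widetilde{M} = \mathbb{C} \times M$ only on the $\mathbb{C}$ factor. By condition ii$^{\ast}$ (the splitting $\form = da \otimes \bar{h} + \form_0$) the $da \otimes \bar{h}$ term of the perturbation is precisely what is absorbed by the naturality transformation, leaving $v$ to satisfy an equation whose only remaining perturbation is built from $\form_0$. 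Combined with condition iii$^{\ast}$ (stating that $\pi$ is $(J_z, (\phi^h_{a_r(z)})_* i)$-holomorphic outside $K_{V_1,\ldots,V_{k+1}} \times M$) and the fact that $D\pi(Y_0) = 0$ on this region, a standard computation shows that $\bar{v} := \pi \circ v$ satisfies $\bar\partial \bar{v} = 0$ on $\bar{v}^{-1}\bigl(\mathbb{C} \setminus K_{V_1,\ldots,V_{k+1}}\bigr)$.

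Next I would analyze the asymptotic and boundary behavior of $\bar{v}$. The strip-like end asymptotes are Hamiltonian chords of the various $\bar{H}_{V_i,V_j}$; by the discussion following the definition of the Floer data, every such chord projects either to a bottleneck $(-\tfrac{3}{2}, n)$ or $(\tfrac{5}{2}, n)$, or else lies entirely over $[-\tfrac{5}{4}, \tfrac{9}{4}] \times [-s, s]$, with $s = s_{V_1,\ldots,V_{k+1}}$. Along the boundary, $v(C_i) \subset (\phi^{\bar{h}}_{a_r(z)})^{-1}(V_i)$; by property (iv) of $h$, the flow $\phi^h_t$ preserves the vertical strip $[-\tfrac{3}{2}, \tfrac{5}{2}] \times \mathbb{R}$, and outside this strip, by properties (ii)-(iii), it only displaces the horizontal rays at integer heights $n$ within the thin strips $T_n^{\pm}$ of vertical width $\epsilon$. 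Hence the planar projections of the transformed boundary Lagrangians remain inside some horizontal strip $\mathbb{R} \times [-C_1, C_1]$, with $C_1$ depending only on the data.

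With these preparations the bound follows from the open mapping theorem in two steps. Fix $C$ larger than $C_1$, the $y$-extent of $K_{V_1,\ldots,V_{k+1}}$, and the $y$-coordinates of all asymptotic points. First, for the $y$-bound: on any connected component $W$ of $\bar{v}^{-1}\bigl(\{|\mathrm{Im}\,w| > C\}\bigr)$, the map $\bar{v}$ is $j$-holomorphic, $W$ does not meet $\partial S_r$ (the boundary conditions lie within $|y|\le C_1$), and $W$ does not extend into the strip-like ends (the asymptotes lie within $|y|\le C_0$). The maximum principle for the harmonic function $\mathrm{Im}(\bar{v})$ on $W$, whose values on $\partial W$ are all equal to $\pm C$, forces $W = \emptyset$. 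Second, for the $x$-bound: consider a connected component $W$ of $\bar{v}^{-1}\bigl(\{\mathrm{Re}\,w > \tfrac{5}{2}\}\bigr)$. The bottleneck structure of $h_+$ at $x = \tfrac{5}{2}$ (together with property iii of $h$) guarantees that on $(\tfrac{5}{2}, \infty) \times \mathbb{R}$ the transformed boundary Lagrangians stay close to the horizontal lines at integer heights with $|y| \leq s$; applying Proposition~\ref{p:open-map} to an unbounded component $U$ of $\bigl((\tfrac{5}{2}, \infty) \times [-C, C]\bigr) \setminus \bigcup_i \pi((\phi^{\bar{h}}_{a_r(\cdot)})^{-1}(V_i))$ and using that there are no asymptotes with $\mathrm{Re} > \tfrac{5}{2}$ contradicts the fact that the closure of the image must accumulate only at the finite set of asymptotic points. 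An identical argument using the bottleneck at $x = -\tfrac{3}{2}$ handles $\mathrm{Re}(\bar{v}) < -\tfrac{3}{2}$.

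The main obstacle is the $x$-direction: unlike the $y$-direction, the transformed boundary Lagrangians have cylindrical ends that still extend to $x = \pm\infty$, so no naive maximum principle applies to $\mathrm{Re}(\bar{v})$. The whole point of the carefully engineered profile of $h_{\pm}$, with non-degenerate maxima at the bottlenecks $x = \pm\tfrac{3}{2}, \tfrac{5}{2}$ and linear behavior further out, is precisely that $(\phi^h_t)^{-1}$ separates the horizontal ends of distinct cobordisms outside the bottleneck, creating gaps in the target $\mathbb{C}$ through which $\bar{v}$ cannot pass by the open mapping theorem. Verifying this separation cleanly and confirming that the resulting gaps $U$ satisfy hypothesis (4) of Proposition~\ref{p:open-map} (i.e.\ that $\overline{\mathrm{image}(\bar{v})} \setminus \mathrm{image}(\bar{v}) \cap U = \emptyset$, which amounts to controlling the strip-like-end limits) is the technical heart of the argument.
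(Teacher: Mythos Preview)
Your overall strategy matches the paper's: apply the naturality transformation $v(z) = (\phi^{\bar h}_{a_r(z)})^{-1}(u(z))$, observe that $\bar v := \pi\circ v$ is $i$-holomorphic outside $K_{V_1,\ldots,V_{k+1}}$, and use Proposition~\ref{p:open-map}. Your $y$-bound via the maximum principle for $\mathrm{Im}(\bar v)$ is fine and equivalent to what the paper does.

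The gap is in the $x$-direction. Applying Proposition~\ref{p:open-map} to an unbounded gap component $U\subset (\tfrac{5}{2},\infty)\times[-C,C]$ only yields a contradiction \emph{once you know} $\bar v(S_r)\cap U\neq\emptyset$. But nothing you have written forces this: $\bar v$ could enter $\{\mathrm{Re}>\tfrac{5}{2}\}$ while staying entirely inside one of the thickened rays $R_0=\bigcup_{t\in[0,1]}(\phi^h_t)^{-1}(\text{horizontal end of some }V_i)$, which are genuine two-dimensional open regions, not one-dimensional curves. In that scenario all boundary values lie in $R_0$, all asymptotes project to the bottleneck $P\in\partial R_0$, and the open-mapping argument against the gaps $U$ says nothing. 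You identify hypothesis~(4) as the technical heart, but the real missing step is hypothesis~(1).

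The paper closes this gap with an additional argument on the \emph{exit} strip-like end that crucially uses the bottleneck being a local \emph{maximum} of $h_\pm$. On that end (for $s\ge 1$) one has $a_r=t$, so at $t=0$ the boundary condition is the untransformed $V_1$ and $v'':=\bar v\circ\epsilon_{k+1}$ takes values on the actual horizontal line through $P$. Holomorphicity gives $\partial_t v''(s,0)=i\,\partial_s v''(s,0)$; if $\partial_s v''(s_0,0)>0$ then $v''(s_0,t)$ moves into the region on the side of the line \emph{opposite} to where $(\phi^h_t)^{-1}$ pushes the ends (this is exactly where the sign of $h_\pm''$ at the bottleneck matters), i.e.\ into $Q$, contradicting the gap argument. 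Hence $\partial_s v''(s,0)\le 0$ for all $s$, but $v''(s,0)\to P$ as $s\to\infty$; together these force $v''(\,\cdot\,,0)\equiv P$ and then $v''\equiv P$ by unique continuation. Without this step (or an equivalent one) the proof is incomplete.
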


\begin{proof}
   First note that, due to our assumptions on $h$, the subset
   $[-\tfrac{3}{2}, \tfrac{5}{2}] \times \mathbb{R} \times M$ as well
   as its complement are both invariant under the flow
   $\phi_t^{\bar{h}}$ of $\bar{h}$.

   We now define an auxiliary constant. Let $C'>0$ be large enough
   such that the set $B' = [-\tfrac{3}{2}, \tfrac{5}{2}] \times [-C',
   C']$ satisfies the following conditions for every $t \in [0,1]$:
   \begin{enumerate}
     \item $B' \supset (\phi_t^h)^{-1} \Bigl( \pi(V_i) \cap \bigl(
      [-\tfrac{3}{2}, \tfrac{5}{2}] \times \mathbb{R} \bigr) \Bigr)$.
     \item $B' \supset (\phi_t^h)^{-1}(K_{V_1, \ldots, V_{k+1}})$.
     \item $B' \supset (\phi_t^h)^{-1}(\pi(\gamma(t)))$ for every
      chord $\gamma:[0,1] \to \mathbb{C} \times M$ in
      $\mathcal{P}_{\bar{H}_{V_i,V_{i+1}}}$, $i=1, \ldots, k$, and in
      $\mathcal{P}_{\bar{H}_{V_1,V_{k+1}}}$.
   \end{enumerate}
   The proof is based on the following auxiliary Lemma..

   \noindent \textbf{Auxiliary Lemma.}  {\sl Let $r \in \mathcal{R}^{k+1}$ and
     $u: S_r \to \mathbb{C} \times M$ a solution of~\eqref{eq:jhol1}.
     Define $v:S_r \to \mathbb{C} \times M$ by the formula:
     \begin{equation} \label{eq:nat-v} u(z) =
        \phi_{a_r(z)}^{\bar{h}}(v(z)),
     \end{equation}
     where $a_r:S_r \to [0,1]$ is the transition function. 
     We have $\textnormal{image\,}(v) \subset B'$.}

   \noindent \textbf{Proof of the Auxiliary Lemma.} A straightforward
   calculation shows that the Floer equation~\eqref{eq:jhol1} for $u$
   transforms into the following equation for $v$:
   \begin{equation}\label{eq:jhol2}
      Dv + J'(z,v)\circ Dv\circ j = Y' + J'(z,v)\circ Y'\circ j.
   \end{equation}
   Here $Y'\in \Omega^{1}(S_{r}, C^{\infty}(TM))$ and $J'$ are defined
   by:
   \begin{equation} \label{eq:nat-Y'-J'} Y=D
      \phi_{a(z)}^{\bar{h}}(Y')+da_r \otimes X^{\bar{h}}, \quad
      J_z=(\phi_{a_r(z)}^{\bar{h}})_{\ast} J'_z.
   \end{equation}
   The map $v$ satisfies the following moving boundary conditions:
   \begin{equation}\label{eq:mov-bdry} \forall \ z \in C_{i}, \quad 
      v(z)\in (\phi_{a(z)}^{\bar{h}})^{-1}(V_{i}).
   \end{equation}
   The asymptotic conditions for $v$ at the punctures of $S_r$ are as
   follows. For $i=1, \ldots, k$, $v(\epsilon_i(s,t))$ tends as $s \to
   -\infty$ to a time-$1$ chord of the flow $(\phi_t^{\bar{h}})^{-1}
   \circ \phi_t^{\bar{H}_{V_i, V_{i+1}}}$ starting on $V_i$ and ending
   on $(\phi_1^{\bar{h}})^{-1}(V_{i+1})$.  (Here $\epsilon_i(s,t)$ is
   the parametrization of the strip-like end at the $i$'th puncture.)
   Similarly, $v(\epsilon_{k+1}(s,t))$ tends as $s \to \infty$ to a
   chord of $(\phi_t^{\bar{h}})^{-1} \circ \phi_t^{\bar{H}_{V_1,
       V_{k+1}}}$ starting on $V_1$ and ending on
   $(\phi_1^{\bar{h}})^{-1}(V_{k+1})$. Note that there exists
   $\delta>0$ (that depends on the function $h$) such those chords
   that lie outside of $[-\tfrac{3}{2} + \delta,
   \tfrac{5}{2}-\delta]\times \mathbb{R} \times M$ have constant
   projection under $\pi$ to points of the type $(-\tfrac{3}{2}, q)$
   or $(\tfrac{5}{2}, q)$, $q \in \mathbb{Z}$.
   The proof of the Auxiliary Lemma is based on three claims, as follows.

   \smallskip
   \noindent \textbf{Claim 1.} {\sl Put $v' = \pi \circ v: S_r \to
     \mathbb{C}$. The map $v'$ is holomorphic over $\mathbb{C}
     \setminus ([-\tfrac{3}{2} + \delta', \tfrac{5}{2}-\delta'] \times
     [-C', C'])$ for small enough $\delta'>0$.}
   \smallskip

   Indeed, let $z\in S_r$ be a point with $v'(z) \notin [-\tfrac{3}{2}
   + \delta', \tfrac{5}{2}-\delta'] \times [-C', C']$.  The vector
   field $Y$ from equation~\eqref{eq:jhol1} is of the form
   $Y=Y_{0}+da_r\otimes X^{\bar{h}}$ with $D\pi (Y_{0})=0$ so that for
   $Y'$ we have $Y_{0}=D \phi^{\bar{h}}_{a(z)}(Y')$.  The Hamiltonian
   vector field $X^{\bar{h}}$ is horizontal with respect to $\C\times
   M$, hence $D\pi(Y')=0$. Further, by property iii$^{\ast}$ and the
   definition of $J'$ from~\eqref{eq:nat-Y'-J'} we have that at the
   point $z$ the projection $\pi$ is $(J',i)$-holomorphic. This proves
   Claim 1.

\

   Define now the following subsets (see
   Figure~\ref{f:compactness-1}):
   \begin{equation*}
      \begin{aligned}
         & R = \Bigl( \bigcup_{i=1}^{k+1} \bigcup_{t \in [0,1]}
         (\phi_{t}^{h})^{-1}(\pi(V_i)) \Bigr) \cap (\mathbb{C}
         \setminus
         B'), \quad Q = \mathbb{C} \setminus (R \cup B'), \\
         & \mathcal{I} = \Bigl( \bigl \{ (-\tfrac{3}{2}, q) \mid q \in
         \mathbb{Z} \bigr \} \cup \bigl \{ (\tfrac{5}{2}, q) \mid q
         \in \mathbb{Z} \bigr \} \Bigr) \cap B'.
      \end{aligned}
   \end{equation*}
   It is easy to see that $Q \subset \mathbb{C}$ is open and that
   $\overline{R} \subset R \cup \mathcal{I}$.
   \vspace{0.6in}
   \begin{figure}[htbp]
      \begin{center}
         \epsfig{file=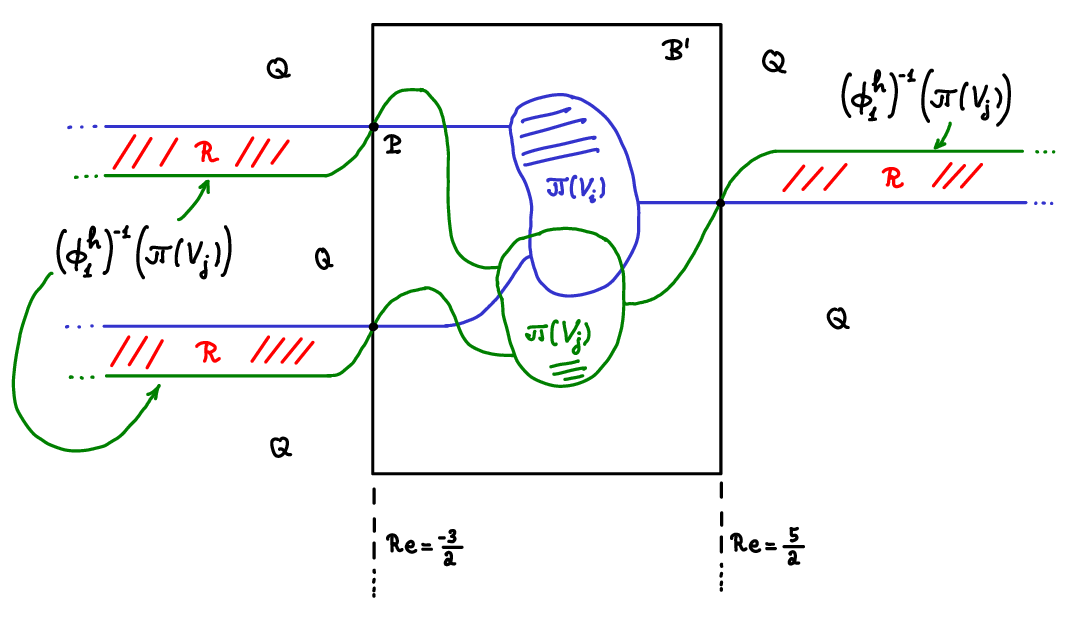, width=0.9 \linewidth}
      \end{center}
      \caption{\label{f:compactness-1} The projection to $\mathbb{C}$
        of $V_i$, $V_j$, $\phi_t^{\bar{h}}(V_j)$ and the regions $R$,
        $Q$.}
   \end{figure}
   
   \smallskip
   \noindent \textbf{Claim 2.} {\sl $v'(S_r) \cap Q = \emptyset$.}
   \smallskip
   
   To see this, first note that all connected components of $Q$ are
   unbounded, hence none of them has compact closure. Now assume by
   contradiction that $v'(S_r)$ intersects $Q$. Let $Q_{0}$ be one of
   the connected components of $Q$ that intersects $v'(S_{r})$. Note
   that $v'(\partial S_r) \subset B' \cup R$ hence $v'(\partial S_r)
   \cap Q_0 = \emptyset$.  Note also that all the chords corresponding
   to the ends of $v'$ are disjoint from $Q_0$, hence
   $(\overline{v'(S_r)} \setminus v'(S_r)) \cap Q_0 = \emptyset$.
   Finally, $\overline{v'(S_r)}$ is clearly compact. Thus we can apply
   Proposition~\ref{p:open-map} with $w = v'$, $\Sigma = S_r$, $\Gamma
   = \mathbb{C}$ and $U = Q_0$ and obtain that $\overline{Q}_0$ is
   compact. A contradiction. This concludes the proof of Claim 2.

   \smallskip Note that the intersection points between $\pi(V_i) \cap
   (\mathbb{C} \setminus \textnormal{Int\,} B')$ and
   $\pi(\phi_t^h(V_j)) \cap (\mathbb{C} \setminus \textnormal{Int\,}
   B')$ are all in $\mathcal{I}$.
   
   \noindent \textbf{Claim 3.} {\sl Let $P \in \mathcal{I}$. Then
     for every $z \in \textnormal{Int\,}(S_r)$, $v'(z) \neq P$.}
   \smallskip
   
   Indeed, if there were a point $z$ with $v'(z) = P$, then as $v'$ is
   holomorphic in a neighborhood of $z$ the open mapping would imply
   that there is a neighborhood of $P$ which is contained in the image
   of $v'$. But Any neighborhood of $P$ must intersect $Q$, hence we
   obtain a contradiction to Claim 2. This concludes the proof of
   Claim 3.
   
   \smallskip

   We are now ready to prove the main statement of the Auxiliary Lemma, i.e.
   $\textnormal{image\,}(v') \subset B'$. By what we have proved so
   far, it is enough to show that $\textnormal{image\,}(v') \cap R =
   \emptyset$ which we now proceed to show. Indeed, suppose by contradiction that
   $\textnormal{image\,}(v') \cap R \neq \emptyset$. As all the
   Hamiltonian chords corresponding to $v$ are inside $B' \times M$ we
   have only two possibilities:

   \begin{enumerate}
     \item $\textnormal{image\,}(v') \subset \mathcal{R} \cup
      \mathcal{I}$ and $\textnormal{image\,}(v')$ is not constant at
      one of the points of $\mathcal{I}$.
     \item There exists $z_0, z_1 \in S_r$ with $v'(z_0) \in R$ and
      $v'(z_1) \in B' \setminus \mathcal{I}$.
   \end{enumerate}

   We begin by ruling out possibility 2. Let $\{z_t\}_{t \in [0,1]}$
   be a path in $S_r$ connecting $z_0$ to $z_1$ and such that $z_t \in
   \textnormal{Int\,}(S_r)$ for every $0<t<1$. As $$\partial B' \cap
   \partial R \subset \mathcal{I}$$ it follows that there exists $0<
   t_0 < 1$ such that $v'(z_{t_0}) \in \mathcal{I}$. However, this
   contradicts the Claim 3. This rules out possibility 2.

   Assume now that possibility 1 occurs. Then
   $\textnormal{image\,}(v')$ is entirely contained inside
   $\overline{R_0}$, where $R_0 \subset R$ is one of the connected
   components. Now $\overline{R_0} = R_0 \cup \{P\}$, where $P \in
   \mathcal{I}$. Without loss of generality assume that $P =
   (-\tfrac{3}{2}, q)$ for some $q \in \mathbb{Z}$ (the case $P =
   (\tfrac{5}{2}, q)$ can be dealt with in an analogous way). Note
   that all the Hamiltonian chords that lie in $\overline{R_0} \times
   M$ project via $\pi$ to the point $P$. In particular, at the exit
   puncture of $S_r$, the map $v'$ tends to $P$, i.e.
   \begin{equation} \label{eq:v'-exit} \lim_{s \to \infty}
      v'(\epsilon_{k+1}(s,t)) = P.
   \end{equation}
   Put $v'' = v' \circ \epsilon_{k+1} : [0, \infty) \times [0,1] \to
   \mathbb{C}$. We have $v''(s, 0) \in (-\infty, \tfrac{3}{2}] \times
   \{q\}$ for every $s \geq 0$. We claim that for every $s \geq 0$ we
   have $\partial_s v'' (s,0) \leq 0$. Indeed, if \pbred{$\partial_s
     v''(s_0, 0)>0$} for some $s_0 \geq 0$ then as $v''$ is
   holomorphic we have $\partial_t v''(s_0, 0) = i \partial_s
   v''(s_0,0) \in i \mathbb{R}_{\geq 0}$. It follows that for $t$
   close enough to $0$, $v''(s_0,t) \in Q$. But this contradicts the statement
   at Claim 2. Thus, $\partial_s v''(s,0) \leq 0$ for every $s \geq 0$. On
   the other hand we have $\lim_{s \to \infty} v''(s,0) = P$. It
   follows that $v''(s,0) = P$ for every $s \geq 0$. From this it is
   not hard to see (e.g. by a reflection argument) that $v''$ is the
   constant map at $P$. A contradiction. This rules out possibility 1,
   and concludes the proof of the Auxiliary Lemma.
   
   \
   
   We now return to the proof of Lemma \ref{lem:compact1}.
 Let $C = C_{V_1, \ldots,
     V_{k+1}}>0$ to be a large enough constant so that the subset $B =
   [-\tfrac{3}{2}, \tfrac{5}{2}] \times [-C, C]$ satisfies $$B \supset
   \bigcup_{\tau \in [0,1]} \phi_{\tau}^h(B'),$$ where $B'$ is the
   subset  in the Auxiliary Lemma.  For every $r \in
     \mathcal{R}^{k+1}$ and every solution $u:S_r \to \mathbb{C}
     \times M$ of~\eqref{eq:jhol1} we have $\textnormal{image\,}(u)
     \subset B \times M$.

   Indeed, if $u$ is such a solution and we define $v$
   using~\eqref{eq:nat-v} then by the Auxiliary Lemma  we have $$v(z) \in B' \times
   M, \; \forall z \in S_r.$$ It follows that $$u(z) =
   \phi_{a_r(z)}^{\bar{h}}(v(z)) \in \phi_{a_r(z)}^h (B') \times M
   \subset B \times M, \; \forall z \in S_r.$$
\end{proof}

To be able to apply Gromov compactness to the moduli spaces of
solutions of~\eqref{eq:jhol1} we also need to have some apriori energy
estimates that we now proceed to establish.

Let $\mathcal{M}(\gamma_{1},\ldots, \gamma_{k+1}; \mathscr{D})$ be the
space of pairs $(r,u)$ with $r \in \mathcal{R}^{k+1}$ and $u:S_r \to
\widetilde{M}$ a solution of the equation~\eqref{eq:jhol1} with
asymptotic conditions at the punctures prescribed by the chords
$\gamma_1, \ldots, \gamma_{k+1}$. Here $\mathscr{D} =
\mathscr{D}_{V_1, \ldots, V_{k+1}} = (\form, \mathbf{J})$ is a
perturbation datum as described in~\S\ref{subsec:equation}.

Recall that the energy of a map $u:S_r \to \widetilde{M}$ is defined
as follows. Pick a volume form $\sigma$ on $S_r$. Let $Y$ be the
$1$-form on $S_r$ with values in Hamiltonian vector fields associated
to $\form$. Then $$E(u) := \tfrac{1}{2} \int_{S_r} |Du -
Y|_{\mathbf{J}}^2 \sigma.$$ Recall also that the energy is independent
of the form $\sigma$ (note that the norm $| \cdot |_J$ on $1$-forms on
$S_r$ does depend on $\sigma$).

\begin{lem}\label{lem:energy-bound1}
   There exists a constant, $C$, depending on the transition functions
   $\mathbf{a}$, the Hamiltonians $H_{V_{1}, V_{2}},\ldots,
   H_{V_{k},V_{k+1}}, H_{v_{1}, V_{k+1}}$, $h$ and the perturbation data
   $\mathscr{D}$ so that for any Hamiltonian chords $\gamma_1, \ldots,
   \gamma_{k+1}$ and every $(r,u) \in \mathcal{M}(\gamma_{1},\ldots,
   \gamma_{k+1}; \mathscr{D})$ we have
   $$E(u)\leq \int_{S_r} u^* \widetilde{\omega} + C.$$ 
\end{lem}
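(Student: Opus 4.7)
The plan is to apply the standard energy identity for Hamiltonian-perturbed Cauchy--Riemann equations (see~\cite{Se:book-fukaya-categ}, Section 8), which for a solution $u$ of~\eqref{eq:jhol1} takes the pointwise form
$$\tfrac{1}{2}|Du - Y|_{\mathbf{J}}^2\, \sigma \;=\; u^*\widetilde{\omega} \;-\; R^\form(u) \;-\; d\bigl(u^*\form\bigr),$$
where $R^\form$ is the curvature $2$-form on $S_r$ with values in $C^\infty(\widetilde{M})$ associated to $\form$ and $\sigma$ is an auxiliary volume form (the identity itself is $\sigma$-independent). Integrating and applying Stokes yields
$$E(u) \;=\; \int_{S_r} u^*\widetilde{\omega} \;-\; \int_{S_r} R^\form(u) \;-\; \int_{\partial S_r} u^*\form,$$
so it suffices to bound the last two summands uniformly in $r$ and $u$.

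For the boundary term I would decompose $\form = \form_0 + da_r \otimes \bar{h}$ as in condition ii$^*$ of~\S\ref{subsec:equation}. The piece $\form_0$ vanishes on $TC_i \subset T\partial S_r$ by ii.(a), hence contributes nothing. For $da_r\otimes \bar{h}$ along the boundary, split $\partial S_r$ into the strip-like ends and the remaining compact portion: on each strip-like end the asymptotic requirements i.(a)/ii.(a$'$) of~\S\ref{subsec:strip-ends} give $a_r = t$, so $da_r$ annihilates the boundary tangent $\partial_s$. The entire boundary integral therefore reduces to an integral over the compact piece of $\partial S_r$. There, Lemma~\ref{lem:der-bound} provides $|da_r(\xi)|\leq C^{k}_{\mathbf{a}}|\xi|_{\rho_{k+1}}$, Lemma~\ref{lem:compact1} confines $u(S_r)$ to $B_{V_1,\ldots,V_{k+1}}\times M$ where $\bar{h}$ is bounded by some $C_1$, and~\eqref{eq:metric-rho} bounds the length of $\partial S_r$ in $\rho_{k+1}$ by $A_{k+1}$. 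Together these give a uniform estimate of the form
$$\left|\int_{\partial S_r} u^*\form\right| \;\leq\; C^{k}_{\mathbf{a}}\, C_1\, A_{k+1}.$$

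For the curvature term, I would use that along each strip-like end $\form$ restricts to $\bar{H}_{V_i,V_{i+1}}\,dt$ (resp.\ $\bar{H}_{V_1,V_{k+1}}\,dt$), an $s$-independent $1$-form in Hamiltonians, whose associated curvature vanishes identically. Combined with the consistency of the choices of transition functions and strip-like ends, this means that $R^\form$ is supported in a compact subset of $S_r$ whose location can be controlled uniformly in $r \in \overline{\mathcal{R}}^{k+1}$. On this set, $u$ takes values in the fixed compact $B_{V_1,\ldots,V_{k+1}}\times M$ by Lemma~\ref{lem:compact1}, and $R^\form$ itself depends only on the fixed data $\form_0$, $\bar{h}$, and the Floer data. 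Hence $|\int_{S_r} R^\form(u)|$ is bounded by a constant independent of $r$ and $u$, completing the proof.

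The main subtlety is that the extra term $da_r\otimes \bar{h}$, which is indispensable for forcing solutions to transform under naturality to maps whose projection is holomorphic at infinity (and hence for Lemma~\ref{lem:compact1} to work), does \emph{not} vanish on $T\partial S_r$ as in the setup of~\cite{Se:book-fukaya-categ}. This is why the boundary contribution must be estimated by hand, and the uniform bound~\eqref{eq:metric-rho} on the lengths of $\partial S_r$ together with Lemma~\ref{lem:der-bound} were built into the framework precisely for this purpose; the $L^\infty$-control on $\bar{h}\circ u$ supplied by Lemma~\ref{lem:compact1} then closes the estimate.
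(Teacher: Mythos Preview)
Your approach is essentially the paper's. The one omission is in your Stokes step: $S_r$ is non-compact (the strip-like ends go to $\pm\infty$), so integrating $d(u^*\form)$ picks up not only $\int_{\partial S_r} u^*\form$ but also asymptotic contributions at the punctures. Since $\form = \bar{H}_{V_i,V_{i+1}}\,dt$ along the $i$-th end, these are $\pm\int_0^1 \bar{H}_{V_i,V_{i+1}}(t,\gamma_i(t))\,dt$ (and the analogous term for the exit). The paper bounds them immediately because all chords $\gamma_i$ lie in $B_{V_1,\ldots,V_{k+1}}\times M$ by Lemma~\ref{lem:compact1}, so $|\bar{H}|$ is uniformly controlled there; you should include this. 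Beyond that, your treatment of the curvature term and of the boundary integral $\int_{\partial S_r}\bar{h}(u)\,da_r$ via Lemma~\ref{lem:der-bound}, Lemma~\ref{lem:compact1}, and~\eqref{eq:metric-rho} matches the paper's argument (the paper invokes Lemma~\ref{lem:der-bound} directly rather than explicitly splitting $\partial S_r$ into ends and compact part, since that lemma already records the vanishing of $da_r$ on the far portions of $\partial S_r$).
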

\begin{proof} Let $r\in \mathcal{R}^{k+1}$ and a $u:S_{r}\to \C\times
   M$ a solution of~\eqref{eq:jhol1}.

   Let $s+it$ be local conformal coordinates on $S_r$. We write the
   form $\sigma$ in these coordinates as $\sigma = \lambda ds \wedge
   dt$, where $\lambda$ is a real function defined locally in the
   chart of the coordinates $(s,t)$. Write $$\form^r_{s,t} = F_{s,t}ds
   + G_{s,t}dt,$$ where $F_{s,t}, G_{s,t}: \widetilde{M} \to
   \mathbb{R}$ are smooth functions (that depend also on $r \in
   \mathcal{R}^{k+1}$).

   A simple calculation shows that
   \begin{equation} \label{eq:energy-1} \tfrac{1}{2} |Du -
      Y|_{\mathbf{J}}^2 \lambda(s,t) = \widetilde{\omega}(\partial_s
      u, \partial_t u) + dF_{s,t}(\partial_t u) - dG_{s,t}(\partial_s
      u) - \{F_{s,t}, G_{s,t} \}(u(s,t)).
   \end{equation}
   Here $\{\cdot, \cdot\}$ is the Poisson bracket, and we use the sign
   convention that $$\{F, G\} = -\widetilde{\omega}(X^F, X^G) =
   dF(X^G).$$
   
   Denote by $\widetilde{u}: S_r \to S_r \times \widetilde{M}$ the
   graph of $u$, $\widetilde{u}(z) = (z, u(z))$. Let
   $\widetilde{\form}^r = \textnormal{pr}^* \form^r$, where
   $\textnormal{pr}: S_r \times \widetilde{M} \to \widetilde{M}$ is
   the projection. We have
   \begin{equation*}
      \begin{aligned}
         & \widetilde{\form}^r_{(s,t,p)} = F_{s,t}(p)ds +
         G_{s,t}(p)dt, \quad \textnormal{where} \, (s,t) \in S_r, p \in M, \\
         & (\widetilde{u}^* \widetilde{\form}^r)_{s,t} =
         F_{s,t}(u(s,t)) ds + G_{s,t}(u(s,t))dt.
      \end{aligned}
   \end{equation*}
   It follows that $$d(\widetilde{u}^* \widetilde{\form}^r) =
   \Bigl(-\tfrac{\partial F_{s,t}}{\partial t}(u(s,t)) +
   \tfrac{\partial G_{s,t}}{\partial s}(u(s,t)) + dG_{s,t}(\partial_s
   u) - dF_{s,t}(\partial_t u) \Bigr) ds \wedge dt.$$ Substituting
   this into~\eqref{eq:energy-1} we obtain:
   \begin{equation} \label{eq:energy-2} 
      \begin{aligned}
         \tfrac{1}{2} |Du - Y|_{\mathbf{J}}^2 \sigma = & u^*
         \widetilde{\omega} - d(\widetilde{u}^* \widetilde{\form}^r) \\
         & + \Bigl(-\tfrac{\partial F_{s,t}}{\partial t}(u(s,t)) +
         \tfrac{\partial G_{s,t}}{\partial s}(u(s,t)) - \{F_{s,t},
         G_{s,t}\}(u(s,t)) \Bigr)ds \wedge dt.
      \end{aligned}
   \end{equation}

   We start by estimating the integral of the third summand
   in~\eqref{eq:energy-2}. The estimate here has already been outlined
   in~\cite{Se:book-fukaya-categ} (see Sections~8g and~9l). It
   basically follows from the fact that the form $\form$ coincides
   with the Floer datum on the strip like ends (our different choice
   of $\form$ with respect to~\cite{Se:book-fukaya-categ} plays no
   role in this estimate).

   Consider the form $R^{\form^r} \in \Omega^2(S_r,
   C^{\infty}(\widetilde{M}))$ defined by $R^{\form^r} = d \form^r +
   \tfrac{1}{2}\{\form^r \wedge \form^r \}$, where $d$ stands for the
   exterior differential on $S_r$. In the local coordinates $(s,t)$ we
   have:
   $$R^{\form^r}_{s,t} = \Bigl(-\tfrac{\partial F_{s,t}}{\partial t} +
   \tfrac{\partial G_{s,t}}{\partial s} - \{F_{s,t}, G_{s,t}\}
   \Bigr)ds \wedge dt.$$ This form does not depend on the coordinates
   $(s,t)$. It is in fact the curvature form of the connection on $S_r
   \times \widetilde{M}$ induced by $\form^r$.  See Chapter~8
   of~\cite{McD-Sa:Jhol-2} for more details.  (But note that we use
   here different sign conventions for Hamiltonian vector fields and
   for the Poisson bracket. Note also that our $\widetilde{M}$ is
   $\mathbb{C} \times M$, whereas the $\widetilde{M}$
   in~\cite{McD-Sa:Jhol-2} corresponds in our case to $S_r \times
   \mathbb{C} \times M$.)

   Using the curvature $R^{\form^r}$ we can write the third summand
   in~\eqref{eq:energy-2} as
   \begin{equation} \label{eq:energy-R-1} \int_{S_r} R^{\form^r}(u).
   \end{equation}

   Note that the form $R^{\form^r}$ vanishes identically over the part
   of the strip-like ends $\mathcal{W}_r \subset S_r$. This follows
   from the fact that $\form^r$ coincides with $\bar{H}_{V_{i},
     V_{i+1}} dt$ on $U^r_i \subset S_r$, $i=1, \ldots, k$, and with
   $\bar{H}_{V_{1}, V_{k+1}} dt$ on $U^r_{k+1} \subset S_r$, where
   $(s,t)$ are conformal coordinates adapted to the strip-like ends.
   Moreover, due to the consistency of the family of forms $\form$
   with respect to gluing/splitting, the forms $\form^r$ and
   $R^{\form^r}$, $r \in \mathcal{R}^{k+1}$, extend continuously in $r
   \in \overline{\mathcal{R}}^{k+1}$ to the partial compactification
   $\overline{\mathcal{S}}^{k+1}$ of $\mathcal{S}^{k+1}$.

   The extended form $R^{\form}$ vanishes identically in a small
   neighborhood $\mathcal{W}$ of the union of all the punctures of the
   surfaces in $\overline{S}^{k+1}$. Note that $\overline{S}^{k+1}
   \setminus \mathcal{W}$ is compact. It follows that for every
   compact subset $K \subset \mathbb{C} \times M$ there exists a
   constant $C_K$ for which we have the following uniform bound:
   $$\int_{S_r} 
   \max_{p \in K} R^{\form^r}(p) \leq C_K, \quad \forall r \in
   \overline{\mathcal{R}}^{k+1}.$$

   By Lemma~\ref{lem:compact1}, there exists a compact subset $B
   \subset \mathbb{C}$ (that does not depend on $(r,u)$),
   such that $u(S_r) \subset B \times M$.  It follows that
   $$\int_{S_r} R^{\form^r}(u) \leq 
   \int_{S_r} \max_{p \in B \times M} R^{\form^r}(p) \leq C_{B \times
     M}.$$ This provides a uniform bound (independent of $(r,u)$) for the
   integral of the third summand of~\eqref{eq:energy-2}.

   We now turn to bounding the integral of the second summand
   in~\eqref{eq:energy-2}. Here there are some differences in
   comparison to~\cite{Se:book-fukaya-categ} due to our choice of the
   form $\form$. We have:
   \begin{equation} \label{eq:energy-T2-1}
      \begin{aligned}
         \int_{S_r} -d(\widetilde{u}^* \widetilde{\form}^r) =
         & -\int_{\partial S_r} \widetilde{u}^* \widetilde{\form}^r \\
         & + \sum_{i=1}^k \int_0^1 \bar{H}_{V_i, V_{i+1}}(t,
         \gamma_i(t)) dt - \int_0^1 \bar{H}_{V_1, V_{k+1}}(t,
         \gamma_{k+1}(t))dt.
      \end{aligned}
   \end{equation}
   As all the Hamiltonian chords $\gamma_i$ lie entirely inside the
   compact subset $B_{V_1, \ldots, V_{k+1}} \times M$ (see
   Lemma~\ref{lem:compact1}, the last two terms
   in~\eqref{eq:energy-T2-1} are uniformly bounded by a bound that
   depends only on the maximum of $|\bar{H}_{V_i, V_{i+}}(t,p)|$ and
   $|\bar{H}_{V_1, V_{k+1}}(t,p)|$ over the compact set $[0,1] \times
   B_{V_1, \ldots, V_{k+1}} \times M$.

   It remains to bound (uniformly in $r$) the first term
   in~\eqref{eq:energy-T2-1}, namely $\int_{\partial S_r}
   \widetilde{u}^* \widetilde{\form}^r$. (As remarked above, in
   contrast to our case, in the setting of~\cite{Se:book-fukaya-categ}
   this term vanishes because $\form$ is assumed
   in~\cite{Se:book-fukaya-categ} to vanish on $T(\partial S_r)$.)

   Recall that $\form^r = \form^r_0 + da_r \otimes \bar{h}$ and that
   $\form^r_0 (\xi)=0$ for every $\xi \in T(\partial S_r)$. Thus we
   have:
   \begin{equation} \label{eq:eq:energy-T2-2} \int_{\partial S_r}
      \widetilde{u}^* \widetilde{\form}^r = \int_{\partial S_r}
      \form^r(u) = \int_{\partial S_r} \bar{h}(u) da_r.
   \end{equation}
   Put $C_{\bar{h}} = \max_{p \in B_{V_1, \ldots, V_{k+1}}\times M}
   |\bar{h}(p)|$. By Lemma~\ref{lem:compact1} we have $|\bar{h}(u(z))|
   \leq |C_{\bar{h}}|$ for every $z \in S_r$. It now follows from
   Lemma~\ref{lem:der-bound} and~\eqref{eq:metric-rho} that
   $$\Bigl | \int_{\partial S_r}  \widetilde{u}^* \widetilde{\form}^r \Bigr |
   \leq C_{\bar{h}} C^{k}_{\mathbf{a}} A_{k+1}.$$
   This yields a uniform bound for the integral of the second summand
   in~\eqref{eq:energy-2}, and concludes the proof of the lemma.
\end{proof}

It remains to bound the symplectic area of the solutions
of~\eqref{eq:jhol1}. For this we make use of
condition~\eqref{eq:Hlgy-vanishes}.

\begin{lem}\label{lem:symp-area-bound}
   Let $(r,u), (r',u') \in \mathcal{M}(\gamma_{1},\ldots, \gamma_{k},
   \gamma_{k+1};\mathscr{D})$. Then
   $$\int_{S_r} u^* \widetilde{\omega} - \int_{S_{r'}} u'^* \widetilde{\omega} = 
   \rho(\mu(u)-\mu(u'))$$ where $\mu$ is the Maslov index and $\rho$
   is the monotonicity constant.
\end{lem}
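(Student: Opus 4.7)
\textbf{Proof plan for Lemma~\ref{lem:symp-area-bound}.}

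The plan is to glue $u$ with the orientation-reversal $\bar{u}'$ of $u'$ along their common asymptotic chords, producing a relative homotopy class whose symplectic area is precisely $\int u^*\widetilde{\omega} - \int u'^*\widetilde{\omega}$ and whose Maslov index is $\mu(u) - \mu(u')$, and then to apply uniform monotonicity together with condition~\eqref{eq:Hlgy-vanishes} to conclude.

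Concretely, I would first observe that both $u$ and $u'$ are defined on $(k+1)$-punctured disks $S_r$, $S_{r'}$, carry the same boundary conditions $V_1, \ldots, V_{k+1}$ along the corresponding boundary arcs $C_i$, $C'_i$, and are asymptotic at the corresponding punctures to the same Hamiltonian chords $\gamma_1, \ldots, \gamma_{k+1}$. Using the strip-like ends, I would cut each map at a sufficiently large parameter $T$ and identify the resulting pieces of boundary of $S_r$ with those of $S_{r'}$, reversing orientation on the second factor. This yields a continuous map $w : \Sigma \to \widetilde{M}$, where $\Sigma$ is a topological disk whose boundary decomposes into $k+1$ arcs $\widetilde{C}_1, \ldots, \widetilde{C}_{k+1}$ with $w(\widetilde{C}_i) \subset V_i$. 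The boundary arcs of $w$ coming from $u$ and from $u'$ are joined by the short pieces of chords $\gamma_i|_{[0, \varepsilon]}$ and $\gamma_i|_{[1-\varepsilon, 1]}$, whose contribution to the symplectic area is zero; hence $\int_\Sigma w^*\widetilde{\omega} = \int_{S_r} u^*\widetilde{\omega} - \int_{S_{r'}} u'^*\widetilde{\omega}$. Similarly, an additivity argument for the Maslov index (comparing trivializations along the glued ends) gives $\mu(w) = \mu(u) - \mu(u')$.

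To convert the class of $w$ into an absolute area-versus-Maslov relation, I would use that each $V_i$ satisfies~\eqref{eq:Hlgy-vanishes}: the $k+1$ boundary loops of $\Sigma$ (after cutting $\Sigma$ along arcs connecting the corners, so that each $\widetilde{C}_i$ becomes a loop in $V_i$ based at $\gamma_i(0)$) can be capped off in $\widetilde{M}$ by disks, producing a class in $\pi_2(\widetilde{M}, V_i)$ for each $i$ whose total symplectic area equals $\int_\Sigma w^*\widetilde{\omega}$ and whose total Maslov index equals $\mu(w)$. Here the cylindrical ends of the $V_i$ together with~\eqref{eq:Hlgy-vanishes} and the uniform monotonicity of the family $\mathcal{CL}_d(\mathbb{C}\times M)$ with common constant $\rho$ and common $d_V=d$ ensure that the relation $\omega = \rho\,\mu$ on $\pi_2(\widetilde{M}, V_i)$ extends to each summand. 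Summing over $i$ gives $\int_\Sigma w^*\widetilde{\omega} = \rho\,\mu(w)$, which is exactly the asserted equality.

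The main obstacle I expect is the bookkeeping of the Maslov index additivity under gluing, in particular verifying that the Fredholm/Maslov index $\mu(u)$ used in the paper is compatible with the geometric Maslov index of the capped-off glued disk, and handling the arcs along short pieces of the chords $\gamma_i$ near the punctures (which form the ``corners'' of $\Sigma$). Both of these are standard once the correct conventions are fixed; the $\pi_1$-vanishing assumption~\eqref{eq:Hlgy-vanishes} is precisely what is needed so that the capping disks exist and the area/Maslov pairing descends to a well-defined homomorphism on the relevant relative $\pi_2$ groups.
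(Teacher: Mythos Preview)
Your strategy---glue $u$ to the orientation-reversed $u'$ along the common asymptotic chords, then cap off using~\eqref{eq:Hlgy-vanishes} and invoke uniform monotonicity---is exactly the paper's approach. But there is a topological error that propagates into a genuine gap in your capping step.

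The glued surface $\Sigma$ is not a disk. Gluing two truncated $(k+1)$-punctured disks along the $k+1$ intervals coming from the strip-like ends yields a genus-$0$ surface with $k+1$ boundary \emph{circles} $\partial_1\Sigma,\ldots,\partial_{k+1}\Sigma$, the $i$-th one lying in $V_i$; an Euler-characteristic count gives $\chi=1+1-(k+1)=1-k$, hence $k+1$ boundary components. Your description of $\Sigma$ as a polygon with $k+1$ arcs meeting at corners is incorrect for $k\geq 1$, and the subsequent ``cutting along arcs connecting the corners'' is not a meaningful operation on the actual surface.

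Once the topology is corrected, your capping argument still has a missing link. Capping each $\partial_i\Sigma$ by a disk $\Gamma_i\in\pi_2(\widetilde{M},V_i)$ gives $\widetilde{\omega}(\Gamma_i)=\rho\,\mu(\Gamma_i)$ by monotonicity of $V_i$, but summing these identities does \emph{not} give $\int_\Sigma w^*\widetilde{\omega}=\rho\,\mu(w)$: there is no direct relation between $\sum_i\widetilde{\omega}(\Gamma_i)$ and $\int_\Sigma w^*\widetilde{\omega}$, nor between $\sum_i\mu(\Gamma_i)$ and $\mu(u)-\mu(u')$, without further input. The paper supplies this by forming the closed sphere $N'=\Sigma\cup\bigcup_i\Gamma_i$ and fixing a trivialization $\theta$ of $T\widetilde{M}$ over it. The Maslov numbers of the boundary circles are then computed in two ways---via $\Sigma$ (recovering $\mu(u)-\mu(u')$) and via the caps $\Gamma_i$ together with $2c_1(N')$---and the spherical relation $\widetilde{\omega}(N')=2\rho\,c_1(N')$ closes the loop. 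You need this sphere class; the relative classes in $\pi_2(\widetilde{M},V_i)$ alone are not enough.
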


\begin{proof} We start with the case when the inclusions
   $\pi_{1}(V_{i})\to \pi_{1}(\C\times M)$ are all null. We will then
   modify the argument to cover the case when the image of these
   inclusions is torsion and under the additional assumption that
   $c_{1}$, $\int \omega$ are proportional on $H_{2}(M;\Z)$.  The
   argument is essentially the same as the one originally used by
   Oh~\cite{Oh:HF1} for strips. We first glue $u$ and $-u'$ along the
   chords $\gamma_{i}$. This produces a surface $N$ of genus $0$ with
   boundary components $\partial_{i} N $, $1\leq i\leq k+1$, together
   with a map $j:N\to \C\times M$ whose image is the union of the
   closures of the images of $u$ and $u'$ and whose $i$-th boundary
   component is mapped to $u(C_{i})\cup u'(C_{i})\subset V_{i}$.  We
   now use trivializations of $T(\C\times M)$ over $u$ and $u'$ to get
   a trivialization $\xi$ of $j^{\ast}T(\C\times M)$ and we compute
   $$\mu (u)-\mu(u')= \mu_{\xi}(\partial_{k+1} N)-
   \sum_{i=1}^{k}\mu_{\xi}(\partial _{i} N) ~.~$$ This difference can
   also be computed using other trivializations of $T(\C\times M)$
   over $N$.  Such trivializations are not unique but it is easy to
   see that the difference above is independent of this choice.

   By (\ref{eq:Hlgy-vanishes}) $[\partial_{i} N]=0\in \pi_{1}(M)$ for
   all $i$.  Notice that $N$ is homeomorphic to a sphere $S^{2}$ with
   $k+1$ disjoint disks removed from it.    Let
   $\Gamma_{i}$, $1\leq i\leq k+1$ be $k+1$ copies of the $2$- disk.
   Consider the surface $N'= N\cup _{i=1}^{k+1} \cup_{\partial_{i} N}
   \Gamma_{i}$. In other words, $N'$ is obtained by gluing the disks
   $\Gamma_{i}$ to $N$ along the boundary components $\partial_{i}N$.
   Clearly, $N'$ is homeomorphic to a sphere.

   We now notice that the map $j:N\to M$ extends to a map $x:
   N'\to M$.  We fix a trivialization
   $\theta$ of $x^{\ast}T(\C\times M)$.  We restrict this
   trivialization to $N$ and we get:
   $$\mu(u)-\mu(u')=\mu_{\xi}(\partial_{k+1} N)-
   \sum_{i=1}^{k}\mu_{\xi}(\partial _{i} N)=
   \mu_{\theta}(\partial_{k+1} N)-\sum_{i=1}^{k}\mu_{\theta}(\partial
   _{i} N) ~.~$$ On the other hand $\mu_{\theta}(-)$ can also be
   calculated using the disks $\Gamma_{i}$, $1\leq i\leq k+1$, and $x$
   (as the trivialization $\theta$ restricts to trivializations of the
   $\Gamma_{i}$) and so we have, by uniform monotonicity
   $\mu_{\theta}(\partial_{i}N)=\mu(\Gamma_{i})=\frac{1}{\rho}(\omega_{0}\oplus
   \omega) (\Gamma_{i})$, $1\leq i\leq k$, and
   $\mu_{\theta}(\partial_{k+1}N)=2c_{1}(x)-\mu(\Gamma_{k+1}) =
   \frac{1}{\rho}((\omega_{0}\oplus \omega) (x)-(\omega_{0}\oplus
   \omega) (\Gamma_{k+1}))$.  We obtain:
   \begin{eqnarray}\mu(u)-\mu(u')=\frac{1}{\rho}
      ((\omega_{0}\oplus\omega)(x)-\sum_{i=1}^{k+1}
      (\omega_{0}\oplus\omega)(\Gamma_{i}))=\\ \nonumber
      =\frac{1}{\rho}(\omega_{0}\oplus
      \omega)(N)=\frac{1}{\rho}((\omega_{0}\oplus\omega)(u)-
      (\omega_{0}\oplus\omega)(u'))
   \end{eqnarray}
   
   It now remains to consider the case when the inclusions
   $\pi_{1}(V_{i})\to \pi_{1}(\C\times M)$ are torsion but
   additionally $c_{1}$ and $\int \omega$ are proportional on
   $H_{2}(M;\Z)$.  Let $l\in \mathbb{N}$ be big enough so that $l
   \cdot [\partial_{i} N]=0$ in $\pi_{1}(\C\times M)$, $\forall i$.
   Let $\widetilde{\Gamma}_{i}\in\pi_{2}(\C\times M,V_{i})$ so that
   $\partial \widetilde{\Gamma}_{i} = l\cdot\partial_{i}N$. Define the
   singular chain in $\C\times M$
   $$\widetilde{x}= l\cdot N+\sum_{i=1}^{k+1}\widetilde{\Gamma}_{i}~.~$$ 
   Notice that this is a cycle and thus we can write:
   \begin{eqnarray}l(\mu(u)-\mu(u'))=\frac{1}{\rho}
      ((\omega_{0}\oplus\omega)(\widetilde{x})-\sum_{i=1}^{k+1}
      (\omega_{0}\oplus\omega)(\widetilde{\Gamma}_{i}))=\\ \nonumber
      =\frac{1}{\rho}(\omega_{0}\oplus \omega)(l\cdot
      N)=\frac{l}{\rho}((\omega_{0}\oplus\omega)(u)-
      (\omega_{0}\oplus\omega)(u'))
   \end{eqnarray}
 
\end{proof}

Let now $\mathcal{M}_{\nu}(\gamma_{1},\ldots, \gamma_{k},
\gamma_{k+1}; \mathscr{D}) \subset \mathcal{M}(\gamma_{1},\ldots,
\gamma_{k}, \gamma_{k+1}; \mathscr{D})$ be the moduli space formed by
those $u$'s with $\mu(u)=\nu$. It follows from the previous lemma that
all elements in $$\mathcal{M}_{\nu}(\gamma_{1},\ldots, \gamma_{k},
\gamma_{k+1}; \mathscr{D})$$ have the same $\widetilde{\omega}$-area,
hence by Lemma~\ref{lem:energy-bound1} the energy of such curves $u$
is uniformly bounded.  In our applications these are the moduli spaces
that will be used and $\nu$ will only take the values $0$ and $1$.

\subsection{Transversality} \label{subsec:transversality} In order to
define the Fukaya category $\fuk_{cob}^{d}(\C\times M)$ we need to be
able to choose regular Floer data, $\mathscr{D}_{V',V'} = (\bar{H}_{V,
  V'}, J_{V, V'})$ as well as regular perturbation data
$\mathscr{D}_{V_1, \ldots, V_{k+1}} = (\form, \mathbf{J})$ for all
Lagrangian cobordisms involved in the construction.

The basic arguments follow those in~\cite{Se:book-fukaya-categ}. Below
we explain how they adapt to our case.

We consider the solutions $u$ of~\eqref{eq:jhol1} associated to the
Floer data $\bar{H}_{V,V'}$, $J_{V,V'}$ as well as to the perturbation
data $\mathscr{D}_{V_1, \ldots, V_{k+1}} =(\form, \mathbf{J})$. By
Lemma~\ref{lem:compact1}, these solutions are contained inside the
sets $B_{V,V'}\times M$ and, respectively, $B_{V_1, \ldots, V_{k+1}}
\times M$. We also have that in the interior of these sets, and away
from the strip-like ends, $\bar{H}_{V,V'}$, $J_{V,V'}$ and
respectively $\form_{0}$, $\mathbf{J}$ are not restricted. However,
the Floer data have to satisfy conditions $ii$, $iii$ and the
perturbation data have to satisfy the conditions $ii^{\ast}$ and
$iii^{\ast}$ from \S\ref{subsec:equation}.  In other words, the
admissible perturbations - for the Hamiltonian part as well as the
almost complex structure - have to be ''vertical'' (or contained in
$M$) in a small neighborhood of each of the bottlenecks.  \pbredb{To
  show that the data can be chosen to insure regularity, we first make
  generic choices of Floer and perturbation data in $M$ so that all
  curves $u$ with $\pi\circ u$ constant equal to a single bottleneck
  are regular in $M$ (this means, in particular, to choose generically
  $H_{V,V'}$ and $J_{V,V'}$ outside of $K_{V,V'}$ and similarly for
  the vertical part of the forms $\Theta_{0}$ outside of
  $K_{V_{1},\ldots ,V_{k+1}}\times M$ - see the point $ii^{\ast}$ in
  \S\ref{subsec:equation} - as well as for the relevant almost complex
  structures). We claim that such curves $u$ continue to be regular
  also when viewed in $\mathbb{C} \times M$ . To see this we appeal to
  the specific form of our bottlenecks, namely to the fact that they
  correspond to maxima of the function $h_{\pm}$. The linearized
  operator corresponding to equation~\eqref{eq:jhol1} at such
  solutions $u$ split into horizontal and vertical components. By the
  results of~\S\ref{sb:ind-reg} (more specifically,
  Corollary~\ref{c:ind-0} applied with $f_1 = \cdots = f_{k+1} = h_-$
  or $h_+$) it follows that the index of the horizontal operator is
  $0$ and that it is surjective.  Since the perturbation data were
  chosen generically in the fiber the vertical part of the operator is
  also surjective.  Consequently the total linearized operator at
  solutions $u$ with constant projection at the bottleneck is
  surjective. This proves regularity for curves $u$ with constant
  projection at the bottlenecks.}

\pbredb{It remains to analyze the moduli spaces of curves that do not
  project to a constant at a bottleneck. At this point it is useful to
  consider also the curves $v$ that correspond to $u$ via the
  transformation~\eqref{eq:nat-v}. Clearly, regularity for $u$ is
  equivalent to regularity for its corresponding $v$. Note also that
  $u$ projects to a constant at a bottleneck iff its corresponding $v$
  has the same property. Now, if $v$ does not project to a constant at
  a bottleneck then its projection can not remain only in a small
  neighborhood of any fixed bottleneck: otherwise $\pi\circ v$ would
  have the same fixed bottleneck point as entry as well as exit which
  in turn shows that $\pi\circ v$ is constant (indeed, it is easy to
  see by an application of the open mapping theorem that a bottleneck
  can only be an exit if the curve $\pi\circ v$ is constant). Thus,
  these curves reach the region where all the data can be chosen
  freely.  The arguments from \cite{Se:book-fukaya-categ} together
  with the standard regularity argument (as for instance
  in~\cite{McD-Sa:Jhol-2}) imply that the Floer and perturbation data
  can be chosen to be both regular and consistent.}


\subsection{Summing it up} \label{subsec:fin-cob} In view of the
compactness results in~\S\ref{subsec:energy} and the regularity
properties discussed in~\S\ref{subsec:transversality}, the
construction of the Fukaya $A_{\infty}$ category
$\fuk_{cob}^{d}(\C\times M)$ can proceed as in the case covered
in~\cite{Se:book-fukaya-categ}, with the modifications summarized
in~\S\ref{sec:fuk-M} for the monotone setting.  As
in~\cite{Se:book-fukaya-categ}, this process actually leads to a
coherent system of $A_{\infty}$-categories.  In our case, the
categories in this system also depend on the choice of transition
functions $\mathbf{a}$ as well as on the function $h$.  However, as
described in~\cite{Se:book-fukaya-categ} this dependence can be dealt
with and the resulting $A_{\infty}$ categories are all
quasi-equivalent. In particular, 
the associated homology categories are all equivalent in a canonical way.
These invariance properties will be made explicit in the next section.

For the convenience of the reader, we collect below the main
ingredients in the construction of this category and fix relevant
notation.

\subsubsection*{A. Objects} The objects of $\fuk_{cob}^{d}(\C\times
M)$ are the Lagrangian cobordisms $V\in \mathcal{CL}_{d}(M)$ (those
cobordisms $V\subset \C\times M$, as given in
Definition~\ref{def:Lcobordism}, which are uniformly monotone with
monotonicity constant $\rho>0$ and with the number
$d_{V}=d\in\Z_{2}$).

\subsubsection*{B. Morphisms} To define the morphisms we assume
regular Floer data $(\bar{H}_{V,V'}, J_{V,V'})$ fixed for any pair
$V,V'\in\mathcal{CL}_{d}(M)$ - subject to the conditions
in~\S\ref{subsec:equation} - and we put
$$\hom(V,V')= CF(V,V'; \bar{H}_{V,V'}, J_{V,V'})= \Z_{2} 
\langle \mathcal{P}_{V,V'} \rangle~.~$$

\subsubsection*{C. The Floer complex} The differential $\mu_1$ on
$C(V,V')$, is defined by counting index-$0$ solutions $u:\mathbb{R}
\times [0,1] \longrightarrow \widetilde{M}$ of the
equation~\eqref{eq:jhol1}, modulo reparametrization by $\R$, in the
special case when $k=1$ and $K=dt\otimes \bar{H}_{V,V'}$, $J=J_{V,V'}$
 all over the domain of $u$. The compactness properties
in~\S\ref{subsec:energy} obviously apply to this case too and, as a
consequence, we deduce that the complex $(C(V,V'), \mu_1)$ satisfies
all the usual properties of the standard Lagrangian Floer complex. We
denote the resulting Floer homology by $HF(V,V')$.

\begin{rem}\label{rem:comp-Floer-compl} 
   Given two cobordisms $V$, $V'$ as above as well as the profile
   function $h:\R^{2}\to \R$, consider the complex $C=CF(V,V';
   (H',h);\widetilde{\mathbf{J}})$ as defined in \cite{Bi-Co:cob1}.
   This complex coincides with a complex $CF(V,V'; \bar{H}, \bar{J})$
   as defined above for appropriate $\bar{H}$, $\bar{J}$.  Thus, for
   cobordisms, the definition of
   the Floer complex in the current paper generalizes the one in \S
   4.3 of \cite{Bi-Co:cob1}.  It is important to note that the
   particular choice of the shape of the profile function away from
   $[0,1]\times \R$, in particular the fact that the bottlenecks
   correspond to local maxima of the functions $h_{\pm}$, implies, as
   shown in \cite{Bi-Co:cob1}, that $HF(V,V)\cong QH(V,\partial V)$
   (here $QH(V, \partial V)$ is understood without grading 
   and over $\Z_{2}$). This isomorphism was obtained in \S 5.2
   \cite{Bi-Co:cob1} by a PSS-argument that makes use of moving
   boundary conditions.  Alternatively, one could use a similar
   PSS-type morphism
   $$PSS: \mathcal{C}(f, J_{V,V})\to CF(V,V; \bar{H}_{V,V}, J_{V,V}),$$
   where $\mathcal{C}(f, J_{V,V})$ is the pearl complex associated to
   a function on $V$ whose negative gradient points outward along the
   boundary of $V$.  The advantage of this morphism is that the
   boundary conditions are now fixed. Either way, by adapting the
   usual argument from the compact case, it is easy to see that this
   PSS morphism is compatible with multiplication. Thus the $\mu_{2}$
   product on $HF(V,V)$ has a unit because $QH(V,\partial V)$ has as
   unit the fundamental class of $V$ relative to $\partial V$.
\end{rem}

\subsubsection*{D. The cobordism Fukaya category} Finally, the
definition of the higher multiplications, $\mu_{k}$, $k \geq 2$, in
$\fuk_{cob}^{d}(\C\times M)$ is given by counting solutions of the
equation (\ref{eq:jhol1}).  By Remark \ref{rem:comp-Floer-compl} the
resulting category is homologically unital.


\subsection{Invariance properties of $\fuk^{d}_{cob}(\C\times M)$}
\label{sb:inv-fuk-2}

Similarly to $\fuk^d(M)$, the Fukaya category of cobordisms
$\fuk^d_{cob}(\mathbb{C} \times M)$ depends on auxiliary choices. The
goal of this section is to explain how to compare these different
$A_{\infty}$-categories. Recall that the auxiliary choices required in
the construction consist of strip-like ends, Floer data for each pair
of objects, compatible perturbation data as well as one additional
choice, specific to our construction, namely the profile function $h:
\mathbb{R}^2 \to \mathbb{R}$ introduced in~\S\ref{subsec:equation}.

Given a profile function $h$ we denote by $\mathcal{I}_{cob}^{h}$ the
set of all the possible regular auxiliary structures as above, with
Floer and perturbation data corresponding to $h$. For $i \in
\mathcal{I}_{cob}^h$ we write $\fuk^d(\mathbb{C} \times M; i,h)$ for
the corresponding Fukaya category (the $h$ in $\fuk^d(\mathbb{C}
\times M; i,h)$ is superfluous since it is already encoded in $i$, but
we write it nevertheless).

The main purpose of this section is to prove the following result.
\begin{prop}\label{prop:inv1} The family of
   categories $\fuk^d_{cob}(\mathbb{C} \times M; i,h)$, $h \in
   \mathcal{H}$, $i \in \mathcal{I}^h_{cob}$ forms a coherent system
   in the sense of~\cite{Se:book-fukaya-categ}. In particular they are
   all quasi-equivalent, and in fact quasi-isomorphic.
\end{prop}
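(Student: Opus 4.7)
The plan is to follow the strategy of Chapter~10 of \cite{Se:book-fukaya-categ}, adapted to our setting: build one large $A_\infty$-category $\fuk^d_{cob}(\mathbb{C} \times M)^{\textnormal{tot}}$ whose objects are triples $(V, i, h)$, together with a full and faithful embedding of each $\fuk^d_{cob}(\mathbb{C} \times M; i, h)$ into it, and then read off the desired coherent system from these embeddings. There are two independent sources of auxiliary data to deal with: the profile function $h$ and the strip-like ends / Floer data / perturbation data packaged in $i \in \mathcal{I}^h_{cob}$. I would treat them in this order.

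First I would fix $h$ and compare two data sets $i_0, i_1 \in \mathcal{I}^h_{cob}$. Choose a smooth interpolating family $\{i_s\}_{s\in[0,1]}$ of Floer and perturbation data, with $i_s \equiv i_0$ near $s=0$ and $i_s \equiv i_1$ near $s=1$; for the coherence of the higher compositions, extend this to families indexed by $[0,1]^N$ that are compatible with gluing as in \cite{Se:book-fukaya-categ}. Since every $i_s$ shares the same profile function $h$, conditions i--iv of~\S\ref{subsec:equation} continue to hold uniformly in $s$. Consequently the compactness result of Lemma~\ref{lem:compact1}---whose proof only relied on the shape of $h$ near the bottlenecks and on the naturality transformation~\eqref{eq:nat-v}---applies verbatim to the parametrized moduli spaces, and the energy bound of Lemma~\ref{lem:energy-bound1} together with the symplectic area bound of Lemma~\ref{lem:symp-area-bound} remain valid with constants that depend continuously on $s$. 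The standard Seidel-type argument then produces an $A_\infty$-functor $\fuk^d_{cob}(\mathbb{C}\times M; i_0, h) \to \fuk^d_{cob}(\mathbb{C}\times M; i_1, h)$ which is the identity on objects and a quasi-isomorphism on every morphism complex.

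Next I would vary the profile function itself. The space of admissible $h$'s (those satisfying i--iv of~\S\ref{subsec:equation}) is contractible, and crucially one may connect any two profile functions $h_0, h_1$ by a path $\{h_s\}_{s\in[0,1]}$ whose bottlenecks remain at the fixed points $(-\tfrac{3}{2}, i)$ and $(\tfrac{5}{2}, i)$ and remain non-degenerate local maxima of $h_{s,\pm}$ for every $s$. Choosing Floer and perturbation data adapted to $h_s$ and running the same parametrized construction, the Auxiliary Lemma inside the proof of Lemma~\ref{lem:compact1} goes through with $h$ replaced by $h_s$---the only ingredients used were the existence of the bottlenecks, the $h_s$-invariance of the strip $[-\tfrac{3}{2},\tfrac{5}{2}]\times \mathbb{R}$, and the naturality change of variables---and the energy estimate is again uniform in $s$ because all quantities involved depend continuously on $h_s$. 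Regularity along the family is obtained as in~\S\ref{subsec:transversality}, using again that the bottlenecks are maxima and Corollary~\ref{c:ind-0}.

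Combining the two steps, the parametrized moduli spaces assemble into the structural maps of $\fuk^d_{cob}(\mathbb{C}\times M)^{\textnormal{tot}}$, and each $\fuk^d_{cob}(\mathbb{C}\times M; i, h)$ embeds into it as a full $A_\infty$-subcategory. The resulting comparison functors are identities on objects and quasi-isomorphisms on morphisms, and their compatibility under composition (coherence) is built in by construction of the total category. I expect the principal difficulty to lie in step two: while the compactness machinery of~\S\ref{subsec:energy} is designed to be robust, one has to verify that the profile function can indeed be varied while keeping the bottlenecks pinned and the Hamiltonian flow well-behaved on $\mathbb{R}$ for all $t$, and that the perturbation data chosen in a neighborhood of each bottleneck can be interpolated without destroying the horizontal holomorphicity constraint iii$^{\ast}$. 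This is a matter of a careful but routine construction of the interpolating data, analogous to---but slightly more delicate than---the one already carried out in~\S\ref{subsec:strip-ends}.
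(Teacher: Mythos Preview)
Your overall architecture matches the paper's: build a single $A_\infty$-category $\fuk^{d,\textnormal{tot}}_{cob}(\mathbb{C}\times M)$ with objects $(V,i,h)$, embed each $\fuk^d_{cob}(\mathbb{C}\times M;i,h)$ as a full subcategory, and read off the coherent system. Where your proposal diverges is in how the higher operations $\widehat\mu_k$ on a mixed tuple $(V_1,i_1,h_1),\ldots,(V_{k+1},i_{k+1},h_{k+1})$ are actually produced, and this is where the paper's technical content lies. A one-parameter path $\{h_s\}$ (or an $[0,1]^N$-family) does not suffice: to write a Floer-type equation on $S_r$ that restricts on the $j$-th strip-like end to the equation governed by $h_j$, the paper enlarges the perturbation datum to include a family of profile functions $\mathbf{h}=\{h^{r,z}\}$ parametrized by the \emph{points of the domain} $z\in S_r$. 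This modifies the equation---the $1$-form $Y$ of~\eqref{eq:jhol1} is replaced by $\mathcal{Y}_z=X^{\bar h^{r,z}}\,da_r+\beta^r_{a_r(z)}+Y_0$, with a new term $\beta^r_\tau$ coming from differentiating $z\mapsto\phi_\tau^{h^{r,z}}$---and Lemma~\ref{lem:compact1} must be re-established for this equation via the naturality change $u(z)=\phi^{\bar h^{r,z}}_{a_r(z)}(v(z))$. Your sketch does not isolate this mechanism, so as written it does not produce the $\widehat\mu_k$.

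The paper also organizes the argument differently from your uniform treatment. It first builds the total category only over $\mathcal{H}(h^0)$, the profile functions agreeing with a fixed $h^0$ outside $[-2,3]\times\mathbb{R}$; on this class the domain-dependent interpolation above is unproblematic because all $h^{r,z}$ coincide at infinity and the bottleneck geometry is uniform. The remaining freedom---the constants $\alpha^\pm$ governing $h$ at infinity---is then handled not by interpolation but by an explicit chain-level bijection $\tau_{h^1,h^0}$: since all solutions already live in $[-\tfrac32,\tfrac52]\times\mathbb{R}\times M$ by Lemma~\ref{lem:compact1}, changing $h$ outside $[-\tfrac32-\epsilon,\tfrac52+\epsilon]\times\mathbb{R}$ leaves the moduli spaces literally unchanged, giving a strict isomorphism of $A_\infty$-categories. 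The paper remarks that one could instead interpolate all profile functions at once (closer to what you propose), but this requires a separate compactness verification that is not covered by your appeal to Lemma~\ref{lem:compact1} as stated.
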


The proof of this proposition will occupy \S\ref{subsubsec:tot-profile},
\S\ref{subsubsec:def-tot}, \S\ref{subsubsec:quasi-eq} and
\S\ref{subsubsec:inf-prof} below.  We then deal with another
invariance issue: our definition of a profile function involves some
additional choices that were made to simplify the compactness arguments in the
construction of $\fuk_{cob}^{d}(\C\times M)$. Mainly, the real
coordinates of the bottlenecks have been fixed at $\frac{3}{2}$ and
$\frac{5}{2}$ - see Figure \ref{fig:kinks}. We will show in
\S\ref{subsubsec:bottle} an invariance result with respect to this
choice. Finally, in \S\ref{subsubsec:ham} we discuss the action of
horizontal Hamiltonian isotopies on the cobordism Fukaya category.

\subsubsection{A {\bf tot} category integrating the profile functions:
  basic ingredients} \label{subsubsec:tot-profile} For a profile
function $h^0 : \mathbb{R}^2 \longrightarrow \mathbb{R}$. We denote by
$\mathcal{H}(h^0)$ the set of all profile functions $h: \mathbb{R}^2
\longrightarrow \mathbb{R}$ which coincide with $h^0$ outside of
$[-2,3] \times \mathbb{R}$ and have the same bottlenecks as $h^{0}$
(this last condition is automatic in view of our current definition of
profile function).

We will now turn the collection of $A_{\infty}$-categories
$\fuk^d_{cob}(\mathbb{C} \times M; i,h)$, $h \in \mathcal{H}(h^0)$, $i
\in \mathcal{I}_{cob}^{h}$, into a coherent system of
$A_{\infty}$-categories in the sense of~\S\ref{sbsb:families-A-infty}.
We follow closely the approach from~\cite{Se:book-fukaya-categ} which
has been also summarized (for $\fuk^d(M)$) in~\S\ref{sbsb:inv-fuk-1},
namely we will construct an $A_{\infty}$-category $\fuk^{d,
  tot}_{cob}(\mathbb{C} \times M; h^0)$ which contains all the
$\fuk^d_{cob}(\mathbb{C} \times M; i,h)$ as full subcategories, and
moreover the embeddings $\fuk^d_{cob}(\mathbb{C} \times M; i,h)
\longrightarrow \fuk^{d, tot}_{cob}(\mathbb{C} \times M; h^0)$ are all
quasi-equivalences.

The construction of $\fuk^{d, tot}_{cob}(\mathbb{C} \times M; h^0)$
will be analogous to the one used for $\fuk^{d}_{cob}(\mathbb{C}
\times M; i, h)$ (as described in~\S\ref{subsec:strip-ends}
-~\S\ref{subsec:fin-cob}) only that we will need more general
perturbation data. Here are the details of the construction.

Fix $k \geq 2$ and $k+1$ cobordisms $V_1, \ldots, V_{k+1} \in
\mathcal{CL}_{d}(\mathbb{C} \times M)$. Fix also $k+1$ profile
functions $h_{V_1, V_2}, \ldots, h_{V_k,V_{k+1}}, h_{V_1, V_{k+1}} \in
\mathcal{H}(h^0)$ and regular Floer data $\mathscr{D}_{V_1,V_2},
\ldots, \mathscr{D}_{V_k, V_{k+1}}, \mathscr{D}_{V_1, V_{k+1}}$ so
that $\mathscr{D}_{V_i, V_{i+1}}$ is defined according to the recipe
from~\S\ref{subsec:equation} using the profile function $h_{V_i,
  V_{i+1}}$, and similarly $\mathscr{D}_{V_1, V_{k+1}}$ is defined
using the profile function $h_{V_1, V_{k+1}}$.

We would like to define a higher composition
\begin{equation} \label{eq:mu-k-tot} \widehat{\mu}_k: CF(V_1, V_2;
   \mathscr{D}_{V_1, V_2}) \otimes \cdots \otimes CF(V_k, V_{k+1};
   \mathscr{D}_{V_k, V_{k+1}}) \longrightarrow CF(V_1, V_{k+1};
   \mathscr{D}_{V_1, V_{k+1}}).
\end{equation}

To  this end, we need to define new perturbation data
$\mathscr{P}_{V_1, \ldots, V_{k+1}} = (\form, \mathbf{J},
\mathbf{h})$. The first two structures are similar to those
in~\S\ref{subsec:equation} and we will indicate the differences later.
The third structure $\mathbf{h}$ consists of a family of profile
functions $\mathbf{h} = \{ h^{r, z}: \mathbb{R}^2 \to \mathbb{R}\}_{z
  \in S_r}$ parametrized by $r \in \mathcal{R}^{k+1}$, $z \in S_r$.
The functions $h^{r,z}$ are required to satisfy the following
properties for every $r \in \mathcal{R}^{k+1}$:
\begin{enumerate}
  \item[i.] $h^{r,z} \in \mathcal{H}(h^0)$ for every $z \in S_r$.
  \item[ii.] For every $z$ in the $i$'th in-going strip-like end of $S_r$
   we have $h^{r,z} = h_{V_i, V_{i+1}}$, $i=1, \ldots, k$, and over
   the out-going strip-like end we have $h^{r,z} = h_{V_1, V_{k+1}}$.
\end{enumerate}
Moreover we require the family $\mathbf{h}$ to be compatible with
splitting and gluing in an obvious way which involves the
corresponding families for lower values of $k$.  \pbred{Note that such
  families $\mathbf{h} = \{ h^{r, z}: \mathbb{R}^2 \to \mathbb{R}\}_{z
    \in S_r}$ always exist because for every $r \in
  \overline{\mathcal{R}}^{k+1}$, $z \in S_r$ the set of choices for
  $h_{r,z}$ is the space $\mathcal{H}(h^0)$ which is contractible, and
  therefore the arguments from~\cite{Se:book-fukaya-categ}
  (Sections~(9g) and~(9i)) easily extend in our case.
  \label{pb:red-1}}

From now on, when the value of $r \in \mathcal{R}^{k+1}$ is obvious we
will omit the $r$ from the notation and simply write $h^z$.

Next we choose a family of $\widetilde{\omega}$-compatible almost
complex structures $\mathbf{J} = \{J_{r,z}\}_{r \in \mathcal{R}^{k+1},
  z \in S_r}$ on $\widetilde{M}$ so that $\pi : \widetilde{M} \to
\mathbb{C}$ is $(J_{r,z}, (\phi_{a_r(z)}^{h^{r,z}})_*(i))$-holomorphic
for every $z \in S_r$ outside $K_{V_1, \ldots, V_{k+1}} \times M$ (we
are using here the notation from~\S\ref{subsec:equation}).

The forms $\form$ are defined in a similar way as
in~\S\ref{subsec:equation} via the formula:
$$\form|_z = da_r \otimes \bar{h}^{r,z} + \form_0, \;\; \forall z \in S_r,$$
where $\form_0$ is the same as in~\S\ref{subsec:equation}.

We will now describe the precise perturbed Cauchy-Riemann equation
relevant for defining the operations $\mu_k$. Let $\tau \in [0,1]$ and
consider the map
$$S_r \longrightarrow \textnormal{Ham}(\mathbb{R}^2), 
\quad z \longmapsto \phi_{\tau}^{h^{r,z}}.$$ Denote by $\beta_{\tau}^r
\in \Omega^1(S_r, C^{\infty}(T \mathbb{R}^2))$ the differential of
this map, viewed as a $1$-form on $S_r$ with values in the space of
Hamiltonian vector fields of $\mathbb{R}^2$.

Given $r \in \mathcal{R}^{k+1}$ we define a $1$-form $\mathcal{Y}$ on
$S_r$ with values in the space of Hamiltonian vector fields of
$\widetilde{M}$, whose value at $z \in S_r$ is given by:
\begin{equation} \label{eq:Y-extended} \mathcal{Y}_z =
   X^{\bar{h}^{r,z}} da_r + \beta^r_{a_r(z)} + Y_0,
\end{equation}
where all the terms on the right hand side are computed at $T_z S_r$.
Here $Y_0 = X^{\form_0}$ is the same as in~\S\ref{subsec:equation}.
Note that if all the profile functions $h_{i, i+1}$ and $h_{1, k+1}$
coincide and the family $\mathbf{h}$ is constant then $\mathcal{Y}$
coincides with the $1$-form $Y$ we had in~\S\ref{subsec:equation}.

We now consider the same equation as~\eqref{eq:jhol1} but with $Y$
replaced by $\mathcal{Y}$, namely:
\begin{equation}\label{eq:jhol-extended} \ u:S_{r}\to \C\times M, \quad Du +
   J(z,u)\circ Du\circ j = \mathcal{Y}_z + 
   J(z,u)\circ \mathcal{Y}_z\circ j, \quad 
   u(C_{i})\subset V_{i} ~.~
\end{equation}

We claim that Lemma~\ref{lem:compact1} continues to hold for solutions
of~\eqref{eq:jhol-extended} (possibly with a different constant $C$).
The proof of this is similar to the proof of Lemma~\ref{lem:compact1}
and uses the following additional ingredients:
\begin{enumerate}
  \item[i.] $\beta_{\tau}^{r} \equiv 0$ on the strip-like ends.
  \item[ii.] For every $z \in S^r$ and $\xi \in T_z S_r$ the vector field
   (on $\mathbb{R}^2$) $\beta_{\tau}^r (\xi)$ vanishes outside of
   $[-2, 3] \times \mathbb{R}$.
  \item[iii.] Instead of the naturality transformation in~\eqref{eq:nat-v}
   we now use $u(z) = \phi^{\bar{h}^{r,z}}_{a_r(z)}(v(z))$.
  \item[iv.] Being profile functions, the restrictions of each of the
   functions $h^{r,z}$ to $(-\infty, -1] \times \{i\}$ has a unique
   critical point at $-\tfrac{3}{2} \times \{i\}$ which is a local
   maximum and similarly the restriction to $[2,\infty) \times \{i\}$
   has a unique critical point at $\tfrac{5}{2}$ which is also a local
   maximum. Moreover, all the $h^{r,z}$ coincide with the same profile
   function $h^0$ outside of $[-2,3]\times \mathbb{R}$.
\end{enumerate}

The energy estimates from Lemma~\ref{lem:energy-bound1} can be
adjusted to this case too and we obtain a uniform energy bound for all
pairs $(r,u)$ with $r \in \mathcal{R}^{k+1}$ and $u$ satisfying
equation~\ref{eq:jhol-extended} with prescribed asymptotic conditions
at the punctures. The proof of this is similar to the proof of
Lemma~\ref{lem:energy-bound1} only that we have to use a uniform bound
on the functions in the family $\mathbf{h}$ and their derivatives.
Finally, Lemma~\ref{lem:symp-area-bound} continues to hold without any
changes.

The above imply the standard compactness result for the solutions
of~\eqref{eq:jhol-extended} with prescribed asymptotic conditions and
prescribed Maslov index. Namely, the space of solutions is compact up
to splitting and bubbling of holomorphic disks and spheres.

In order to define the operations $\widehat{\mu}_k$
from~\eqref{eq:mu-k-tot} we need to choose the perturbation data
$\mathscr{P}_{V_1, \ldots, V_{k+1}} = (\form, \mathbf{J}, \mathbf{h})$
in a consistent way (with respect to splitting and gluing of the
spaces $\mathcal{R}^{k+1}$ and $\mathcal{S}^{k+1}$ for different
values of $k$). \pbred{This is just a straightforward generalization
  of the case we had earlier (for $\fuk^d_{cob}$), when we had only
  $(\form, \mathbf{J})$ together with the fact (explained earlier)
  that the family of profile functions $\mathbf{h} = \{ h^{r, z}:
  \mathbb{R}^2 \to \mathbb{R}\}_{z \in S_r}$ can be chosen to be
  compatible with splitting and gluing.}

\subsubsection{Definition of the {\bf tot} category}
\label{subsubsec:def-tot} We are now ready to define the
$A_{\infty}$-category $\fuk^{d, tot}_{cob}(\mathbb{C}\times M; h^0)$
following the scheme from~\cite{Se:book-fukaya-categ}. The objects of
this category are triples $(V,i, h)$, where $V \in
\mathcal{CL}_{d}(\mathbb{C} \times M)$ is a Lagrangian cobordism, $h
\in \mathcal{H}(h^0)$ and $i \in \mathcal{I}_{cob}^h$. To describe the
morphisms, choose a regular Floer datum
$\mathscr{D}_{(V,i,h'),(W,j,h'')}$ for every pair of objects $(V, i,
h')$, $(W, j, h'')$, in an arbitrary way but subject to the following
two conditions: the Floer datum should be defined using a profile
function from $\mathcal{H}(h^0)$, and moreover, when $(i,h') =
(j,h'')$ then the Floer datum should coincide with the one prescribed
by the index $i=j$ and profile function $h'=h''$. The morphism space
$\hom \bigl( (V,i,h'), (W, j, h'') \bigr)$ is then defined as $CF(V,W;
\mathscr{D}_{(V,i,h'),(W,j,h'')})$ and the differential
$\widehat{\mu}_1$ is precisely the same as for the earlier definition
of $\fuk^d_{cob}(\mathbb{C} \times M)$. Namely, we do {\em not} use
equation~\eqref{eq:jhol-extended} to define $\widehat{\mu}_1$, but
rather equation~\eqref{eq:jhol1} (with $S_r = \mathbb{R} \times [0,1]$
of course).

The higher order compositions $\widehat{\mu}_k$ are defined as
follows. We choose regular consistent perturbation data
$\mathscr{P}_{V_1, \ldots, V_{k+1}} = (\form, \mathbf{J}, \mathbf{h})$
as described above for each $k+1$ tuple of objects $(V_1, i_1, h_1),
\ldots, (V_{k+1}, i_{k+1}, h_{k+1})$ subject to the condition that
when $h_1 = \cdots = h_{k+1} = h$ and $i_1= \cdots = i_{k+1} = i$ then
the family of profile functions $\mathbf{h}$ is constant and equals
$h$ and $(\form, \mathbf{J})$ are the same as prescribed by $i$.  The
operation $\widehat{\mu}_k$ is then defined by counting index-$0$
solutions of~\eqref{eq:jhol-extended} in the standard way. 

The fact that the operations $\widehat{\mu}_k$, $k \geq 1$, satisfy
the $A_{\infty}$-identities can be proved in the same way as for the
categories $\fuk^d_{cob}(\mathbb{C} \times M; i,h)$.

Note that when $\mathbf{h}$ is the constant family (i.e. $h^{r,z} = h$
for every $r, z$) then~\eqref{eq:Y-extended} coincides with the
$1$-forms $Y$ and equation~\eqref{eq:jhol-extended} becomes the same
as~\eqref{eq:jhol1}. It follows that for every $h \in
\mathcal{H}(h^0)$ and $i \in \mathcal{I}_{cob}^h$ we have an obvious
inclusion
\begin{equation} \label{eq:inclusion-fuk-cob} \fuk^d_{cob}(\mathbb{C}
   \times M; i,h) \longrightarrow \fuk^{d, tot}_{cob}(\mathbb{C}\times
   M; h^0)
\end{equation}
which is a full and faithful $A_{\infty}$-functor. Note also that
similarly to $\fuk^d_{cob}(\mathbb{C} \times M; i,h)$ the category
$\fuk^{d, tot}_{cob}(\mathbb{C}\times M; h^0)$ too is homologically
unital.

\subsubsection{The canonical inclusions are quasi-equivalences.}\label{subsubsec:quasi-eq}
It remains to show that the inclusions~\eqref{eq:inclusion-fuk-cob}
are quasi-equivalences. Let $V, W \in \mathcal{CL}_{d}(\mathbb{C}
\times M)$ be two cobordisms. Fix a profile function $h^0$ and let
$\mathscr{D}=\mathscr{D}_{V,W}$, $\mathscr{D}' = \mathscr{D}'_{V,W}$
be two arbitrary regular Floer data for $(V,W)$ both constructed using
profile functions $h, h' \in \mathcal{H}(h^0)$. Choose a perturbation
datum $\mathscr{P} = (\form, \mathbf{J}, \mathbf{h})$ on the strip $Z
= \mathbb{R} \times [0,1]$ which extends $\mathscr{D}$ at the negative
end of $Z$ and extends $\mathscr{D}'$ at the positive end of $Z$.
This perturbation datum $\mathscr{P}$ is constructed exactly as was
done above (for $S_r$ when $k \geq 2$), only that now it is defined
over $Z$.

We define a map
\begin{equation} \label{eq:continu-cob} \Phi_{\mathscr{P}} : CF(V, W;
   \mathscr{D}) \longrightarrow CF(V, W; \mathscr{D}'), \quad
   \Phi_{\mathscr{P}}(\gamma) = \sum_{\gamma'} \#
   \mathcal{M}_0(\gamma, \gamma'; \mathscr{P}) \gamma',
\end{equation}
where $\gamma' \in \mathcal{P}_{\bar{H}'}$ (i.e. $\gamma'$ is a chord
of the Hamiltonian $\bar{H}'$ (of $\mathscr{D}'$) starting on $V$ and
ending on $W$) and $\mathcal{M}_0(\gamma, \gamma'; \mathscr{P})$
stands for the $0$-dimensional component of the space of maps $u: Z
\to \widetilde{M}$ which satisfy the same equation
as~\eqref{eq:jhol-extended} with $S_r$ replaced by $Z$ and with the
boundary and asymptotic conditions replaced by the following:
\begin{equation*}
   \begin{aligned}
      & u(\mathbb{R} \times 0) \subset V, \quad u(\mathbb{R} \times 1)
      \subset W, \\
      & \lim_{s \to -\infty} u(s,t) = \gamma(t), \quad \lim_{s \to
        \infty} u(s,t) = \gamma'(t).
   \end{aligned}
\end{equation*}

The compactness arguments from~\S\ref{subsec:energy} apply to
$\mathcal{M}(\gamma, \gamma'; \mathscr{P})$ and together with standard
transversality arguments show that:
\begin{enumerate}
  \item[i.] $\mathcal{M}_0(\gamma, \gamma'; \mathscr{P})$ is a compact
   $0$-dimensional manifold, hence a finite set. Thus
   $\Phi_{\mathscr{P}}$ is well defined.
  \item[ii.] $\Phi_{\mathscr{P}}$ is a quasi-isomorphism of chain
   complexes.
  \item[iii.] The induced map in homology, which we denote by
   $$\Phi_{\mathscr{D}', \mathscr{D}}: H(CF(V,W;\mathscr{D}))
   \longrightarrow H(CF(V,W;\mathscr{D}')$$ is independent of
   $\mathscr{P}$. Moreover, the maps $\Phi_{\mathscr{D}',\mathscr{D}}$
   (for different $\mathscr{D}$, $\mathscr{D}'$) together with the
   product $\mu_2$ satisfy the TQFT composition laws (as described in
   Section~8b in~\cite{Se:book-fukaya-categ}). In particular, these
   maps are compatible with the triangle product.
\end{enumerate}
We call the maps $\Phi_{\mathscr{P}}$ as well as $\Phi_{\mathscr{D}',
  \mathscr{D}}$, continuation maps or comparison maps.

Next we consider the special case $(V,V)$, i.e. $W = V$.  Let
$\mathscr{D}$, $\mathscr{D}'$, $\mathscr{D}''$ be three Floer data for
the pair $(V,V)$, all constructed using profile function from
$\mathcal{H}(h^0)$. By choosing a perturbation datum
$\mathcal{P}_{\mathscr{D}', \mathscr{D}', \mathscr{D''}}$ on $S_3$,
which is compatible with these three Floer data (as described above)
we obtain the triangle product: $$\widehat{\mu}_2: CF(V,V;
\mathscr{D}) \otimes CF(V,V; \mathscr{D}'') \longrightarrow CF(V,V;
\mathscr{D}').$$ We denote the induced map in homology by $$* :
HF(V,V; \mathscr{D}) \otimes HF(V,V;\mathscr{D}'') \longrightarrow
HF(V,V; \mathscr{D}').$$ The isomorphism $\Phi_{\mathscr{D}',
  \mathscr{D}}$ can be expressed via the product $*$. More precisely,
there exists a canonical element $u \in HF(V,V;\mathscr{D}'')$ such
that
\begin{equation} \label{eq:cont-prod}
   \Phi_{\mathscr{D}', \mathscr{D}}(a) = a * u, \; \; \forall
   a \in HF(V,V;\mathscr{D}).
\end{equation}
The element $u$ can be described as follows. Let $PSS: QH(V, \partial
V) \to HF(V,V; \mathscr{D}'')$ be the PSS isomorphism as described
e.g. in Section~5.2 of~\cite{Bi-Co:cob1}. Recall that $QH(V, \partial
V)$ has a well defined unity $[V]$. The element $u$ is then defined by
$u = PSS([V])$. The proof of~\eqref{eq:cont-prod} is similar to the
case of closed Lagrangian submanifolds. It basically follows from the
fact that $\Phi_{\mathscr{D}', \mathscr{D}}$ is compatible with the
$\mu_2$-product and the TQFT composition identities (see
e.g.~\cite{Se:book-fukaya-categ} Section~8b). These extend to our
setting due to the compactness arguments from~\S\ref{subsec:energy}.
Alternatively, one can construct $u \in HF(V,V;\mathscr{D}'')$ by
counting perturbed holomorphic maps $D \setminus \{1\} \to \mathbb{C}
\times M$ from a disk punctured at one point, where the perturbation
extends the Floer datum $\mathscr{D}''$ on the strip-like end
corresponding to the puncture $1 \in D$. (See Section~8l
of~\cite{Se:book-fukaya-categ} for more on that.)

It follows that for every $V \in \mathcal{C}\mathcal{L}_d(\mathbb{C}
\times M)$, and $h', h'' \in \mathcal{H}(h^0)$, $i' \in
\mathcal{I}_{cob}^{h'}$, $i'' \in \mathcal{I}_{cob}^{h''}$ the objects
$(V, i', h')$ and $(V, i'', h'')$ are isomorphic in the homological
category $H \fuk_{cob}^{d, tot}(\mathbb{C} \times M); h^0)$. Standard
results in homological algebra~\cite{Ge-Ma:methods-hom-alg} imply that
the homological embeddings $$H \fuk^d_{cob}(\mathbb{C} \times M; i, h)
\longrightarrow H \fuk^{d, tot}_{cob}(\mathbb{C} \times M; h^0),$$
associated to~\eqref{eq:inclusion-fuk-cob}, are equivalences for every
$i$ and $h$.  By the discussion in~\S\ref{sbsb:families-A-infty} (see
also~\cite{Se:book-fukaya-categ} for more details) this gives the
family $\fuk_{cob}^d(\mathbb{C} \times M; i, h)$, $h \in
\mathcal{H}(h^0)$, $i \in \mathcal{I}_{cob}^{h}$, the structure of a
coherent system of $A_{\infty}$-categories. In particular we obtain
for every $h', h'' \in \mathcal{H}(h^0)$ and $i' \in
\mathcal{I}_{cob}^{h'}$, $i'' \in \mathcal{I}_{cob}^{h''}$ a
quasi-equivalence
\begin{equation} \label{eq:quasi-equiv-fuk-cob}
   \mathcal{F}_{(i'',h''), (i',h')} : \fuk_{cob}^d(\mathbb{C}\times M;
   i', h') \longrightarrow \fuk_{cob}^d(\mathbb{C}\times M; i'', h'')
\end{equation}
which is canonical in homology and satisfies $\mathcal{F}_{(i'',h''),
  (i',h')}(V) = V$ for every object $V$. {(In fact
  $\mathcal{F}_{(i'',h''), (i',h')}$ is a quasi-isomorphism.)}
Moreover, for every two cobordisms $V$ and $W$, the action of the
degree $1$ component of $\mathcal{F}_{(i'',h''), (i',h')}$ on
morphisms, namely $\mathcal{F}^{V,W}_{(i'',h''), (i',h')}: CF(V,W;
\mathscr{D}') \to CF(V,W; \mathscr{D}'')$, coincides in homology with
the continuation map $\Phi_{\mathscr{D}'', \mathscr{D}'}$. Here
$\mathscr{D}'$, $\mathscr{D}''$, are the Floer data for $(V,W)$
corresponding to the choices $i'$ and $i''$ respectively.

\subsubsection{ Changing the profile function at infinity.}
\label{subsubsec:inf-prof}
Summarizing the construction till now, we have an
$A_{\infty}$-category $\fuk^d_{cob}(\mathbb{C} \times M; h)$ which is
well defined up to {quasi-isomorphism}, but still depends on the
choice of the profile function at $\pm \infty$. It remains to get rid
of this dependence.

Denote by $\mathcal{H}$ the space of all profile functions (with all
possible constants $\alpha^{\pm}$, $\beta^{\pm}$, but subject to the
conditions described in~\S\ref{subsec:equation}, in particular
$\alpha^- > 0$, $\alpha^+ <0$ etc.). Let $h^0, h^1 \in \mathcal{H}$.
Denote by $\alpha^{\pm}_i$, $i=0,1$, the constants defining $h^i$ on
$(-\infty, 2] \times \mathbb{R}$ and $[3,\infty) \times \mathbb{R}$.
Fix $0< \epsilon < 1/100$ and choose a function $\sigma: \mathbb{R}
\to \mathbb{R}$ with the following properties:
\begin{enumerate}
  \item $\sigma(x) = 1$ for every $x \in [-\tfrac{3}{2}-\epsilon,
   \tfrac{5}{2}+\epsilon]$.
  \item $\sigma(x) = \tfrac{\alpha_1^-}{\alpha_0^-}$ for $x \leq -2$
   and $\sigma(x) = \tfrac{\alpha_1^+}{\alpha_0^+}$ for $x \geq 3$.
\end{enumerate}
Using this function we can construct a bijection $$\tau_{h^1,h^0}:
\mathcal{H}(h^0) \longrightarrow \mathcal{H}(h^1)$$ which is uniquely
characterized by the following property. For every $h \in \mathcal{H}(h^0)$
\begin{equation} \label{eq:tau-10}
   \begin{aligned}
      & (\tau_{h^1,h^0}h)(x,i) = h(x,i), \quad \forall x \in
      [-\tfrac{3}{2}-\epsilon,
      \tfrac{5}{2}+\epsilon], i \in \mathbb{Z}, \\
      & \frac{\partial (\tau_{h^1,h^0}h)}{\partial x} (x,i) = \sigma(x)
      \frac{\partial h}{\partial x} (x,i), \quad \forall x \in
      \mathbb{R}, i \in \mathbb{Z}.
   \end{aligned}
\end{equation}

For every $h \in \mathcal{H}(h^0)$, the bijection $\tau_{h^1,h^0}$ induces
in an obvious way a bijection $\tau_{h^1,h^0} : \mathcal{I}_{cob}^h
\longrightarrow \mathcal{I}_{cob}^{\tau_{h^1,h^0}h}$ which we also denote
by $\tau_{h^1,h^0}$. Note that the Hamiltonian chords corresponding to $i
\in \mathcal{I}_{cob}^h$ and to $\tau_{h^1,h^0}i \in
\mathcal{I}_{cob}^{\tau_{h^1,h^0} h}$ coincide. Moreover, due
to~\eqref{eq:tau-10} and in view of Lemma~\ref{lem:compact1} the
equation~\eqref{eq:jhol1} has precisely the same solutions for the
choice of data $i$ and $\tau_{h^1,h^0}i$. Thus the $A_{\infty}$-categories
$\fuk^d_{cob}(\mathbb{C} \times M; i; h)$ and $\fuk^d_{cob}(\mathbb{C}
\times M; \tau_{h^1,h^0}i; \tau_{h^1,h^0}h)$ can be identified by an obvious
chain-level isomorphism induced from $\tau_{h^1,h^0}$:
$$T^{i,h}_{h^1,h^0}: \fuk^d_{cob}(\mathbb{C}\times M; i, h)
\longrightarrow \fuk^d_{cob}(\mathbb{C}\times M; \tau_{h^1,h^0}i,
\tau_{h^1,h^0}h).$$ The action of $T_{h^1,h^0}$ on objects is of course the
``identity''.

A slight variation on Lemma~\ref{lem:compact1} shows that the
isomorphisms $T_{h^1,h^0}$ are compatible with the
quasi-equivalences~\eqref{eq:quasi-equiv-fuk-cob} in the sense that
for every $h', h'' \in \mathcal{H}(h^0)$ and $i' \in
\mathcal{I}^{h'}_{cob}$, $i'' \in \mathcal{I}^{h''}_{cob}$ we have:
$$T^{i'',h''}_{h^1,h^0} \circ \mathcal{F}_{(i'',h''), (i',h')} =
\mathcal{F}_{(\tau_{h^1,h^0} i'',\tau_{h^1,h^0} h''),
  (\tau_{h^1,h^0}i',\tau_{h^1,h^0}h')} \circ T^{i',h'}_{h^1,h^0}.$$

Finally, it follows from the definitions that the isomorphisms
$T^{i,h}_{h^1,h^0}$ have also the following compatibility properties:
$$T^{\tau_{h^1, h^0}i, \tau_{h^1, h^0}h}_{h^2, h^1} \circ
T^{i,h}_{h^1, h^0} = T^{i,h}_{h^2, h^0}.$$

This concludes the proof of Proposition \ref{prop:inv1}.  In view of
this result we will denote by abuse of notation each of the categories
$\fuk^d_{cob}(\mathbb{C} \times M; i,h)$ by $\fuk^d_{cob}(\mathbb{C}
\times M)$. However, notice for further use that each time an explicit
geometric construction takes place we generally need to indicate what
are the particular choices of almost complex structures, profile
functions etc.

\begin{rem}
   In the last step in the discussion above we have made a chain level
   comparison between $\fuk^d_{cob}(\mathbb{C} \times M; i, h)$ for
   certain pairs of functions $h \in \mathcal{H}$ which do not
   coincide at infinity. There is an alternative way to establish this
   comparison which in fact generalizes the construction of the quasi-equivalences
   $\mathcal{F}_{(i'',h''), (i',h')}$ described above. This goes as
   follows. One constructs directly an $A_{\infty}$-category
   $\widehat{\fuk}^{d, tot}_{cob}(\mathbb{C} \times M)$ which is
   defined in a similar way to $\fuk^{d,tot}_{cob}(\mathbb{C} \times
   M; h)$, with the following changes. One allows Floer data to have
   profile functions which do not necessarily coincide at infinity.
   Moreover, one takes perturbation data $\mathscr{P} = (\form,
   \mathbf{J}, \mathbf{h})$, where the family of profile functions $h
   \in \mathcal{H}$ interpolates the given choices of profile
   functions on the ends of the punctured disks $S_r$. The operations
   $\widehat{\mu}_k$ from~\eqref{eq:mu-k-tot} are then defined via the
   same equation~\eqref{eq:jhol-extended}. The only thing left to be
   checked is whether compactness holds for the spaces of solutions
   of~\eqref{eq:jhol-extended}. It turns out that a rather
   straightforward generalization of Lemma~\eqref{lem:compact1} holds
   for these solutions too. The main point is that the constants
   $\alpha^{\pm}$ (for the different profile functions involved) have
   the same signs and are small enough.
\end{rem}


\subsubsection{Independence of bottleneck position.}\label{subsubsec:bottle}
In this subsection we discuss the independence of the cobordism Fukaya
category on the positions along the real axis of the bottlenecks of
the profile function.  It is clear that our choice to put the
bottlenecks at the points $-\frac{3}{2}$ and $\frac{5}{2}$ is
completely arbitrary.  We get rid of this dependence below.

Denote by $\mathcal{C}\mathcal{L}_{d}^{[a,b]}(\C\times M)$ the
Lagrangians in $\C\times M$ that are $\R$-extensions of cobordisms
$V\subset [a,b]\times \mathbb{R} \times M$ with $-\infty< a<b <
\infty$, that satisfy condition~\eqref{eq:Hlgy-vanishes} and are
uniformly monotone with $d_{V}=d$.  For instance,
$\mathcal{C}\mathcal{L}^{[0,1]}_{d} (\C\times
M)=\mathcal{C}\mathcal{L}_{d}(\C\times M)$.  For fixed $c\leq a\leq 0
< 1\leq b\leq d$ we define a coherent system of categories $\fuk_{
  [a,b]}^{d}(\C\times M; [c,d])$ by the same method used before in
this section to define $\fuk^{d}_{cob}(\C\times M)$.  Their role is to
integrate all possible bottleneck positions for the profile functions.

The objects of $\fuk_{ [a,b]}^{d}(\C\times M; [c,d])$ are the
Lagrangians in the set $\mathcal{C}\mathcal{L}_{d}^{[a,b]}(\C\times
M)$. The morphisms and the multiplications $\mu_k$ are defined just as
in our construction of $\fuk^{d}_{cob}(\C\times M)$ but this time we
use profile functions $h^{[c,d]}$ similar to the ones considered till
now but rescaled to the interval $[c,d]$: thus such an $h^{[c,d]}$
satisfies the properties given in the definition of the function $h$
in \S\ref{subsec:equation} but relative to the interval $[c,d]$
instead of the interval $[0,1]$. For instance, we use the sets,
$$W_{i}^{+}= [d+1,\infty)\times [i-\epsilon, i+\epsilon] 
   \quad \textnormal{and } \; W_{i}^{-}= (-\infty, -1+c]\times
   [i-\epsilon, i+\epsilon], \; i \in \mathbb{Z}~,~$$
   and the functions $h_{\pm}$ have respectively critical points 
   \pbred{at $\frac{5}{2}+d$ and $-\frac{3}{2}+c$.}
In particular, $\fuk^{d}_{cob}(\C\times M)= \fuk^{d}_{[0,1]}(\C\times M; [0,1])$. 

It will be shown below that all these categories are in fact
{quasi-isomorphic}. Clearly, there is an inclusion of
 (coherent systems of) $A_{\infty}$ categories:
$$\widetilde{Id}_{a,b}:\fuk^{d}_{[0,1]}(\C \times M; [c,d])
\to \fuk^{d}_{[a,b]}(\C \times M; [c,d]).$$

\begin{prop}\label{prop:eq2} With the notation above we have:
   \begin{itemize}
     \item[i.] Any two cobordisms $V,V'\in
      \mathcal{CL}_{d}^{[a,b]}(\C\times M)$ that are horizontally
      isotopic are isomorphic in the homological category
      $H(\fuk^{d}_{[a,b]}(\C \times M); [c,d])$.  As a consequence
      $\widetilde{Id}_{a,b}$ is a quasi-equivalence.
     \item[ii.] Any two categories $\fuk_{[a,b]}^{d}(\C\times M;
      [c,d])$ are canonically quasi-equivalent,  independently of
      $c\leq a\leq 0 < 1\leq b\leq d$.
   \end{itemize}
\end{prop}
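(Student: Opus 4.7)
\textbf{Proof plan for Proposition \ref{prop:eq2}.} For part (i) the strategy is to produce, for any horizontally isotopic pair $V,V'$, a mutually inverse pair of morphisms in the homological category by running a moving-boundary continuation argument. Let $\{\overline{V}_t\}_{t\in[0,1]}$ be a horizontal isotopy from $V$ to $V'$ with associated ambient Hamiltonian flow $\psi_t$ as in Definition~\ref{d:isotopies}, and pick a smooth increasing cutoff $\chi:\R\to[0,1]$ with $\chi\equiv 0$ on $(-\infty,-1]$ and $\chi\equiv 1$ on $[1,\infty)$. For any test cobordism $W$, choose a perturbation datum on the strip $Z=\R\times[0,1]$ that interpolates the Floer data of $(V,W)$ at $s\to-\infty$ and of $(V',W)$ at $s\to+\infty$, and impose the moving boundary condition $u(s,0)\in\overline{V}_{\chi(s)}$, $u(s,1)\in W$. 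Counting index-$0$ solutions yields a chain map $c^{W}:CF(V,W)\to CF(V',W)$. The crucial technical step is compactness: after applying the naturality transformation~\eqref{eq:nat-v}, one must show that the projection $\pi\circ v$ stays in a compact region of $\C$. Conditions (ii) and (iii) of Definition~\ref{d:isotopies} bound $|d\pi(X_t)|$ uniformly on the ends and preserve their cylindrical structure there, which allows one to adapt the proof of Lemma~\ref{lem:compact1} by enlarging the rectangle $B'$ slightly so as to absorb $\bigcup_t\pi(\overline{V}_t)$ on the relevant compact region, while the argument at the bottlenecks (via the open mapping theorem) proceeds verbatim.

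Specializing to $W=V'$ and evaluating $c^{V'}$ on the Floer unit of $HF(V',V')$ (which exists by Remark~\ref{rem:comp-Floer-compl}, via the isomorphism with $QH(V',\partial V')$) produces a distinguished class $\alpha\in HF(V,V')$; the reverse isotopy $\{\overline{V}_{1-t}\}$ produces $\beta\in HF(V',V)$ analogously. A TQFT gluing argument, identical in spirit to the derivation of~\eqref{eq:cont-prod} in~\S\ref{subsubsec:quasi-eq}, identifies $\beta\ast\alpha$ with the continuation morphism attached to the concatenated isotopy $V\leadsto V'\leadsto V$; because this loop is contractible in the space of horizontal isotopies (one can shrink $\chi$ to a constant homotopy), the resulting continuation equals the identity on $HF(V,V)$. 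Hence $\alpha$ and $\beta$ are mutually inverse and $V\simeq V'$ in $H\fuk^{d}_{[a,b]}(\C\times M;[c,d])$. The inclusion $\widetilde{Id}_{a,b}$ is then a quasi-equivalence because it is tautologically full and faithful at the chain level (same profile functions, Floer and perturbation data) and essentially surjective on homology: given any $V\in\mathcal{CL}^{[a,b]}_d(\C\times M)$, one can horizontally isotope $V$ into $\mathcal{CL}^{[0,1]}_d(\C\times M)$ by a horizontal Hamiltonian flow generated by a function of the form $H(x,y)$ which compresses the $x$-interval $[a,b]$ down to $[0,1]$ away from the bottleneck region while remaining of the type $\alpha^\pm(y)$ on the cylindrical ends (so $|d\pi(X_H)|$ is uniformly bounded).

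For part (ii) the plan is to reduce to the rescaling construction already carried out in~\S\ref{subsubsec:inf-prof}. Given two admissible bottleneck intervals $[c,d]$ and $[c',d']$, choose a smooth positive function $\sigma$ on $\R$ equal to $1$ near $[-3/2,5/2]$ and interpolating the slopes $\alpha^{\pm}$ of $h^{[c,d]}$ to those of $h^{[c',d']}$ outside the bottleneck neighborhoods; this determines by the same formula~\eqref{eq:tau-10} a bijection between admissible profile functions, between their Floer data, and between their perturbation data. By Lemma~\ref{lem:compact1} (in its extended form covering moving profile functions, as in~\S\ref{subsubsec:tot-profile}) the Floer solution spaces for paired data coincide, yielding a chain-level isomorphism $\fuk^d_{[a,b]}(\C\times M;[c,d])\cong\fuk^d_{[a,b]}(\C\times M;[c',d'])$. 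Combining with (i) applied to a horizontal rescaling of the $x$-interval handles the case of different $[a,b]$, and the compatibility of these rescalings with the comparison quasi-equivalences~\eqref{eq:quasi-equiv-fuk-cob} (checked as in the end of~\S\ref{subsubsec:inf-prof}) makes the resulting chain of quasi-equivalences canonical. The main technical obstacle in the whole argument is establishing compactness in the presence of \emph{moving} Lagrangian boundary conditions in (i); once the naturality transformation is shown to behave well under the horizontal Hamiltonian flow, the remaining steps are direct adaptations of~\S\ref{sb:inv-fuk-2}.
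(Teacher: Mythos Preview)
Your treatment of part (i) is essentially the paper's own approach: a moving-boundary continuation morphism pushed against the unit coming from $QH(V,\partial V)$. There is a directional slip in your write-up --- with the boundary condition $u(s,0)\in\overline{V}_{\chi(s)}$ the map $c^{W}$ goes from $CF(V,W)$ to $CF(V',W)$, so specializing $W=V'$ lands \emph{in} $CF(V',V')$, not in $CF(V,V')$; to produce $\alpha\in HF(V,V')$ one should move the second factor, or equivalently specialize $W=V$ and use the reverse isotopy. This is cosmetic and the argument goes through as in the paper.

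Part (ii), however, has a genuine gap. The rescaling bijection $\tau_{h^1,h^0}$ of~\S\ref{subsubsec:inf-prof} that you invoke compares only profile functions with the \emph{same} bottleneck positions: the function $\sigma$ there is required to equal $1$ on a neighborhood of $[-\tfrac{3}{2},\tfrac{5}{2}]$, so $\tau$ alters the slopes $\alpha^{\pm}$ at infinity but fixes the bottlenecks. The parameter $[c,d]$, by contrast, is precisely what controls the real coordinates of the bottlenecks of $h^{[c,d]}$; for distinct $[c,d]$ the perturbed Cauchy--Riemann equations differ on the very region where solutions live, and there is no reason for the solution spaces to coincide, so you do not get a chain-level isomorphism as claimed. (The extended Lemma~\ref{lem:compact1} from~\S\ref{subsubsec:tot-profile} does not help either: the class $\mathcal{H}(h^0)$ used there consists of profile functions all sharing the bottlenecks of $h^0$.)

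The paper handles this by a different mechanism. It introduces a one-parameter family $h^t=h_0\circ\xi_t^{-1}$, where $\xi_t:\C\to\C$ is a Hamiltonian isotopy that is a pure real translation on each cylindrical region, carrying the bottlenecks of $h_0$ to those of $h^{[c,d]}$ at $t=1$. One then builds a \textbf{tot}-type category $\fuk^{d}_{[0,1]}(\C\times M)^{\xi}$ over the family $\mathcal{H}^{\xi}=\{h^t\}$ and shows that each fibre inclusion is a quasi-equivalence. The new point is compactness when the bottlenecks move with $z\in S_r$: after the usual naturality transformation $u\mapsto v$ one applies a \emph{second} change of variable $w(z)=\psi_{b_r(z)}^{-1}v(z)$ (with $\psi_t$ the lift of $\xi_t$). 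Because $\xi_t$ is a real translation near the ends, this brings all moving bottlenecks back to those of $h_0$, and the argument of Lemma~\ref{lem:compact1} then applies to $\pi\circ w$. Your proposal omits this double-naturality step, which is exactly the new ingredient needed.
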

\begin{proof}
   Point {\em i} is very similar to the argument at the beginning of
   \S\ref{subsubsec:quasi-eq} except that we not only modify the data
   $\mathscr{D}$ but rather isotope from $V$ to $V'$.  Suppose that
   $V$ and $V'$ are horizontally isotopic via a horizontal isotopy
   $\Psi$.  As showed in \cite{Bi-Co:cob1} there is an associated
   isomorphism $\overline{\Psi}:HF(V,V)\to HF(V,V')$. Let $\alpha =
   \overline{\Psi}(u)$ where $u\in HF(V,V)$ is, as in
   \S\ref{subsubsec:quasi-eq}, the unit (that is the image of the
   fundamental class by the PSS-morphism $QH(V,\partial V)\to
   HF(V,V)$). A similar class $\beta$ can be defined in $HF(V',V)$.
   These classes satisfy $\alpha\ast \beta= u$ and $\beta\ast
   \alpha=u'$ thus showing that $V$ and $V'$ are isomorphic in the
   homology category as claimed. In homology the functor
   $\widetilde{Id}_{a,b}$ is full and faithful and as any cobordism in
   $\mathcal{CL}^{[a,b]}_{d}(\C\times M)$ can be rescaled by a
   horizontal isotopy to $[0,1]\times \R\times M$, it follows from the
   first part of the proof that $\widetilde{Id}_{a,b}$ is a
   quasi-equivalence.

   We now prove point {\em ii}.  In view of {\em i}, it is clearly
   enough to prove that given $c\leq 0, 1\leq d$ there is a quasi-equivalence
   of the two categories $\fuk^{d}_{[0,1]}(\C\times M; [0,1])$ and
   $\fuk^{d}_{[0,1]}(\C\times M; [c,d])$. For this we will use a
   special ambient hamiltonian isotopy induced by a hamiltonian
   isotopy in the plane, $\xi_{t}$, with the property that: $\xi_{t}
   :\C\to \C$ is a translation of the form $(x,y)\to (x+tv_{-},y)$ for
   $(x,y)\in (-\infty, \epsilon]\times \R$ and $(x,y)\to (x+tv_{+},
   y)$ for $(x,y)\in [1-\epsilon, \infty)\times \R$ with $\epsilon$
   very small and $v_{-}<0$ and $v_{+}>0$. Moreover, the choices of
   $v_{+}$ and $v_{-}$ are so that some fixed profile function
   $h_{0}=h^{[0,1]}$ has the property that $h_{0}\circ \xi_{1}^{-1}$
   is a profile function of the type $h^{[c,d]}$.
   We will now construct an appropriate {\bf tot} category
   $\fuk^{d}_{[0,1]}(\C\times M)^{\xi}$. The objects of this category
   are pairs $(V,t)$ where $V\in\mathcal{CL}_{d}^{[0,1]}(\C\times M)$
   and $t\in [0,1]$.  We now fix Floer and perturbation data for each
   fixed $t\in [0,1]$ in an arbitrary way except that at each $t$ we
   use as profile function $h^{t}=h_{0}\circ \xi_{t}^{-1}$.  This
   produces categories $\fuk^{d}_{[0,1]}(\C\times M; h^{t})$. This
   data, now defined for each fixed value of $t$ has now to be
   extended to more general families of the form $(V_{i}, t_{i})$ with
   different parameters $t_{i}$. The construction is similar to the
   construction of the {\bf tot} category in
   \S\ref{subsubsec:tot-profile} and \S\ref{subsubsec:def-tot}. We
   first fix the specific class of profile functions that we will use:
   $\mathcal{H}^{\xi}=\{h^{t} : \ t\in [0,1]\}$.  We then use
   $\mathcal{H}^{\xi}$ instead of $\mathcal{H}(h^{0})$ in the
   construction in \S\ref{subsubsec:tot-profile}.  In particular, we
   consider families $\bar{h}^{r,z}$ so that conditions i and ii after
   equation (\ref{eq:mu-k-tot}) are satisfied with $\mathcal{H}^{\xi}$
   instead of $\mathcal{H}(h^{0})$, in particular $h^{r,z}\in
   \mathcal{H}^{\xi}$ for each $r\in \mathcal{R}^{k+1}$ and $z\in
   S_{r}$. From here on the construction is just like in
   \S\ref{subsubsec:def-tot}: for each pair $(V, t_{1}), (W, t_{2})$
   we select Floer data $\mathscr{D}_{(V,t_{1}), (W,t_{2})}$ up to our
   usual conditions but so that the profile function is selected from
   $\mathcal{H}^{\xi}$ and, whenever $t_{1}=t_{2}$, the datum and the
   profile function coincides with the one prescribed for $t_{1}$.  We
   then define the higher compositions for each $k+1$-tuple of objects
   $(V_{1}, t_{1}),\ldots (V_{k+1}, t_{k+1})$ in the usual way but
   subject to the condition that the profile function is a family
   $\mathbf{h}=\{h^{r,z}\}$ as described above and is so that if
   $t_{1}=t_{2}=\ldots t_{k+1}$ then $h^{r,z}$ is the constant family
   equal to $h^{t_{1}}$ and, similarly, the rest of the data coincides
   with the one in $\fuk^{d}_{[0,1]}(\C\times M;h^{t_{1}})$.  On the
   technical level there are two points to check to finish the
   construction of $\fuk^{d}_{[0,1]}(\C\times M)^{\xi}$. The first
   concerns the localization of curves in the relevant moduli spaces -
   the analogue of Lemma \ref{lem:compact1}. The second second point
   is to verify energy bounds as in Lemma \ref{lem:energy-bound1}.
   This second point does not present any particular difficulty but
   the first one requires a new argument. The reason is that the
   functions $h^{t}$ have bottlenecks at different points along the
   real axis - these bottlenecks are simply translations along
   $\R\subset \C$ of the bottlenecks of $h$ but for various values of
   $t$ these are different points.  Thus the reasoning used in
   \S\ref{subsubsec:tot-profile} requires an adaptation that we now
   describe.  Consider a solution $u$ of the equation
   (\ref{eq:jhol-extended}) but, of course, with $h^{r,z}\in
   \mathcal{H}^{\xi}$.  Denote by $\psi_{t}$ the lift of $\xi_{t}$ to
   $\C\times M$.  We will again use a naturality transformation
   $u(z)=\phi_{a_{r}(z)}^{\bar{h}^{r,z}}(v(z))$ (we recall
   $\bar{h}^{r,z}=h^{r,z}\circ \pi$). In this case $\bar{h}^{r,z}$ can
   be written as $\bar{h}^{b_{r}(z)}$ where $b_{r}:S_{r}\to [0,1]$
   and, further, $\bar{h}^{b_{r}(z)}=\bar{h}_{0}\circ
   \psi_{b_{r}(z)}^{-1}$. By an elementary calculation we have:
   $$\phi_{\tau}^{\bar{h}^{r,z}}={\psi}_{b_{r}(z)} 
   \circ \phi_{\tau}^{\bar{h}_0}\circ {\psi}_{b_{r}(z)}^{-1}\ , \
   \forall \tau~.~$$ We now apply yet another naturality
   transformation to $v$ by putting
   $$w(z)={\psi}_{b_{r}(z)}^{-1} v(z)~.~$$
   We now discuss the properties of the planar maps $w'(z)=\pi \circ
   w(z)$. Because $\xi_{t}$ is a translation (for all $t$) in the real
   direction in the exterior of the region $[\epsilon, 1-\epsilon]$,
   the holomorphicity of $\pi\circ v$ around and on the sides of the
   bottlenecks is still true for $w$. Moreover, the boundary
   conditions for $w$ are such that for $w$ only the bottlenecks of
   $h_{0}$ itself are involved. In other words, the argument in Lemma
   \ref{lem:compact1} is applicable to $w$ and concludes the
   verification.

   The usual arguments used in the construction of the {\bf tot}
   category show that the ``fibre'' inclusion:
   $$\fuk^{d}_{[0,1]}(\C\times M;h^{t})\to \fuk^{d}_{[0,1]}(\C\times M)^{\xi} $$
   is a quasi-equivalence for all $t\in [0,1]$ and this implies that
   the first fibre which is $\fuk^{d}_{[0,1]}(\C\times M; [0,1])$ and
   the last one which is $\fuk^{d}_{[0,1]}(\C\times M; [c,d])$ are
   quasi-equivalent.
\end{proof}


\subsubsection{Action of horizontal Hamiltonian isotopies.}\label{subsubsec:ham}
Here we analyze the action of certain Hamiltonian diffeomorphisms
$\phi :\C\times M\to \C\times M$ on $\fuk^{d}_{cob}(\C\times M)$.  The
construction is parallel to a construction
in~\cite{Se:book-fukaya-categ}, specifically \S 10d, with some
modifications that we will outline.  The Hamiltonian diffeomorphisms
$\phi$ of interest to us satisfy a strong version of horizontality
that we define below.

\begin{dfn}\label{def:str-hor} Fix a hamiltonian isotopy $\phi_{t}$ ,
   $t\in [0,1]$, on $\C\times M$. Let $X_{t}$ be the time dependent
   vector field of $\phi_{t}$.  The isotopy $\phi_{t}$ is called an
   {\em ambient horizontal hamiltonian} isotopy if it has the property
   that there are sufficiently large constant $R_{\phi}, R'_{\phi}> 0$
   so that for each $t\in [0,1]$ there are two other {\em real}
   numbers $v^{t}_{-} , v^{t}_{+} \in \R$ so that for all $z=(x+iy,
   m)\in \C\times M$, we have \pbredb{$$X_{t}(z)= (v^{t}_{-}, 0) \in\C
     \oplus T_m M \ \textnormal{if \ } x \leq - R'_{\phi} \ , \
     X_{t}(z)= (v^{t}_{+},0) \in \C \oplus T_m M \ \textnormal{if \ }
     x \geq R_{\phi}~.~$$}
\end{dfn}

The definition above implies that an ambient horizontal Hamiltonian
isotopy moves all the ends of a Lagrangian cobordism by sliding them
along themselves.  Such an isotopy is horizontal in the sense of
Definition \ref{d:isotopies} with respect to any cobordism $V$.

The group of Hamiltonian diffeomorphisms $\phi$ that are time-$1$ maps
of ambient horizontal Hamiltonian isotopies will be denoted by
$Ham_{hor}(\C\times M)$.  This group only admits a partially defined
action on $\fuk_{cob}^{d}(\C\times M)$) because the objects of
$\fuk_{cob}^{d}(\C\times M)$ are $\R$-extensions of cobordisms in
$V\subset ([0,1]\times \R)\times M$ and $Ham_{hor}(\C\times M)$ does
not preserve this class of cobordisms.

However, with the notation on \S\ref{subsubsec:bottle}, notice that
for $|a|, |b|, |c|, |d|$ sufficiently large there are constants
$c',d'$ depending on $\phi$ and an $A_{\infty}$ functor
$$\widetilde{\phi}_{[a,b]}:  \fuk^{d}_{[0,1]}(\C \times M; [c,d])
\to \fuk^{d}_{[a,b]}(\C \times M;[c',d'])$$ with action on objects $V
\to \phi(V)$ and whose value on morphisms is defined by transporting
intersection points by $\phi$.  For the (higher) multiplication in the
target category, we use almost complex structures obtained by
transporting through $\phi$ the structures in the domain category.
Similarly, we use in the target a profile function $\bar{h}\circ
\phi^{-1}$ that corresponds to the profile function $h=h^{[c,d]}$ used
on the domain.  (Here $\bar{h}=\pi\circ h$.) This also determines the
values of $c'$ and $d'$. In both the case of the almost complex
structures and of the profile function, this construction is possible
because $\phi$ is horizontal so, for instance, it preserves the class
of almost complex structures for which the projection $\pi:\C\times
M\to \C$ is holomorphic at infinity.  There is an abuse of notation
here because the function $\bar{h}\circ\phi^{-1}$ is defined on
$\C\times M$ and not on $\C$ as usual profile functions are. However,
given that $\phi$ is a translation \pbred{in the real direction} in
$\C$ and by taking the bottlenecks of $h$ far enough, it is easy to
arrange that for $(x,m)\in \C\times M$ with $|Re(x)|$ large enough, we
have $\bar{h}\circ\phi^{-1}(x,m)=h\circ \tilde{\phi}^{-1}(x)$ where
$\tilde{\phi}$ is a horizontal translation of \pbred{speed
  $v^{1}_{\pm}$.}  In short, all arguments involving usual profile
functions can be applied as well to \pbred{the functions
  $\bar{h}^{t}=\bar{h}\circ\phi_t^{-1}$}.  In this subsection we will
further refer to these functions as {\em generalized profile
  functions}. The categories associated to them fit in the same
coherent system as described in~\S\ref{sb:inv-fuk-2} using the same
arguments as there.

It is easy to check that the functor described above is well-defined
and an inclusion.  Notice moreover that in view of Proposition
\ref{prop:eq2}, by composing with the appropriate quasi-equivalences,
we could treat simply $\widetilde{\phi}_{[a,b]}$ as a functor with
domain $\fuk^{d}_{[0,1]}(\C\times M; [c,d])$ and target
$\fuk^{d}_{[a,b]}(\C\times M; [c,d])$. In particular,
$\widetilde{\phi}_{[a,b]}$ and $\widetilde{Id}_{[a,b]}$ can be viewed
as functors between the same $A_{\infty}$-categories.

\begin{prop}\label{prop:ham}
   For any $\phi\in Ham_{hor}(\C\times M)$, and $|a|, |b|, |c|, |d|$
   large enough so that the functor $\widetilde{\phi}_{[a,b]}$ is well
   defined, we have that $\widetilde{\phi}_{[a,b]}$ is a
   quasi-equivalence and is quasi-isomorphic to
   $\widetilde{Id}_{[a,b]}$.
\end{prop}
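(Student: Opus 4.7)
The plan is to prove Proposition \ref{prop:ham} by constructing an interpolating $A_\infty$-category (a ``$\mathbf{tot}$''-category) in which both $\widetilde{\phi}_{[a,b]}$ and $\widetilde{Id}_{[a,b]}$ embed as full subcategories via quasi-equivalences, and then to exhibit a natural transformation between the two functors whose components are isomorphisms in homology. This parallels the invariance arguments already developed in \S\ref{subsubsec:tot-profile}--\S\ref{subsubsec:bottle}, with the extra geometric input that the horizontal isotopy $\phi_t$ gives, for each object $V$, an explicit horizontal isotopy from $V$ to $\phi(V)$.

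First, I would set up an $A_\infty$-category $\fuk^{d,\phi}_{tot}$ whose objects are pairs $(V,s)$ with $V\in \mathcal{CL}_d^{[0,1]}(\C\times M)$ and $s\in[0,1]$, thought of as representing $\phi_s(V)$. Fix the class of generalized profile functions $\mathcal{H}^\phi=\{\bar h^s=\bar h\circ\phi_s^{-1}\ :\ s\in[0,1]\}$ discussed in \S\ref{subsubsec:ham}, and use these in place of $\mathcal{H}(h^0)$ in the recipe of \S\ref{subsubsec:def-tot}. Floer data for a pair $((V_0,s_0),(V_1,s_1))$ are arbitrary regular choices built from some $\bar h^s\in\mathcal{H}^\phi$, subject to the convention that they agree with the ones fixing $\fuk^d_{[a,b]}(\C\times M;[c,d])$ (resp.\ the ones used to define $\widetilde{\phi}_{[a,b]}$) when $s_0=s_1=0$ (resp.\ $s_0=s_1=1$). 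Perturbation data on punctured disks consist of triples $(\Theta,\mathbf J,\mathbf h)$ with $h^{r,z}\in \mathcal{H}^\phi$, consistent with gluing.

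Second, I would prove the analogue of Lemma~\ref{lem:compact1} in this setting by a \emph{double} naturality transformation, exactly as in the proof of Proposition~\ref{prop:eq2}(ii). Writing $h^{r,z}=\bar h^{b_r(z)}$ for some $b_r:S_r\to[0,1]$, first apply $u(z)=\phi_{a_r(z)}^{\bar h^{r,z}}(v(z))$ to remove the $da_r\otimes \bar h^{r,z}$ term, and then apply $v(z)=\psi_{b_r(z)}(w(z))$, where $\psi_t$ denotes the lift of $\phi_t$ to $\C\times M$. Since $\phi_t$ is horizontal translation in the real direction at infinity (with speed $v_\pm^t$), the projection $\pi\circ w$ is holomorphic on a neighbourhood of a single set of fixed bottlenecks, and encounters only the images of the ends of the $V_i$ under $\phi_t^{-1}$; the open-mapping/Proposition~\ref{p:open-map} argument then confines $w$ to a fixed compact of $\C$, giving compactness for $u$. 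Energy bounds follow from the arguments of Lemma~\ref{lem:energy-bound1} together with uniform $C^0$-bounds on the family $\mathcal{H}^\phi$. Once this is in hand, the $A_\infty$-structure maps of $\fuk^{d,\phi}_{tot}$ are defined as usual, and the standard continuation argument of \S\ref{subsubsec:quasi-eq} shows that the two fibre inclusions
\begin{equation*}
\fuk^d_{[a,b]}(\C\times M;[c,d])\ \hookrightarrow\ \fuk^{d,\phi}_{tot},\qquad s=0,1,
\end{equation*}
are both quasi-equivalences, with images identifying $V$ with $(V,0)$ and with $(V,1)$ respectively. Under the $s=1$ inclusion composed with the quasi-equivalence of Proposition~\ref{prop:eq2}, $(V,1)$ is isomorphic in the homology category to $\phi(V)$.

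Third, I would use that, for each $V$, the pair $V$ and $\phi(V)$ is horizontally isotopic via $\phi_t$. The argument recalled in the proof of Proposition~\ref{prop:eq2}(i) produces a continuation element $\alpha_V\in HF(V,\phi(V))$, obtained as the image of the PSS unit $u_V=PSS([V])\in HF(V,V)$ under the moving-boundary comparison map, which is an isomorphism in $H\fuk^{d,\phi}_{tot}$. To upgrade $\{\alpha_V\}_V$ to an $A_\infty$ natural transformation $\mathcal{T}:\widetilde{Id}_{[a,b]}\Rightarrow\widetilde{\phi}_{[a,b]}$, I would define the higher components $\mathcal{T}_k$ by counting solutions of the equation~\eqref{eq:jhol-extended} on punctured disks carrying one distinguished boundary marked point at which the family of profile functions transitions from the $s=0$ regime to the $s=1$ regime (equivalently, inserting the PSS unit along that marked point); compactness and the $A_\infty$ pre-natural transformation relations follow from the same double-naturality/energy package. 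Since each $\mathcal{T}_0(V)=\alpha_V$ is an isomorphism in homology, $\mathcal{T}$ is a quasi-isomorphism of $A_\infty$-functors, which gives the proposition.

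The main obstacle I anticipate is precisely the compactness step, step two. The profile functions in $\mathcal{H}^\phi$ have bottlenecks at varying real coordinates, so the first naturality transformation alone produces moving boundary conditions that drift with $s$; what rescues the situation is that $\phi_t$ is pure horizontal translation outside a compact of $\C$, so the second naturality transformation by $\psi_{b_r(z)}^{-1}$ pulls all the drifting bottlenecks back to those of a single reference profile function $h_0$. Verifying that this is compatible with the full generality of perturbation data $(\Theta,\mathbf J,\mathbf h)$, in particular that $\Theta_0$ and $\mathbf J$ can be chosen so the projection $\pi\circ w$ remains holomorphic outside a fixed compact independently of $r\in\overline{\mathcal{R}}^{k+1}$ and of the distinguished marked point used for $\mathcal{T}_k$, is the step requiring the most attention; all other pieces are routine adaptations of the arguments already carried out in \S\ref{sb:inv-fuk-2}.
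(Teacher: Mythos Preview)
Your proposal is essentially correct and follows the same strategy as the paper: build a $\mathbf{tot}$-type category parametrized by $t\in[0,1]$ using the family of generalized profile functions $\mathcal{H}^\psi=\{\bar h\circ\psi_t^{-1}\}$, establish compactness via the double naturality transformation (first by $\phi_{a_r(z)}^{\bar h^{r,z}}$, then by the lift $\psi_{b_r(z)}$), and finally produce the natural transformation by a moving-boundary construction \`a la Section~10(c) of \cite{Se:book-fukaya-categ}.

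One organizational difference worth noting: the paper is more careful to separate domain and target. It constructs \emph{two} parallel $\mathbf{tot}$ categories, $\fuk^d_\psi(\C\times M)$ with objects $(V,t)\in\mathcal{CL}_d^{[0,1]}\times[0,1]$ and $\fuk^d_{[a,b]}(\C\times M)^\psi$ with objects $(W,t)\in\mathcal{CL}_d^{[a,b]}\times[0,1]$, together with an $A_\infty$-inclusion $\psi^\ast$ between them sending $(V,t)\mapsto(\psi_t(V),t)$; this inclusion restricts to $\widetilde{Id}_{[a,b]}$ at $t=0$ and to $\widetilde{\phi}_{[a,b]}$ at $t=1$. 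Your single $\mathbf{tot}$ category conflates these two, which accounts for the mild inconsistency in your write-up (objects declared in $\mathcal{CL}_d^{[0,1]}$ but fibre inclusions stated from $\fuk^d_{[a,b]}$). Untangling this is purely bookkeeping and does not affect the substance of the argument.
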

\begin{proof} 
   The proof is based on the following {\bf tot} type construction.
   Fix an ambient horizontal Hamiltonian isotopy $\psi_{t}$, $t\in
   [0,1]$ so that $\phi=\psi_{1}$ and assume that $|a|, |b|, |c|, |d|$
   are sufficiently large so that the functor
   $\widetilde{\phi}_{[a,b]}$ is well defined for some $c',d'$. Fix
   also a profile function $h=h^{[c,d]}$.  We assume moreover that $h$
   is such that the compositions $\bar{h}\circ \psi_{t}^{-1}$ produce
   generalized profile functions giving rise to the category
   $\fuk^{d}_{[a,b]}(\C\times M; [c_{t}, d_{t}])$ for all $t\in
   [0,1]$, which belong to the coherent system.

   This can be easily achieved by assuming that the bottlenecks of $h$
   are far enough from the origin - in other words by taking $|c|,
   |d|$ large enough. Thus the functor $\widetilde{\phi}_{[a,b]}$ is
   well defined. It is easy to see that it is a quasi-equivalence: in
   homology it is faithful and full and each object in the target
   category is isomorphic to some object in the image of
   $\widetilde{\phi}_{[a,b]}$.

   We now show that $\widetilde{\phi}_{[a,b]}$ and
   $\widetilde{Id}_{[a,b]}$ are quasi-isomorphic as functors. We will
   first construct a category $\fuk^{d}_{\psi}(\C\times M)$.  The
   objects of this category are pairs $(V,t)$ where
   $V\in\mathcal{CL}_{d}^{[0,1]}(\C\times M)$ and $t\in [0,1]$.
   Geometrically $(V,t)$ is interpreted as $(V,t)=\psi^{t}\circ V$. We
   first fix the Floer and perturbation data for the category
   $\fuk^{d}_{[0,1]}(\C\times M;[c,d])$. We use in the construction
   the profile function $h$ fixed above.  This produces a specific
   $A_{\infty}$-category that we denote by $\fuk^{d}(\C\times M;
   i_{0})$. We now define categories $\fuk^{d}(\C\times M; i_{t})$
   that have the same objects as $\fuk^{d}(C\times M;i_{0})$ but with
   Floer and perturbation data transported from the choices in $i_{0}$
   by the isotopy $\psi_{t}$ - this uses the writing
   $(V,t)=\psi^{t}\circ V$. Simultaneously, in the definition of
   $i_{t}$ we use the generalized profile function, as explained
   above, $\bar{h}^{t}=\bar{h}\circ \psi_{t}^{-1}$. It is easy to see
   that this is compatible with all the relevant equations, in
   particular with~\eqref{eq:Y-extended}. This means, in particular,
   that there is a canonical identification between the categories
   $\fuk^{d}(\C\times M; i_{0})$ and $\fuk^{d}(\C\times M; i_{t})$ for
   all $t\in [0,1]$. To define $\fuk^{d}_{\psi}(\C\times M)$ we use
   for all families of objects $(V_{i}, t)$ with the same parameter
   $t$, the data being already collected in $i_{t}$.  We then extend
   the data to more general families in the usual way by using profile
   functions selected from the family
   $\mathcal{H}^{\psi}=\{\bar{h}\circ \psi_{t}^{-1} : \ t\in [0,1]\}$.
   We then pursue as in the proof of Proposition~\ref{prop:eq2} ii but
   with $\mathcal{H}^{\psi}$ instead of $\mathcal{H}(\xi)$.  It easy
   to see that, because $|a|, |b|, |c|, |d|$ are large enough all the
   arguments in the proof of Proposition~\ref{prop:eq2} apply also to
   our generalized profile functions.

   The usual arguments used in the construction of the {\bf tot}
   category show that the ``fibre'' inclusion:
   $$\fuk^{d}(\C\times M; i_{t})\to \fuk^{d}_{\psi}(\C\times M)$$
   is a quasi-equivalence for all $t\in [0,1]$.

   The next step in the proof is to construct in a similar way an
   $A_{\infty}$-category $\fuk^{d}_{[a,b]}(\C\times M)^{\psi}$.  The
   objects of this category are pairs $(V,t)\in
   \mathcal{CL}_{d}^{[a,b]}(\C\times M)\times [0,1]$. The Floer and
   perturbation data are defined in such a way that the following two
   conditions are satisfied:
\begin{itemize}
  \item[i.] for objects $(V_{1}, t),\ldots (V_{k+1}, t)$ the profile
   function in use is $\bar{h}^{t}$ and, for the rest, the
   perturbation choices etc. are as required to define the
   $A_{\infty}$-category $\fuk^{d}_{[a,b]}(\C\times M)$. We denote
   this category by $\fuk^{d}_{[a,b]}(\C\times M; t)$
  \item[ii.] the choices of Floer data, perturbation data and profile
   functions are so that the map
   $$Ob(\fuk^{d}_{\psi}(\C\times M)) \ni (V,t) \longrightarrow
   (\psi^{t}\circ V, t)\in Ob (\fuk^{d}_{[a,b]}(\C\times M)^{\psi})$$
   extends to an inclusion of $A_{\infty}$-categories.
\end{itemize}

Thus, $\fuk^{d}_{[a,b]}(\C\times M)^{\psi}$ is the analogue of the
category $\fuk^{d}_{[0,1]}(\C\times M)^{\xi}$ from the proof of
Proposition \ref{prop:eq2} with the additional restriction in the
choice of perturbation data as indicated at the point {\em ii} above.

To summarize what was achieved till now, we have constructed an
$A_{\infty}$ inclusion
$$\psi^{\ast}:\fuk^{d}_{\psi}(\C\times M)\to \fuk^{d}_{[a,b]}(\C\times M)^{\psi}$$
that has the property that $\psi^{\ast}$ \pbred{restricted to $i_0$,
  $\fuk^{d}(\C\times M; i_{0})\to \fuk^{d}_{[a,b]}(\C\times M; 0)$, is
  the inclusion $\widetilde{Id}_{[a,b]}$ and its restriction to $i_1$,
  $\fuk^{d}(\C\times M; i_{1})\to \fuk^{d}_{[a,b]}(\C\times M; 1)$, is
  $\widetilde{\phi}_{[a,b]}$.} Again, the usual arguments used in the
construction of the {\bf tot} category show that the ``fibre''
inclusions:
$$ \fuk^{d}_{[a,b]}(\C\times M; t)\to \fuk^{d}_{[a,b]}(\C\times M)^{\psi}$$  
are quasi-equivalences for all $t\in [0,1]$. \pbred{This shows that
  $\widetilde{\phi}_{[a,b]}$ is a quasi-equivalence (and the same for
  $\widetilde{Id}_{[a,b]}$, again). Recall also that (by composing
  with a certain quasi-equivalence) we can view
  $\widetilde{\phi}_{[a,b]}$ and $\widetilde{Id}_{[a,b]}$ as two
  functors between the same $A_{\infty}$-categories.}

\pbred{It remains to show that these two functors are
  quasi-isomoprhic, in the category of $A_{\infty}$-functors, i.e.
  there exists a natural transformation of $A_{\infty}$-functors
  between $\widetilde{\phi}_{[a,b]}$ and $\widetilde{Id}_{[a,b]}$
  which is a quasi-isomorphism. This can be proved using a moving
  boundary construction (based on the isotopy $\{\psi_s\}_{s \in
    [0,1]}$) which follows eactly the argument in Section~10(c)
  in~\cite{Se:book-fukaya-categ} (see also Proposition~10.3 in that
  section).} \pbred{Apart of working at all times with extended
  profile functions selected from $\mathcal{H}^{\psi}$ the argument
  from Secction~10(c) in~\cite{Se:book-fukaya-categ} applies here with
  almost no adjustments. We therefore skip the remaining details.}
\end{proof}

\begin{rem} \label{rem:equivariant} a. It is also possible to show
   that, in homology, the quasi-isomorphism between
   $\widetilde{\phi}_{[a,b]}$ and $\widetilde{Id}_{[a,b]}$ only
   depends on the homotopy class (with fixed end-points) of the path
   of Hamiltonian diffeomorphisms $\phi_{t}$, $t\in [0,1]$.

b. Notice that, except for the
   profile functions which are specific to our setting, the
   construction of $\fuk^{d}(\C\times M)^{\psi}$ is slightly simpler
   than the one in \S 10d \cite{Se:book-fukaya-categ}. This is because
   we are not actually constructing here an action of
   $Ham_{hor}(\C\times M)$ on $\fuk^{d}_{cob}(\C\times M)$.
   Constructing such an action is in fact possible, by a certain
   direct limit $\lim_{[a,b] \subset
     \mathbb{R}}\fuk^{d}_{[a,b]}(\C\times M)$ but goes beyond what is
   needed in this paper.
\end{rem}

\section{Proof of the main theorem} \label{s:proof-main}
The heart of the proof of Theorem~\ref{thm:main} consists in
constructing a sequence of exact triangles associated to a cobordism.
Before the proof, some preparation is required.


\subsection{Enrichment of $\fuk^d_{cob}(\mathbb{C} \times M)$}
\label{sb:enrich}

For convenience, we will use an enrichment,
$\fukcben(\mathbb{C} \times M)$, of the $A_{\infty}$-category
$\fukcb(\mathbb{C}\times M)$.

Fix a choice of all the auxiliary structures $(i,h)$ needed to define
a concrete model of $\fukcb(\mathbb{C} \times M)$, namely a profile
function $h$, and a choice $i$ of consistent Floer and perturbation
data as in~\S\ref{sec:FukCob}. We denote the resulting
$A_{\infty}$-category by $\fukcb(\mathbb{C} \times M; i,h)$. The
objects of $\fukcben(\mathbb{C} \times M)$ are Lagrangian
submanifolds with cylindrical ends $V \subset \mathbb{C} \times M$
with the same normalization as for Lagrangian cobordisms (i.e. $V$ is
cylindrical over $\mathbb{C} \setminus \{ 0 \leq \textnormal{Re} z
\leq 1\}$) with the only exception that the ends of $\pi(V)$ are
allowed to have $y$-coordinate in $\tfrac{1}{2}\mathbb{Z}$ (rather
than just $\mathbb{Z}$).

Further we assume that the profile function $h$ is small enough, i.e.
that the parameter $\epsilon$ in the definition of $h$ is small
enough, say $\epsilon < \tfrac{1}{10}$. See~\S\ref{subsec:equation}.
We now alter the definition of $h$ to a function $h': \mathbb{R}^2 \to
\mathbb{R}$ such that $$h'(x,y) = h(x, y-\tfrac{1}{2}), \; \;
\forall\; x \in (-\infty, -1] \cup [2,\infty), \; \; y \in
(j+\tfrac{1}{2}-\epsilon, j + \tfrac{1}{2} + \epsilon), \, j \in
\mathbb{Z}.$$ We also require that $h'(x,y) = h(x,y)$ for $x \in
(-\infty, -1] \cup [2,\infty)$, $y \in (j-\epsilon, j + \epsilon)$, $j
\in \mathbb{Z}$. We call the functions $h'$ of this type {\em extended
profile functions}.

Next, we extend the choices made in $i \in \mathcal{I}_{cob}^h$ in a
consistent way to accommodate all the new (as well as the old) objects
mentioned above and denote this extension by $i'$. We define
composition maps $\mu_k$, $k \geq 1$, in the same way we did for
$\fukcb(\mathbb{C} \times M)$. The result is a new
$A_{\infty}$-category $\fukcben(\mathbb{C} \times M; i', h')$ which
comes with an obvious full and faithful embedding $$j_{1/2}:
\fukcb(\mathbb{C} \times M; i, h) \longrightarrow \fukcben(\mathbb{C}
\times M; i', h').$$ The family of $A_{\infty}$-categories
$\fukcben(\mathbb{C} \times M; i', h')$ indexed by $(i', h')$ have the
same invariance properties as the family $\fukcb(\mathbb{C} \times M;
i,h)$ and forms a coherent system. Moreover, the collection of
functors $j_{1/2}$ (for different choices of $(i,h)$) are compatible
with the corresponding coherent systems of $\fukcb$ and $\fukcben$
categories. We will henceforth omit sometimes the $(i,h)$ or $(i',
h')$ from the notation when these choices are made clear.


\subsection{Inclusion functors} \label{subsec:inclusion} The purpose
of this subsection is to associate an $A_{\infty}$-functor
$$\mathcal{I}_{\gamma}:\fuk^{d}(M)\to \fukcben (\mathbb{C} \times M)~.~$$
to a curve $\gamma \subset \mathbb{R}^2 \cong \mathbb{C}$ that is
properly embedded, diffeomorphic to $\mathbb{R}$, is horizontal
outside $[0,1] \times \mathbb{R}$ and such that the ends of $\gamma$
have $y$-coordinates in $\tfrac{1}{2}\mathbb{Z}$. We can view $\gamma$
as a Lagrangian with cylindrical ends in the manifold $\mathbb{C}
\times \{\textnormal{pt}\}$ or as an object of $\fukcben(\mathbb{C}
\times \{\textnormal{pt}\})$.

\pbred{
In case the ends of the curve $\gamma$ have integral $y$-coordinates
the functor $\mathcal{I}_{\gamma}$ will factor through $$\fukd(M)
\longrightarrow \fukcb(\mathbb{C} \times M) \longrightarrow
\fukcben(\mathbb{C} \times M).$$ By abuse of notation we will sometime
denote the first functor $\fukd(M) \longrightarrow \fukcb(\mathbb{C}
\times M)$ in this composition also by $\mathcal{I}_{\gamma}$.
}

\pbred{
Below we will, in fact, define a family of $A_{\infty}$-functors
$$\mathcal{I}_{\gamma}: \fuk^d(M) \longrightarrow \fukcben(\mathbb{C} \times M)$$
that are quasi-isomorphic one to the other (as functors). We will call
them {\em inclusion functors}.
}

We first introduce an auxiliary $A_{\infty}$-category
$\mathcal{B}_{\gamma, h}$ that will be also used later in the paper.
We assume that $\gamma$ has the shape as in Figure~\ref{fig:curve}.
This choice is useful in the next section but is not restrictive. Any
other curve $\gamma$ works equally well for the purposes of the
current section.  We also pick $h$ and the extended profile function
$h'$ so that $(\phi_{1}^{h'})^{-1}(\gamma)$ looks as in
Figure~\ref{fig:curve}.
\begin{figure}[htbp]
   \begin{center}
      \epsfig{file=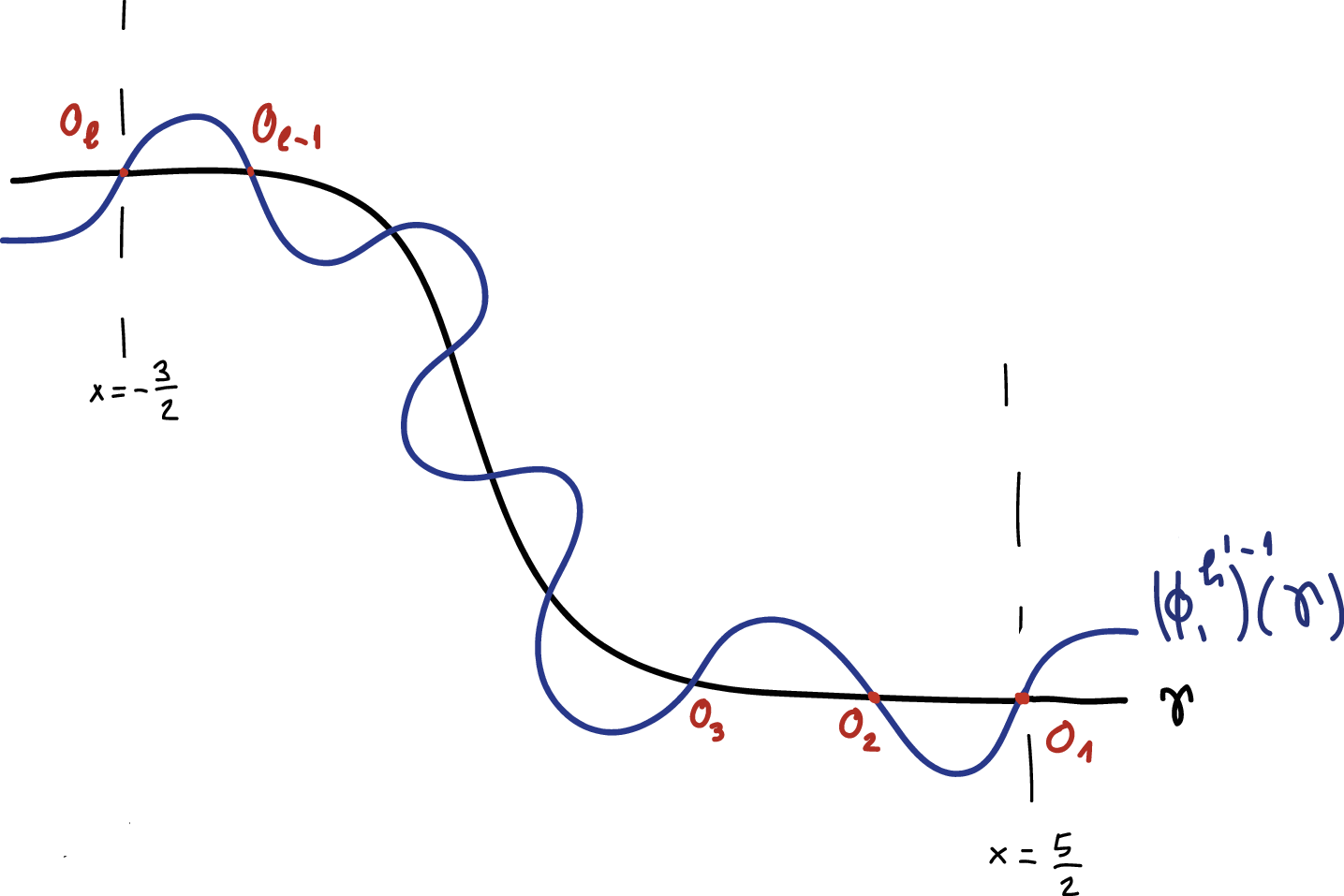, width=0.75\linewidth}
   \end{center}
   \caption{\label{fig:curve} The curves $\gamma$,
     $(\phi^{h'}_{1})^{-1}(\gamma)$ and the points $o_{i}$, $1\leq
     i\leq l$.}
\end{figure}
More precisely, $\gamma\cap (\phi_{1}^{h'})^{-1}(\gamma)$ consists of
an {\em odd} number $l$ of intersection points. We denote them by
$o_{1}, o_{2},\ldots, o_{l}$, as in the picture, and assume that
$l\geq 3$; $o_{1}$ is the intersection point with largest real
coordinate and $o_{l}$ the intersection point with the smallest real
coordinate. The intersection points $o_{i}$ with $i$ odd are called
{\em positive} (or odd) and the ones with $i$ even are called {\em
  negative} (or even).  Notice that $o_{1}$ and $o_{l}$ are
bottlenecks - the first along the positive end of $\gamma$ and the
second along the negative end.

The definition of $\mathcal{B}_{\gamma, h}$ is now as follows: the
objects are $\gamma\times L$ with $L\in \mathcal{L}^{\ast}_{d}(M)$. We
abbreviate $\widetilde{L}=\gamma\times L$. To define morphisms and
higher operations we first fix all the data needed to define a model
for $\fuk^{d}(M)$.  Let $\widetilde{L}=\gamma\times L$,
$\widetilde{L}'=\gamma\times L'$ be two objects of
$\mathcal{B}_{\gamma, h}$ and denote by $(H_{L,L'}, J_{L,L'})$ the
Floer datum that has already been associated to $(L,L')$ in
$\fuk^{d}(M)$.  We put $H_{\widetilde{L},\widetilde{L}'}=h'\oplus
H_{L,L'}$ and the time-dependent almost complex structures
$J_{\widetilde{L},\widetilde{L}'}(t)= i_{h'}(t)\oplus J_{L,L'}(t)$,
$t\in [0,1]$ where $$i_{h'}(t)=(\phi_{t}^{h'})_{\ast}i~.~$$ The Floer
datum for $(\widetilde{L},\widetilde{L}')$ is defined to be the pair
$(H_{\widetilde{L},\widetilde{L}'}, J_{\widetilde{L},\widetilde{L}'})$
and we put
$$\hom_{\mathcal{B}_{\gamma,h}}(\widetilde{L},\widetilde{L}')=
CF(\widetilde{L},\widetilde{L'};H_{\widetilde{L},\widetilde{L}'},
J_{\widetilde{L},\widetilde{L}'})$$ endowed with the standard Floer
differential.  \pbredb{It is not difficult to verify that this data is
regular. Indeed, our choices for $H_{\widetilde{L},\widetilde{L'}}$
and $J_{\widetilde{L},\widetilde{L}'}$ imply that the relevant
Fredholm operators split as a product of the obvious operators in $\C$
and in $M$. Due to the choice of $h'$, for each relevant Floer
trajectory $u=(u_{1}, u_{2})$ where $u_{1}=\pi\circ u$ and
$u_{2}=\pi_{2}\circ u$ (with $\pi_{2}:\C\times M\to M$ the projection
on the second factor) the respective operators are surjective at
$u_{1}$ and $u_2$ respectively. The surjectivity at $u_2$ is due to
the generic choice of $H_{L,L'}, J_{L,L'}$. For the surjectivity at
$u_1$ we need to separate to two cases according to whether or not
$u_1$ is constant. If $u_1$ is not constant then by automatic
regularity for holomorphic curves in $\mathbb{C}$ (see Section~13a
in~\cite{Se:book-fukaya-categ} and also~\cite{DS-Ro-Sa:comb-HF}) the
operator is surjective. If $u_1$ is constant then the surjectivity
follows from Corollary~\ref{c:ind-0}. Notice, in particular, that the
Floer strips that project to a constant in $\C$ equal to one of the
points $o_{i}$, be they positive or negative, remain regular as Floer
strips in $\C\times M$.}

\begin{rem} \label{rem:positive-negative} It is useful to contrast the
   regularity indicated above with the situation occurring for curves
   with boundary conditions along Lagrangians
   $\widetilde{L}_{1},\ldots, \widetilde{L}_{k+1}$ with $k\geq 2$,
   that project to one of the constant points $o_{i}$. Of course,
   these curves do not appear in the definition of the Floer complexes
   but rather in that of the higher compositions \pbredb{(see below)}.
   In this case, the curves that project to positive points $o_{i}$
   remain regular but those that project to negative points are not
   regular in $\C\times M$: \pbredb{indeed, by
     Corollary~\ref{c:ind-0}, at the negative points the planar
     operator has a negative index while at the positive points the
     index is $0$.}
\end{rem}

To construct the coherent system of categories
$\mathcal{B}_{\gamma,h}$ we can now apply the general scheme used to
construct the category $\fuk_{cob}^{d}(\C\times M)$ in
\S\ref{sec:FukCob}. This involves using quite general perturbations -
as in \S\ref{subsec:equation} - to the left of $o_{1}$ and to the
right of $o_{l}$. The techniques in \S\ref{subsec:energy} and
\S\ref{subsec:transversality} apply without change in view of the fact
that the points $o_{1}$ and $o_{l}$ are bottlenecks and the same
remains true for the invariance results in \S\ref{sb:inv-fuk-2}.

We use the category $\fukcben(\mathbb{C} \times M; i', h')$ as defined
in~\S\ref{sb:enrich} above using a choice of data $i$ that extends the
choices just made for $\mathcal{B}_{\gamma, h}$. We thus obtain an
obvious full and faithful embedding of $A_{\infty}$-categories:
\begin{equation} \label{eq:beta-gamma-h} \beta_{\gamma, h}:
   \mathcal{B}_{\gamma,h} \longrightarrow \fukcben(\mathbb{C} \times
   M; i', h').
\end{equation}

The next step is to define quasi-isomorphisms
$\mathcal{B}_{\gamma,h}\to \fuk^{d}(M)$ as well as certain explicit
comparison quasi-isomorphisms relating $\mathcal{B}_{\gamma,h_1}$ to
$\mathcal{B}_{\gamma, h_2}$ for different profile functions
$h_1 ,h_2$.  The desired inclusion functor will be defined by
composing $\beta_{\gamma, h}$ with the inverse of
$\mathcal{B}_{\gamma,h}\to \fuk^{d}(M)$. To construct these functors
and deduce some of their properties we need a special model for
$\mathcal{B}_{\gamma,h}$ associated to particular perturbation data
that we describe next.

This perturbation data is only required for the higher compositions
$\mu^{k}$, $k\geq 2$, the Floer data is fixed as above. Let
$L_{1},\ldots, L_{k+1}\in\mathcal{L}^{\ast}_{d}(M)$, $k\geq 2$ and
denote by $(\Theta_M, \mathbf{J}_M)$ the perturbation datum of
$(L_{1}, \ldots, L_{k+1})$ as fixed in $\fuk^{d}(M)$.  The
perturbation datum $(\Theta, \mathbf{J})$ associated to
$(\widetilde{L}_{1},\ldots,\widetilde{L}_{k+1})$ has the following
form: \pbredb{
\begin{equation}\label{eq:perturb-module}
   \Theta = da_{r}\otimes h'+\Theta_M + Q\ , \ 
   \mathbf{J}=\widetilde{i}_{h}\oplus \mathbf{J}_M
\end{equation}
} where $\widetilde{i}_{h}=\{ \widetilde{i}_{h}(z)\}_{z\in S_{r}}$ is
the family of almost complex structures on $\R^{2}$ defined by
$\widetilde{i}_{h}=(\phi^{h'}_{a_{r}(z)})_{\ast}i$, $z\in S_{r}$. The
term $Q$ is a $1$-form
$ Q\in \Omega^{1}(S_{r}, C^{\infty}(\C\times M))$ which is compactly
supported in the interior of $S_{r}$ and away from the strip like
ends. Moreover $Q$ is required to satisfy the following conditions for
all $z\in S_{r}$, $v\in T_{z}S_{r}$:
\begin{itemize}
\item[i.] The function $Q_{z}(v):\C\times M\to \R$ is of the from
  $Q_{z}(v)=q_{z}(v)\circ \pi$ with $q_{z}(v):\C\to\R$.
\item[ii.] The function $q_{z}(v)$ is supported in a union $U_{h}$ of
  small neighborhoods of the points $o_{i} \in \C$ with $i=$ even.
\end{itemize}

%
Similarly to what has been done before in the case of $\fuk^{d}(M)$
we define $(\Theta, \mathbf{J})$ so that it extends the Floer data
already defined on the strip-like ends of the surfaces $S_{r}$ and so
that it is consistent with splitting and gluing in the space
$\widehat{\mathcal{S}}^{k+1}$.

Note that if $Q$ is chosen such that for some $z, v$ the function
$q_z(v)$ does not have a critical point at $o_i$, where $i=$ even,
then no solution $u$ of~\eqref{eq:jhol1} can have a constant
projection at $o_i$. (In contrast, solutions with a constant
projection over $o_i$ with $i=$ odd definitely exist.)

\pbredb{The class of perturbations described above is sufficient for
  the regularity of the moduli spaces of perturbed $J$-holomorphic
  polygons \corr{as will be seen in
    Corollary \ref{c:ind-n-const}.  Basically}, due to our choices,
  the linearized operator corresponding to equation~\eqref{eq:jhol1}
  \pbcorr{has} at every solution $u$ \pbcorr{a horizontal component
    (corresponding to $\mathbb{R}^2$) and a vertical one
    (corresponding to $M$)}. The vertical part can be assumed
  surjective in view of the choices made for
  $\fuk^{d}(M)$. As for the horizontal part, the argument splits into
  two cases.  The first one is when $\pi \circ u$ is not constant and
  in that case Lemma~\ref{l:index-non-const} assures surjectivity of
  the respective operator. The second possible case is that
  $\pi \circ u$ is constant.  As remarked above a (generic) suitable
  choice of $Q$ assures that $\pi \circ u$ equals to one of the
  $o_i$'s with $i=$~odd.  Corollary~\ref{c:ind-0} then implies
  surjectivity for the horizontal operator.  \corr{The actual argument
    is slightly more complicated as the splitting mentioned above
    applies for the moduli spaces of polygons with fixed punctures and
    the moduli spaces we are interested in are associated to polygons
    with moving punctures - see Corollary \ref{c:ind-n-const}.}  }

Notice also that the Floer complex
$CF^{\mathcal{B}_{\gamma,h}}(\widetilde{L},\widetilde{L}')=
\hom_{\mathcal{B}_{\gamma, h}}(\widetilde{L},\widetilde{L}')$ splits
as a vector space:
$$CF^{\mathcal{B}_{\gamma,h}}(\widetilde{L},\widetilde{L}')=
\bigoplus_{i=1}^{l} CF(L,L'),$$ where $CF(L,L')=CF(L,L';H_{L,L'},
J_{L,L'})$ is the Floer complex in $\fuk^{d}(M)$. The $i$-th summand
at the right-hand side of the above equality corresponds to the point
$o_{i}$ and we will sometimes identify it as $CF(L,L')^{o_{i}}$.  We
use the following notation. For $x\in CF(L,L')$ and $1\leq i\leq l$ we
denote by $x^{(i)}\in
CF^{\mathcal{B}_{\gamma,h}}(\widetilde{L},\widetilde{L}')$ the element
$(0,\ldots, 0, x, 0 \ldots, 0)\in CF(L,L')^{o_{i}}$ where $x$ is
placed at the $i$-the spot.  Such an element $(0,\ldots, 0, x, 0
\ldots, 0)\in CF(L,L')^{o_{i}}\subset
CF^{\mathcal{B}_{\gamma,h}}(\widetilde{L},\widetilde{L}')$ will be
called a morphism {\em of type} $o_{i}$. Morphisms of some type
$o_{i}$ as before will be called homogeneous.  We also denote
$CF^{\mathcal{B}_{\gamma,h}}(\widetilde{L},\widetilde{L}')^{\leq
  \alpha}$ the partial sum $\oplus_{i=1}^{\alpha}
CF(L,L')^{o_{i}}\subset
CF^{\mathcal{B}_{\gamma,h}}(\widetilde{L},\widetilde{L}')$, $1\leq
\alpha\leq l$.

\begin{rem}\label{rem:snakes-chain}It is an easy exercise
   to describe also the differential $\overline{d}$ of
   $CF^{\mathcal{B}_{\gamma,h}}(\widetilde{L},\widetilde{L}')$.  For
   this let $d:CF(L,L')\to CF(L,L')$ be the differential of the Floer
   complex in $\fuk^{d}(M)$.  We then have, for $j$ even
   $\overline{d}x^{(j)}=(dx)^{(j)}$ and for $j$ odd
   $\overline{d}x^{(j)}=(dx)^{(j)}+x^{(j-1)}- x^{(j+1)}$ (where, by a
   slight abuse in notation, we put $x^{(j+1)}=0$ if $j=l$ and
   $x^{(j-1)}=0$ if $j=1$). In particular, the complexes
   $CF^{\mathcal{B}_{\gamma,h}}(\widetilde{L},\widetilde{L}')$ and
   $CF(L,L')$ are quasi-isomorphic.
\end{rem}
We will need a strong form of the statement in
Remark~\ref{rem:snakes-chain}.  For an odd index $1\leq j\leq l$ we
define a functor $c_{\gamma, h,j}: \mathcal{B}_{\gamma,h}\to
\fuk^{d}(M)$ by putting on objects $c_{\gamma,h,j}(\widetilde{L})=L$,
on morphisms $c_{\gamma,h,j}^{1}(x^{(i)})=x$ if $i=j$,
$c_{\gamma,h,j}^{1}(x^{(i)})=0$ if $i\not=j$ and
$c_{\gamma,h,j}^{r}=0$ for $r\geq 2$.

\begin{prop}\label{prop:htpy-proj} 
   With the definition above, the $c_{\gamma,h,j}$'s are
   $A_{\infty}$-functors and are quasi isomorphisms.  Moreover, for
   any two odd $1\leq j, j' \leq l$, the functors $c_{\gamma,h,j}$ and
   $c_{\gamma,h,j'}$ are homotopic.
\end{prop}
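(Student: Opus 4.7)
\textbf{Plan for the proof of Proposition \ref{prop:htpy-proj}.}

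Since $c^r_{\gamma,h,j}=0$ for $r\ge 2$, the $A_\infty$-functor relations collapse to the single family of identities
\[
c^1_{\gamma,h,j}\circ\mu_k^{\mathcal{B}_{\gamma,h}} = \mu_k^{\fuk^d(M)}\circ (c^1_{\gamma,h,j})^{\otimes k}, \qquad k\ge 1.
\]
For $k=1$ this is a direct computation from the explicit form of $\bar d$ in Remark \ref{rem:snakes-chain}. For $k\ge 2$, the right-hand side vanishes unless every input is homogeneous of type $o_j$, so the content of the identity splits into two statements: (i) the $o_j$-component of $\mu_k^{\mathcal{B}_{\gamma,h}}(x_1^{(j)},\ldots,x_k^{(j)})$ equals $\mu_k^{\fuk^d(M)}(x_1,\ldots,x_k)$, and (ii) the $o_j$-component of $\mu_k^{\mathcal{B}_{\gamma,h}}$ vanishes on any tuple of inputs not all of type $o_j$.

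I would establish (i) and (ii) by analyzing the planar projection of each perturbed polygon contributing to $\mu_k^{\mathcal{B}_{\gamma,h}}$. By the product form of the perturbation data \eqref{eq:perturb-module}, every solution $u:S_r\to \C\times M$ splits as $u=(u_1,u_2)$, where $u_2:S_r\to M$ is a solution of the equation defining $\mu_k^{\fuk^d(M)}$ and $u_1:S_r\to \C$ satisfies a perturbed Cauchy--Riemann equation with moving boundary on the translates $(\phi^{h'}_{a_r(z)})^{-1}(\gamma)$. The key geometric claim is that the only $u_1$ whose asymptotic limit at the out-going puncture is the positive (odd) bottleneck $o_j$ is the constant map $u_1\equiv o_j$, and that this forces all $k$ incoming asymptotics to equal $o_j$ as well. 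Granting this, (i) is immediate, since with $u_1\equiv o_j$ the fiber equation for $u_2$ reduces to the one defining $\mu_k^{\fuk^d(M)}$, and (ii) is automatic. The constancy claim itself is proved by the same open-mapping strategy as in Lemma \ref{lem:compact1}: after the naturality transformation $u_1(z)=\phi^{h'}_{a_r(z)}(v_1(z))$ the map $v_1$ becomes $i$-holomorphic outside a prescribed compact set, and the bottleneck profile of $h'$ together with the ordering of the $o_i$'s along $\gamma$ forces $v_1$ with exit at a positive bottleneck to be constant. The residual case of mixed even/odd asymptotics is excluded on index grounds via Corollary \ref{c:ind-0}, reflecting the asymmetry between positive and negative intersection points noted in Remark \ref{rem:positive-negative}.

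The quasi-isomorphism statement then follows at once: the induced map in homology identifies $HF^{\mathcal{B}_{\gamma,h}}(\widetilde L,\widetilde L')\cong HF(L,L')$ with the projection onto the $j$-th odd summand, which is an isomorphism by Remark \ref{rem:snakes-chain}, and $c_{\gamma,h,j}$ acts on objects by a bijection. For the homotopy between $c_{\gamma,h,j}$ and $c_{\gamma,h,j'}$ with two odd indices $j<j'$, I would construct an $A_\infty$-prenatural transformation $T$ whose first component is the telescoping operator $T^1(x^{(i)}) = x$ for $i$ even with $j<i<j'$ and $T^1(x^{(i)}) = 0$ otherwise; a direct computation using the formula for $\bar d$ yields $c^1_{\gamma,h,j}-c^1_{\gamma,h,j'} = \bar d\,T^1+T^1\,\bar d$. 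The higher components $T^k$ for $k\ge 2$ would be defined by counting the same type of perturbed polygons as for $\mu_k^{\mathcal{B}_{\gamma,h}}$ but with one additional interior marked point whose planar image is constrained to one of the even intersection points $o_i$ with $j<i<j'$; the $A_\infty$-homotopy identities then arise from the standard codimension-one boundary analysis of the resulting one-dimensional moduli spaces.

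The main obstacle is precisely the constancy claim for $u_1$ in the second paragraph: pinning down, among all perturbed planar polygons, those which actually contribute through the projection $c^1_{\gamma,h,j}$. Once the correct moduli-space picture is confirmed --- essentially parallel to the compactness analysis already carried out for $\fuk^d_{cob}(\C\times M)$ --- the $A_\infty$-functor axioms, the quasi-isomorphism property, and the construction of the $A_\infty$-homotopy all reduce to a combination of moduli-space boundary counts and direct computations with $\bar d$.
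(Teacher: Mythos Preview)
Your approach to showing that $c_{\gamma,h,j}$ is an $A_\infty$-functor and a quasi-isomorphism follows essentially the same route as the paper: reduce to the identity $c^1 \circ \overline{\mu}_k = \mu_k \circ (c^1)^{\otimes k}$, then establish this by showing that any index-$0$ polygon whose output lies at the odd point $o_j$ has constant planar projection at $o_j$. The paper organizes this slightly differently---it records a complete dichotomy for index-$0$ polygons (either all asymptotics are at a single odd $o_s$ and the projection is constant, or exactly one input and the output lie at a common even point with the remaining inputs at the two adjacent odd points)---but your version captures what is needed for the functor identity. One remark: the ``residual case of mixed even/odd asymptotics'' you mention is already excluded by the constancy claim itself (constant at $o_j$ forces every asymptotic to $o_j$), so the appeal to Corollary~\ref{c:ind-0} there is unnecessary; the index statement relevant to the non-constant case is Corollary~\ref{c:ind-n-const}.

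For the homotopy statement your route diverges from the paper's. You propose to build an explicit $A_\infty$-homotopy $T$ directly, with a telescoping $T^1$ and higher components $T^k$ defined via new moduli spaces of polygons carrying an interior marked point pinned to an even intersection. The paper instead constructs a single $A_\infty$-functor $e_{\gamma,h}:\fuk^d(M)\to\mathcal{B}_{\gamma,h}$, acting on morphisms by $e^1_{\gamma,h}(x)=\sum_{s\ \textnormal{odd}} x^{(s)}$ with all higher components zero, checks from the dichotomy above that this really is an $A_\infty$-functor, and observes that $c_{\gamma,h,j}\circ e_{\gamma,h}=\id$ for \emph{every} odd $j$. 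Since $e_{\gamma,h}$ is a quasi-isomorphism it is invertible up to homotopy, so all the $c_{\gamma,h,j}$ are homotopic to one and the same homotopy inverse of $e_{\gamma,h}$. This bypasses any construction of $T^k$ for $k\ge 2$ and any new moduli spaces; moreover $e_{\gamma,h}$ is not merely a proof device but is the functor used to \emph{define} the inclusion $\mathcal{I}_{\gamma,h}=\beta_{\gamma,h}\circ e_{\gamma,h}$ in~\eqref{eq:functor_I-gamma-h}. Your $T^1$ is correct at the chain level, but the proposed geometric definition of $T^k$ for $k\ge 2$ is only sketched and would require substantial additional work (transversality for the pinned moduli spaces, identifying their codimension-one boundary strata with the terms of the $A_\infty$-homotopy relations) that the paper's argument avoids entirely.
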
 
\begin{proof}
   In view of Remark~\ref{rem:snakes-chain}, to prove the first part
   of the claim it is enough to show that $c_{\gamma,h,j}$ is an
   $A_{\infty}$-functor. For this we start by listing two properties
   of the higher compositions in $\mathcal{B}_{\gamma,h}$ in the model
   using the perturbations described above. Denote the compositions in
   $\mathcal{B}_{\gamma, h}$ by $\overline{\mu}_{k}$ and the
   compositions in $\fuk^{d}(M)$ by $\mu_{k}$.  We assume below $k\geq
   2$. Let $z_{i} \in CF(L_{i},L_{i+1})^{o_{j_{i}}}$, $1\leq i\leq k$,
   and let $u: S_r \to \widetilde{M}$ be an index-$0$ solution
   to~\eqref{eq:jhol1} with entries $z_1, \ldots, z_k$. Denote by $w
   \in \widetilde{M}$ the exit (or output) of $u$. We claim that one
   of the following possibilities occur:
   \begin{itemize}
   \item[i.] There exists an odd $s$ such that
     $j_1 = \cdots = j_k = s$ and $\pi \circ u$ is constant at
     $o_s$. In particular
     $\overline{\mu}_{k}( z_{1}, \ldots, z_{k}) = (0,\ldots,
     0,\mu_{k}( z_{1}, \ldots, z_{k}), 0,\ldots , 0)\in
     CF(L_{1},L_{k+1})^{o_{s}}$.
   \item[ii.] There is \corr{an} $i$ with $j_i=$ even and
     $w \in CF(L_1, L_{k+1})^{o_{j_i}}$. Moreover, for every
     $r \neq i$ we have $j_r = j_i \pm 1$ \corr{or $j_{r}=j_{i}$}. In
     particular
     $\overline{\mu}_{k}( z_{1}, \ldots, z_{k}) \in
     CF(L_{1},L_{k+1})^{o_{j_{i}}}$.
   \end{itemize}
   The justification of this property is a simple application of two
   arguments. First, the odd points play a typical ``bottleneck'' role
   because our perturbations are such that the curves $u$ transform by
   the naturality transformation (\ref{eq:nat-v}) to polygons that
   project holomorphically onto $\C$ around these points. Moreover,
   the odd points are ``entry'' (or input) points from the point of
   view of the strip-like ends of these projections. This shows that
   any such curve that has such a point as an ``exit'' (or output)
   point has to project to a constant. The second argument is based on
   Corollary~\ref{c:ind-n-const}. More specifically, if an even point
   corresponds to the ``exit'' then, it is necessary for \corr{at
     least} one even point to appear among the
   entries.


   Finally, the fact that $j_r = j_i \pm 1$ \corr{or $j_{r}=j_{i}$}
   for every $r$ follows from a bottleneck argument similar the ones
   used previously several times.

   The above description of $\overline{\mu}_{k}$ implies the following
   identity:
   $$c^1_{\gamma, h,j}(\overline{\mu}_k(z_1, \ldots, z_k)) = 
   \mu_k \bigl( c^1_{\gamma, h,j}(z_1), \ldots, c^1_{\gamma, h,j}(z_k)
   \bigr).$$ In view of the vanishing of the higher order components
   of $c_{\gamma, h, j}$ it follows from this identity that
   $c_{\gamma,h,j}$ is an $A_{\infty}$-functor.

   To conclude the proof of the Proposition it remains to show the
   statement about the homotopy. To simplify the exposition we will
   now assume that $l=3$. We will show now that $c_{\gamma, h, 1}$ and
   $c_{\gamma, h,3}$ are homotopic as $A_{\infty}$-functors.

   Let $k \geq 2$ and let $z_{i}\in CF(L_{i},L_{i+1})^{o_{j_{i}}}$,
   $1\leq i\leq k$, be so that all $j_{i} \in \{1,3\}$ for every $i$.
   By inspecting the points {\em i}, {\em ii} above we see that
   $\overline{\mu}(z_{1},\ldots, z_{k})=0$ except possibly if all
   $j_{i}$ are equal to $1$ or if they are all equal to $3$.
   Moreover, we have
   $$\overline{\mu}_{k}(x_{1}^{(1)},\ldots, x_{k}^{(1)})=
   (\mu_{k}(x_{1},\ldots, x_{k}))^{(1)}\ , \quad
   \overline{\mu}_{k}(x_{1}^{(3)},\ldots,
   x_{k}^{(3)})=(\mu_{k}(x_{1},\ldots, x_{k}))^{(3)}~.~$$ By Remark
   \ref{rem:snakes-chain} for $k=1$ we have
   $$\overline{\mu}_{1}(x^{(1)}+x^{(3)})=(\mu_{1}(x))^{(1)}+(\mu_{1}(x))^{(3)})~.~$$
   We thus define an $A_{\infty}$-functor:
   $$e_{\gamma,h}: \fuk^{d}(M) \longrightarrow \mathcal{B}_{\gamma,h}$$ 
   by defining it on objects as $e_{\gamma, h}(L)=\widetilde{L}$, on
   morphisms $e_{\gamma, h}^{1}(x)= x^{(1)}+x^{(3)}$ and setting the
   higher components to be null, $e_{\gamma,h}^{k}=0$ for all $k>1$.

   This morphism clearly satisfies $c_{\gamma, h, 1}\circ e_{\gamma,h}
   = c_{\gamma, h, 3}\circ e_{\gamma,h}=id$. Moreover, $e_{\gamma,h}$
   is a quasi-isomorphism and thus invertible up to homotopy. (Note
   that $e_{\gamma, h}$, in contrast to $c_{\gamma, h, j}$ does not
   depend on $j$.) Thus, $c_{\gamma, h,1}\simeq c_{\gamma, h, 3}$.
   The argument for $l>3$ is analogous: namely
   $$e_{\gamma,h}:\fuk^{d}(M)\to \mathcal{B}_{\gamma,h}$$ has the same
   definition on objects as in the case $l=3$, its action on morphisms
   is again null for $k\geq 2$ and
   $e_{\gamma,h}^{1}(x)=x^{(1)}+x^{(3)}+\ldots + x^{(l)}$.
\end{proof}

We now define the inclusion functor
$$\mathcal{I}_{\gamma, h} : \fukd(M) \longrightarrow
\fukcben(\mathbb{C} \times M; i', h')$$ as the composition
\begin{equation} \label{eq:functor_I-gamma-h} \mathcal{I}_{\gamma, h}
   = \beta_{\gamma, h} \circ e_{\gamma, h},
\end{equation}
where $\beta_{\gamma, h}$ is the functor defined
in~\eqref{eq:beta-gamma-h}.

\begin{rem}\label{rem:var-snakes}
   a. The functor $e_{\gamma,h}$ is a common homotopy inverse for all
   the projections $c_{\gamma,h,i}$. As all these projections are
   mutually chain homotopic we will sometimes omit the index $i$ from
   the notation in $c_{\gamma, h, i}$.

   b. If the ends of $\gamma$ have integral $y$-coordinates then
   $\mathcal{I}_{\gamma, h}$ factors through
   $$\fukd(M) \longrightarrow \fukcb(\mathbb{C} \times M; i, h) 
   \stackrel{j}{\longrightarrow} \fukcben(\mathbb{C} \times M; i',
   h').$$
\end{rem}

\begin{prop} \label{prop:inv-incl}
   Up to a quasi-isomorphism (canonical in homology), the 
   functor $\mathcal{I}_{\gamma,h}$ depends only
   on the horizontal isotopy type of $\gamma$. 
\end{prop}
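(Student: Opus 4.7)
The plan is to adapt the \textbf{tot}-category technique from Propositions \ref{prop:inv1}, \ref{prop:eq2}, and \ref{prop:ham} to the auxiliary category $\mathcal{B}_{\gamma,h}$, and thereby reduce the claim to invariance results already established for $\fukcben(\mathbb{C} \times M)$. Given two horizontally isotopic curves $\gamma_0, \gamma_1$ in $\mathbb{R}^2$ with compatible profile functions $h_0, h_1$, I would first choose a horizontal Hamiltonian isotopy $\xi_t$ of $\mathbb{C}$ taking $\gamma_0$ to $\gamma_1$, and lift it trivially to a strongly horizontal ambient Hamiltonian isotopy $\psi_t = \xi_t \times \id_M$ in the sense of Definition \ref{def:str-hor}; this sends $\gamma_0 \times L$ to $\gamma_1 \times L$ for every $L \in \mathcal{L}^*_d(M)$.

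Next, I would construct a \textbf{tot}-category $\mathcal{B}^{\psi}_{\mathrm{tot}}$ whose objects are pairs $(\widetilde{L},t)$ with $\widetilde{L}=\gamma_0 \times L$ interpreted geometrically as $\psi_t(\widetilde{L}) = \gamma_t \times L$, and whose Floer and perturbation data at parameter $t$ are obtained by transporting those at $t=0$ through $\psi_t$, using as profile function the rescaled family $h^t = h_0 \circ \psi_t^{-1}$; mixed tuples of objects at different parameters are handled by interpolating via a family of generalized profile functions drawn from $\mathcal{H}^{\psi}$, exactly as in the proof of Proposition \ref{prop:ham}. Compactness for the relevant moduli spaces follows by iterating the naturality transformations used in that proof: a first naturality brings planar projections into a form that is holomorphic outside a compact region around the (moving) bottlenecks, and a second naturality, conjugating by $\psi_{b_r(z)}$, returns the bottlenecks to fixed positions so that Lemma \ref{lem:compact1} applies uniformly in $t$. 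Energy bounds and regularity then follow by combining \S\ref{subsec:energy}--\S\ref{subsec:transversality} with the generic choice scheme already used for $\mathcal{B}_{\gamma,h}$. The standard \textbf{tot}-argument yields that each fibre inclusion $\mathcal{B}_{\gamma_t,h^t} \hookrightarrow \mathcal{B}^{\psi}_{\mathrm{tot}}$ is a quasi-isomorphism.

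Simultaneously, I would build a parallel \textbf{tot}-category $\fukcben^{\mathrm{tot},\psi}$ extending the construction of \S\ref{subsubsec:ham}, together with an $A_{\infty}$-inclusion $\mathcal{B}^{\psi}_{\mathrm{tot}} \to \fukcben^{\mathrm{tot},\psi}$ whose restrictions to $t=0$ and $t=1$ recover $\beta_{\gamma_0, h_0}$ and $\beta_{\gamma_1, h_1}$ respectively. Composing the resulting zig-zag of fibre quasi-equivalences with the homotopy inverses $e_{\gamma_t, h^t}$ from Proposition \ref{prop:htpy-proj} produces a quasi-isomorphism $\mathcal{I}_{\gamma_0, h_0} \simeq \mathcal{I}_{\gamma_1, h_1}$ in the category of $A_{\infty}$-functors. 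Canonicity in homology follows as in Remark \ref{rem:equivariant}(a), since the resulting comparison depends only on the homotopy class with fixed endpoints of the planar isotopy. The main obstacle I anticipate is maintaining consistency of the perturbation term $Q$ in \eqref{eq:perturb-module}, which is supported near the even intersection points $o_i$: as these points move with $\gamma_t$ the supports must be chosen as thin, time-dependent neighborhoods tracking the moving even intersections. This can be arranged because the number of intersection points $l$ and their parities are preserved along any horizontal isotopy (after a proportional rescaling of the profile function), and the space of admissible such consistent data is contractible in the usual sense.
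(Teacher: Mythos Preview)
Your strategy is viable and mirrors the Hamiltonian-action argument of Proposition~\ref{prop:ham}, but the paper takes a considerably shorter route that sidesteps the technical issues you anticipate. The paper first fixes the bottlenecks $o_1$ and $o_l$ and builds a single $\mathcal{B}^{tot}$ containing all the $\mathcal{B}_{\gamma,h}$ as full subcategories (using the general interpolation scheme of~\S\ref{sb:inv-fuk-2}, not a parametric family driven by a specific isotopy $\psi_t$). The key observation you are missing is that the projection $c_{\gamma,h,1}$ onto the type-$o_1$ component extends to a \emph{global} quasi-equivalence $\bar{c}_1:\mathcal{B}^{tot}\to\fuk^d(M)$, and for every $(\gamma,h)$ one has $\bar{c}_1\circ(\text{inclusion})\circ e_{\gamma,h}=\id_{\fuk^d(M)}$. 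Thus all the functors $(\text{inclusion})\circ e_{\gamma,h}$ are right-inverses to the same quasi-equivalence $\bar{c}_1$, hence mutually quasi-isomorphic; composing with the inclusion $\mathcal{B}^{tot}\hookrightarrow\fuk^{d,tot}_{cob,1/2}(\C\times M)$ gives the result directly. This retraction trick makes the comparison of the $e_{\gamma,h}$'s essentially formal, whereas in your approach you still owe an argument for why $i_0\circ e_{\gamma_0,h_0}\simeq i_1\circ e_{\gamma_1,h_1}$ as functors into $\mathcal{B}^\psi_{\mathrm{tot}}$ --- knowing that $i_0,i_1$ are quasi-equivalences is not by itself enough. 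The paper's argument also covers changes in $l$ for free (since $o_1$ persists), and only at the very end are the bottlenecks moved via~\S\ref{subsubsec:bottle}; this completely avoids the ``main obstacle'' you raise about tracking the supports of $Q$ along a moving family of even points.
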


\begin{proof}
   The proof follows the general method for proving invariance
   statements as in~\S\ref{sb:inv-fuk-2}. Thus, we first assume that
   $\gamma$ varies inside a fixed horizontal isotopy type and $h$
   changes while keeping fixed the bottlenecks $o_{1}$ and $o_{l}$ and
   $l$ remains fixed.  We then include the different categories
   $\mathcal{B}_{\gamma,h}$ in the same category $\mathcal{B}^{tot}$.
   This category comes with a projection $\bar{c}_{1}:\mathcal{B}^{tot}\to
   \fuk^{d}(M)$ associated to the first intersection point $o_{1}$. This functor
   $\bar{c}_{1}$ restricts to the projection $c_{\gamma,h,1}$ on each $\mathcal{B}_{\gamma,h}$
   and is a quasi-equivalence.  
   Consider now the functors:
   $$\fuk^{d}(M)\stackrel{e_{\gamma,h}}{\longrightarrow}
   \mathcal{B}_{\gamma,h}\to \mathcal{B}^{tot}~.~$$
   The composition of $\bar{c}_{1}$ with each one of these functors is the identity.  
   Thus any two such
   functors are quasi-isomorphic.  The category $\mathcal{B}^{tot}$ can
   be mapped by inclusion into $\fuk_{cob,1/2}^{d,tot}(\C\times M)$
   and thus any two functors $\mathcal{I}_{\gamma, h}$ are also quasi-isomorphic by
   a quasi-isomorphism that is canonical in homology.

   The same argument also applies if we vary $h$ by keeping its
   positive and negative bottlenecks all fixed while allowing the
   number $l \geq 3$ to change: because the bottlenecks are fixed the
   corresponding category $\mathcal{B}^{tot}$ remains well defined as
   well as its inclusion in $\fuk_{cob,1/2}^{d,tot}(\C\times M)$; the
   projection $c_{1}$ remains also well-defined.  The last step is to
   ``move'' the bottlenecks. This is achieved combining the argument
   above with that in~\S\ref{subsubsec:bottle}.
\end{proof}

\begin{rem}\label{rem:comparison}
   The description of the $A_{\infty}$-structure for the category
   $\mathcal{B}_{\gamma,h}$ in the proof of Proposition
   \ref{prop:htpy-proj} also allows us to compare explicitly the
   categories $\mathcal{B}_{\gamma,h}$ and $\mathcal{B}_{\gamma, g}$
   where the number of points $o_{i}\in\ C$ for the function $h$ is
   $l\geq 5$ and the number of points $o'_{i}\in \C$ for the
   function $g$ is $l'=l-2$. Indeed, it is easy to define:
   $$p^{h}_{g}:\mathcal{B}_{\gamma,h}\to \mathcal{B}_{\gamma, g}$$
   which is the identity on objects, it is given on morphisms by
   $(p^{h}_{g})^{1}(x^{(i)})=x^{(i)}$ if $i\leq l-2$ and
   $(p^{h}_{g})^{1}(x^{(i)})=0$ if $l-2< i \leq l$ and
   $(p^{h}_{g})^{k}=0$ for $k\geq 2$.  For suitable choices of $g$ and
   perturbation data it is easy to see that this is an $A_{\infty}$
   functor. For this, we pick $g$ so that
   $(\phi^{g'}_{1})^{-1}(\gamma)$ coincides with
   $(\phi_{1}^{h'})^{-1}(\gamma)$ to the right of $o_{l-2}$ and we
   also fix the perturbation data so that the perturbation data for
   $\mathcal{B}_{\gamma,h}$ extends that of $\mathcal{B}_{\gamma, g}$.
   Clearly, $p^{h}_{g}$ is compatible in the obvious sense with the
   projections $c_{\gamma, h}$ and $c_{\gamma, g}$ as well as with the
   functors $e_{\gamma, h}$, $e_{\gamma,g}$. 

\end{rem}


\subsection{Index and regularity} \label{sb:ind-reg} The purpose of
this subsection is to provide details on the index calculations used
above in constructing and analyzing the category $\mathcal{B}_{\gamma,
  h}$.  Recall that we had to calculate the index of the linearized
operator associated to equation~\eqref{eq:jhol1} at a solution $u$ of
that equation. We need to calculate the index for two types of
solutions $u$. The first is when $u$ has a constant projection to
$\mathbb{C}$, and the second one is when the projection is not
constant. The main part in this calculation amounts to computing the
index of the horizontal part of the operator. We will also address
regularity of the solutions.

\subsubsection{The case of solutions with constant projection}
\label{sbsb:u-const}

We will work here with the following slightly more general setting.
Let $\gamma \subset \mathbb{R}^2$ be a curve as
in~\S\ref{subsec:inclusion}. Fix $k \geq 2$. Let $f_1, \ldots, f_{k+1}
: \gamma \longrightarrow \mathbb{R}$ be Morse functions and assume
that $p \in \gamma$ is a critical point of all the $f_i$'s.  Denote by
$|p|_{f_i}$ the Morse index of $f_i$ at $p$. Fix a a punctured disk $S
= S_r$ for some $r \in \mathcal{R}^{k+1}$ and denote its complex
structure by $j$. Denote by $a = a_r: S \longrightarrow \mathbb{R}$
the transition function as in~\S\ref{subsec:strip-ends}. Let $\{
\hat{f}_z: \gamma \longrightarrow \mathbb{R} \}_{z \in S}$ be a family
of functions, parametrized by $S$, all having $p$ as a critical point
and such that on the $i'th$ strip-like end of $S$, $f_z$ coincides
with $f_i$ at infinity, i.e.  $\hat{f}_z = f_i$ for every $z \in
\epsilon_i^S(Z^- \setminus K^-)$, $i=1, \ldots, k$ and $\hat{f}_z =
f_{k+1}$ for every $z \in \epsilon_i^S(Z^+ \setminus K^+)$, where
$K^{\pm}$ are compact subsets of the semi-strips. We extend the
functions $f_i$ and $\hat{f}_z$ to functions on $\mathbb{R}^2$ by
identifying $T^*\gamma \cong \mathbb{R}^2$ and defining the extension
to be constant along each fibers of $T^*\gamma$. Denote by
$X^{\hat{f}} = \{X^{\hat{f}_z}\}_{z \in S}$ the ($S$-dependent)
Hamiltonian vector field associated to $\hat{f}$. Set $Z = da \otimes
X^{\hat{f}}$ viewed as a $1$-form on $S$ with values in the space of
Hamiltonian vector fields on $\mathbb{R}^2$. Finally define an
$S$-dependent complex structure $\hat{i} = \{\hat{i}_z \}_{z \in S}$
on $\mathbb{R}^2$ defined by $\hat{i}_z = \big(
\phi_{a(z)}^{\hat{f}_z} \big)_* i$.

We extend the choices above to the case $k=1$ as follows. In that case
$S$ can be identified with the strip $\mathbb{R} \times [0,1]$ and we
require that $f_0 = f_1$, the family $\hat{f}_z$ to be constant and
$a(s,t) = t$ for every $(s,t)$.

With the above data fixed, consider the following equation:
\begin{equation} \label{eq:jhol-plane} w:(S, \partial S)
   \longrightarrow (\mathbb{R}^2, \gamma), \quad Dw + \hat{i}_z(w)
   \circ Dw \circ j = Z + \hat{i}_z(w) \circ Z \circ j.
\end{equation}

Due to our choices we have $Z_z(p) = 0$ for every $z \in S$, hence the
constant map $w_0(z) \equiv p$ is a solution of~\eqref{eq:jhol-plane}.
Denote by $\mathcal{D}_{w_0}$ the linearized operator associated
to~\eqref{eq:jhol-plane} at $w_0$.

The following Lemma follows from the theory developed
in~\cite{Se:Lefschetz-Fukaya}.
\begin{lem} \label{l:index}
   The Fredholm index of $\mathcal{D}_{w_0}$ is given by
   $$\textnormal{ind} (\mathcal{D}_{w_0}) = |p|_{f_1} + \cdots + |p|_{f_k} - 
   |p|_{f_{k+1}} - (k-1).$$ Moreover, when the index is $0$ the
   operator $\mathcal{D}_{w_0}$ is surjective, hence the solution
   $w_0$ is regular.
\end{lem}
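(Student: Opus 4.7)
The plan is to apply the naturality transformation $w(z) = \phi^{\hat{f}_z}_{a(z)}(v(z))$ already used throughout Section~\ref{sec:FukCob}. This converts equation~\eqref{eq:jhol-plane} into the standard Cauchy--Riemann equation $\bar{\partial} v = 0$ for $v: S \to \mathbb{R}^2$ with the moving totally-real boundary condition $v(z) \in L(z) := (\phi^{\hat{f}_z}_{a(z)})^{-1}(\gamma)$. The constant solution $w_0 \equiv p$ corresponds to $v_0 \equiv p$, and the two linearized operators are conjugate via the differential of this naturality map, so the index and regularity of $\mathcal{D}_{w_0}$ can equivalently be studied through the linearization $\mathcal{D}_{v_0}$ of the transformed equation. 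The latter is a standard $\bar{\partial}$-operator on the trivial bundle $S \times T_p \mathbb{R}^2 \to S$ whose boundary condition is the smoothly varying family of real lines $L(z) \subset T_p \mathbb{R}^2$, $z \in \partial S$.

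Next I would invoke the Riemann--Roch formula for Cauchy--Riemann operators on $(k+1)$-pointed disks with totally-real boundary conditions and strip-like ends, as developed in~\cite{Se:Lefschetz-Fukaya} and in~\cite{Se:book-fukaya-categ}. This formula expresses $\textnormal{ind}(\mathcal{D}_{v_0})$ as a sum of Maslov--Viterbo indices contributed by the behaviour of $L(z)$ at each strip-like end, plus a topological correction coming from the Euler characteristic of the $(k+1)$-pointed disk; taking into account that $L(z) \equiv T_p\gamma$ on the boundary arcs where $a \equiv 0$, this topological term accounts for the $-(k-1)$ in the formula.

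The heart of the argument is the local computation at each strip-like end. Choose coordinates $(x,y)$ on $T_p \mathbb{R}^2$ with $T_p \gamma = \{y=0\}$, and extend each $f_i$ to be fibrewise constant in the $y$-direction. A direct computation shows that the linearization at $p$ of the inverse flow $\phi^{f_i}_{-a}$ is the symplectic shear $(x,y) \mapsto (x,\, y + a\, f_i''(p)\, x)$, so the path $a \mapsto L(a) = \{y = a\, f_i''(p)\, x\}$ is monotone and its direction of rotation is determined by the sign of $f_i''(p)$, i.e., by the Morse index $|p|_{f_i} \in \{0,1\}$. A standard computation of the resulting Maslov--Viterbo contribution yields $|p|_{f_i}$ at each in-going end and $-|p|_{f_{k+1}}$ at the out-going end; summing these together with the topological term $-(k-1)$ produces the asserted formula.

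For the regularity statement when $\textnormal{ind}(\mathcal{D}_{w_0}) = 0$, I would invoke the automatic regularity principle for Cauchy--Riemann operators with totally-real boundary conditions in a real two-dimensional target (see Section~13 of~\cite{Se:book-fukaya-categ}, as well as~\cite{DS-Ro-Sa:comb-HF}): in real dimension $2$ the cokernel of $\mathcal{D}_{v_0}$ automatically vanishes whenever the index is non-negative. The main subtlety to expect is the careful bookkeeping of Maslov--Viterbo contributions across the transition regions of the function $a_r$, where $L(z)$ interpolates between $T_p\gamma$ and its image under $(\phi^{\hat{f}_z}_{-1})_*$; controlling these contributions in the spirit of~\cite{Se:Lefschetz-Fukaya} is what makes the final topological term come out correctly as $-(k-1)$.
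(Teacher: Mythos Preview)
Your approach is essentially the paper's: both reduce to the linear theory of $\bar{\partial}$-operators in $\mathbb{C}$ with line boundary conditions from \cite{Se:Lefschetz-Fukaya}, compute the index via the map $\hat{\lambda}: \partial\bar{S} \to \mathbb{R}P^1$ (equivalently, via Maslov-type contributions at the ends determined by the sign of $f_i''(p)$), and use the naturality transformation to pass to the standard $\bar{\partial}$ with moving boundary conditions. The paper organizes things slightly differently---for the index it first homotopes $\hat{i}$ to the standard $i$ through Fredholm operators (without applying naturality) and reads off $\textnormal{ind} = 1 + \deg(\hat{\lambda})$ directly, reserving the naturality transformation for the regularity step---but this is cosmetic.

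One caution on your regularity step: the ``automatic regularity'' of Section~13a in \cite{Se:book-fukaya-categ} is stated for \emph{non-constant} holomorphic curves and does not literally cover the linearization at the constant solution $v_0$. The result you actually need is the purely linear one---that a $\bar{\partial}$-operator on a punctured disk with totally real line boundary conditions in $\mathbb{C}$ is injective when $\deg(\hat{\lambda}) \leq -1$---which is Proposition~4.1 of \cite{Se:Lefschetz-Fukaya}. The paper invokes exactly this: since the index is $0$ one has $\deg(\hat{\lambda}) = -1$, hence $\mathcal{D}''_{v_0}$ is injective, and index zero then forces surjectivity. Your conclusion is correct, but the citation should point to the linear result rather than the non-constant automatic regularity.
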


The following is an immediate consequence of the previous lemma.
\begin{cor} \label{c:ind-0} $\textnormal{ind}(\mathcal{D}_{w_0}) = 0$
   in any of the following two cases:
   \begin{enumerate}
     \item $k=1$ and $|p|_{f_1} = |p|_{f_2}$.
     \item $k \geq 2$ and $|p|_{f_1} = \cdots = |p|_{f_{k+1}} = 1$.
   \end{enumerate}
   Moreover, in both cases the constant solution $w_0$ is regular.
\end{cor}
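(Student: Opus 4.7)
The proof will be an immediate plug-in into the index formula provided by Lemma \ref{l:index}, combined with its regularity clause. The plan is to separate the two stated cases and verify the arithmetic in each, then invoke the ``index $0$ implies surjective'' part of Lemma \ref{l:index} to conclude regularity of the constant solution $w_0$ in both situations.

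First, for case (1), $k=1$, there is a subtle point to address: the convention set up just before Lemma \ref{l:index} in this case fixes $f_1 = f_2$, $\hat{f}_z$ constant and $a(s,t)=t$, so the statement ``$|p|_{f_1} = |p|_{f_2}$'' is automatic. Plugging into the formula of Lemma \ref{l:index} with $k=1$, the last term $(k-1)$ vanishes, and we obtain $\textnormal{ind}(\mathcal{D}_{w_0}) = |p|_{f_1} - |p|_{f_2} = 0$. For case (2), $k\geq 2$ and all Morse indices equal to $1$, the formula yields
\[
\textnormal{ind}(\mathcal{D}_{w_0}) = \underbrace{1+\cdots+1}_{k \text{ times}} - 1 - (k-1) = k - 1 - (k-1) = 0.
\]

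Finally, in each of the two cases we appeal to the second assertion of Lemma \ref{l:index}: whenever $\textnormal{ind}(\mathcal{D}_{w_0}) = 0$, the operator $\mathcal{D}_{w_0}$ is surjective, so the constant solution $w_0 \equiv p$ of equation \eqref{eq:jhol-plane} is regular. There is no real obstacle here: the Corollary is a mechanical specialization of Lemma \ref{l:index}, and no additional input is required beyond the arithmetic above.
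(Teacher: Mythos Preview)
Your proof is correct and matches the paper's approach exactly: the paper states that the corollary is an immediate consequence of Lemma~\ref{l:index}, and your argument carries out precisely that substitution into the index formula together with the regularity clause. Your remark about the $k=1$ convention is also apt.
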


\begin{proof}[Proof of Lemma~\ref{l:index}]
   The proof follows essentially the recipe from Sections~4.5 and~4.6
   in~\cite{Se:Lefschetz-Fukaya}, with a few minor modifications that
   we outline below.

   We begin with the index formula. Consider the same equation
   as~\eqref{eq:jhol-plane} but with $\hat{i}$ replaced by the
   standard (constant) complex structure $i$ in the plane
   $\mathbb{C}$. The constant map $w_0$ solves that equation too. The
   linearized operator $\mathcal{D}'_{w_0}$ of that equation is
   homotopic to $\mathcal{D}_{w_0}$ through Fredholm operators, hence
   its index is equals to that of $\mathcal{D}'_{w_0}$.

   The operator $\mathcal{D}'_{w_0}$ is precisely of the class of
   operators considered in Section~4e of~\cite{Se:Lefschetz-Fukaya}
   (see~(4.32) and~(4.33) in that paper for the precise description of
   that class). More precisely, the operator $\mathcal{D}'_{w_0}$ has
   the following form:
   \begin{equation} \label{eq:lin-D'}
      \begin{cases}
         & \mathcal{D}'_{w_0}(\xi) = \overline{\partial}{\xi} - \Bigl(
         i \circ \textnormal{Re}(\xi)
         \textnormal{diag} \bigl(\hat{f}_z''(p), 0 \bigr) 
         \otimes da \Bigr)^{0,1}, \\
         & \xi: S \longrightarrow \mathbb{R}^2, \\
         & \xi(z) \in T_{p} \gamma, \; \forall z \in \partial S.
      \end{cases}
   \end{equation}
   Here both sides of the equation are regarded as $1$-forms with
   values in $\hom_{\mathbb{R}}(\mathbb{C}, \mathbb{C}) =
   Mat_2(\mathbb{R})$. Also, $\hat{f}_z''(p)$ stands for the second
   derivative of the function $\hat{f}_z: \gamma \longrightarrow
   \mathbb{R}$ at the point $p$, where we identify $\gamma$ with
   $\mathbb{R}$ in the obvious way.

   From here on, the index calculation of $\mathcal{D}'_{w_0}$ follows
   a topological recipe developed in~\cite{Se:Lefschetz-Fukaya}.
   Denote by $\bar{S}$ the compactification of $S$ obtained by adding
   to each strip like end an interval of the type $\epsilon_i^S(\pm
   \infty \times [0,1])$. According to~\cite{Se:Lefschetz-Fukaya} we
   have $$\textnormal{ind}(\mathcal{D}'_{w_0}) = 1 +
   \deg(\hat{\lambda}),$$ where $\hat{\lambda}: \partial \bar{S}
   \longrightarrow {\mathbb{R}}P^1$ is a map defined as
   $\hat{\lambda}(z) = T_p\gamma \in {\mathbb{R}}P^1$ for every $z \in
   \partial S$ and over the intervals at infinity of $\partial
   \bar{S}$ by using an explicit homotopy class of loops in
   $\mathbb{R}P^1$ starting and ending at $T_p\gamma$. This homotopy
   class is explicitly determined on each end by the sign of
   $f_i''(p)$.

   A little care is needed when following the procedure
   from~\cite{Se:Lefschetz-Fukaya}, since Seidel's conventions differ
   from ours. The essential difference here is that
   in~\cite{Se:Lefschetz-Fukaya} Seidel parametrizes the strip-like
   ends in a different way than we do. Namely, the ``input'' (or
   ``entry'') ends $i=1, \ldots, k$ are parametrized in Seidel by
   $[0,\infty) \times [0,1]$ whereas we parametrize them by $(-\infty,
   0]\times [0,1]$. Similarly, the ``output'' (or ``exit'') end is
   parametrized in Seidel by $(-\infty, 0] \times [0,1]$ and we
   parametrize it by $[1, \infty) \times [0,1]$. The outcome is that
   in order to calculate the map $\bar{\lambda}$ using the conventions
   of Seidel we need to replace $f_i''(p)$ by $-f_i''(p)$ everywhere.
   Apart from that a straightforward substitution in the method
   from~\cite{Se:Lefschetz-Fukaya} yields the desired index formula.

   We now turn to regularity, under the assumption that the index is
   $0$. Here it is convenient to apply the naturality transformation,
   namely define $v:S \longrightarrow \mathbb{R}^2$ by the
   transformation $w(z) = \phi_{a(z)}^{\hat{f}_z}(v(z))$. Then the
   equation~\eqref{eq:jhol-plane} is transformed to the following
   equation with moving boundary conditions:
   \begin{equation} \label{eq:jhol-homogen-plane}
      \begin{cases}
         & Dv + i \circ Dv_z \circ j = 0, \\
         & v:S \longrightarrow \mathbb{R}^2, \\
         & v(z) \in \bigl( \phi_{a(z)}^{\hat{f}_z}
         \bigr)^{-1}(\gamma), \; \forall z \in \partial S.
      \end{cases}
   \end{equation}
   
   Note that the constant solution $w_0$ is sent by this
   transformation to the constant solution $v_0 \equiv p$ of
   equation~\eqref{eq:jhol-homogen-plane}. The linearization
   $\mathcal{D}''_{v_0}$ of~\eqref{eq:jhol-homogen-plane} at $v_0$ now
   looks as follows:
   \begin{equation} \label{eq:lin-D-homog}
      \begin{cases}
         & \mathcal{D}''_{v_0}(\xi) = \overline{\partial}{\xi} \\
         & \xi: S \longrightarrow \mathbb{R}^2, \\
         & \xi(z) \in D \bigl(\phi_{a(z)}^{\hat{f}_z}
         \bigr)^{-1}(T_{p} \gamma), \; \forall z \in \partial S.
      \end{cases}
   \end{equation}
   This operator still falls in the class of operators considered
   in~\cite{Se:Lefschetz-Fukaya} (see Section~4.5 in that paper). The
   index can now be independently (of the previous calculation)
   calculated via an analogous procedure:
   $\textnormal{ind}(\mathcal{D}''_{v_0}) = 1 + \deg (\hat{\lambda})$,
   where this time the map $\hat{\lambda}$ takes into account also the
   moving boundary conditions $D \bigl(\phi_{a(z)}^{\hat{f}_z}
   \bigr)^{-1}(T_{p} \gamma)$, $z \in \partial S$. Since the total
   index is assumed to be $0$ we have $\deg(\hat{\lambda}) = -1$. By
   Proposition~4.1 in~\cite{Se:Lefschetz-Fukaya} this implies that
   $\mathcal{D}''_{v_0}$ is injective. As the index is $0$ it must
   also be surjective.
\end{proof}

\subsubsection{The case of solutions with non-constant projection}
\label{sbsb:u-non-const}

Here we consider a very similar setting to the one
in~\S\ref{sbsb:u-const} with the following difference.  The functions
$f_1, \ldots, f_{k+1}: \gamma \longrightarrow \mathbb{R}$ are assumed
to be Morse but possibly with different critical points. The family
$\hat{f}$ is only assumed to coincide with the $f_i$'s on the ends at
infinity. The term $Z$ in equation~\eqref{eq:jhol-plane} is allowed
now to have following more general form:
$$Z = da \otimes X^{\hat{f}} + X^{q},$$ where $q$ is a $1$-form on $S$
with values in $C^{\infty}(\mathbb{R}^2)$ whose support (as a form on
$S$) is compact and lies inside the interior of $S$. The main example
we have in mind is when the $f_i$'s, as well as the $\hat{f}_z$'s all
coincide with one extended profile function $h'$, and the form $q$ is as
defined on page~\pageref{eq:perturb-module} after
equation~\eqref{eq:perturb-module}.

Let $w: (S, \partial S) \longrightarrow (\mathbb{R}^2, \gamma)$ be a
solution of equation~\eqref{eq:jhol-plane}, where $Z$ is now assumed
to be of the form just discussed. It is easy to see that along each
strip-like end $w$ converges to a critical point of the corresponding
$f_i$. Denote by $p_1, \ldots, p_k \in \gamma$ the critical points
corresponding to the input ends and by $p_{k+1}$ the one corresponding
to the output. We write $|p_i|_{f_i}$ for the Morse index of $f_i$ at
$p_i$.  We continue to denote the linearized operator
of~\eqref{eq:jhol-plane} at $w$ by $\mathcal{D}_{w}$. \pbcorr{Let
  $r \in \mathcal{R}^{k+1}$ be the point parametrizing the punctured
  disk $S$ (so that $S = S_r$) and denote by
  $\overline{\mathcal{D}}_{r,w}$ the extended linearized operator,
  that takes into account ``moving'' punctures}. The following Lemma
follows too from the methods of~\cite{Se:Lefschetz-Fukaya}.

\begin{lem} \label{l:index-non-const} Let $w$ be a {\em non-constant}
  solution of equation~\eqref{eq:jhol-plane}. Then the operator
  \pbcorr{$\overline{\mathcal{D}}_{r,w}$} is surjective and its index
  is given by:
  $$\pbcorr{\textnormal{ind}(\overline{\mathcal{D}}_{r,w})}
  = |p_1|_{f_1} + \cdots + |p_k|_{f_k} - |p_{k+1}|_{f_{k+1}} -\corr{
    1}.$$
\end{lem}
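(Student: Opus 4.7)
My plan is to adapt the strategy of Lemma~\ref{l:index} to the non-constant case. The argument splits into two independent tasks: the index formula and the surjectivity statement. Throughout I will exploit the naturality transformation $w(z) = \phi_{a(z)}^{\hat{f}_z}(v(z))$, which converts equation~\eqref{eq:jhol-plane} into a genuinely $(j,i)$-holomorphic equation for $v$ with totally real (but moving) boundary conditions, as in~\eqref{eq:jhol-homogen-plane}. The extra term $X^q$ in $Z$, being supported in the interior of $S$, plays no role on the strip-like ends; after the naturality transformation it produces a compactly supported inhomogeneity that does not affect either the index or the asymptotic analysis.

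First I would compute the index. As in the proof of Lemma~\ref{l:index}, the operator $\mathcal{D}_w$ is homotopic through Fredholm operators to the corresponding operator associated to the standard complex structure $i$ on $\mathbb{C}$, so its index is unchanged. This reduces matters to an operator of the class studied in Section~4(e) of~\cite{Se:Lefschetz-Fukaya}, whose Fredholm index is $1 + \deg(\hat{\lambda})$, where $\hat{\lambda}: \partial \bar S \to \mathbb{R}P^1$ is the Lagrangian loop obtained from $T\gamma$ along the boundary of the compactification $\bar S$ together with the prescribed homotopy classes on the intervals at infinity. Since on each strip-like end $w$ converges to a critical point $p_i$ of $f_i$ and the asymptotic homotopy class at that end is exactly the one appearing in Lemma~\ref{l:index} (it depends only on the Hessian of $\hat{f}_z$ at the asymptotic critical point and on the sign conventions for input/output ends), exactly the same bookkeeping as in the proof of Lemma~\ref{l:index} yields
\[
\textnormal{ind}(\mathcal{D}_w) = |p_1|_{f_1} + \cdots + |p_k|_{f_k} - |p_{k+1}|_{f_{k+1}} - (k-1).
\]
The only difference with the constant case is that the critical points $p_i$ are now allowed to differ, but this enters only through the Morse indices.

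For surjectivity I would use automatic transversality for pseudo-holomorphic maps in real dimension $2$. Transforming $w$ to $v$ via naturality, $v$ satisfies $\bar\partial v = 0$ (up to a compactly supported inhomogeneity coming from $X^q$) with totally real boundary on the moving Lagrangians $(\phi_{a(z)}^{\hat{f}_z})^{-1}(\gamma) \subset \mathbb{C}$. Since $v$ is non-constant and takes values in $\mathbb{C}$, the linearized operator $\mathcal{D}''_v$ decouples from any vertical direction and falls in the class of Cauchy--Riemann operators on a line bundle over a bordered Riemann surface with totally real boundary. By the standard automatic regularity statement in complex dimension one (see Section~13(a) of~\cite{Se:book-fukaya-categ} and the theorem of Hofer--Lizan--Sikorav, also used in~\cite{DS-Ro-Sa:comb-HF}), such an operator is automatically surjective provided the underlying holomorphic map is somewhere injective, which follows here from $v$ being non-constant and the open mapping principle applied near any regular point. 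Transferring surjectivity back to $\mathcal{D}_w$ via the linearization of the naturality change of variables concludes the proof.

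The main obstacle I anticipate is the careful verification that the asymptotic homotopy classes used in the topological index count of~\cite{Se:Lefschetz-Fukaya} really match those in Lemma~\ref{l:index} once the critical points $p_i$ are allowed to vary; this is essentially bookkeeping (sign conventions for input versus output ends, and the dependence of the class on the Hessian), but it is the only point where one must be attentive in transcribing Seidel's recipe to the present setting. Once this is checked, the rest of the argument is a routine reduction to results already in the literature.
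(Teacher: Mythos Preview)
Your approach is essentially the same as the paper's, and the references you invoke are the right ones. Two small points deserve attention, though.

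For the index, your claim that ``exactly the same bookkeeping as in Lemma~\ref{l:index}'' suffices hides one step. In Lemma~\ref{l:index} the map $\hat{\lambda}$ is constant along $\partial S$ because $w_0 \equiv p$, so $T_{w_0(z)}\gamma = T_p\gamma$ for all $z \in \partial S$; only the intervals at infinity contribute to $\deg(\hat{\lambda})$. In the non-constant case $w(z)$ moves along $\gamma$, and a priori $T_{w(z)}\gamma$ could wind in $\mathbb{R}P^1$ along the boundary arcs. The paper deals with this by first applying a symplectomorphism of $\mathbb{R}^2$ that makes $\gamma$ entirely horizontal, so that $T_{w(z)}\gamma$ is again constant along $\partial S$ and only the end contributions survive. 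You should insert this straightening step; otherwise the reduction to the end calculation of Lemma~\ref{l:index} is not justified.

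For surjectivity, you are right to appeal to Section~13(a) of~\cite{Se:book-fukaya-categ}, but the detour through naturality and the ``somewhere injective'' hypothesis are both unnecessary. The automatic regularity result there applies directly to $\mathcal{D}_w$: for a \emph{non-constant} map to a Riemann surface the linearized operator is always surjective, with no injectivity assumption required (the argument uses positivity considerations specific to complex dimension one, not a somewhere-injective/simple-curve mechanism \`a la Hofer--Lizan--Sikorav). The paper accordingly just cites Section~13(a) in one line.
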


\begin{proof}
   The proof is very similar to the proof of Lemma~\ref{l:index},
   following the theory from Sections~4.5 and~4.6
   of~\cite{Se:Lefschetz-Fukaya}.

   By applying a suitable symplectomorphism of $\mathbb{R}^2$ we may
   assume without loss of generality that the curve $\gamma$ is
   entirely horizontal. The map $\hat{\lambda}$ mentioned in the proof
   of Lemma~\ref{l:index} then becomes constant over $\partial S$
   (since $\hat{\lambda}(z) = T_{w(z)} \gamma$ \, $\forall z \in
   \partial S$). Thus the only contribution of $\hat{\lambda}$ to the
   index of $\mathcal{D}_w$ comes from the ends of $S$. But the
   calculation over each end coincides with that done for the proof of
   Lemma~\ref{l:index}.

   \corr{The index of} \pbcorr{$\overline{\mathcal{D}}_{r,w}$}
   \corr{is bigger by $k-2$ compared to that of $\mathcal{D}_{w}$ and,
     finally, the surjectivity of}
   \pbcorr{$\overline{\mathcal{D}}_{r,w}$} follows from the theory
   developed in Section~13a of~\cite{Se:book-fukaya-categ}.
\end{proof}

\pbcorr{We now examine the situation for polygons in
  $\mathbb{R}^2 \times M$. Let
  $L_1, \ldots, L_{k+1} \in \mathcal{L}^*_d(M)$ and
  $\widetilde{L}_1, \ldots, \widetilde{L}_{k+1}$ as
  in~\S\ref{subsec:inclusion}. Let
  $\gamma_i \in CF(\widetilde{L}_i, \widetilde{L}_{i+1};
  H_{\widetilde{L}_i, \widetilde{L}_{i+1}}, J_{\widetilde{L}_i,
    \widetilde{L}_{i+1}})$, $i=1, \ldots, k$, and
  $\gamma_{k+1} \in CF(\widetilde{L}_1, \widetilde{L}_{k+1};
  H_{\widetilde{L}_1, \widetilde{L}_{k+1}}, J_{\widetilde{L}_1,
    \widetilde{L}_{k+1}})$ be generators.}
  
\begin{cor} \label{c:ind-n-const} \corr{For a generic choice of
    perturbation data}
  \pbcorr{$\mathscr{D}_M = (\Theta_M, \mathbf{J}_M)$ in $M$,}
  \corr{the moduli spaces}
  \pbcorr{$\mathcal{M}(\gamma_1, \ldots, \gamma_k, \gamma_{k+1};
    \mathscr{D}_{\mathbb{R}^2 \times M})$} \corr{of polygons}
  \pbcorr{in $\mathbb{R}^2 \times M$, defined using the perturbation
    data $\mathscr{D}_{\mathbb{R}^2 \times M} = (\Theta, \mathbf{J})$
    as in~\eqref{eq:perturb-module},} \corr{are regular. Moreover, for
    $k\geq 2$,} \pbcorr{if all the points
    $\pi(\gamma_1), \ldots, \pi(\gamma_k)$ are of odd type and the
    space
    $\mathcal{M}(\gamma_1, \ldots, \gamma_k, \gamma_{k+1};
    \mathscr{D}_{\mathbb{R}^2 \times M})$ has a $0$-dimensional
    component which is not empty, then $\pi(\gamma_{k+1})$ is also of
    odd type.}
\end{cor}
\begin{proof} \corr{Consider the ``horizontal'' moduli spaces, as in
    Lemma~\ref{l:index-non-const}, that we will denote} \pbcorr{for
    brevity by
    $\mathcal{M}_{k+1}(\mathbb{R}^2;\mathscr{D}_{\mathbb{R}^2})$.}
  \corr{Consider also the ``vertical'' moduli spaces}
  \pbcorr{$\mathcal{M}_{k+1}(M;\mathscr{D}_M)$} \corr{that are
    considered in the construction of the Fukaya category of $M$, as
    in \S\ref{sec:fuk-M}, and are associated to the system of
    perturbations} \pbcorr{$\mathscr{D}_M$ and strip-like ends,}
  \corr{compatible with gluing. Here $k$ is the number of inputs of
    the respective polygons. In both cases we will consider moduli
    spaces of arbitrary dimensions.  The horizontal ones are regular
    by Lemma~\ref{l:index-non-const}. Standard arguments imply that,
    for a generic choice of perturbations} \pbcorr{$\mathscr{D}_M$,
    the moduli spaces of the second type are also regular.} \pbcorr{We
    have two obvious projection maps:
    $p_{1}: \mathcal{M}_{k+1}(\mathbb{R}^2;
    \mathscr{D}_{\mathbb{R}^2}) \to \mathcal{R}^{k+1}$ and}
  \pbcorr{$p_{2}^{\mathscr{D}_M}:\mathcal{M}_{k+1}(M;\mathscr{D}_M)
    \to \mathcal{R}^{k+1}$.} \corr{The split nature of the
    perturbation datum in~\eqref{eq:perturb-module}} \pbcorr{implies
    that the moduli space that assembles the curves in
    $\mathcal{M}(\gamma_1, \ldots, \gamma_k, \gamma_{k+1};
    \mathscr{D}_{\mathbb{R}^2 \times M})$ is given by the fiber
    product
    $\mathcal{M}_{k+1}(\mathbb{R}^2; \mathscr{D}_{\mathbb{R}^2})
    \times_{\mathcal{R}^{k+1}} \mathcal{M}_{k+1}(M;\mathscr{D}_M)$
    over the two projections $p_{1}$ and $p_{2}^{\mathscr{D}_M}$. By
    standard arguments, a further generic choice of perturbations
    $\mathscr{D}_M$ is sufficient to make the projection
    $p_{2}^{\mathscr{D}_M}$ transverse to $p_{1}$.}
  \pbcorr{Consequently, for a generic choice of perturbations
    $\mathscr{D}_M$ the moduli space
    $\mathcal{M}(\gamma_1, \ldots, \gamma_k, \gamma_{k+1};
    \mathscr{D}_{\mathbb{R}^2 \times M})$ is regular.} \corr{Its
    dimension is given by the sum of the dimensions of the
    ``horizontal'' and ``vertical'' spaces} \pbcorr{$-(k-2)$.}

  \pbcorr{Assume by contradiction that, $k \geq 2$ and
    $\mathcal{M}(\gamma_1, \ldots, \gamma_k, \gamma_{k+1};
    \mathscr{D}_{\mathbb{R}^2 \times M})$ has a component of dimension
    $0$ such that $\pi(\gamma_1), \ldots, \pi(\gamma_k)$ are all of odd
    type but $\pi(\gamma_{k+1})$ is of even type. Let $(r,u)$ be an
    element of the $0$-dimensional component of
    $\mathcal{M}(\gamma_1, \ldots, \gamma_k, \gamma_{k+1};
    \mathscr{D}_{\mathbb{R}^2 \times M})$.  The total index of the
    extended operator $\overline{\mathcal{D}}_{r,u}$
    is
    $$0=ind(\overline{\mathcal{D}}_{r,u})=ind^{\mathrm{hor}}(r,u) +
    ind^{\mathrm{ver}}(r,u)-(k-2),$$ where $ind^{\mathrm{hor}}$ and
    $ind^{\mathrm{ver}}$ are the obvious horizontal and vertical
    indices. They coincide with the indices of the projections of $u$,
    respectively, to $\mathbb{R}^2$ and to $M$. By}
  \corr{Lemma~\ref{l:index-non-const}, the horizontal index is}
  \pbcorr{$ind^{\mathrm{hor}}(r,u)=k-1$} \corr{which implies that}
  \pbcorr{$ind^{\mathrm{ver}}(r,u)=-1$} \corr{and leads to a
    contradiction, thus ending the proof.}
 \end{proof}

\begin{rem}
  \corr{In the earlier} \pbcorr{published} \corr{version of this paper
    this  subsection contained} \pbcorr{a mistake confusing} \corr{between
    the moduli spaces with fixed punctures and those with ``moving
    punctures''. This led to an error in the statement of} \pbcorr{the
    original version of} \corr{Lemma~\ref{l:index-non-const} as well
    as to a wrong version of Corollary~\ref{c:ind-n-const}.}
  \pbcorr{The above Lemma~\ref{l:index-non-const} and
    Corollary~\ref{c:ind-n-const} should therefore replace the
    corresponding ones in the previous version of the paper.}
  \corr{The only impact of} \pbcorr{this correction} \corr{on the rest
    of the paper} \pbcorr{has to do with the proof of
    Proposition~\ref{prop:htpy-proj}. Namely,} \corr{the possibility at  point~ii in the proof of Proposition~\ref{prop:htpy-proj} needs
    to allow for curves with multiple even entries - as indicated in
    the present version - and not just with a single even entry, as
    claimed in the earlier version. However, the potential existence
    of these more general curves is immaterial to any of the other
    arguments,} \pbcorr{formulas and results} \corr{in the paper.}

  \corr{The errors in} \pbcorr{the earlier version} \corr{of this
    subsection were discovered by Emily Campling. We are grateful to her for pointing them}
  \pbcorr{out} \corr{to us.}
\end{rem}



\subsection{The exact triangles}

\subsubsection{$A_{\infty}$-modules associated to a cobordism}
\label{subsubsec:modules}

Let $V$ be a Lagrangian cobordism $V\in \mathcal{CL}_{d}(M)$,
$\gamma\subset \C$ a curve and $h$ a profile function, as
in~\S\ref{subsec:inclusion}. (See Figure~\ref{fig:curve}.) We assume
that $\#(\gamma\cap (\phi_{1}^{h'})^{-1}(\gamma))=l$ where $h'$ is the
extended profile function associated to $h$.

Consider the Yoneda embedding
$$\mathcal{Y}:\fuk^{d}_{cob,1/2}(\C\times M;i',h')\to 
\textnormal{\sl fun}(\fuk^{d}_{cob,1/2}(\C\times M;i',h'),
Ch^{\textnormal{opp}})$$ and the composition of
functors\begin{equation}\label{eq:composition} \mathcal{Y}(V)\circ
   \mathcal{I}_{\gamma,h}\ : \
   \fuk^{d}(M)\stackrel{\mathcal{I}_{\gamma,h}}{\longrightarrow}
   \fuk^{d}_{cob,1/2}(\C\times
   M;i',h')\stackrel{\mathcal{Y}(V)}{\longrightarrow}
   Ch^{\textnormal{opp}}
\end{equation}
where $\mathcal{I}_{\gamma,h}$ is the inclusion functor from equation
(\ref{eq:functor_I-gamma-h}).  As recalled in
\S\ref{subsubsec:morphisms}, $Ch^{\textnormal{opp}}$- valued functors
defined on an $A_{\infty}$-category $\mathcal{A}$ are naturally
identified with modules over $\mathcal{A}$.  Let
$\mathcal{M}_{V,\gamma,h}$ be the $\fuk^{d}(M)$-module that
corresponds to $\mathcal{Y}(V)\circ \mathcal{I}_{\gamma,h}$.
Explicitly, this module is given by
$\mathcal{M}_{V,\gamma,h}(N)=CF^{cob}(\gamma\times N, V)$ for all
$N\in\mathcal{L}^{\ast}_{d}$ and
$$\mu^{\mathcal{M}_{V,\gamma,h}}: CF(N_{1},N_{2})\otimes\ldots 
CF(N_{k-1}, N_{k})\otimes CF^{cob}(\gamma\times N_{k}, V)\to
CF^{cob}(\gamma\times N_{1}, V)$$ is given by
$\mu^{\mathcal{M}_{V,\gamma,h}}(a_{1},\ldots,
a_{k-1},b)=\mu^{cob}(e^{1}_{\gamma,h}(a_{1}),\ldots, e^{1}_{\gamma,
  h}(a_{k-1}),b)$.

Recall from~\eqref{eq:functor_I-gamma-h} that $\mathcal{I}_{\gamma, h}
= \beta_{\gamma, h} \circ e_{\gamma, h}$. Thus we also have a functor
$$\mathcal{Y}(V)\circ \beta_{\gamma,h}\ : \ \mathcal{B}_{\gamma, h}\to
\fuk^{d}_{cob,1/2}(\C\times M;i',h')\to Ch^{\textnormal{opp}}$$ with a
corresponding $\mathcal{B}_{\gamma,h}$-module
$\overline{\mathcal{M}}_{V,\gamma,h}$ and we have
$$\mathcal{M}_{V,\gamma,h} = 
e_{\gamma,h}^{\ast}(\overline{\mathcal{M}}_{V,\gamma,h})~.~$$ In view
of Remark \ref{rem:var-snakes} and of the invariance properties
of $\fuk_{cob}^{d}(\C\times M)$, it follows that the quasi-isomorphism
type of the module $\mathcal{M}_{V,\gamma,h}$ only depends on the
horizontal isotopy types of $\gamma$ and $V$.  The relevant
quasi-isomorphism can be constructed explicitly for $\gamma, h$ fixed
and $V$ horizontally isotopic to $V'$ via an isotopy $\Phi$. The
result is a quasi-isomorphism
$$\Phi^{V}_{V'}:\overline{\mathcal{M}}_{V,\gamma,h}\to 
\overline{\mathcal{M}}_{V',\gamma,h}$$ that is constructed by counting
$J$-holomorphic polygons with boundary conditions along $\gamma \times
N_{1}, \ldots, \gamma\times N_{k}$ and with the $k+1$ side satisfying a
moving boundary condition along $\Phi_{t}(V)$.

\subsubsection{Exact triangles associated to a cobordism}
\label{subsubsec:triangles}
In this subsection we use the theory developed above to construct the
exact triangles described in the introduction in
equation~\eqref{eq:funct-exact}. Even if we are interested in these
triangles at the derived level it is important to note that the actual
construction needs to be performed before derivation and thus it is by
necessity very explicit.

Let $V: L\cobto (L_{1},\ldots, L_{m})$ be a Lagrangian cobordism in
$\mathcal{CL}_{d}(\C\times M)$.  Consider a sequence of plane curves
$\gamma_{1},\ldots, \gamma_{m}\subset \C$ as in
Figure~\ref{fig:many-snakes}. We also choose profile functions $h_{i}$
and associated extended profile functions $h'_{i}$ so that
$(\phi_{1}^{h'_{i}})^{-1}(\gamma_{i})$ are as in Figure
\ref{fig:many-snakes} (where it is drawn for $m=4$).

\begin{figure}[htbp]
   \begin{center}
      \epsfig{file=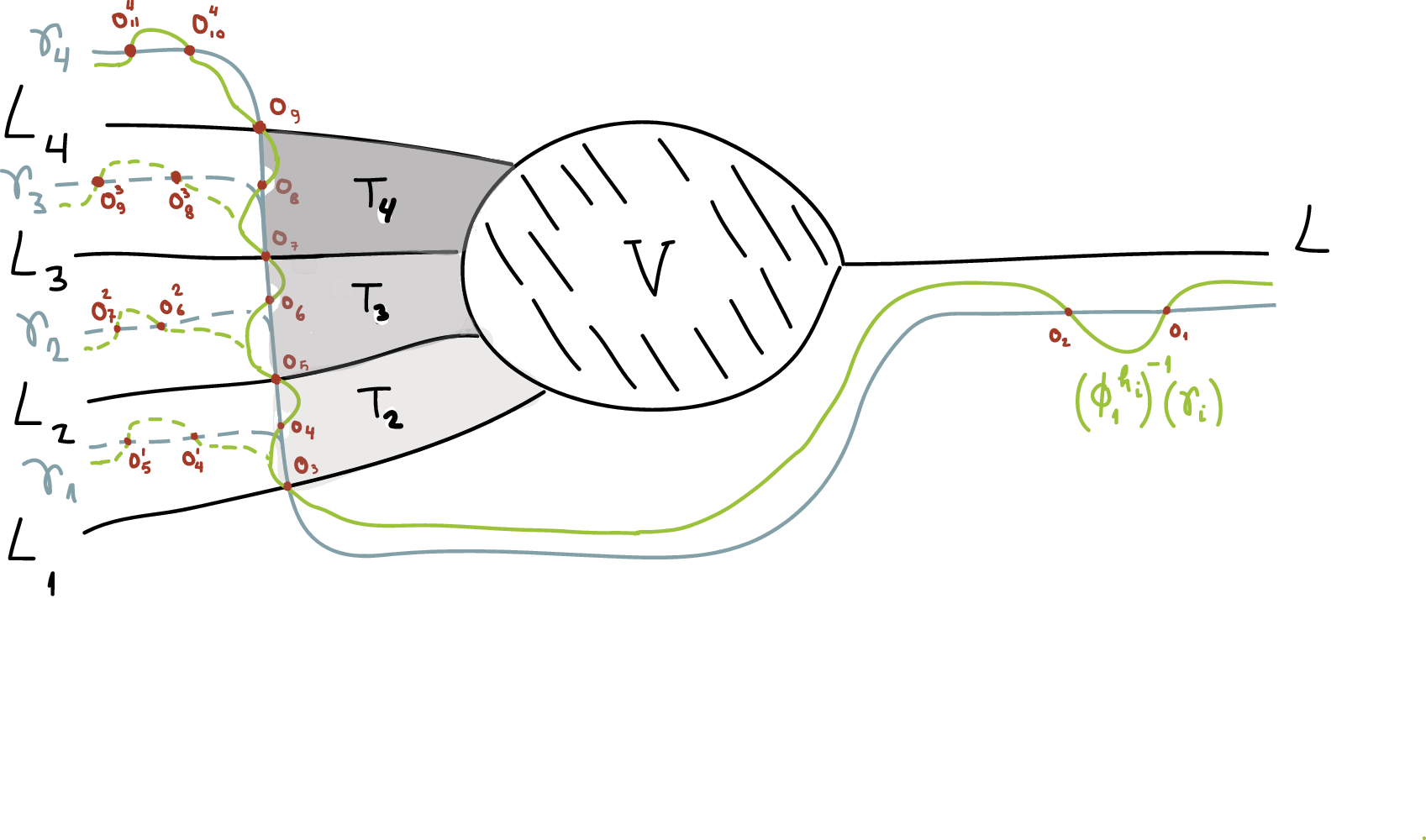, width=0.90\linewidth}
   \end{center}
   \caption{\label{fig:many-snakes} The cobordism $V$, the curves
     $\gamma_{j}$, and $(\phi_{1}^{h'_{j}})^{-1}(\gamma_{j})$, the
     points $o^{j}_{i}$ and the regions $T_{i}$.}
\end{figure}
More precisely, our choices are as follows. As before, we identify
$\mathbb{C} \cong \mathbb{R}^2$ in the standard way. All the curves
$\gamma_i$ will be of the (general) shape depicted in
Figure~\ref{fig:curve} but with different heights for their ends. We
first pick $\gamma_{m}$ so that the height of its negative end is
$m+\frac{1}{2}$ and the height of its positive end is $\frac{3}{2}$.
Additionally we require that $\gamma_{m}$ coincides with a vertical
line of real coordinate $\eta_{\gamma_{m}}<0$ in between the
intersections with the $L_{1}$ end of $V$ and the intersection with
the $L_{m}$ end. We now pick $h_{m}$ so there are $2m+3$ intersection
points $\{o^{m}_{i}\}=(\phi_{1}^{h'_{m}})^{-1}(\gamma_{m})\cap
\gamma_{m}$ that are so that $o^{m}_{1}$ is a bottleneck and $o^m_1 =
(\frac{5}{2},\frac{1}{2})$. Similarly, we require that $o^{m}_{2m+3}$
is a bottleneck and $o^{m}_{2m+3} = (-\frac{3}{2},m+\frac{1}{2})$. The
positive intersections satisfy $o^{m}_{2i+1}= \gamma_{m}\cap (i
\textnormal{'th negative end of } V)$, $1\leq i\leq m$. Further, we
assume that the negative intersections $o^{m}_{2i}$ are of height
$i-\frac{1}{2}$, $1\leq i\leq m+1$. The choice of the curves
$\gamma_{j}$ for $j<m$ is such that this curve has essentially the
same shape as $\gamma_{m}$, it coincides with $\gamma_{m}$ below the
height $i+\frac{1}{4}$ and its negative end is of height
$i+\frac{1}{2}$.  Moreover, $h'_{j}$ is such that
$(\phi_{1}^{h'_{j}})^{-1}(\gamma_{j})$ coincides with
$(\phi_{1}^{h'_{m}})^{-1}(\gamma_{m})$ for all points $(x,y)\in\C$
with $y\leq j+\frac{1}{4}$. This means, in particular, that we have
$o^{j}_{i}=o^{m}_{i}$ for all $1\leq i\leq 2j+1$.  Finally,
$o^{j}_{2j+3}$ is a bottleneck with $o^{j}_{2j+3} = (-\frac{3}{2},
j+\frac{1}{2})$ and $o^{j}_{2j+2}$ is of height $j+\frac{1}{2}$.

Next we pick the Floer and perturbation data required to define the
categories $\mathcal{B}_{j}=\mathcal{B}_{\gamma_{j},h_{j}}$, as in the
construction of the inclusion functors in~\S\ref{subsec:inclusion}.
Additionally, we require that the data for
$\mathcal{B}_{\gamma_{j+1},h_{j+1}}$ extends in the obvious sense the
data for $\mathcal{B}_{\gamma_{j},h_{j}}$.  We now define a sequence
of auxiliary $A_{\infty}$-categories $\mathcal{B}'_{j}$ for each
$1\leq j\leq m$. These have the same objects as $\mathcal{B}_{j}$ and
their morphisms are defined algebraically as the following quotients:
\begin{equation} \label{eq:hom-B'}
   \hom^{\mathcal{B}'_j}(\widetilde{N}', \widetilde{N}'') = 
   \hom^{\mathcal{B}_j}(\widetilde{N}', \widetilde{N}'') / \langle
   o^j_{2j+2}, o^j_{2j+3} \rangle,
\end{equation}
where the ``denominator'' in the quotient stands for the subspace
generated by the intersection points \pbredb{that project to}
$o^j_{2j+2}$ and $o^j_{2j+3}$. There is an obvious projection
$$p_{j}:\mathcal{B}_{j}\to \mathcal{B}'_{j}$$
given by a formula analogous to that of $p^{h}_{g}$ from
Remark~\ref{rem:comparison}.  The description of the higher
compositions in the proof of Propositions~\ref{prop:htpy-proj} shows
that $\mathcal{B}'_{j}$ so defined and endowed with the obvious
compositions inherited from $\mathcal{B}_{j}$ is an
$A_{\infty}$-category and that $p_{j}$ is an $A_{\infty}$-functor.
Notice also that there is a further, similar, projection
$$q_{j}:\mathcal{B}'_{j}\to \mathcal{B}'_{j-1}$$
that sends to $0$ the morphisms of type $o^{j}_{i}$ with $i= 2j,
2j+1$.  Again this is well defined due to the description of the
higher compositions in $\mathcal{B}_{\gamma,h}$, the fact that the
points $o^{j}_{i}$ coincide with $o^{j-1}_{i}$ for $1\leq i\leq 2j-1$
together with our choices of Floer and perturbation data.  Recall also
the map $e_{\gamma, h}$ from (\ref{eq:functor_I-gamma-h}) and let
$e_{j}=e_{\gamma_{j},h_{j}}:\fuk^{d}(M)\to \mathcal{B}_{j}$ and
$e'_{j}=p_{j}\circ e_{j}$. It is immediate to see that:
\begin{equation}\label{eq:e-commute}
e'_{j-1}=q_{j}\circ e'_{j}~.~
\end{equation}
We recall also the projection functors $c_{\gamma, h, i}$ from
Propositions~\ref{prop:htpy-proj}. Specifically in our case, the
$c_{\gamma_{j}, h_{j}, i}:\mathcal{B}_{j}\to \fuk^{d}(M)$ is the
projection on the morphisms of type $o^{j}_{i}$. Clearly, it descends
to $c'_{\gamma_{j}, h_{j}, i}:\mathcal{B}'_{j}\to \fuk^{d}(M)$ as long
as $i\leq 2j+1$.  For each $j$, we will mainly be interested in the
last of these functors which we denote for brevity by
$\sigma_{j}=c'_{\gamma_{j}, h_{j}, 2j+1}$.

The fundamental result here concerns the structure of the modules
$\overline{\mathcal{M}}_{j}=\overline{\mathcal{M}}_{V,\gamma_{j},h_{j}}$.
\begin{prop}\label{prop:str-mod} For all $1\leq j\leq m$,
   there are $\mathcal{B}'_{j}$-modules $\mathcal{M}'_{j}$ with the
   following properties:
\begin{itemize}
  \item[i.] $p^{\ast}_{j}(\mathcal{M}'_{j})=\overline{\mathcal{M}}_{j}$.
  \item[ii.] $\mathcal{M}'_{1}=\sigma_1^{\ast} \, \mathcal{Y}(L_{1})$
  \item[iii.] For $j\geq 2$, there is a $\mathcal{B}'_{j}$-module
   morphism $$\phi_{j}: \sigma_{j}^{\ast} \,
   \mathcal{Y}(L_{j})\longrightarrow q^{\ast}_{j} \,
   \mathcal{M}'_{j-1}$$ so that
   $\mathcal{M}'_{j}=\textnormal{Cone}(\phi_{j})~.~$
\end{itemize}
Here $\mathcal{Y}(L_{j})$ is the $\fuk^{d}(M)$-module associated to
$L_{j}$ by the Yoneda embedding and $\textnormal{Cone}(\phi_{j})$ is
the cone over the module morphism $\phi_j$
(see~\S\ref{sbsb:triang-derived}).
\end{prop}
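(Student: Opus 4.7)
I would prove the three claims together by induction on $j$, with the argument hinging on a geometric decomposition of $\overline{\mathcal{M}}_j(\widetilde{N}) = CF^{cob}(\gamma_j \times N, V)$ coming from the way $\gamma_j$ crosses the projection $\pi(V)$. First I would identify the generators: after applying the Hamiltonian perturbation $\bar{H}$ with profile part $h'_j$, the generators of $CF^{cob}(\gamma_j \times N, V)$ are in bijection with transverse intersections of $\phi_1^{\bar{H}}(\gamma_j \times N)$ with $V$. From the shape of $\gamma_j$ in Figure~\ref{fig:many-snakes}, these intersections cluster in well-separated planar regions: the regions $T_i$ (for $1 \leq i \leq j$), where the vertical segment of $\gamma_j$ crosses the $i$-th negative end of $V$, contributing one copy of $CF(N, L_i)$ each, together with possibly intersections near the two ``outer'' bottlenecks $o^j_{2j+2}$ and $o^j_{2j+3}$. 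Crucially, because $\gamma_j$ coincides with $\gamma_{j-1}$ below the height $j+\tfrac14$, the intersections in the regions $T_1, \ldots, T_{j-1}$ are identical for the two curves.

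For part (i), I would argue that module operations with any input of type $o^j_{2j+2}$ or $o^j_{2j+3}$ vanish. This is precisely the type of open-mapping-theorem argument used in Proposition~\ref{prop:htpy-proj} and Remark~\ref{rem:positive-negative}: applying the naturality transformation~\eqref{eq:nat-v} to a polygon $\mathbf{u}$ contributing to such an operation produces a curve $\mathbf{v}$ whose planar projection is holomorphic near these points. The position of $o^j_{2j+3}$ at a bottleneck (a local maximum of $h'_{j-}$) and of $o^j_{2j+2}$ at the ``negative'' intersection above, combined with the requirement that the output lie in one of the module regions $T_i$ (which sit geometrically ``below'' $o^j_{2j+2}$), forces the planar projection to contradict the open mapping theorem, exactly in the same manner as in the proof of Propositions~\ref{prop:htpy-proj}. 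Hence the module structure descends to $\mathcal{B}'_j$, defining $\mathcal{M}'_j$ with $p_j^\ast \mathcal{M}'_j = \overline{\mathcal{M}}_j$.

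For the base case $j=1$, the same bottleneck argument shows that $\mathcal{M}'_1(\widetilde{N}) = CF(N, L_1)$ (with generators in the unique region $T_1$) and that the module operations reduce precisely to those of $\sigma_1^\ast \mathcal{Y}(L_1)$, since all morphism components in $\mathcal{B}'_1$ other than the type $o^1_3$ are killed after the quotient. For the inductive step $j \geq 2$, I would exhibit the filtration
$$0 \longrightarrow q_j^\ast \mathcal{M}'_{j-1} \longrightarrow \mathcal{M}'_j \longrightarrow \sigma_j^\ast \mathcal{Y}(L_j) \longrightarrow 0$$
on $\mathcal{M}'_j$ as vector spaces, splitting $\mathcal{M}'_j(\widetilde{N}) = \mathcal{M}'_{j-1}(\widetilde{N}) \oplus CF(N, L_j)$ according to whether generators lie in $T_1 \cup \cdots \cup T_{j-1}$ (the ``old'' region, where $\gamma_j$ and $\gamma_{j-1}$ agree) or in $T_j$ (the ``new'' crossing). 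Again by the open mapping theorem applied to projections, any perturbed $J$-holomorphic polygon with output in the old region and all inputs (both module-input and morphism-inputs) from the old region coincides with a polygon contributing to $\mathcal{M}'_{j-1}$; and any polygon with output in $T_j$ must have module-input in $T_j$, giving the diagonal $CF(N, L_j)$-part. The off-diagonal part defines the module morphism $\phi_j$ by counting polygons with module-input in $T_j$ and output in the old region, and the $A_\infty$-module relations for $\overline{\mathcal{M}}_j$ translate into the statement that $\mathcal{M}'_j = \mathrm{Cone}(\phi_j)$.

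The main obstacle will be the careful verification that the components of $\phi_j$, defined geometrically by polygon counts between the $T_j$ region and the lower regions, really do satisfy the $A_\infty$-module morphism relations (rather than just a chain map relation), and do so after descending to the $\mathcal{B}'_j$-quotient. This amounts to a systematic analysis of boundary strata of $1$-dimensional moduli spaces, controlling which region each input and the output lies in via the open mapping theorem, and checking that the boundary contributions not coming from the cone structure cancel or lie in the subspace $\langle o^j_{2j+2}, o^j_{2j+3}\rangle$ killed by $p_j$. This is a direct extension of the polygon-region analysis already performed in the proof of Proposition~\ref{prop:htpy-proj}, but must now be carried out in presence of the additional moving boundary component $V$.
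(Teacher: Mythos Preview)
Your approach is essentially the same as the paper's: control the planar projections of the relevant polygons via the open mapping theorem/bottleneck arguments, obtain a block-triangular decomposition of the module operations $\mu^{\overline{\mathcal{M}}_j}$, and read off the cone structure from that decomposition. Two small corrections are worth noting. First, there are no module generators near $o^j_{2j+2}$ or $o^j_{2j+3}$: those points are self-intersections of $\gamma_j$ with its own perturbation and contribute only to morphisms in $\mathcal{B}_j$, not to $CF^{cob}(\gamma_j\times N,V)$, which splits exactly as $\bigoplus_{i=1}^{j} CF(N,L_i)$. Second, what you flag as the ``main obstacle'' is in fact automatic: once you have established the block form $\mu^{\mathcal{M}'_j}=(\mu^{q_j^\ast\mathcal{M}'_{j-1}},\,\phi_j\oplus\mu^{\sigma_j^\ast\mathcal{Y}(L_j)})$, the $A_\infty$-module relations for $\mathcal{M}'_j$ (already known) directly unpack into the module-morphism relations for $\phi_j$; no separate boundary-strata analysis is needed. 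The paper packages the geometric input as a single technical lemma (output of a polygon with module-input at level $s$ lands at level $\leq s$, and all morphism-inputs are of type $\leq 2s+1$), from which all three items follow without induction, but your inductive organization amounts to the same thing.
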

\begin{proof}
   Fix $j\in\{1,\ldots, m\}$. The proof is based on the properties of
   the compositions $\mu^{\overline{\mathcal{M}}_{j}}$.

   We will use the notation in \S\ref{prop:htpy-proj} that we first
   adapt to the module $\overline{\mathcal{M}}_{j}$. For each $N\in
   \mathcal{L}^{\ast}_{d}$, we notice that
   $\overline{\mathcal{M}}_{j}(N)=CF^{cob}(\gamma_{j}\times N, V)$
   splits as a vector space as
   \begin{equation}\label{eq:split-mod0}
      \overline{\mathcal{M}}_{j}( N)=\bigoplus_{i=1}^{j} CF(N, L_{i})~.~
   \end{equation}
   We will denote the summand $CF(N,L_{i})$ in this decomposition by
   $CF^{\overline{\mathcal{M}}_j}(N,L_{i})$ and we denote the partial
   sum $\oplus_{i=1}^{s}CF(N,L_{i})$ by
   $\overline{\mathcal{M}}_{j}(N)^{\leq s}$.  The generators of
   $CF^{\overline{\mathcal{M}}_j}(N,L_{i})$ project to the point
   $o^{j}_{2i+1}\in \C$ and thus we will refer to the elements in
   $CF^{\overline{\mathcal{M}}_j}(N,L_{i})$ as the elements of
   $\overline{\mathcal{M}}_{j}(N)$ of type $o^{j}_{2i+1}$.

   Fix $N_{1},\ldots, N_{k}\in\mathcal{L}^{\ast}_{d}$ and $$z_{i}\in
   CF(N_{i},N_{i+1})^{o_{l_{i}}}\subset
   CF^{\mathcal{B}_{\gamma_{j},h_{j}}}(\widetilde{N}_{i},\widetilde{N}_{i+1})
   \ , \ b\in CF^{\overline{\mathcal{M}}_j}(N_{k}, L_{s})\subset
   CF^{cob}(\widetilde{N}_{k},V)$$ where $1\leq l_{i}\leq 2j+3$,
   $1\leq s \leq j$ and $\widetilde{N}_{1}=\gamma_{j}\times
   N_{1},\ldots, \widetilde{N}_{k}=\gamma_{j}\times N_{k}$.

   \begin{lem}\label{lem:technique2} With the notation above we have:
      \begin{itemize}
        \item[a.]  $\mu^{\overline{\mathcal{M}}_{j}}(z_{1},\ldots,
         z_{k-1}, b)\in \overline{\mathcal{M}}_{j}(N)^{\leq s}~.~$
        \item[b.] If $\mu^{\overline{\mathcal{M}}_{j}}(z_{1},\ldots,
         z_{k-1}, b)\not=0$, then $l_{i}\leq 2s+1$ for all
         $i=1,\ldots, k-1$.
      \end{itemize}
   \end{lem}
   \begin{proof}[Proof of Lemma~\ref{lem:technique2}]
      Consider a perturbed $J$-holomorphic polygon $u:S_{r}\to
      \C\times M$ that contributes to
      $\mu^{\overline{\mathcal{M}}_{j}}_{k}$. Thus $u$ satisfies the
      equation~\eqref{eq:perturb-module} with boundary conditions
      along $\widetilde{N}_{1},\ldots, \widetilde{N}_{k}, V$. It has
      entries $z_{1},\ldots, z_{k-1}, b$ and denote its exit by $c\in
      CF^{\overline{\mathcal{M}}_j}(N_{1}, L_{t})$, where $1 \leq t
      \leq j$.

       {To simplify the argument, we will choose the
         profile functions $h_j$ and the perturbation data for all
         tuples of the type $\gamma_j \times N_1, \ldots, \gamma_j
         \times N_k, V$ to be so that the Hamiltonian flow
         $\phi_t^{h_j'}$ of the extended profile functions $h'_j$ keep
         $V$ invariant.  Moreover, choose the $\Theta_0$-part of the
         perturbation data $(\Theta, \mathbf{J})$ to be so that the
         Hamiltonian functions associated to $\Theta_0$ all vanish
         near $V$, similar to the choices made for the category
         $\mathcal{B}_{\gamma, h}$ in~\S\ref{subsec:inclusion}. This
         is all possible since $\gamma_j$ intersects $\pi(V)$
         transversly.  See Figure~\ref{fig:many-snakes}. Note also
         that the negative intersection points between $\gamma_j$ and
         $(\phi_1^{h_j'})^{-1}(\gamma_j)$ are away from $\pi(V)$.}

       We again denote by $v$ the polygon obtained from $u$ by the
       naturality equation~\eqref{eq:nat-v} and we let $v'=\pi\circ v$
       be its planar projection.  We now notice that $v'$ is
       holomorphic in a neighborhood of $\partial S_{r}$. Moreover,
       $v'$ is also holomorphic at all points $z\in S_{r}$ so that
       $v'(z)$ belongs to the complement of the region $K_{j}\subset
       \C$, where
       $$K_{j}=  U_{h_{j}}\ \cup\ 
       \bigcup_{\tau=0}^{1}(\phi_{\tau}^{h'_{j}})^{-1}(\gamma_{j}) \
       \cup \ \pi(V).$$ Here $U_{h_{j}}$ stands for the union of small
       neighborhoods of the negative points $o^{j}_{2i}$ used in the
       construction of $\mathcal{B}_{\gamma_{j},h_{j}}$,
       see~\S\ref{subsec:inclusion}.  By Proposition~\ref{p:open-map},
       as soon as a component of $\C \setminus K_{j}$ intersects the
       image of $v'$, it is completely contained in it.  In
       particular, $v'(S_{r})$ can not intersect any unbounded such
       component. To continue, it is useful to identify the bounded
       connected components of $\C\backslash K_{j}$: \pbredb{we denote
         by $T_{i}$, $i=2, \ldots, j$, the (bounded) connected
         component whose closure contains the points $o^{j}_{2i-1}$
         and $o^{j}_{2i+1}$;} there are in all $j-1$ such components,
       see again Figure~\ref{fig:many-snakes}.  An important property
       of these components is that if $T_{i}\subset v'(S_{r})$ then
       one of the following four possibilities occurs:
      \begin{itemize}
        \item[1.] {there is some point $z\in S_{r}$ so that
           $v'(z)=o^{j}_{2i+1}$.}
        \item[2.]  {$o^{j}_{2i+1}$ is the projection of one of
           the $z_{q}$'s and the strip-like end corresponding to this
           $z_{q}$ has an image that covers the ``great angle''
           between $\gamma_{j}$ and
           $(\phi_{1}^{h'_{j}})^{-1}(\gamma_{j})$, see Figure
           \ref{fig:big-angle}.}
        \item[3.] {$\pi(c) = o^{j}_{2i+1}$.}
        \item[4.] {$\pi(b) = o^{j}_{2i+1}$.}
      \end{itemize}
   
      \begin{figure}[htbp]
         \begin{center}
            \epsfig{file=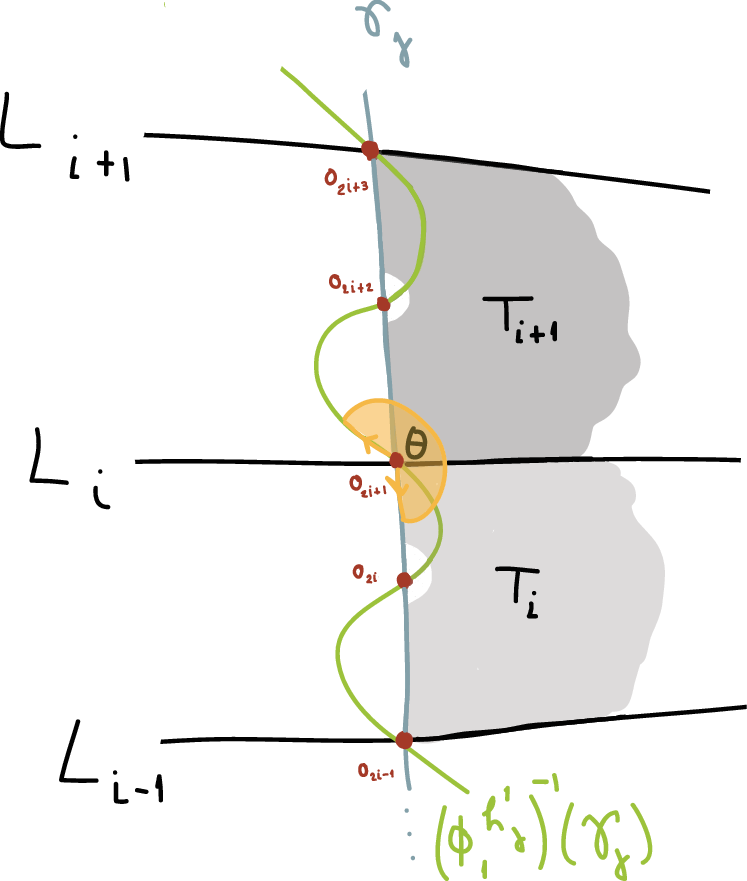, width=0.50\linewidth}
         \end{center}
         \caption{\label{fig:big-angle} The ``great angle'' between
           $\gamma_{j}$ and $(\phi_{1}^{h'_{j}})^{-1}(\gamma_{j})$. }
      \end{figure} 
      Given that $c$ is an exit point for $u$, we notice that if one
      of 1, 2, or 3 takes place, then the connected region of
      $\C\backslash K_{j}$ above $T_{i}$ is also contained in
      $v'(S_{r})$. We now claim that if possibility 4 does not occur
      then:
      \begin{equation} \label{eq:first-end} \mathrm{if\ } T_{i}\subset
         v'(S_{r}) \; \; \textnormal{then} \; \; i+1 \leq s~.~
      \end{equation}
      In other words, if $v'$ intersects $T_{i}$, then the point
      $o^{j}_{2s+1}$, which is the projection of the entry $b$, is
      ``above'' $T_{i}$. To see this assume $s\leq i$. Then the
      argument above shows that case 4 does not apply and thus the
      region above $T_{i}$ is also in $v'(S_{r})$. In case $i=j-1$ we
      have already arrived to a contradiction as the region above
      $T_{j-1}$ is not bounded. If $i< j-1$ this region is $T_{i+1}$
      and by repeating the same argument now applied to $T_{i+1}$ we
      deduce that the region above $T_{i+1}$ is again in $v'(S_{r})$.
      Recursively, we arrive to a contradiction.  The properties
      1,2,3,4 also apply to the point $o^{j}_{2i+1}$ and an argument
      similar to the above, but using the fact that $b$ is an entry
      point, shows that
      \begin{equation} \label{eq:second-end} \textnormal{if} \;\;
         T_{i}\subset v'(S_{r}) \;\; \textnormal{then} \;\; t\leq i.
      \end{equation}
      If $v'$ is not constant, then $T_{s-1}\subset v'(S_{r})$. Thus,
      putting together~\eqref{eq:first-end}
      and~\eqref{eq:second-end}, we obtain $t<s$ which shows the
      point a in the statement of our lemma. Notice that
      \begin{equation}\label{eq:cst-curves-mod}
         \textnormal{if} \;\; t=s \;\; \textnormal{then} \;\;
         v' \;\; \textnormal{is constant}~.~ 
      \end{equation}
      Indeed, if $v'$ is not constant, then $v'(S_{r})$ has to contain
      $T_{s-1}$ and this implies by~\eqref{eq:first-end}
      and~\eqref{eq:second-end} that $t< s$.

      To show the point b we clearly may assume that the curve $v'$ is
      not constant.  Suppose that there is some $q\in {1,\ldots, k-1}$
      so that $o_{j_{q}}> 2s+1$. Given that $t<s$ and using that the
      points $z_{i}$ are entries, it follows that one of the following
      two possibilities takes place:
      \begin{itemize}
        \item[1'.] there is a point $z$ belonging to one of the first
         $k-1$ edges of $S_{r}$ (these are the edges corresponding to
         the boundary conditions $\widetilde{N}_{1},\ldots,
         \widetilde{N}_{k-1}$) so that $v'(z) = o^{j}_{2s+1}$.
        \item[2'.] there is some $q'$ so that $\pi(z_{q'})=o^{j}_{2s+1}$
         and the image by $v'$ of the strip-like end associated to
         $z_{q'}$ covers the ``great angle'' between $\gamma_{j}$ and
         $(\phi_{1}^{h_{j}})^{-1}(\gamma_{j})$ at $o^{j}_{2s+1}$.
      \end{itemize}
      Both 1' and 2' show that the connected region of $\C\backslash
      K_{j}$ above $o^{j}_{2s+1}$ intersects $v'(S_{r})$ which
      contradicts (\ref{eq:first-end}) and ends the proof of the
      lemma.
   \end{proof}

   We now return to the proof of Proposition~\ref{prop:str-mod}. The
   first step is to define the $\mathcal{B}'_{j}$-modules
   $\mathcal{M}'_{j}$. First recall that the $A_{\infty}$-category
   $\mathcal{B}'_{j}$ has the same objects as $\mathcal{B}_{j}$ given
   by $\widetilde{N}=\gamma_{j}\times N$ with $N\in
   \mathcal{L}^{\ast}_{d}$ and its morphisms are obtained by taking
   the quotients~\eqref{eq:hom-B'}. In particular, there is a
   canonical identification of vector spaces:
   $$CF^{\mathcal{B}_{j}}(\widetilde{N},\widetilde{N}')^{\leq
     2j+1}=CF^{\mathcal{B}'_{j}}(\widetilde{N},\widetilde{N}')~.~$$
   Let $N\in \mathcal{L}^{\ast}_{d}$ and put
   $\mathcal{M}'_{j}(N)=CF^{cob}(\gamma_{j}\times N,V)=
   \overline{\mathcal{M}}_{j}(N)$.  If $N_{1},\ldots, N_{k}\in
   \mathcal{L}^{\ast}_{d}$ and $z_{i}\in
   CF^{\mathcal{B}_{j}}(\widetilde{N}_{i},\widetilde{N}_{i+1})^{\leq
     2j+1}$, $b\in CF^{cob}(\widetilde{N}_{k}, V)$ we put:
   $$\mu^{\mathcal{M}'_{j}}(z_{1},\ldots, z_{k-1},b)=
   \mu^{\overline{\mathcal{M}}_{j}}(z_{1},\ldots, z_{k-1},b)~.~$$ It
   is an immediate consequence of Lemma \ref{lem:technique2} b that
   this provides indeed a $\mathcal{B}'_{j}$-module. It also follows
   that $p_{j}^{\ast}(\mathcal{M}'_{j})=\overline{\mathcal{M}}_{j}$
   thus showing the first point of the proposition.

   For the second point first fix the notation $\mathcal{M}_{L_{i}}$
   for the $\fuk^{d}(M)$-module corresponding to $\mathcal{Y}(L_{i})$.
   Next notice that the module $\mathcal{M}'_{1}$ has the form
   $\mathcal{M}'_{1}(N)=\overline{\mathcal{M}}_{j}(N)^{\leq 1} =
   \mathcal{M}_{L_1}(N)$.  Using~\eqref{eq:cst-curves-mod} we deduce
   that all perturbed $J$-holomorphic polygons $u$ computing
   $\mu^{\mathcal{M}'_{1}}$ have the property that the associated
   curves $v'$ are constant.  This means that
   $\mu^{\mathcal{M}'_{1}}_{k}$ can be identified with
   $\mu^{\mathcal{M}_{L_{1}}}_{k}$ as long as the first $k-1$ inputs
   in $\mu^{\mathcal{M}'_{1}}_{k}$ are all elements in
   $CF^{\mathcal{B}_{1}}(\gamma_{j}\times N_{i}, \gamma_{j}\times
   N_{i+1})$ of type $o^{1}_{3}$. Moreover,
   $\mu^{\mathcal{M}'_{1}}_{k}$ is zero as soon as one of these inputs
   is homogeneous of a different type.  As $\sigma_{1}$ is the
   projection on $o^{1}_{3}$ this implies claim ii.

   We now prove iii.  The claim follows if we can show that the module
   $\mathcal{M}'_{j}$ satisfies
   \begin{equation}\label{eq:split-module-vs}
      \mathcal{M}'_{j}(N)=\mathcal{M'}_{j-1}(N)\oplus \mathcal{M}_{L_{j}}(N)
   \end{equation}
   for each $N\in\mathcal{L}^{\ast}_{d}$ and that, with respect to
   this splitting, the structure maps $\mu^{\mathcal{M}'_{j}}$ have
   the form:
   \begin{equation}\label{eq:split-module-mult}
      \mu^{\mathcal{M'}_{j}} = (\mu^{q_{j}^{\ast}\mathcal{M}'_{j-1}}\ ,\  
      \phi_{j}\oplus \mu^{\sigma_j^{\ast}\mathcal{M}_{L_{j}}} )
   \end{equation} 
   Indeed, the fact that $\mathcal{M}'_{j}$ is a
   $\mathcal{B}_{j}$-module then implies that the generalized
   multilinear map $\phi_{j}$ is in fact a module morphism
   $\phi_{j}:(c^{j})^{\ast}\mathcal{M}_{L_{j}}\to
   q_{j}^{\ast}\mathcal{M}'_{j-1}$ and the claim follows from the
   definition of the cone over a module morphism.  Both properties
   (\ref{eq:split-module-vs}) and (\ref{eq:split-module-mult}) depend
   in an essential way on the fact that our choices of data and
   perturbations used in the construction of
   $\mathcal{B}_{j}=\mathcal{B}_{\gamma_{j},h_{j}}$ extend the choices
   for $\mathcal{B}_{j-1}$. We first deal with
   (\ref{eq:split-module-vs}).  Recall that
   $\mathcal{M}_{L_{j}}(N)=CF(N,L_{j})$ and that for $i\leq 2j-1$ we
   have $o^{j}_{i}=o^{j-1}_{i}$. These facts together with the
   definition of $\mathcal{M}'_{j}$ and (\ref{eq:split-mod0}) imply
   immediately (\ref{eq:split-module-vs}).  Passing now to
   (\ref{eq:split-module-mult}) we see that the two points of Lemma
   \ref{lem:technique2} used together imply that the first term of
   $\mu^{\mathcal{M}'_{j}}$ is indeed the pull-back to
   $\mathcal{B}'_{j}$ of the multiplication
   $\mu^{\mathcal{M}'_{j-1}}$. The second term is the part of the
   multiplication involving elements of $\mathcal{M}_{L_{j}}$. This
   term decomposes in two parts, the first, $\phi_{j}$, with values in
   $\mathcal{M}'_{j-1}(-)$.  The second, $\psi_{j}$, with values in
   $\mathcal{M}_{L_{j}}(-)$. From this point on the argument is
   similar to that at the point ii: by (\ref{eq:cst-curves-mod}) the
   perturbed $J$-holomorphic curves $u$ computing $\psi_{j}$ are so
   that the associated planar curves $v'$ are constant equal to
   $o^{j}_{2j+1}$. This shows that $\psi_{j}(z_{1},\ldots, z_{k-1},
   -)$ can be identified with $\mu^{\mathcal{M}_{L_{j}}}$ as long as
   all $z_{i}\in CF^{\mathcal{B}_{j}}(\gamma_{j}\times N_{i},
   \gamma_{j}\times N_{i+1})$ are of type $o^{j}_{2j+1}$ and
   $\psi_{j}$ vanishes if one of the $z_{i}$'s is homogeneous and of a
   different type. Given that $\sigma_{j}$ is the projection on
   $o^{j}_{2j+1}$ this means precisely that $\psi_{j}$ is the higher
   composition in $\sigma_{j}^{\ast} \mathcal{M}_{L_{j}}$ which
   concludes the proof of the proposition.
\end{proof}

Having proved Proposition~\ref{prop:str-mod}, the next step is to use
it to relate the associated relevant $\fuk^{d}(M)$-modules (rather than
$\mathcal{B}'_j$-modules). We will make again use of the
functors
$$e_{j}:\fuk^{d}(M)\to \mathcal{B}_{j}, \quad  e'_{j}=p_{j}\circ
e_{j}:\fuk^{d}(M)\to\mathcal{B}'_{j}$$ defined just before
Proposition~\ref{prop:str-mod} (see also~\eqref{eq:e-commute}).
Recall also from~\S\ref{subsubsec:modules} that
$$\mathcal{M}_{V,\gamma_{j}, h_{j}}=e^{\ast}_{j}(\overline{\mathcal{M}}_{j})~.~$$

\begin{cor} \label{cor:exact-triangles} The $\fuk^{d}(M)$-modules
   $\mathcal{M}_{V, \gamma_{j}, h_{j}}$ have the following properties:
   \begin{itemize}
     \item[i.] $\mathcal{M}_{V,\gamma_{1},h_{1}}=\mathcal{Y}(L_{1})$
     \item[ii.] there are exact triangles
      \pbredb{
        \begin{equation} \label{eq:exact-tr-mod}
           \mathcal{Y}(L_{s})\stackrel{\phi_{V,s}\ }{\longrightarrow}
           \mathcal{M}_{V,\gamma_{s-1},h_{s-1}}\to
           \mathcal{M}_{V,\gamma_{s},h_{s}}, \ 2\leq s\leq m ~.~
        \end{equation}
      }
   \end{itemize}
\end{cor}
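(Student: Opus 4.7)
The plan is to deduce both parts of the corollary from Proposition~\ref{prop:str-mod} by pulling back the statements from $\mathcal{B}'_j$-modules to $\fuk^d(M)$-modules along the functor $e'_j = p_j \circ e_j : \fuk^d(M) \to \mathcal{B}'_j$. The starting point is the identity
$$\mathcal{M}_{V,\gamma_j,h_j} = e_j^*\, \overline{\mathcal{M}}_j = e_j^*\, p_j^*\, \mathcal{M}'_j = (e'_j)^*\, \mathcal{M}'_j,$$
which combines the definition from~\S\ref{subsubsec:modules} with part (i) of Proposition~\ref{prop:str-mod}. Thus everything boils down to understanding $(e'_j)^*\,\mathcal{M}'_j$.

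For part (i), I would use $\mathcal{M}'_1 = \sigma_1^*\,\mathcal{Y}(L_1)$ from Proposition~\ref{prop:str-mod}(ii) to obtain
$$\mathcal{M}_{V,\gamma_1,h_1} \;=\; (e'_1)^*\sigma_1^*\,\mathcal{Y}(L_1) \;=\; (\sigma_1 \circ e'_1)^*\,\mathcal{Y}(L_1).$$
The key identification is $\sigma_1 \circ e'_1 = \mathrm{id}_{\fuk^d(M)}$. Unwinding, $\sigma_1 \circ e'_1 = c'_{\gamma_1,h_1,3}\circ p_1 \circ e_{\gamma_1,h_1}$; since $p_1$ only kills generators of types $o^1_4, o^1_5$ (which $c'_{\gamma_1,h_1,3}$ would kill anyway), we have $c'_{\gamma_1,h_1,3}\circ p_1 = c_{\gamma_1,h_1,3}$ as functors out of $\mathcal{B}_1$. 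The formula for $e_{\gamma_1,h_1}$ in the proof of Proposition~\ref{prop:htpy-proj} then gives $c_{\gamma_1,h_1,3}\circ e_{\gamma_1,h_1} = \mathrm{id}$, whence $\mathcal{M}_{V,\gamma_1,h_1} = \mathcal{Y}(L_1)$.

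For part (ii), I would apply the pullback $(e'_j)^*$ to the exact triangle realising $\mathcal{M}'_j$ as $\mathrm{Cone}(\phi_j)$ in Proposition~\ref{prop:str-mod}(iii):
$$\sigma_j^*\,\mathcal{Y}(L_j) \xrightarrow{\phi_j} q_j^*\,\mathcal{M}'_{j-1} \longrightarrow \mathcal{M}'_j.$$
Pullback of $A_\infty$-modules preserves cones, so this produces an exact triangle of $\fuk^d(M)$-modules whose terms I would then simplify one by one. First, $(e'_j)^*\sigma_j^*\,\mathcal{Y}(L_j) = (\sigma_j \circ e'_j)^*\,\mathcal{Y}(L_j) = \mathcal{Y}(L_j)$ by exactly the computation used in (i), since $2j+1$ is odd and so $c_{\gamma_j,h_j,2j+1}\circ e_{\gamma_j,h_j} = \mathrm{id}$. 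Second, $(e'_j)^*q_j^*\,\mathcal{M}'_{j-1} = (q_j \circ e'_j)^*\,\mathcal{M}'_{j-1} = (e'_{j-1})^*\,\mathcal{M}'_{j-1} = \mathcal{M}_{V,\gamma_{j-1},h_{j-1}}$, where the middle equality is the compatibility~\eqref{eq:e-commute} and the last is the opening identity. Third, $(e'_j)^*\,\mathcal{M}'_j = \mathcal{M}_{V,\gamma_j,h_j}$ again by that identity. The resulting triangle is exactly~\eqref{eq:exact-tr-mod}.

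The whole argument is formal once Proposition~\ref{prop:str-mod} is in hand; the only delicate point is checking that the strict (chain-level) identities $\sigma_j \circ p_j = c_{\gamma_j,h_j,2j+1}$ and $e'_{j-1} = q_j \circ e'_j$, together with $c_{\gamma_j,h_j,2j+1}\circ e_{\gamma_j,h_j} = \mathrm{id}$, actually hold on the nose and not merely up to homotopy. These, however, all follow directly from the definitions of the functors $p_j$, $q_j$, $c_{\gamma,h,i}$ and $e_{\gamma,h}$ and from the fact that the perturbation/Floer data for $\mathcal{B}_j$ extend those for $\mathcal{B}_{j-1}$, as arranged in~\S\ref{subsubsec:triangles}. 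Hence the principal work sits in Proposition~\ref{prop:str-mod}, and the corollary is its direct pullback.
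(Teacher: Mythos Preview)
Your proof is correct and follows essentially the same route as the paper: both arguments pull back the exact triangle of $\mathcal{B}'_j$-modules from Proposition~\ref{prop:str-mod}(iii) along $e'_j$, using the strict identities $\sigma_j\circ e'_j = c_{\gamma_j,h_j,2j+1}\circ e_{\gamma_j,h_j}=\id$, $q_j\circ e'_j = e'_{j-1}$ from~\eqref{eq:e-commute}, and $\mathcal{M}_{V,\gamma_j,h_j}=(e'_j)^*\mathcal{M}'_j$ to identify the three terms. Your additional remark that these identities hold on the nose (rather than merely up to homotopy) is a useful clarification the paper leaves implicit.
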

\begin{proof}
   To simplify notation we put
   $\mathcal{M}_{j}=\mathcal{M}_{V,\gamma_{j}, h_{j}}$. For the first
   claim, notice that $c^{1}\circ p_{1}\circ
   e_{1}=c_{\gamma_{1},h_{1},3}\circ e_{\gamma_{1},h_{1}}=
   id_{\fuk^{d}(M)}$. Therefore, using Proposition \ref{prop:str-mod}
   ii, we get
   $$\mathcal{M}_{V,\gamma_{1},h_{1}}=
   (c^{1}\circ p_{1}\circ
   e_{1})^{\ast}(\mathcal{Y}(L_{1}))=\mathcal{Y}(L_{1})~.~$$

   We now pass to the second point. First, recall
   from~\cite{Se:book-fukaya-categ} that a pull-back over an $A_{\infty}$-functor maps
   exact triangles to exact triangles. By
   Proposition~\ref{prop:str-mod} we know that there is an exact
   triangle of $\mathcal{B}'_{j}$-modules:
   \begin{equation}\label{eq:seq-mod}(c^{s})^{\ast}\mathcal{Y}(L_{s})\to
      q^{\ast}_{s}(\mathcal{M}'_{s-1})\to \mathcal{M}'_{s}~.~
   \end{equation}\
   We pull-back this triangle by the functor $e'_{s}=p_{s}\circ
   e_{\gamma_{s},h_{s}}$.  We recall that $c^{s}=c_{\gamma_{s},h_{s},
     2s+1}$ and that $c_{\gamma_{s}, h_{s},2s+1}\circ
   e_{\gamma_{s},h_{s}}=id_{\fuk^{d}(M)}$. Therefore, the first module
   on the left in (\ref{eq:seq-mod}) pulls-back to
   $\mathcal{Y}(L_{s})$. By the definition of
   $\mathcal{M}_{V,\gamma_{s}, h_{s}}$ and the point i of Proposition
   \ref{prop:str-mod}, we get that the pull-back of the module on the
   right in (\ref{eq:exact-tr-mod}) is precisely
   $\mathcal{M}_{V,\gamma_{s},h_{s}}$.  Finally, for the the middle
   module in (\ref{eq:exact-tr-mod}) we use the identity
   (\ref{eq:e-commute}) together with Proposition \ref{prop:str-mod} i
   to write:
   $$\mathcal{M}_{V,\gamma_{s-1},h_{s-1}}=(e'_{s-1})^{\ast}(\mathcal{M}'_{s-1})=
   (e'_{s})^{\ast}(q_{s}^{\ast} \mathcal{M}'_{s-1})$$ and $\phi_{V,j}$
   is defined as the pull-back of $\phi_{j}$.
\end{proof}

\begin{rem} \label{rem:invariance-triang} It is also useful to discuss
   the invariance properties of the triangles~\eqref{eq:exact-tr-mod}.
   It is not difficult to show - see also
   Proposition~\ref{prop:inv-incl} - that if $V'$ is horizontally
   isotopic to $V$ and the system of curves $\gamma_{j}$ and profile
   functions $h_{j}$ are replaced by, respectively, $\gamma'_{j}$,
   $h'_{j}$, then we have a diagram that commutes in homology:
   \begin{equation}\label{eq:comp-triang-mod}
      \begin{aligned}
         \xymatrix@-2pt{ \mathcal{Y}(L_{s+1})\ar[r]\ar[d] &
           \mathcal{M}_{V,\gamma_{s-1},h_{s-1}}\ar[r]\ar[d] &
           \mathcal{M}_{V,\gamma_{s},h_{s}}\ar[d] \\
           \mathcal{Y}(L_{s+1})\ar[r] & \mathcal{M}_{V',\gamma'_{s-1},
             h'_{s-1}}\ar[r] & \mathcal{M}_{V',\gamma'_{s},h'_{s}} }
      \end{aligned}
   \end{equation} 
   so that the first vertical arrow on the left is the identity and
   the two other arrows are quasi-isomorphisms. This is to be
   interpreted as follows: if all the additional structures (almost
   complex structures, Floer and perturbation data for $M$ and
   $\C\times M$ etc.) are fixed for both $(V, \{\gamma_{j}\},
   \{h_{j}\})$ as well as for $(V', \{\gamma'_{j}\}, \{h'_{j}\})$,
   subject to the restrictions required to establish
   Corollary~\ref{cor:exact-triangles}, then
   (\ref{eq:comp-triang-mod}) holds.  The dependence of the auxiliary
   data is in the sense of coherent systems. Assuming for a moment the
   auxiliary data fixed, the technique to prove
   (\ref{eq:comp-triang-mod}) is based on using the appropriate {\bf
     tot} category - as in \S\ref{sb:inv-fuk-2} - and repeating in
   this setting the arguments used to show
   Proposition~\ref{prop:str-mod}. There are no new compactness issues
   because $V$ and $V'$ are horizontally isotopic as are $\gamma_{j}$
   and $\gamma'_{j}$.  Finally, if $V$ varies inside a given
   horizontal isotopy class, but all the rest of the data is fixed,
   then the two vertical maps at the middle and right in
   (\ref{eq:comp-triang-mod}) can be described by the moving boundary
   method. In other words, they are of the type of the morphism $\Phi^{V}_{V'}$ at
   the end of~\S\ref{subsubsec:modules}.
\end{rem}

\subsection{Comparing the ends of a cobordism}
\label{subsubsec:ends-cob} We continue here in the setting and with
the notation of the previous subsection. Thus $V: L\cobto
(L_{1},\ldots, L_{m})$ is as before a Lagrangian cobordism belonging
to $\mathcal{L}_{d}(\C\times M)$.

\begin{prop}\label{prop:comp-morphisms}
   There exists a quasi-isomorphism of $\fuk^{d}(M)$-modules:
   $$\phi_{V,\gamma_{m},h_{m}}: \mathcal{Y}(L)\to \mathcal{M}_{V,\gamma_{m},h_{m}}$$
   that, in homology, depends only on the horizontal isotopy
   type of $V$.
\end{prop}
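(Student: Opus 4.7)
The plan is to construct $\phi_{V,\gamma_m,h_m}$ as a connecting morphism in an exact triangle produced by the same auxiliary-curve machinery used in \S 4.3, and then to establish that the third vertex of that triangle is null. Concretely, I first extend $\gamma_m$ to an auxiliary curve $\widetilde{\gamma}$ obtained by continuing the positive cylindrical end of $\gamma_m$ downward through height $1$, thereby picking up one additional transverse intersection $o^{*}$ with the positive end $L$ of $V$, and settling at a new cylindrical end at height, say, $1/2$. With the corresponding extended profile function $\widetilde{h}$ adapted so that $o^{*}$ appears at the top of the list of positive intersections, the verbatim analogue of Proposition \ref{prop:str-mod} (with $\widetilde{\gamma}$ in the role of $\gamma_{m+1}$ and $L$ playing the part of an extra ``negative end'') yields an exact triangle of $\fuk^d(M)$-modules
\begin{equation*}
\mathcal{Y}(L) \xrightarrow{\; \psi \;} \mathcal{M}_{V,\gamma_m,h_m} \longrightarrow \mathcal{M}_{V,\widetilde{\gamma},\widetilde{h}},
\end{equation*}
in which $\psi$ is defined by counts of perturbed $J$-holomorphic polygons, in exact parallel with Corollary \ref{cor:exact-triangles}. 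I then set $\phi_{V,\gamma_m,h_m} := \psi$.

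To finish, it suffices to prove $\mathcal{M}_{V,\widetilde{\gamma},\widetilde{h}}\simeq 0$. The decisive geometric input is that the extra intersection $o^{*}$ supplies the topological slack needed to horizontally isotope $\widetilde{\gamma}$ — via an ambient horizontal Hamiltonian isotopy of $\mathbb{C}\times M$ in the sense of Definition \ref{def:str-hor} — to a curve $\widetilde{\gamma}'$ disjoint from $\pi(V)$. The cobordism $V$ supplies, inside $\pi(V)\subset\mathbb{C}$, arcs joining the positive end $L$ to each negative end $L_i$; along these arcs one can slide $\widetilde{\gamma}$ so that its $m+1$ transverse intersections with $\pi(V)$ are swept off. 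Once $\widetilde{\gamma}'\cap\pi(V)=\emptyset$ and the cylindrical ends of $\widetilde{\gamma}'\times N$ are at heights distinct from those of $V$, the Lagrangians $\widetilde{\gamma}'\times N$ and $V$ are cylindrically distinct at infinity and geometrically disjoint, so $CF^{cob}(\widetilde{\gamma}'\times N, V)=0$ for every $N$. The horizontal-isotopy invariance of the cobordism Fukaya category (Propositions \ref{prop:inv-incl} and \ref{prop:ham}) then gives $\mathcal{M}_{V,\widetilde{\gamma},\widetilde{h}} \simeq \mathcal{M}_{V,\widetilde{\gamma}',\widetilde{h}} = 0$, so $\phi_{V,\gamma_m,h_m}$ is a quasi-isomorphism.

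The invariance assertion — that $\phi_{V,\gamma_m,h_m}$ depends in homology only on the horizontal isotopy class of $V$ — follows by the coherent-system techniques of \S 3.8, exactly as in Remark \ref{rem:invariance-triang}: horizontal isotopies of $V$ induce a commuting diagram of exact triangles of $\fuk^d(M)$-modules in which the middle and right vertical arrows are quasi-isomorphisms (built by moving-boundary continuation), and the induced map on $\mathcal{Y}(L)$ is the identity. The main obstacle is the acyclicity step: one has to check carefully that the topological cancellation of $\widetilde{\gamma}\cap\pi(V)$ can actually be realized by an ambient horizontal Hamiltonian isotopy (rather than merely by a smooth homotopy), using the concrete planar structure of $\pi(V)$ furnished by the cobordism, and that the resulting continuation maps on the Floer level are well defined and compatible with the bottleneck and compactness machinery of \S 3.3. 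This is the step in which the genuine cobordism structure of $V$ — and not merely the abstract data of its ends — enters the argument.
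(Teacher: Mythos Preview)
Your overall strategy --- produce one more exact triangle with $\mathcal{Y}(L)$ as the new piece and show the remaining vertex is null --- is natural and is exactly the alternative picture the paper itself sketches in Remark~\ref{rem:comparison-mor-discussion}. However, your specific implementation has a genuine gap in the acyclicity step.

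\textbf{The obstruction.} With your $\widetilde{\gamma}$ the positive end sits at height $\tfrac{1}{2}$ and the negative end at height $m+\tfrac{1}{2}$. Such a curve can \emph{never} be made disjoint from $\pi(V)$ by a horizontal isotopy. Indeed, any $\widetilde{\gamma}'$ horizontally isotopic to $\widetilde{\gamma}$ still separates $\mathbb{C}$ into two components $U$ (above) and $D$ (below). Near $x=+\infty$ the positive end of $\pi(V)$ lies at height~$1$, hence in $U$ (since $1>\tfrac12$); near $x=-\infty$ every negative end of $\pi(V)$ lies at height $\leq m$, hence in $D$ (since $m<m+\tfrac12$). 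As $\pi(V)$ is connected it must cross $\widetilde{\gamma}'$. Equivalently, the mod~$2$ intersection number of $\widetilde{\gamma}$ with any path in $\pi(V)$ joining the positive end to a negative end is $1$, and this is invariant under horizontal isotopy of $\widetilde{\gamma}$. So the ``sliding along arcs in $\pi(V)$'' you propose cannot sweep all $m+1$ intersections off; the genuine cobordism structure forces at least one to remain. Consequently $\mathcal{M}_{V,\widetilde{\gamma},\widetilde{h}}$ is \emph{not} acyclic --- in fact one checks (by translating $V$ far to the left, as below) that it is quasi-isomorphic to $\mathcal{Y}(L)$, not to $0$.

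There is also a secondary issue: since your new intersection $o^{*}$ sits next to the \emph{right} bottleneck of $\widetilde{\gamma}$, it appears at the \emph{bottom} of the filtration of Proposition~\ref{prop:str-mod}, not the top. The machinery therefore does not directly produce the triangle $\mathcal{Y}(L)\to\mathcal{M}_{V,\gamma_m,h_m}\to\mathcal{M}_{V,\widetilde{\gamma},\widetilde{h}}$ you wrote, but rather a sequence starting from $\mathcal{M}^{*}_{1}=\mathcal{Y}(L)$.

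\textbf{What the paper does.} The paper's proof is both shorter and avoids the obstruction by moving $V$ rather than $\gamma$. One translates $V$ horizontally to a cobordism $V'$ so that $\gamma_m$ meets $\pi(V')$ only along the single positive end (see Figure~\ref{fig:translated-cob}). By the same argument as in Corollary~\ref{cor:exact-triangles}~(i) this gives $\mathcal{M}_{V',\gamma_m,h_m}=\mathcal{Y}(L)$ on the nose. The moving-boundary continuation map $\Phi^{V'}_{V}$ (from \S\ref{subsubsec:modules}) is then a quasi-isomorphism $\overline{\mathcal{M}}_{V',\gamma_m,h_m}\to\overline{\mathcal{M}}_{V,\gamma_m,h_m}$, and one sets $\phi_{V,\gamma_m,h_m}=e^{\ast}_{\gamma_m,h_m}(\Phi^{V'}_{V})$. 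Invariance follows as in Remark~\ref{rem:invariance-triang}. Note that the paper's Remark~\ref{rem:comparison-mor-discussion} does discuss an extended curve $\gamma'$, but even there the acyclicity of the cone is established by translating $V$ (diagram~\eqref{eq:commut-hats}), not by isotoping the curve off $\pi(V)$.
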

\begin{proof}
   We have seen before in \S\ref{subsubsec:modules} that the modules
   $\mathcal{M}_{V,\gamma_{m},h_{m}}$ have the property that if $V'$
   is horizontally isotopic to $V$, then there is a quasi-isomorphism
   (given by counting curves satisfying moving boundary conditions
   along the last edge of the underlying polygons):
   $$\Phi^{V'}_{V}: \overline{\mathcal{M}}_{V',\gamma_{m},h_{m}} \to 
   \overline{\mathcal{M}}_{V,\gamma_{m},h_{m}}.$$ To prove the
   statement of the proposition, let $V'$ be obtained by a horizontal
   translation of $V$ in such a way that the only intersection of
   $\gamma_{m}$ with $V'$ is along the single positive end of $V'$
   (see Figure~\ref{fig:translated-cob} for $m=4$).
   \begin{figure}[htbp]
      \begin{center}
         \epsfig{file=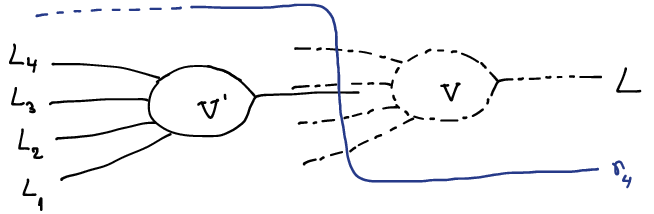, width=0.80\linewidth}
      \end{center}
      \caption{\label{fig:translated-cob} The cobordism $V$, its
        translation $V'$ and the curve $\gamma_{4}$. }
   \end{figure} 
   In this case, by the same argument showing the point i of
   Corollary~\ref{cor:exact-triangles}, we deduce
   $\mathcal{M}_{V',\gamma_{m},h_{m}}=\mathcal{Y}(L)$. We put
   $\phi_{V,\gamma_{m},
     h_{m}}=e^{\ast}_{\gamma_{m},h_{m}}(\Phi^{V'}_{V})$. The
   invariance part of the statement follows in a way similar to
   Remark~\ref{rem:invariance-triang}.
\end{proof}

\begin{rem} \label{rem:comparison-mor-discussion} The map
   $\phi_{V,\gamma_{m},h_{m}}$ is identified in homology with a map
   based on counting holomorphic polygons of exactly the same type as
   the maps $\phi_{V,j}$ from Corollary \ref{cor:exact-triangles}.
   This identification is useful in applications. Therefore we will
   justify this statement in some detail.  Consider a curve $\gamma'$
   as in Figure \ref{fig:super-bending}.
   \begin{figure}[htbp]
      \begin{center}
         \epsfig{file=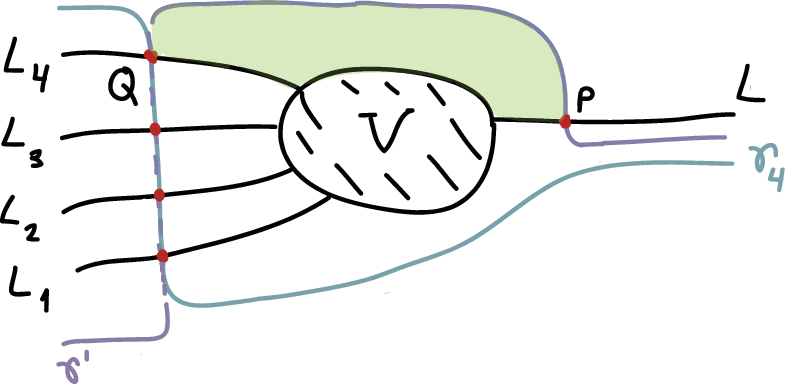, width=0.70\linewidth}
      \end{center}
      \caption{\label{fig:super-bending} The cobordism $V$, and the
        curves $\gamma_{4}$ and $\gamma'$. }
   \end{figure} 
   In particular, $\gamma'$ coincides with $\gamma_{m}$ as far as the
   intersections with the negative ends of $V$ are concerned.  We will
   also consider an associated profile function $h'$ that is also an
   extension of $h_{m}$. By the same methods as in the proof of
   Corollary \ref{cor:exact-triangles} we deduce that there is an
   additional exact triangle:
   $$\mathcal{Y}(L)\stackrel{\phi_{V,m+1}\ }{\longrightarrow}
   \mathcal{M}_{V,\gamma_{m},h_{m}}
   \stackrel{i}{\longrightarrow}Z\stackrel{p}{\longrightarrow}
   \mathcal{Y}(L)$$ so that the module $Z=cone (\phi_{V,m+1})$ is
   acyclic and thus $\phi_{V,m+1}$ is a quasi-isomorphism ($i$ is the
   inclusion here and $p$ the projection).  Geometrically, the module
   morphism $\phi_{V,m+1}$ counts $J$-polygons with the last entry
   over $P$ and ending over one of the intersections of $\gamma'$ with
   the negative ends of $V$.  It is not hard to see that
   $\phi_{V,\gamma_{m}, h_{m}}$ and $\phi_{V,m+1}$ are homologous.
   For this we translate again $V$ to $V'$ and compare the resulting
   structures associated to $V, \gamma'$ and to $V', \gamma'$.  More
   precisely, by the same compactness arguments as those used earlier
   in the paper, we obtain that the following diagram commutes in
   homology:
   \begin{eqnarray} \label{eq:commut-hats}
      \begin{aligned}
         \xymatrix{\mathcal{M}_{V,\gamma_{m},h_{m}}
           \ar[r]^{i} & Z \ar[r]^{p}& \mathcal{Y}(L)_{P}\\
           \mathcal{Y}(L)_{Q'}\ar[r]^{i}\ar[u]^{\phi_{V_{m},\gamma_{m},h_{m}}}
           & cone(id_{\mathcal{Y}(L)}) \ar[u]\ar[r]^{p}&
           \mathcal{Y}(L)_{P}\ar[u]^{id} }
      \end{aligned}
   \end{eqnarray}
   Here the vertical arrows are induced by translating from $V'$ to
   $V$ and $\mathcal{Y}(L)_{P}$ represents the module $\mathcal{Y}(L)$
   associated to the point $P\in\gamma'$ and $\mathcal{Y}(L)_{Q'}$
   represents the module $\mathcal{Y}(L)$ associated to the point $Q'$
   which is the leftmost intersection of $\gamma'$ and $V'$ - as in
   Figure \ref{fig:super-bending2}. This notation is based on the
   decomposition (as vector spaces) $CF(\gamma'\times N,
   V)=CF(N,L)\oplus \mathcal{M}_{V,\gamma_{m},h_{m}}(N)$ for
   $N\in\mathcal{L}^{\ast}_{d}(M)$ where the first term corresponds to
   generators that project to $P$ and so we denote it by
   $\mathcal{Y}(L)_{P}$. A similar convention is applied to $V'$ and
   $\gamma'$ relative to the point $Q'$.
   \begin{figure}[htbp]
      \begin{center}
         \epsfig{file=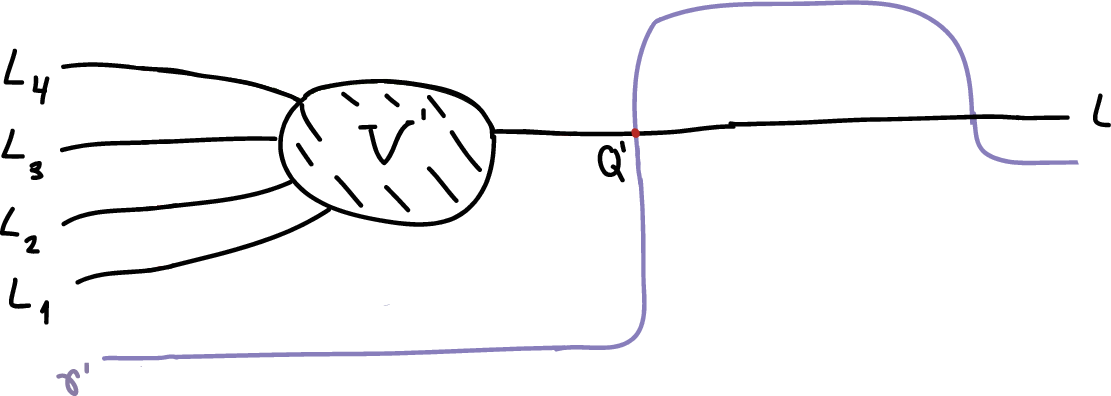, width=0.70\linewidth}
      \end{center}
      \caption{\label{fig:super-bending2} The cobordism $V'$, and the
        curves $\gamma'$. }
   \end{figure}

   By extending the top and bottom exact sequences one more step to
   the right we obtain that in homology $[\phi_{V,\gamma_{m},
     h_{m}}]=[\phi_{V,m+1}]$.
\end{rem}

\subsection{Definition of $\widetilde{\mathcal{F}}$}
\label{subsec:def-functor-tilde}

The functor
$$\widetilde{\mathcal{F}}:\mathcal{C}ob_{0}^{d}(M)\to T^{s}D\fuk^{d}(M)$$
is defined on objects by $\widetilde{\mathcal{F}}(L_{1},\ldots, L_{m})
= (L_{1},\ldots, L_{m})$.  To define it on morphisms, recall that the
morphisms in $\mathcal{C}ob_{0}^{d}(M)$ are obtained from cobordisms
of the type $V: N \cobto (K_{1}, \ldots , K_{r})$, $V\in
\mathcal{L}_{d}(\C\times M)$ in the following way. We consider
disjoint unions of such $V$'s, say $W=V_{1}\cup V_{2}\ldots \cup
V_{r}$ and take the horizontal isotopy class $[W]$ of $W$. An
additional equivalence relation is used so as to identify cobordisms
in case some of the ends of $W$ are void - as described in
\S\ref{subsec:category-cob}: in short, each morphism of
$\mathcal{C}ob_{0}^{d}(M)$ is represented by a unique isotopy class of
a cobordism $W$ as above so that all the negative ends and all
positive ends of $W$ are non-void and are as in Definition
\ref{def:Lcobordism} except if there is a single positive end which is
void and/or there is a single negative end which is void (both can
occur at the same time, for instance if $W$ is void).
  
To define $\widetilde{\mathcal{F}}$ on morphisms we will use the
definition of the category $T^{s}D\fuk^{d}(M)$ as given
in~\S\ref{subsec:cones}. However, we emphasize that we work here in an
ungraded setting.  We will first assume that $W$ has a single
connected component $W=V$ and that $V:L\cobto (L_{1},\ldots, L_{m})$.
By the results in \S\ref{subsubsec:triangles} and
\S\ref{subsubsec:ends-cob}, we can associate to this cobordism a
triple:
$$(\phi_{V,\gamma_{m},h_{m}}, \mathcal{M}_{V,\gamma_{m},h_{m}}, 
\eta_{V,\gamma_{m},h_{m}})$$ where $\phi_{V,\gamma_{m},h_{m}}:
\mathcal{Y}(L)\to \mathcal{M}_{V,\gamma_{m},h_{m}}$ is the
quasi-isomorphism given by Proposition~\ref{prop:comp-morphisms} and
$\eta_{V,\gamma_{m},h_{m}}$ is the
cone-decompositions~\eqref{eq:exact-tr-mod}.  This triple depends on
$\gamma_{m}$, $h_{m}$ as well as on all the other auxiliary data
required in the construction. However, the invariance properties of
these structures (see Remark \ref{rem:invariance-triang} and
Proposition~\ref{prop:comp-morphisms}) imply that if we take the image
$([\phi_{V,\gamma_{m},h_{m}}], [\mathcal{M}_{V,\gamma_{m},h_{m}}],
[\eta_{V,\gamma_{m},h_{m}}]) $ of this triple inside the derived
Fukaya category (or $T^{s}D\fuk^{d}(M)$) and compare it with the
triple $([\phi_{V',\gamma'_{m},h'_{m}}],
[\mathcal{M}_{V,'\gamma'_{m},h'_{m}}], [\eta_{V'
  ,\gamma'_{m},h'_{m}}]) $ associated to some $V'$ horizontally
isotopic to $V$ as well as to $\gamma'_{m}$, $h'_{m}$ etc, then the
two triples are related precisely by the equivalence described
by~\eqref{eq:cones-equiv} in~\S\ref{subsec:cones}.  Notice also that
the linearization of $[\eta_{V,\gamma_{m},h_{m}}]$ is $(L_{1},\ldots,
L_{m})$ again by (\ref{eq:exact-tr-mod}) (here $L_{i}$ and
$\mathcal{Y}(L_{i})$ are identified, of course).

Therefore, $([\phi_{V,\gamma_{m},h_{m}}],
[\mathcal{M}_{V,\gamma_{m},h_{m}}], [\eta_{V,\gamma_{m},h_{m}}])$
determines a morphism
$$\overline{([\phi_{V,\gamma_{m},h_{m}}], 
  [\mathcal{M}_{V,\gamma_{m},h_{m}}],
  [\eta_{V,\gamma_{m},h_{m}}])}:L\to (L_{1},\ldots, L_{m})$$ in
$T^{S}D\fuk^{d}(M)$ that depends only on the horizontal isotopy class
of $V$.

We then put: 
\begin{equation}\label{eq:functor-morph}
   \widetilde{\mathcal{F}}(V)=\overline{([\phi_{V,\gamma_{m},h_{m}}], 
     [\mathcal{M}_{V,\gamma_{m},h_{m}}], [\eta_{V,\gamma_{m},h_{m}}])}~.~
\end{equation}

The next step is to pass to the more general case of a disconnected
$W$. In this case we notice that both the domain and target categories
are strictly monoidal and we extend the definition of
$\widetilde{\mathcal{F}}$ monoidally.

\begin{rem}\label{rem:geom-dec} It is immediate to see that  this definition 
   is in fact compatible with the geometry of the cone-decompositions
   given in Corollary \ref{cor:exact-triangles} and of that of the
   maps $\phi_{{V},\gamma_{m},h_{m}}$ from
   Proposition~\ref{prop:comp-morphisms}. In other words, if we apply
   the constructions in~\S\ref{subsubsec:triangles}
   and~\S\ref{subsubsec:ends-cob} to a disconnected cobordism we
   obtain cone-decompositions and comparison maps between the ends
   that are the sums of the respective structures for each component
   at a time.  Indeed, this is a direct reflection of the fact that a
   complex of the form $CF^{cob}(\gamma_{m}\times N, W)$ for $W=
   V_{1}\cup \cdots \cup V_{r}$ with $V_{i}$ connected splits as
   $$CF^{cob}(\gamma_{m}\times N,W)= CF^{cob}(\gamma_{m}\times N,V_{1})
   \oplus \cdots \oplus CF^{cob}(\gamma_{m}\times N,V_{r}).$$
\end{rem}

\subsection{Compatibility with composition}
\label{subsec:comp-comp-functor}
It is easy to see that $\widetilde{\mathcal{F}}$ takes a trivial
cobordism to the identity (see~\S\ref{subsec:cones}). Thus, to show
that $\widetilde{\mathcal{F}}$ is a functor, it remains to prove that
it is compatible with composition.

Consider two connected cobordisms: $V:L\to (L_{1},\ldots, L_{m})$ and
$V':L_{i}\to (L'_{1},\ldots, L'_{q})$, where $1 \leq i \leq m$.

To fix ideas, assume that $V \subset [0,1]\times \R\times M$ and that
(after a possible rescaling) $V'\subset [0,a]\times [1,q]\times M$
with $a>2$. We also assume that $V'$ is cylindrical over $[a-1,a]
\times \mathbb{R}$ as well as over $[0,1] \times \mathbb{R}$. We now
consider the cobordism: $V'':L\to (L_{1},\ldots, L_{i-1},
L'_{1},\ldots, L'_{q}, L_{i+1},\ldots, L_{m})$, $V''\subset [-a,
1]\times \R\times M$ obtained by gluing $V' + (-a,i-1)$ (this is just
the translation of $V'$ by the vector $(-a,i-1) \in \mathbb{R}^2$) to
$V$ along the $L_{i}$ end and extending the other negative ends of $V$
by trivial cobordisms of the type $\eta_{j}\times L_{j}$ for $j\not=i$
- see Figure \ref{fig:comp-cob}.
\begin{figure}[htbp]
   \begin{center}
      \epsfig{file=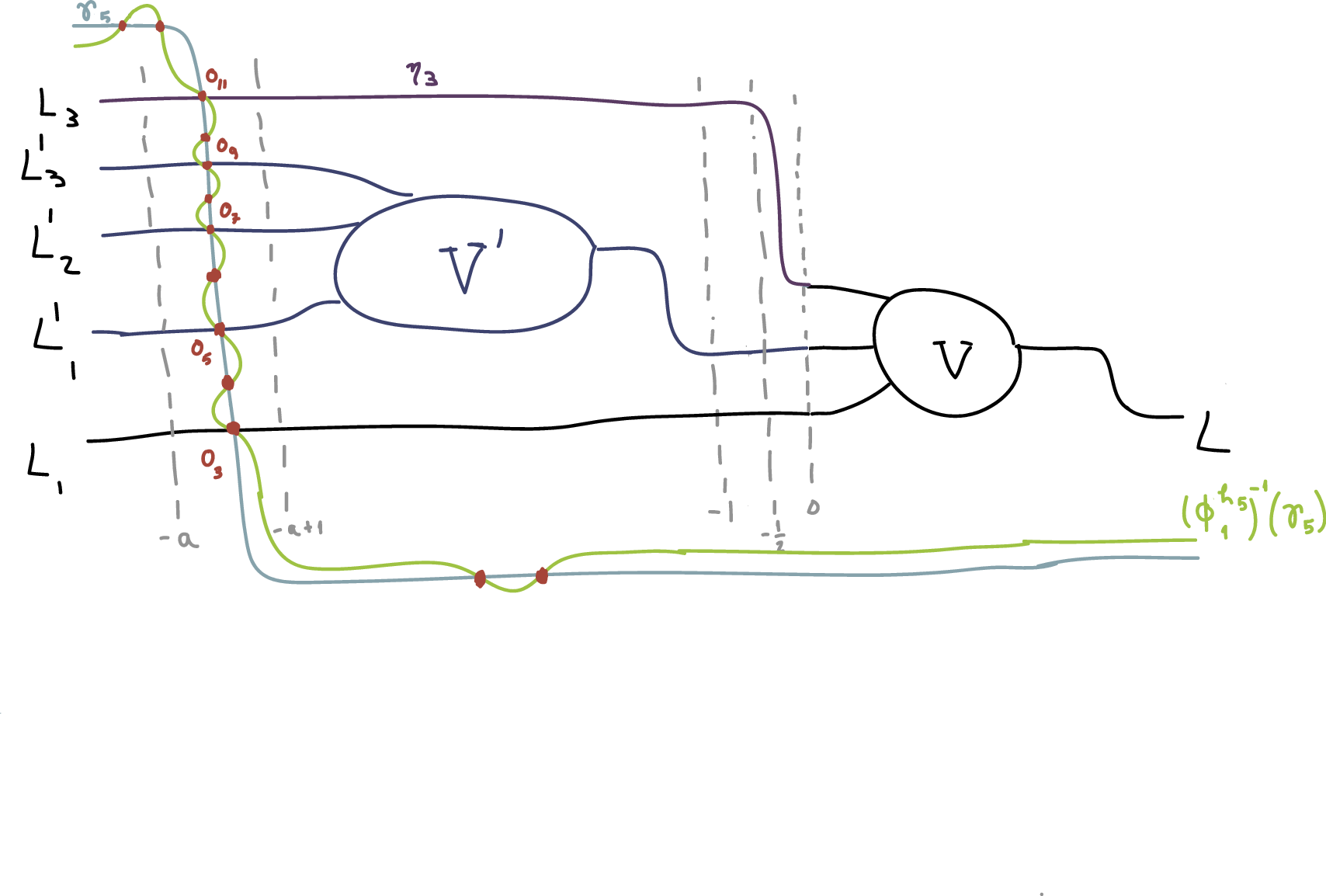, width=0.85\linewidth}
   \end{center}
   \vspace{-1.2in} \caption{\label{fig:comp-cob} The cobordism
     $V''$ together with the curve $\gamma_{5}$ and the curve
     $(\phi^{h_{5}}_{1})^{-1}(\gamma_{5})$.}
\end{figure}
Here the curves $\eta_{j}:[-a,0]\to [-a ,0]\times \R$ are so that
$\eta_{j}(0)=(0,j)$ for all $j$; for $j\geq i$ we have
$\eta_{j}(t)=(t,j+q)$ for all $t\leq -\frac{1}{2}$;
$\eta_{j}(t)=(t,j)$, for all $t\leq -\frac{1}{2}$ in case $j< i$.
Moreover, we assume that all $\eta_{j}$'s are graphs of functions that
have a derivative that is negative or null at all points.

As earlier in the paper, we will actually work in practice with the
$\R$-extensions of all the cobordisms involved -
see~\S\ref{sub:cob-def}.

In $\mor (\mathcal{C}ob^{d}_{0}(M))$ we have:
\begin{equation}\label{eq:comp-cob-compat}[V'']=(\mathrm{id}_{L_{1}}+\ldots
   +\mathrm{id}_{L_{i-1}}+ [V']+ \mathrm{id}_{L_{i+1}}+\ldots +
   \mathrm{id}_{L_{m}})\circ [V] ~.~
\end{equation}
Given the definition of the composition in $T^{S}\fuk^{d}(M)$ - see
(\ref{eq:Phi-circ-Phi'-1}) - it follows that it is enough to show that
$\widetilde{\mathcal{F}}$ is compatible with compositions of the type
in (\ref{eq:comp-cob-compat}). Put
$\mathcal{V}'=\mathrm{id}_{L_{1}}+\ldots +\mathrm{id}_{L_{i-1}}+ [V']+
\mathrm{id}_{L_{i+1}}+\ldots + \mathrm{id}_{L_{m}}$.  Thus our aim is
to show:

\begin{prop}\label{prop:compat-comp} With the notation above we have:
   $$\widetilde{\mathcal{F}}(V'')=\widetilde{\mathcal{F}}(\mathcal{V}')
   \circ \widetilde{\mathcal{F}}(V)~.~$$
\end{prop}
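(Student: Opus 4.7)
The plan is to compute $\widetilde{\mathcal{F}}(V'')$ directly using a family of curves adapted to the decomposition of $V''$ into $V$ and the translated copy of $V'$, and to identify the resulting iterated cone decomposition with the composite one dictated by the composition law \eqref{eq:Phi-circ-Phi'-1} in $T^S D\fuk^d(M)$. Setting $n = m + q - 1$, the first step is to pick curves $\gamma^{V''}_1, \ldots, \gamma^{V''}_n$ and profile functions $h^{V''}_1, \ldots, h^{V''}_n$ adapted to $V''$ as in \S\ref{subsubsec:triangles}, with the following ``hybrid'' shape. For $j \leq i-1$, the curve $\gamma^{V''}_j$ stays in the half-plane $\{x > -a + \epsilon\}$, where $V''$ is (after horizontal isotopy) just $V$ together with trivial extensions, and agrees there with $\gamma^V_j$; for $j \geq i + q$ the curve again stays in this half-plane and agrees with a translate of $\gamma^V_{j-q+1}$; for the middle range $i \leq j \leq i+q-1$, the curve $\gamma^{V''}_j$ descends into $\{x < -a + \epsilon\}$, where $V''$ is a translate of $V'$ (flanked by trivial cobordisms $\eta_r \times L_r$, $r \neq i$), and coincides in that region with $\gamma^{V'}_{j-i+1}$. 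The profile functions $h^{V''}_j$ are chosen compatibly, with bottlenecks distributed so as to separate the intersection geometries of $V$ and $V'$.

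The core of the argument is a localization statement, analogous to Lemma~\ref{lem:technique2} and proved by the same combination of the naturality transformation~\eqref{eq:nat-v} with the open mapping theorem (Proposition~\ref{p:open-map}): any perturbed $J$-holomorphic polygon contributing to the $A_\infty$-module structure of $\mathcal{M}_{V'', \gamma^{V''}_j, h^{V''}_j}$ has planar projection confined to either the $V$-region $\{x > -a + \epsilon\}$ or the $V'$-region $\{x < -a + \epsilon\}$, depending on which ends of $V''$ are involved at the punctures. Using this, together with Remark~\ref{rem:geom-dec} to handle the trivial summands $\mathrm{id}_{L_r}$ inside $\mathcal{V}'$, one obtains canonical quasi-isomorphisms of $\fuk^d(M)$-modules
\begin{equation*}
\mathcal{M}_{V'', \gamma^{V''}_j, h^{V''}_j} \;\simeq\;
\begin{cases}
\mathcal{M}_{V, \gamma^V_j, h^V_j}, & j \leq i-1,\\
\mathcal{M}_{V, \gamma^V_i, h^V_i}, & j = i+q-1,\\
\mathcal{M}_{V, \gamma^V_{j-q+1}, h^V_{j-q+1}}, & j \geq i+q,
\end{cases}
\end{equation*}
compatible with the exact triangles of Corollary~\ref{cor:exact-triangles}, and moreover, an interpolating chain of cone attachments for the intermediate range $i \leq j \leq i+q-2$ whose successive cone-terms are precisely $\mathcal{Y}(L'_1), \ldots, \mathcal{Y}(L'_q)$. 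This interpolating chain is, by construction and the same localization applied to $V'$ viewed as a cobordism $L_i \cobto (L'_1, \ldots, L'_q)$, exactly the one produced by $\widetilde{\mathcal{F}}(V')$.

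For the top-level comparison morphism, the quasi-isomorphism $\phi_{V''} : \mathcal{Y}(L) \to \mathcal{M}_{V'', \gamma^{V''}_n, h^{V''}_n}$ given by Proposition~\ref{prop:comp-morphisms} is analyzed by the same horizontal translation argument used in the proof of that proposition: since the positive end of $V''$ coincides with the positive end of $V$, translating $V''$ far enough to the right identifies $\phi_{V''}$ with $\phi_V$ through the identification $\mathcal{M}_{V'', \gamma^{V''}_n, h^{V''}_n} \simeq \mathcal{M}_{V, \gamma^V_m, h^V_m}$, refined at the $i$-th step by the $V'$-part. Collating these identifications inside $T^S D \fuk^d(M)$ yields exactly the triple produced by the algebraic composition \eqref{eq:Phi-circ-Phi'-1}, giving the desired equality $\widetilde{\mathcal{F}}(V'') = \widetilde{\mathcal{F}}(\mathcal{V}') \circ \widetilde{\mathcal{F}}(V)$.

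The main obstacle is the localization argument in the intermediate strip $-a + \epsilon < x < \epsilon$ where $V''$ transitions between its $V'$-like and its $V$-like behavior: a priori, a perturbed $J$-holomorphic polygon could mix contributions from both sides. This is resolved by inserting supplementary bottlenecks into the profile functions $h^{V''}_j$ on the transition zone and choosing the naturality transformation there so that the planar projection $v = \pi \circ u$ becomes holomorphic across this zone, forcing, by Proposition~\ref{p:open-map} and a connected-component argument identical in spirit to that of Lemma~\ref{lem:compact1} and Lemma~\ref{lem:technique2}, the projection to remain entirely on one side. Once this separation is secured, the algebraic bookkeeping matching the iterated cone decomposition to formula~\eqref{eq:Phi-circ-Phi'-1} is essentially formal.
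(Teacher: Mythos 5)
Your proposal takes a genuinely different route from the paper's. The paper fixes a \emph{single} curve $\gamma_w$ whose vertical part sits between the $V'$-ends and the (extended) $V$-ends, extracts the whole nested family $\mathcal{M}^{\ast}_1\subset\cdots\subset\mathcal{M}^{\ast}_w$ as honest submodules of the one module $\mathcal{M}^{\ast}_w = \mathcal{M}'_{w}$, and then compares the filtration for $V''$ with that for a further-translated cobordism $V''_0$ via a horizontal Hamiltonian isotopy $\overline{\psi}_t$, showing the resulting moving-boundary morphism $\Psi^{V''}_{V''_0}$ is filtration-preserving. Your proposal instead fixes $V''$ and varies the curves, choosing ``hybrid'' $\gamma^{V''}_j$'s that dip into the $V'$-region only for $i\leq j\leq i+q-1$ and return to $\{x>-a+\epsilon\}$ for $j\geq i+q$.

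This is where the gap is. Two interlinked problems arise. First, the $\gamma^{V''}_j$ for $j\geq i+q$ cannot stay in $\{x>-a+\epsilon\}$ and still produce a module whose linearization contains the $L'_s$'s: in that half-plane $V''$ looks like the extended $V$, so the resulting module records $L_i$ rather than $L'_1,\ldots,L'_q$, and the cone decomposition of $\mathcal{M}_{V'',\gamma^{V''}_n,h^{V''}_n}$ then has linearization $(L_1,\ldots,L_m)$ rather than $(L_1,\ldots,L_{i-1},L'_1,\ldots,L'_q,L_{i+1},\ldots,L_m)$. Second, and more structurally, the construction of the cone decomposition in \S\ref{subsubsec:triangles} (Proposition~\ref{prop:str-mod}) requires the curves $\gamma_j$ and their Floer/perturbation data to be \emph{nested}: $\gamma_j$ must coincide with $\gamma_{j+1}$ below a suitable height, and the data for $\mathcal{B}_{j+1}$ must extend that for $\mathcal{B}_j$, so that the $\mathcal{M}'_j$ are genuine submodules and the exact triangles of~\eqref{eq:exact-tr-mod} are actual cone attachments and not merely steps that hold up to quasi-isomorphism. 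Your family, whose vertical segments sit at $x$-coordinates that jump from one side of $-a+\epsilon$ to the other and back as $j$ increases, does not have this nesting; you would at best get modules that are quasi-isomorphic to the right ones at each stage, which is not enough to identify the equivalence class in $T^S D\fuk^d(M)$ — that identification demands the diagram~\eqref{eq:cones-equiv} on the nose. Finally, the closing claim that matching the result with the composition rule~\eqref{eq:Phi-circ-Phi'-1} is ``essentially formal'' understates the situation: that matching is precisely the geometric content of the paper's argument, encoded in the filtration-preserving properties i', ii', iii' for $\Psi^{V''}_{V''_0}$ and the identification of $K_q$ with $\widetilde{\mathcal{F}}(V')$, all established by open-mapping/bottleneck arguments applied to a \emph{fixed} curve $\gamma_w$ and a \emph{moving} boundary condition, not by changing the curve.
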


\begin{proof} The first step in the proof is to consider a different
   cobordism $V''_{0}$ that is obtained as follows: first consider the
   cobordism $V_{0}$ obtained by extending each negative end of $V$ by
   $[-a,0]\times \{j\}\times L_{j}$ if $j\leq i$ and by
   $\eta_{j}([-a,0]))\times L_{j}$ if $j>i$.  The cobordism $V''_{0}$
   is obtained by gluing $V'$ to $V_{0}$ along the $i$-th end and
   extending the other ends of $V_{0}$ trivially.  In other words:

   $$V''_{0}=V_{0}\cup \Bigl( V'+(-2a,i-1) \Bigr) \cup 
   \bigcup_{ j<i\ \mathrm{or}\ j>i+q } ([-2a, -a]\times \{j\}\times
   L_{j}),$$ see Figure~\ref{fig:comp-cob-far}.
   \begin{figure}[htbp]
      \begin{center}
         \epsfig{file=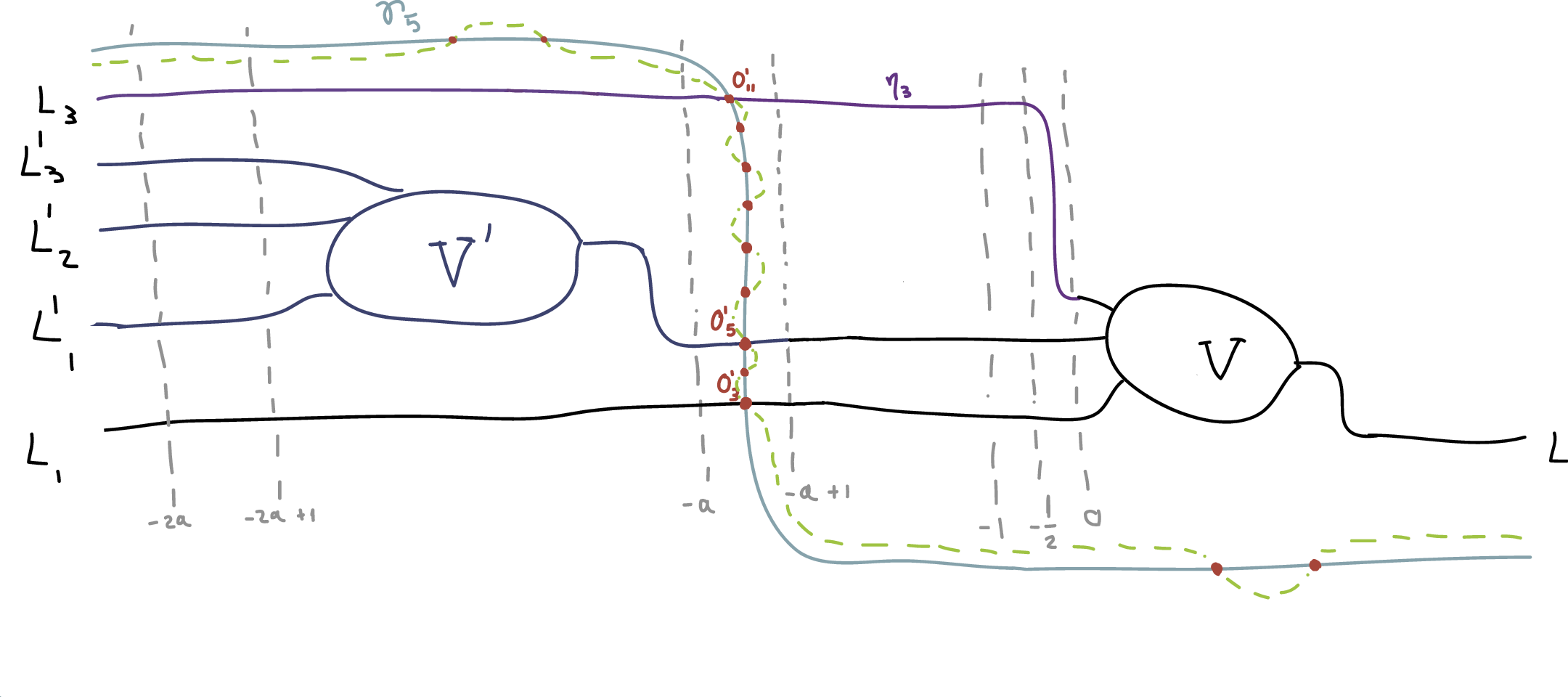, width=0.9\linewidth}
      \end{center}
      \vspace{-0.4in}\caption{\label{fig:comp-cob-far} The cobordism
        $V''_{0}$ obtained by translating $V'$ to the left by $a$. }
   \end{figure} 
   Let the $\R$-extensions of $V''_{0}$ and $V''$ be denoted by
   $\overline{V}''_{0}$ and $\overline{V}''$.

   It is not difficult to see that there is a horizontal Hamiltonian
   isotopy $\overline{\psi}_{t}:\C\times M\to \C\times M$, $t\in
   [0,1]$, of the form $\overline{\psi}_{t}(z,x)=(\psi_{t}(z),x)$ with
   $\psi_{t}:\C\to \C$ a Hamiltonian isotopy, and with the following
   properties:
   \begin{itemize}
     \item[i.]
      $\overline{\psi}_{1}(\overline{V}'')=\overline{V}''_{0}$.
     \item[ii.] $\psi_{t}$ is the identity on $[-\frac{1}{2},\infty)\times\R$.
     \item[iii.] $\psi_{t}$ is a translation in the negative, real
      direction for all points belonging to $\pi(V')\cup
      [-1,-\frac{1}{2}]\times \{i\}$.
     \item[iv.] $\psi_{t}((-\infty,\frac{1}{2}]\times \{j\})=
      (-\infty,\frac{1}{2}]\times \{j\}$ for all $j> i+q$ as well as
      all $j< i$. (Recall that $q$ is the number of negative ends of
      $V'$.)
   \end{itemize}

   For the next step, put $w=m+q-1$ and we consider a curve
   $\gamma_{w}$ together with a profile function $h_{w}$ as in
   \S\ref{subsubsec:triangles} so that the vertical part of
   $\gamma_{w}$ (i.e. the line that contains the points $o_{j}$ for
   $3\leq j\leq 2w+1$) is of real coordinate $-a+\frac{1}{2}$ (see
   Figures \ref{fig:comp-cob} and \ref{fig:comp-cob-far}). By Remark
   \ref{rem:invariance-triang}, this assumption is not restrictive.
   We also fix all the choices of auxiliary data required to define
   the category $\mathcal{B}_{\gamma_{w},h_{w}}$ as
   in~\S\ref{subsec:inclusion}. Recall that we also have the
   associated category $\mathcal{B}'_{w}$ whose objects are the same
   as those of $\mathcal{B}_{\gamma_{w},h_{w}}$ and whose morphism
   spaces are quotients of those of $\mathcal{B}_{\gamma_{w},h_{w}}$
   as described by~\eqref{eq:hom-B'} in~\S\ref{subsubsec:triangles}.

   By the constructions in \S\ref{subsubsec:modules}, in particular
   Proposition \ref{prop:str-mod}, we associate to $V''$ a sequence of
   $\mathcal{B}'_w$-modules, $\mathcal{M}^{*}_{j}$, $j$, $1\leq j\leq
   w$.  With the notation in Proposition~\ref{prop:str-mod}, these
   modules are defined by $\mathcal{M}^{*}_{w}=\mathcal{M}'_{w}$ and
   $$\mathcal{M}^{*}_{j}=(q_{j+1}\circ \ldots  \circ 
   q_{w})^{\ast}(\mathcal{M}'_{j}) \ \mathrm{for}\ 1\leq j < w~.~$$ In
   view of Lemma \ref{lem:technique2} these modules admit a direct
   description: \pbredb{$\mathcal{M}^{\ast}_{w}=\mathcal{M}'_{w}$ is
     the obvious $\mathcal{B}'_{w}$-module with}
   $\mathcal{M}^{\ast}_{w}(N)=CF^{cob}(\gamma_{w}\times N, V'')$ and,
   for $j<w$, $\mathcal{M}^{\ast}_{j}$ is the sub-module of
   $\mathcal{M}^{\ast}_{w}$ so that $\mathcal{M}^{\ast}_{j}(N)$ is
   generated by all the elements in $\mathcal{M}^{\ast}_{w}(N)$ lying
   over the points $o_{2s+1}$, $s\leq j$. In other words, the
   generators of $\mathcal{M}^{\ast}_{j}(N)$ correspond to the
   intersections of $\gamma_{m}$ with the first $j$ ends of
   $\pi(V'')$.  Thus we obtain inclusions:
   \begin{equation} \label{eq:exact-seq-ser} \mathcal{M}^{\ast}_{1}\to
      \ldots\to
      \mathcal{M}^{\ast}_{j-1}\stackrel{\kappa_{j}}{\longrightarrow}
      \mathcal{M}^{\ast}_{j}\stackrel{\kappa_{j+1}}{\longrightarrow}
      \mathcal{M}^{\ast}_{j+1}\to \ldots \to \mathcal{M}^{\ast}_{w}
   \end{equation}
   and each $\kappa_{j}$ fits into an exact triangle (that is
   determined by $\kappa_{j}$ up to isomorphism):
   \begin{equation}
      \label{eq:exact-triangle-star}
      Z_{j}\stackrel{\phi_{j}}{\longrightarrow}\mathcal{M}^{\ast}_{j-1}
      \stackrel{\kappa_{j}}{\longrightarrow}\mathcal{M}^{\ast}_{j}
   \end{equation}
   with:
   \begin{equation}
      Z_j = 
      \begin{cases}
         (c'_{\gamma_{w},h_{w},2j+1})^{\ast}(\mathcal{Y}(L_{j}))
         & \text{if $j<i$;} \\
         (c'_{\gamma_{w},h_{w},2j+1})^{\ast}(\mathcal{Y}(L'_{j-i+1}))
         & \text{if $i\leq j\leq i+q-1$;} \\
         (c'_{\gamma_{w},h_{w},2j+1})^{\ast}(\mathcal{Y}(L_{j-q+1})) &
         \text{if $i+q\leq j\leq w$.}
      \end{cases}
   \end{equation}

   See~\S\ref{prop:htpy-proj} for the definition of the projection
   $c'_{\gamma_{w},h_{w},j}$.

   Recall the map $e'_{w}:\fuk^{d}(M)\to \mathcal{B}'_{w}$ from
   equation (\ref{eq:e-commute}). Notice that, in view of Proposition
   \ref{prop:str-mod}, we have that the sequence of
   $\fuk^{d}(M)$-modules,
   $\mathcal{M}_{j}=(e'_{w})^{\ast}(\mathcal{M}^{\ast}_{j})$ is
   identified with the sequence $\mathcal{M}_{V'',\gamma_{j},h_{j}}$
   (where $\gamma_{j}, h_{j}$ are constructed, essentially by
   restricting appropriately $\gamma_{w}, h_{w}$). Moreover, the
   pull-back over $e'_{w}$ of the exact triangles
   (\ref{eq:exact-triangle-star}) are precisely the exact triangles
   (\ref{eq:exact-tr-mod}). In short, both the module
   $[\mathcal{M}_{V'',\gamma_{w},h_{w}}]$ \pbredb{as well as the
     iterated cone decomposition $[\eta_{V'',\gamma_{w},h_{w}}]$ are}
   determined by the sequence\eqref{eq:exact-seq-ser}. \pbredb{Next,
     we define $[\phi_{V'',\gamma_{w},h_{w}}]$ as follows. Consider
     the cobordism $V''_1$ obtained from $V''$ by translating it to
     the left by $2a$.  The map $\phi_{V'',\gamma_{w},h_{w}}$ is then
     defined to be the pullback over $e'_{w}$ of the morphism defined
     via the ``moving boundary'' map
   $$\Phi^{V''_{1}}_{V''}:c_{\gamma_{m},h_{m},1}^{\ast}
   (\mathcal{Y}(L))\to \mathcal{M}^{\ast}_{w}$$ induced by the
   horizontal isotopy moving $V''$ to $V''_1$.}

   The sequence of modules (\ref{eq:exact-seq-ser}) has an additional
   important property.  For $1\leq s\leq q$ let $K_{s}$ be the
   $\mathcal{B}'_{w}$ quotient module
   $K_{s}=\mathcal{M}^{\ast}_{i+s-1}/\mathcal{M}^{\ast}_{i-1}$. They
   also fit in a sequence of increasing submodules:
   \begin{equation}\label{eq:exactr-tr-K}
      K_{1}\to  \ldots K_{s} 
      \stackrel{\l_{s}}{\longrightarrow} K_{s+1}\to  \ldots \to K_{q}
   \end{equation}
   so that there is a cone decomposition $\eta_{V'}$ with exact
   triangles:
   $$Z_{s+i} \to K_{s}\to K_{s+1}~.~$$
   The key property of this cone decomposition is that
   $\widetilde{\mathcal{F}}(V')$ is the equivalence class of the
   triple $((e'_{w})^{\ast}(\phi_{V',\gamma_{w}, h_{w}}),
   (e'_{w})(K_{q}), (e'_{w})(\eta_{V'}))$ where
   $\phi_{V,'\gamma_{w},h_{w}}$ is obtained by a moving boundary
   morphism induced by translating $V'$ to the right by $a$ in
   Figure~\ref{fig:comp-cob-far}, for instance.

   We now intend to show that the cone decomposition
   $\eta_{V'',\gamma_{w},h_{w}}$ matches with the cone decomposition
   associated to $\widetilde{\mathcal{F}}(\mathcal{V}')\circ
   \widetilde{\mathcal{F}}(V)$. To this end, we consider the sequence
   of $\mathcal{B}'_{w}$-modules
   \begin{equation} \label{eq:exact-triangle-starN}
      \mathcal{N}^{\ast}_{1}\to \ldots
      \mathcal{N}^{\ast}_{j}\stackrel{h_{j}}{\longrightarrow}
      \mathcal{N}^{\ast}_{j+1}\to\ldots \to \mathcal{N}_{w}^{\ast}
   \end{equation}
   that are constructed just as the sequence
   in~\eqref{eq:exact-triangle-star} but by using $V''_{0}$ instead of
   $V''$. It is easy to see that all the compactness arguments
   necessary for the construction of the $\mathcal{M}^{\ast}$'s still
   apply in this case and that, by an appropriate choice of all the
   Floer and perturbation data involved we can insure that the modules
   in (\ref{eq:exact-triangle-starN}) satisfy the following
   properties:
   \begin{itemize}
     \item[-] For $2\leq j\leq w$ there are exact triangles
      \begin{equation*} \label{eq:exact-triangN}
         Z'_{j}\stackrel{\alpha'_{j}}{\longrightarrow}
         \mathcal{N}^{\ast}_{j-1}\stackrel{\tau_{j}}{\longrightarrow}
         \mathcal{N}^{\ast}_{j}
      \end{equation*}
      so that for $2\leq j< i$ or $i+q\leq j\leq w$ we have
      $Z'_{j}=Z_{j}$.
     \item[-]
      $Z'_{i}=(c'_{\gamma_{w},h_{w},2i+1})^*(\mathcal{Y}(L_{i}))$ and
      for $i+1\leq j < i+q$ we have $Z'_{j}=0$ and $\tau_{j}=\id$.
     \item[-] The sequence 
      $$\mathcal{N}^{\ast}_{1}\to \ldots \to \mathcal{N}^{\ast}_{i}\to 
      \mathcal{N}^{\ast}_{i+q}\to \ldots \to \mathcal{N}^{\ast}_{w}$$
      pulls back over $e'_{w}$ to the respective sequence
      corresponding to $\widetilde{\mathcal{F}}(V)$.
   \end{itemize}
   In view of the composition rule described in \S\ref{sub:cob-def},
   to show that
   $\widetilde{\mathcal{F}}(V'')=\widetilde{\mathcal{F}}(\mathcal{V}')\circ
   \widetilde{\mathcal{F}}(V)$ it is enough to prove that the
   sequences (\ref{eq:exact-seq-ser})
   and~\eqref{eq:exact-triangle-starN} are related by morphisms:
   $$\xi_{j}:\mathcal{M}^{\ast}_{j}\to \mathcal{N}^{\ast}_{j}$$ 
   \pbredb{so that $\xi_j \circ \kappa_{j} = \tau_{j}\circ \xi_{j-1}$}
     and additionally:
   \begin{itemize}
     \item[i.'] For $1\leq j < i$, $\xi_{j}=\id$.
     \item[ii.''] For $j > i+q$, the quotient morphism
      $\hat{\xi_{j}}:\mathcal{M}^{\ast}_{j}/\mathcal{M}^{\ast}_{j-1}\to
      \mathcal{N}^{\ast}_{j}/ \mathcal{N}^{\ast}_{j-1}$ is the
      identity.
     \item[iii.'] For $j=i+q-1$, the quotient morphism
      $$\hat{\xi}_{i+q-1}: K_{q}=
      \mathcal{M}^{\ast}_{i+q-1}/\mathcal{M}^{\ast}_{i-1}\to
      \mathcal{N}^{\ast}_{i+q-1}/\mathcal{N}^{\ast}_{i-1}=
      (c'_{\gamma_{w},h_{w},2i+1})^*(\mathcal{Y}(L_{i}))$$ is, in
      homology, the inverse of the morphism
      $\phi_{V',\gamma_{w},h_{w}}$.
   \end{itemize}
   We now want to remark that the module morphisms $\xi_{j}$ with the
   desired properties are induced by the Hamiltonian isotopy
   $\overline{\psi}_{t}$. More precisely, consider the morphism:
   $$\Psi^{V''}_{V''_{0}}:\mathcal{M}^{\ast}_{w}\to \mathcal{N}^{\ast}_{w}$$
   given by counting $J$-holomorphic polygons with the last edge
   satisfying a moving boundary condition along the isotopy
   $\overline{\psi}_{t}$.  The principal geometric input at this stage
   of the proof is that this morphism respects the filtration
   (\ref{eq:exact-seq-ser}).  This follows in view of the properties
   ii, iii, iv of $\psi_{t}$ combined with arguments similar to those
   in the proof of Lemma \ref{lem:technique2}. In essence, we need to
   show that a curve $u$ belonging to $0$-dimensional moduli space
   contributing to $\Psi^{V''}_{V''_{0}}$ that has its last input (the
   one belonging to $CF^{cob}(\gamma_{m}\times N, V'')$) of type
   $o_{k}$ can not have the exit of type $o_{k'}$ with $k'>k$. Indeed,
   if such would be the case we see, e.g. by looking at
   Figures~\ref{fig:comp-cob},~\ref{fig:comp-cob-far}, that either
   $o_{k}$ or $o_{k'}$ is left fixed by $\psi_{t}$. Therefore the
   arguments from the proof of Lemma~\ref{lem:technique2} apply here
   and prove analogues of~\eqref{eq:first-end}
   and~\eqref{eq:second-end}.  The properties i',ii' follow
   immediately.  Finally, from the definition of $\overline{\psi}_{t}$
   it follows that this Hamiltonian isotopy restricted to $V'$ is an
   inverse of the translation inducing $\phi_{V',\gamma_{m},h_{m}}$.
   This implies iii' and concludes the proof.
\end{proof}


\subsection{The diagram~\eqref{eq:commut-diag1} and the other
  Corollaries in~\S\ref{subsec:cor}} \label{sb:diag-cor}

For the convenience of the reader we repeat here
Diagram~\ref{eq:commut-diag1} with a slight addition at its bottom
that will be explained shortly:
\begin{eqnarray} \label{eq:commut-diag2} 
   \begin{aligned}
      \xymatrix{ \cob^{d}_{0}(M)\ar[r]^{\widetilde{\mathcal{F}}}
        \ar[d]_{\mathcal{P}}
        & T^{S}D\fuk^{d}(M)\ar[d]^{\mathcal{P}}  \\
        S\cob^{d}_{0}(M) \ar[r]^{\mathcal{F}} \ar[d] &
        D\fuk^{d}(M)\ar[d]^{\hom(N,-)}\\
        M\cob^{d}_{0}M\ar[r]^{\mathcal{H}F_{N}}& (\mathcal{V},\times)}
   \end{aligned}
\end{eqnarray}
The top line is the functor $\widetilde{\mathcal{F}}$ that was
constructed in \S\ref{subsec:def-functor-tilde} and
\S\ref{subsec:comp-comp-functor}.

We first provide a detailed description of the categories and functors
in diagram~\eqref{eq:commut-diag2}. We end the section - and the main
body of the paper - with the proof of Corollary
\ref{cor:special-short} which is the only statement from the
introduction that does not directly follow from the statement of
Theorem~\ref{thm:main}.

\subsubsection{The top square in ~\eqref{eq:commut-diag2}}
\label{subsubsec:top-square} The second row in the diagram is obtained
from the first one as follows.  The category
$S\mathcal{C}ob^{d}_{0}(M)$ has as objects Lagrangians $L\in
\mathcal{L}^{\ast}_{d}(M)$. The morphisms from $L$ to $L'$ are
connected cobordisms $V:L\cobto (L_{1},\ldots, L_{m-1}, L')$ (with $m
\geq 1$ arbitrary) modulo horizontal isotopy. The composition is
induced in the obvious way from that in $\mathcal{C}ob^{d}_{0}(M)$.
The projection $\mathcal{P}:\mathcal{C}ob^{d}_{0}(M)\to
S\mathcal{C}ob^{d}_{0}(M)$ is the projection on the last component,
i.e.  $\mathcal{P}(K_{1}, \ldots, K_{m}) = K_{m}$ and similarly for
morphisms. (Recall that in $\cob^d_0(M)$ some morphisms are
represented by disjoint unions of cobordisms.)

On the algebraic side the projection $\mathcal{P}$ is defined
in~\eqref{eq:proj}. On objects it is again the projection on the last
component and, in the language of Corollary \ref{cor:exact-triangles},
on morphisms it associates to a triple $(\phi_{V,\gamma_{m},h_{m}},
\mathcal{M}_{V,\gamma_{m},h_{m}},\eta_{V,\gamma_{m},h_{m}})$ the
morphism
$$w_{m}\circ\phi_{V, \gamma_{m},h_{m}}:\mathcal{Y}(L) 
\longrightarrow \mathcal{Y}(L_{m})$$ where
$w_{m}:\mathcal{M}_{V,\gamma_{m},h_{m}} \longrightarrow
\mathcal{Y}(L_{m})$ is the morphism fitting in the last exact triangle
from ~\eqref{eq:exact-tr-mod}:
$$\mathcal{M}_{V,\gamma_{m-1},h_{m-1}} \longrightarrow
\mathcal{M}_{V,\gamma_{m},h_{m}} \xrightarrow{w_{m}}
\mathcal{Y}(L_{m})~.~$$ The functor $\mathcal{F}$ is the identity on
objects. To define it on morphisms we notice that each morphism
$V:L\to L'$ in $S\mathcal{C}ob^{d}_{0}(M)$ provides also a morphism
$V:L\to (L_{1}, \ldots, L_{m-1}, L')$ in $\mathcal{C}ob^{d}_{0}(M)$ and
the functor $\mathcal{F}$ can be defined as
$\mathcal{F}([V])=\mathcal{P}\circ \widetilde{\mathcal{F}}([V])$.
More explicitly, the morphism $\mathcal{F}([V]):\mathcal{Y}(L)\to
\mathcal{Y}(L')$ can be also described in one of the following three
equivalent ways:
\begin{itemize}
  \item[a.] It is given by the composition
   $$\mathcal{Y}(L) \xrightarrow{\phi_{V,\gamma_{m},h_{m}}}
   \mathcal{M}_{V,\gamma_{m}, h_{m}} \longrightarrow
   \mathcal{M}_{V,\gamma_{m}, h_{m}}/\mathcal{M}_{V,\gamma_{m-1},
     h_{m-1}}$$ where the second map is the projection (see
   Corollary~\ref{cor:exact-triangles} and
   Proposition~\ref{prop:comp-morphisms}).
  \item[b.] As we are considering here a morphism
   $\mathcal{Y}(L)\to\mathcal{Y}(L')$, it follows from the general
   properties of the derived Fukaya category that this morphism is
   determined by a Floer homology class $\alpha_{V}\in HF(L,L')$. In
   view of the moving boundary description of the morphism
   $\phi_{V,\gamma_{m},h_{m}}$, this class can be defined as the image
   of the unit in $HF(L,L)$ by the moving boundary morphism
   $$\phi_{V}:CF(L,L)=CF(\gamma_{m}\times L, V')\to CF(\gamma_{m} \times L , V) 
   \xrightarrow{\textnormal{proj}} CF (L, L')$$ where the isotopy
   $V'\to V$ is a translation like in Figure~\ref{fig:translated-cob}
   ($L'= L_{4}$ in that picture) and $proj$ is the projection on the
   last term in the vector space decomposition
   $$CF(\gamma_{m}\times L,V)=\oplus_{i} CF(L,L_{i})$$
   (by Lemma~\ref{lem:technique2} $proj$ is a chain map). Thus,
   $\mathcal{F}(V)=\phi_{V}([L])\in HF(L,L')$.

  \item[c.] In view of Remark~\ref{rem:comparison-mor-discussion} and of the point a.
   the  morphism $\phi_{V}$ can also be described in terms of counting the
   $J$-holomorphic Floer strips with boundary conditions along $\gamma'\times L$ and $V$
   and having as input an intersection point
   that projects to the point $P$ in Figure~\ref{fig:super-bending} 
   and having as exit a point that projects to $Q$ in the same picture
   (these strips cover the region filled in color there; the same
   morphism is discussed in Figure~\ref{fig:MorphCob2}). \end{itemize} 

\subsubsection{The bottom square in diagram~\eqref{eq:commut-diag2}}
The category $M\mathcal{C}ob^{d}_{0}(M)$ is the monoidal completion of
$S\mathcal{C}ob^{d}_{0}(M)$, its objects are families $(L_{1}, \ldots,
L_{k})$ and the morphisms are similar families of morphisms from
$S\mathcal{C}ob^{d}_{0}(M)$. We remark that while the objects of
$\mathcal{C}ob^{d}_{0}(M)$ and $M\mathcal{C}ob^{d}_{0}(M)$ are the
same {\em the morphisms are different}.

Given $N\in \mathcal{L}^{\ast}_{d}(M)$, the functor $\mathcal{H}F_N$
associates to $(L_{1},\ldots, L_{k})$ the product of Floer homologies
\pbred{$HF(N, L_{1})\times \ldots \times HF(N, L_{k})$.} Further, for
each cobordism $V:L\to (L_{1},\ldots, L_{i}, L')$,
$\mathcal{H}F_{N}(V)$ is the morphism
\pbred{$$\mathcal{H}F_{N}(V)=\phi^{N}_{V} : HF(N,L) = 
HF(\gamma_{m}\times N, V') \longrightarrow HF(\gamma_{m} \times N , V)
\xrightarrow{\textnormal{proj}} HF(N, L')$$} defined by a moving
boundary condition as at the point b above. In particular,
$\phi_{V}=\mathcal{H}F_{L}(V)$. Alternatively, we can again define
this morphism by counting strips as indicated at the point c. above.
The commutativity of the bottom square is immediate in this setting.


\subsubsection{Proof of Corollary \ref{cor:special-short}}
\label{subsubsec:proof-cor}
We will make use of the equivalent descriptions of the functor
$\mathcal{F}$ as given at the points~a,~b,~c
in~\S\ref{subsubsec:top-square}. The key point is that for a cobordism
$V:L\to (L_{1},\ldots, L')$ the morphism $\mathcal{F}(V)\in
HF(L,L')=\mor_{D\fuk^{d}(M)}(L,L')$ has two equivalent descriptions.
One - described at the point~a - is a projection composed with the
moving boundary comparison morphism associated to a translation of $V$
as in Proposition \ref{prop:comp-morphisms} and Figure
\ref{fig:translated-cob}. The other - described at the point~c - is
expressed in terms of counting Floer strips with boundary conditions
along $\gamma'\times L$ and $V$ where $\gamma'$ is a ``hat''-like
curve as in Figure \ref{fig:super-bending} (see also Remark
\ref{rem:comparison-mor-discussion}).

We now specialize to the setting of the Corollary.  We use the
notation in the statement, in particular, $V,V', V''$ are as in Figure
\ref{fig:hairy-cob} (for the convenience of the reader, we repeat that
Figure below).
\begin{figure}[htbp]
      \begin{center} \vspace{-0.4in} \epsfig{file=hairy-cob.eps,
           width=0.8\linewidth, height=0.6\linewidth }
      \end{center}
      \vspace{-0.8in}
   \end{figure}
\

We consider the cobordism $V$ and a curve $\gamma_{2}$
as  in Figure \ref{fig:hairy1}.
\begin{figure}[htbp]
      \begin{center} \vspace{-0in} \epsfig{file=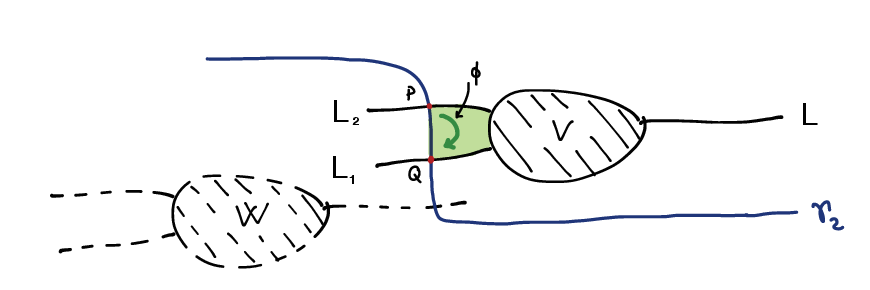,
           width=0.78\linewidth, height=0.33\linewidth }
      \end{center}
      \vspace{-0in}
      \caption{The cobordism $V$, \pbredb{its translated version $W$},
        the curve $\gamma_{2}$ and the strips giving
        $\phi$.\label{fig:hairy1}}
   \end{figure}
   As a consequence of Theorem~\ref{cor:multiple-triang} we deduce the
   existence of an exact triangle in $D\fuk^{d}(M)$, given as the
   horizontal line in the next diagram
   \begin{eqnarray} \label{eq:commut-hairy}
      \begin{aligned}
         \xymatrix{ L_{2}\ar[r]^{\phi} & L_{1} \ar[r]^{i \ \ } &
           \mathrm{Cone}(\phi) \ar[r]^{\ \ p}  & L_{2}\\
           & & L \ar[u]^{\sigma} & }
      \end{aligned}
   \end{eqnarray}
   together with an isomorphism $\sigma: L\to \mathrm{Cone}(\phi)$
   \pbredb{induced by isotoping $V$ to a another cobordism $W$,} from
   one side to the other of the curve $\gamma_{2}$, as in
   Figure~\ref{fig:hairy1} (see also
   Proposition~\ref{prop:comp-morphisms}).  Further, the morphism
   $\phi$ is induced by counting strips with boundary conditions along
   $\gamma_{2}\times L_{2}$ and $V$, starting at $P$ and ending at
   $Q$.  Moreover, $i$ is the inclusion of $L_{1}$ in $\mathrm{Cone}
   (\phi)=\mathcal{M}_{V,\gamma_{2}}$ and $p$ is the projection of the
   cone onto $\mathcal{Y}(L_{2})$ (recall that we neglect grading).

   By the description of $\mathcal{F}(-)$ at the point~a
   in~\S\ref{subsubsec:top-square}, $p\circ \sigma=\mathcal{F}(V)$.
   To finish the proof of the corollary it suffices to show that in
   $D\fuk^{d}(M)$ we have $\phi= \mathcal{F}(V'')$ and
   $\sigma^{-1}\circ i= \mathcal{F}(V')$.

   To show the identity $\sigma^{-1}\circ i=\mathcal{F}(V')$ we first
   notice that the composition $\psi=\sigma^{-1}\circ i$ is induced by
   \pbredb{translating $V$ to $W$} (the inverse of the translation
   used to define $\sigma$). Further, the same methods as those used
   in Remark~\ref{rem:comparison-mor-discussion} imply that $\psi$ can
   also be described by counting strips that start at $R$ and end at
   $K$ and with boundary conditions along $V$ and $\lambda \times
   L_{1}$ as in Figure~\ref{fig:inverted-hat}.
   \begin{figure}[htbp]
      \vspace{-0.2in}
      \begin{center} \vspace{-0in} \epsfig{file=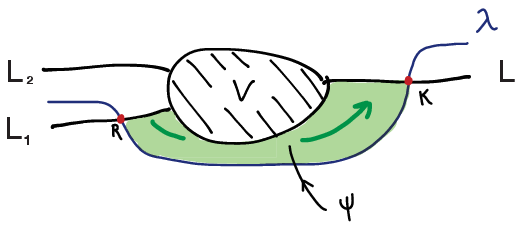,
           width=0.5\linewidth, height=0.25\linewidth }
      \end{center}
      \vspace{-0.2in}
      \caption{ The curve $\lambda$ and the morphism
        $\psi$.\label{fig:inverted-hat}}
   \end{figure}
   By the ``counting strips'' description of $\mathcal{F}(-)$ as at
   the point~c in~\S\ref{subsubsec:top-square} it is easy to see,
   using an appropriate transformation of $\lambda$ as in
   Figure~\ref{fig:twist-again},
\begin{figure}[htbp]
   \begin{center} \vspace{-0in} \epsfig{file=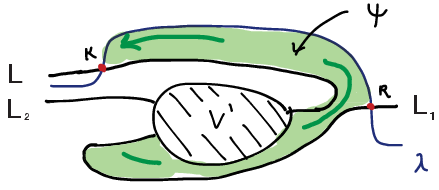,
        width=0.45\linewidth, height=0.23\linewidth }
   \end{center}
   \vspace{-0.1in}
   \caption{The curve $\lambda$ and the morphism $\psi$ after an 
     appropriate transformation.
     \label{fig:twist-again}}
\end{figure}
that in homology $[\phi_{V'}]=[\psi]$.
 
The identity $\phi= \mathcal{F}(V'')$ follows by the description of
$\mathcal{F}(-)$ in terms of ``strip counting'', as at the point~c
in~\S\ref{subsubsec:top-square}, and an appropriate transformation of
$\gamma_{2}$.

\appendix
\section{Generalities on $A_{\infty}$-categories }
\label{subsec:more-alg} 
We review here basic $A_{\infty}$ notions.  All of these are explained in detail
in~\cite{Se:book-fukaya-categ}.  We recall them for completeness, and
also because we use in the paper slightly different conventions (mainly homology
rather than cohomology).

In what follows all vector spaces are assumed to be over the field
$\mathbb{Z}_2$ and are generally ungraded. The algebra below has a
graded version (which also works over arbitrary base rings). This
requires inserting signs in most of the formulae below
(see~\cite{Se:book-fukaya-categ}).

\subsection{Extended multilinear maps} \label{sbsb:ext-mult} Let
$I$, $J$ be two sets of indices. Let $\mathbf{C} = \{C_{i,j}\}_{i,j
  \in I}$, $\mathbf{D} = \{D_{s,r}\}_{s,r \in J}$ be two collections
of vector spaces indexed by pairs of indices in $I$ and $J$. An {\em
  extended multilinear map} $F:\mathbf{C} \to \mathbf{D}$ consists of
the following. First we have a map $F: I \to J$ which we denote by
abuse of notation by $F$. We call it the index map.  Next we have a
collection of multilinear maps $$F^{i_0, \ldots, i_k} : C_{i_0, i_1}
\otimes \cdots \otimes C_{i_{k-1}, i_k} \longrightarrow D_{F(i_0),
  F(i_k)}$$ defined for every $k \geq 1$ and every $i_0, \ldots, i_k
\in I$. When it is obvious, we do not specify the index map, e.g. when
$I=J$, $\mathbf{C}=\mathbf{D}$ and the index map is the identity (we
call this the trivial index map). We often omit the superscripts from
$F^{i_0, \ldots, i_k}$ to simplify the notation. We sometimes denote
the latter by $F_k$ to indicate that it gets $k$ inputs, calling it
the degree $k$ part of $F$.

There are two ways to compose extended multilinear maps. Let $F:
\mathbf{C} \to \mathbf{D}$ and $G: \mathbf{D} \to \mathbf{E}$ be
extended multilinear maps. Define $G \circ F : \mathbf{C} \to
\mathbf{E}$ by composing the index maps in an obvious way and
$$(G \circ F) (a_1, \dots, a_k) = 
\sum G(F(a_{1},\ldots, a_{i_{1}-1}), F(a_{i_{1}}, \ldots,
a_{i_{2}-1}),\ldots, F(a_{i_{l}},\ldots, a_{k}))~.~$$ The sum is taken
over all the possibilities to write terms as on the right-hand side.

For the second type of composition we need to assume that $F:
\mathbf{D} \to \mathbf{D}$ (i.e. $\mathbf{C} = \mathbf{D}$) and that
the index map of $F$ is the identity. Let $G: \mathbf{D} \to
\mathbf{E}$. We define $G \star F : \mathbf{D} \to \mathbf{E}$ by
$$(G \star F) (a_1, \dots, a_k) = \sum G(a_{1},\ldots, a_{i-1}, F(a_i,
\ldots, a_{l}), a_{l+1}, \ldots, a_{k}))~,~$$ where the sum is over
all the possibilities to write terms as on the right side above: we
apply first $F$ to an ordered subset of the $a_{j}$'s (formed by
successive elements) and we use the output of $F$ as one of the inputs
for $G$.

We will also need the following generalization. Let $\mathbf{C} = \{
C_{i,j} \}_{i,j \in I}$ be a collection of vector spaces as before and
$\mathbf{M} = \{ M_i\}_{i \in I}$, $\mathbf{N} = \{ N_s\}_{s \in J}$
be two collections of vector spaces indexed by $I$ and $J$. By a {\em
  mixed extended multilinear} map $Q : \mathbf{C} \otimes \mathbf{M}
\to \mathbf{N}$ we mean an index map $Q: I \to J$ and a collection of
multilinear maps:
$$Q: C_{i_0, i_1} \otimes \cdots \otimes C_{i_{k-2}, i_{k-1}} 
\otimes M_{i_{k-1}} \longrightarrow N_{Q(i_0)},$$ defined for every $k
\geq 1$ and indices $i_0, \ldots, i_{k-1} \in I$. (Note that for $k=1$
we just have a map $Q: M_{i_0} \to M_{i_0}$.)

Let $Q: \mathbf{C} \otimes \mathbf{M} \to \mathbf{N}$ be such a map
with $\mathbf{N}$ being indexed by the same indices as $\mathbf{M}$
and with identity index map. Let $P: \mathbf{C} \otimes \mathbf{N} \to
\mathbf{R}$ be another mixed extended map. We can compose them to get
a new mixed extended multilinear map $P \dashv Q: \mathbf{C} \otimes
\mathbf{M} \to \mathbf{R}$ as follows:
$$(P \dashv Q) (a_1, \ldots, a_{k-1}, b) = \sum P(a_1, \ldots, a_{i-1}, 
Q(a_i, \ldots, a_{k-1},b)).$$

If $F : \mathbf{C} \to \mathbf{C}$ is an extended multilinear map with
identity index map and $Q: \mathbf{C} \otimes \mathbf{M} \to
\mathbf{N}$ is a mixed extended multilinear map, they can be composed
to give a new mixed map $Q \star F : \mathbf{C} \otimes \mathbf{M} \to
\mathbf{N}$ as follows:
$$(Q \star F)(a_1, \dots, a_{k-1},b) = \sum Q(a_{1},\ldots, a_{i-1}, F(a_i,
\ldots, a_{l}), a_{l+1}, \ldots, a_{k-1},b).$$

\subsection{$A_{\infty}$-categories} 
\subsubsection{Definition}\label{sbsb:A-infty-categ} An
$A_{\infty}$-category $\mathcal{A}$ consists of the following data. A
collection of objects $\textnormal{Ob}(\mathcal{A})$, a collection of
vector spaces $\mathbf{C}_{\mathcal{A}} = \{ C(L', L'') \}_{L', L''
\in \textnormal{Ob}(\mathcal{A})}$ indexed by pairs of objects $L',
L'' \in \textnormal{Ob}(\mathcal{A})$, and an extended multilinear map
$\mu: \mathbf{C}_{\mathcal{A}} \to \mathbf{C}_{\mathcal{A}}$ (called
composition map) with identity index map and which satisfies the
identity:
$$\mu \star \mu = 0.$$

The space $C(L',L'')$ is called the space of morphisms between $L'$
and $L''$ and is sometimes denoted by $\hom(L',L'')$. When we want to
emphasize the relation to the category $\mathcal{A}$ we write
$C_{\mathcal{A}}(L', L'')$ or $\hom_{\mathcal{A}}(L', L'')$.

Note that $\mu_1 : C(L', L'') \to C(L', L'')$ is a differential, i.e.
$\mu_1 \circ \mu_1 = 0$, hence $C(L', L'')$ becomes a chain complex
(ungraded in our case). We denote by $H(L', L'')$ the homology of
$(C(L', L''), \mu_1)$. The degree $2$ component $\mu_2$ of $\mu$
descends to homology and induces an associative product $$H(L_0, L_1)
\otimes H(L_1, L_2) \longrightarrow H(L_0,L_2), \quad a_1 \otimes a_2
\mapsto a_1 * a_2.$$ This means that by passing to homology we obtain
a (non-unital) category $H(\mathcal{A})$, called the homology category
associated to $\mathcal{A}$. Its objects are
$\textnormal{Ob}(\mathcal{A})$, and its morphisms are
$\hom_{H(\mathcal{A})}(L,K) = H(L,K)$. The composition of $\alpha \in
H(L_0, L_1)$, $\beta \in H(L_1, L_2)$ is defined to be $\beta \circ
\alpha := \alpha * \beta \in H(L_0, L_2)$.

\begin{rem}\label{rem:notation} Notice that  our notation is slightly different from that in
 \cite{Se:book-fukaya-categ}. The multiplication $\mu_{k}$ in an $A_{\infty}$ algebra
 is defined for us as $$\mu_{k}:C(X_{1},X_{2})\otimes C(X_{2},X_{3})\ldots \otimes C(X_{k},X_{k+1})\to C(X_{1},X_{k+1})$$
 and in  \cite{Se:book-fukaya-categ} it is defined as a map:
 $$C(X_{k},X_{k+1})\otimes C(X_{k-1},X_{k})\ldots \otimes C(X_{1},X_{2})\to C(X_{1},X_{k+1})~.~$$
As we do not care 
 about signs here the two conventions give the same notion and it is easy to move from one to the other.
 \end{rem}

\subsubsection{Units.}
An $A_{\infty}$-category $\mathcal{A}$ is called {\em strictly unital}
if for every object $L \in \textnormal{Ob}(\mathcal{A})$ there exists
a distinguished element $e_L \in C(L,L)$ with $\mu_1(e_L)=0$,
$\mu_2(e_L, \cdot) = \mu_2(\cdot, e_L) = \id$ and $\mu_k(a_1, \ldots,
a_{i-1}, e_L, a_{i+1}, \ldots, a_k) = 0$ for every $k \geq 3$ and $1
\leq i \leq k$.  $\mathcal{A}$ is called {\em homologically unital} if
for every object $L \in \textnormal{Ob}(\mathcal{A})$ there exists a
distinguished element $1_L \in H(L,L)$ which behaves like an identity
with respect to composition. In most cases considered in the paper
the $A_{\infty}$ categories will be homologically unital. Below we
will use the following abbreviations: ``hu'' stands for homologically
unital, ``su'' for strictly unital and ``nu'' for non-unital (or more
precisely, not necessarily unital).

\subsubsection{Functors}
Let $\mathcal{A}$, $\mathcal{B}$ be two $A_{\infty}$-categories. A nu
$A_{\infty}$-functor $\mathcal{F} : \mathcal{A} \to \mathcal{B}$
consists of the following pieces of data. A correspondence which
associates for every object $L \in \textnormal{Ob}(\mathcal{A})$ an
object $\mathcal{F}(L) \in \textnormal{Ob}(\mathcal{B})$. The second
piece is an extended multilinear map $\mathcal{F}:
\mathbf{C}_{\mathcal{A}} \to \mathbf{C}_{\mathcal{B}}$ with index map
$L \mapsto \mathcal{F}(L)$ and which satisfies the following identity
$$\mathcal{F} \star \mu^{\mathcal{A}} = \mu^{\mathcal{B}} \circ
\mathcal{F}.$$ Here $\mu_{\mathcal{A}}$, $\mu_{\mathcal{B}}$ are the
composition maps of $\mathcal{A}$, $\mathcal{B}$. Of course, nu
$A_{\infty}$-functors $\mathcal{F}$ descend to usual nu functors
$H(\mathcal{F}): H(\mathcal{A}) \to H(\mathcal{B})$ on the associated
homology categories.

Nu $A_{\infty}$-functors $\mathcal{F} : \mathcal{A} \to \mathcal{B}$,
$\mathcal{G}: \mathcal{B} \to \mathcal{C}$ can be composed to give a
new nu functor $\mathcal{G} \circ \mathcal{F}: \mathcal{A} \to
\mathcal{C}$. On the level of objects the composition is obvious and
the corresponding extended multilinear maps are composed by using the
$\circ$ composition, i.e. $\mathcal{G} \circ \mathcal{F}:
\mathbf{C}_{\mathcal{A}} \to \mathbf{C}_{\mathcal{C}}$.

We now describe natural transformations between nu
$A_{\infty}$-functors $\mathcal{F}, \mathcal{G} : \mathcal{A} \to
\mathcal{B}$.  First, define a collection of vector spaces
$\mathbf{C}_{\mathcal{B}}^{\mathcal{F}, \mathcal{G}} = \{
C_{\mathcal{A}}(\mathcal{F}(L'), \mathcal{G}(L''))\}_{L', L'' \in
  \textnormal{Ob}(\mathcal{A})}$ indexed by pairs of objects of
$\mathcal{A}$. A {\em pre-natural} transformation $T:\mathcal{F} \to
\mathcal{G}$ consists of a pair $T = (T^0, T')$, where $T'$ is an
extended multilinear map $T':\mathbf{C}_{\mathcal{A}} \to
\mathbf{C}_{\mathcal{B}}^{\mathcal{F}, \mathcal{G}}$, with identity
index map, and $T^0$ is a collection of morphisms indexed by
$\textnormal{Ob}(\mathcal{A})$, i.e. $T^0_L \in
C_{\mathcal{B}}(\mathcal{F}(L), \mathcal{G}(L)), \, \forall L \in
\textnormal{Ob}(\mathcal{A})$.

In order to describe which pre-natural transformations are actually
natural we need the following notation.  Define $(\mu^{\mathcal{B}}
\circ \star \circ (\mathcal{F}, T, \mathcal{G})) :
\mathbf{C}_{\mathcal{A}} \to \mathbf{C}_{\mathcal{B}}^{\mathcal{F},
\mathcal{G}}$ to be the following extended multilinear map:
\begin{equation} \label{eq:mu-FTG}
   \begin{aligned}
      (\mu^{\mathcal{B}} \circ \star \circ (\mathcal{F}, T,
      \mathcal{G}))(& a_1, \ldots, a_k) = \\
      \sum \mu^{\mathcal{B}}(& \mathcal{F}(a_1, \ldots, a_{q_1}),
      \ldots, \mathcal{F}(a_{q_{j-1}+1}\ldots, a_{q_{j}}), T(a_{q_j +
        1},
      \ldots, a_{q_{j+1}}), \\
      & \mathcal{G}(a_{q_{j+1}+1}, \dots, a_{q_{j+2}}), \ldots,
      \mathcal{G}(a_{q_r+1}, \ldots, a_k)).
   \end{aligned}
\end{equation}
This sum is taken over all $j$, $r$ and all partitions $$1 \leq q_1 <
\cdots < q_j \leq q_{j+1} < \cdots < q_r < k.$$ The convention is that
if $q_j = q_{j+1}$ then the operator $T$ on the right-hand side
receives no input so it should be interpreted as $T^0_{L_{q_j}} \in
C(L_{q_j}, L_{q_j})$, while if $q_j < q_{j+1}$ the operators $T'$ are
used.

A pre-natural transformation $T = (T^0, T')$ is called a natural
transformation if the following two conditions are satisfied:
\begin{equation} \label{eq:nat-tr}
   \begin{aligned}
      & \mu_1^{\mathcal{B}}(T^0_L) = 0 \quad \forall L \in
      \textnormal{Ob}(\mathcal{A}), 
      \\
      & (\mu^{\mathcal{B}} \circ \star \circ (\mathcal{F}, T,
      \mathcal{G})) + T' \star \mu^{\mathcal{A}} = 0.
   \end{aligned}
\end{equation}

As before, natural transformations between nu $A_{\infty}$-functors
induce natural transformations (in the usual sense) between the
cohomological functors.

Pre-natural transformations $T = (T^0, T') : \mathcal{F} \to
\mathcal{G}$ and $S = (S^0, S'): \mathcal{G} \to \mathcal{H}$ can be
composed as follows:
\begin{equation} \label{eq:pre-nat-comp}
   \begin{aligned}
      & (S \circ T)^0 = \mu_2^{\mathcal{B}}(T^0, S^0), \\
      & (S \circ T)' = \mu^{\mathcal{B}} \circ \star \circ \star \circ
      (\mathcal{F}, T, \mathcal{G}, S, \mathcal{H}),
   \end{aligned}
\end{equation}
where the operation $\circ \star \circ \star \circ$ should be
interpreted in a similar way to~\eqref{eq:mu-FTG}.

The collection of nu $A_{\infty}$-functors $\mathcal{A} \to
\mathcal{B}$ can be given the structure of an $A_{\infty}$-category
$\textnormal{\sl nu-fun}(\mathcal{A}, \mathcal{B})$. The objects are
the nu functors and the morphism spaces $C(\mathcal{F}',
\mathcal{F}'')$ are pre-natural transformations between $\mathcal{F}'$
and $\mathcal{F}''$. The operation $\mu_1$ is given by the left-hand
side of~\eqref{eq:nat-tr} and $\mu_2$ by the right-hand side
of~\eqref{eq:pre-nat-comp}. The higher $\mu_k$'s are given by an
obvious generalization of the left-hand side of~\eqref{eq:nat-tr}.
See~\cite{Se:book-fukaya-categ} for more details. Note that in this
language natural transformations are precisely those pre-natural ones
$T$ with $\mu_1(T)=0$.

Next, we have the notion of a homotopy between two nu
$A_{\infty}$-functors. Let $\mathcal{F}, \mathcal{G}:\mathcal{A} \to
\mathcal{B}$ be two nu $A_{\infty}$-functors with the same action on
objects. Consider the pre-natural transformation $D = (0, D')$ with
$D'_k = \mathcal{F}_k - \mathcal{G}_k$ for every $k \geq 1$. A simple
calculation shows that $D$ is a natural transformation. We say that
$\mathcal{F}$ is homotopic to $\mathcal{G}$ (or that $D$ is homotopic
to $0$) if there exists a pre-natural transformation $T = (T^0, T') :
\mathcal{F} \to \mathcal{G}$ with $T^0 = 0$ and such that $D =
\mu_1(T)$. Homotopy is an equivalence relation and is preserved by
composition with a given nu functor. Two homotopic nu functors
$\mathcal{F}$, $\mathcal{G}$ induce the same functors on the homology
categories.

\subsubsection{Homologically unital functors} \label{sbsb:hu-functors}
A (nu) $A_{\infty}$-functor $\mathcal{F}: \mathcal{A} \to \mathcal{B}$
between su $A_{\infty}$-categories $\mathcal{A}$, $\mathcal{B}$, is
called su if $\mathcal{F}(e_L) = e_{\mathcal{F}(L)}$ for every $L \in
\textnormal{Ob}(\mathcal{A})$ and $$\mathcal{F}(a_1, \ldots, a_{i-1},
e_L, a_{i+1}, \ldots, a_k) = 0$$ for every $k \geq 2$ and $1 \leq i
\leq k$.

If $\mathcal{A}$, $\mathcal{B}$ are only hu, a (nu) functor
$\mathcal{F}$ is called hu if $H(\mathcal{F}): H(\mathcal{A}) \to
H(\mathcal{B})$ is a unital functor. We denote by $\textnormal{\sl
  hu-fun}(\mathcal{A}, \mathcal{B})$ the full subcategory of
$\textnormal{\sl nu-fun}(\mathcal{A}, \mathcal{B})$ consisting of the
hu functors. A hu functor $\mathcal{F}$ is called a quasi-equivalence
if its homological functor $H(\mathcal{F})$ is an equivalence of
categories.

If $\mathcal{A}, \mathcal{B}$ are $A_{\infty}$-categories with
$\mathcal{B}$ being su then the $A_{\infty}$-categories
$\textnormal{\sl hu-fun}(\mathcal{A}, \mathcal{B})$ and
$\textnormal{\sl nu-fun}(\mathcal{A}, \mathcal{B})$ are strictly
unital.

\subsection{$A_{\infty}$-modules} \label{sbsb:A-infty-mod}
\subsubsection{Basic definition.} Let $Ch$
be the $A_{\infty}$-category of (ungraded) chain complexes of
$\mathbb{Z}_2$-vector spaces. Objects are chain complexes and
morphisms spaces are linear maps between their underlying vector
spaces. The operation $\mu_1$ is given by the induced differential on
maps between two chain complexes, so that $\mu_1(f)=0$ iff $f$ is a
chain map. The operation $\mu_2$ is given by the obvious composition.
The higher order composition maps are trivial, i.e. $\mu_k = 0, \;
\forall k \geq 3$, so that $Ch$ is actually a dg-category. We denote
by $Ch^{\textnormal{opp}}$ the opposite category. Denoting
$C^{\textnormal{opp}}(-,-)$ the morphism spaces for
$Ch^{\textnormal{opp}}$ this means that for two chain complexes $C'$,
$C''$ we have $C^{\textnormal{opp}}(C', C'') = \hom(C'', C')$ (the
$\hom$ being of vector spaces) endowed with the obvious differential
induced from $d_{C'}$, $d_{C''}$. The operation
$\mu_2^{\textnormal{opp}}: C(C', C'') \otimes C(C'', C''') \to C(C',
C''')$ is given by $\mu_2^{\textnormal{opp}}(f,g) = g \circ f$.  Note
that $Ch$ and $Ch^{\textnormal{opp}}$ are strictly unital.

Let $\mathcal{A}$ be an $A_{\infty}$-category. A nu
$\mathcal{A}$-module is a nu $A_{\infty}$-functor $\mathcal{M}:
\mathcal{A} \to Ch^{\textnormal{opp}}$. The collection of nu
$\mathcal{A}$-modules forms a category $\textnormal{\sl
  nu-mod}_{\mathcal{A}}$ whose objects are nu $\mathcal{A}$-modules,
and morphism spaces are pre-natural transformations endowed with the
differential $\mu_1$ described in the preceding section. The operation
$\mu_2$ is given by obvious composition and $\mu_k = 0, \; \forall k
\geq 3$.

An alternative more convenient way to describe nu modules is as
follows. On the level of objects a nu $\mathcal{A}$-module
$\mathcal{M}$ prescribes a collection $\mathbf{M} =
{\mathcal{M}(L)}_{L \in \textnormal{Ob}(\mathcal{A})}$ of vector
spaces indexed by $\textnormal{Ob}(\mathcal{A})$. The second
ingredient is a mixed extended multilinear map $\mu_{\mathcal{M}}:
\mathbf{C}_{\mathcal{A}} \otimes \mathbf{M} \to \mathbf{M}$ which
satisfies the following identity:
\begin{equation}
   (\mu^{\mathcal{M}} \dashv \mu^{\mathcal{M}}) + 
   (\mu^{\mathcal{M}} \star \mu^{\mathcal{A}}) = 0.
\end{equation}
Note that $\mu_1^{\mathcal{M}} : \mathcal{M}(L) \to \mathcal{M}(L)$ is
a differential, hence $\mathbf{M}$ becomes a collection of chain
complexes.

\subsubsection{Morphisms.}\label{subsubsec:morphisms} Pre-module morphisms $\nu: \mathcal{M}' \to \mathcal{M}''$ between two
nu $\mathcal{A}$-modules are given by mixed extended multilinear maps
$\nu: \mathbf{C}_{\mathcal{A}} \otimes \mathbf{M}' \to \mathbf{M}''$.
A pre-module morphism $\nu: \mathcal{M}' \to \mathcal{M}''$ is called
a module morphism if
\begin{equation} \label{eq:mod-morph}
   (\mu^{\mathcal{M}''} \dashv \nu) + (\nu \dashv \mu^{\mathcal{M}'}) + 
   (\nu \star \mu^{\mathcal{A}}) = 0.
\end{equation}
Pre-module morphisms $\nu : \mathcal{M}' \to \mathcal{M}''$, $\eta:
\mathcal{M}'' \to \mathcal{M}'''$ can be composed by
\begin{equation} \label{eq:comp-mod-morph} \eta \dashv \nu :
   \mathcal{M}' \to \mathcal{M}'''.
\end{equation}

The collection of nu $\mathcal{A}$-modules $\textnormal{\sl
  nu-mod}_{\mathcal{A}}$ becomes a dg-category with objects being nu
$\mathcal{A}$-modules and morphism spaces $C(\mathcal{M}',
\mathcal{M}'')$ being pre-module morphisms $\nu: \mathcal{M}' \to
\mathcal{M}''$. The differential $\mu_1$ on $C(\mathcal{M}',
\mathcal{M}'')$ is defined by the left-hand side
of~\eqref{eq:mod-morph} and the composition $\mu_2$ is defined
by~\eqref{eq:comp-mod-morph}. The higher operations $\mu_k$, $k\geq 3$
on $\textnormal{\sl nu-mod}_{\mathcal{A}}$ are defined to be $0$.

There is an identification of $A_{\infty}$-categories $\textnormal{\sl
  nu-fun}(\mathcal{A}, Ch^{\textnormal{opp}}) \approx \textnormal{\sl
  nu-mod}_{\mathcal{A}}$.  This goes as follows. If $\mathcal{M}$ is
an nu $\mathcal{A}$-module it corresponds to the nu functor
$\mathcal{F}: \mathcal{A} \to Ch^{\textnormal{opp}}$ which associates
to $L$ the chain complex $(\mathcal{M}(L), \mu_1^{\mathcal{M}})$ and
with extended multilinear map defined by:
\begin{equation} \label{eq:mod-func} 
   \begin{aligned}
      & \langle \mathcal{F}(a_1, \ldots, a_k), b \rangle =
      \mu^{\mathcal{M}}(a_1, \ldots, a_k, b), \\
      & a_j \in C(L_{j-1},L_j), \quad b \in
      C^{\textnormal{opp}}\bigl(\mathcal{M}(L_0),
      \mathcal{M}(L_k)\bigr) = \hom (\mathcal{M}(L_k),
      \mathcal{M}(L_0)).
   \end{aligned}
\end{equation}
On the level of morphisms the identification is as follows. If $\nu:
\mathcal{M}' \to \mathcal{M}''$ is a pre-module morphism and
$\mathcal{F}'$, $\mathcal{F}''$ are two nu-functors $\mathcal{A} \to
Ch^{\textnormal{opp}}$ corresponding to $\mathcal{M}'$,
$\mathcal{M}''$ respectively, then the pre-natural transformation
$T=(T^0, T'): \mathcal{F}' \to \mathcal{F}''$ corresponding to $\nu$
is defined by:
\begin{equation} \label{eq:pre-mod-fun-morph} 
   \begin{aligned}
      & T^0 \in C^{\textnormal{opp}} (\mathcal{M}'(L),
      \mathcal{M}''(L)) = \hom(\mathcal{M}''(L), \mathcal{M}'(L)),
      \quad T^0(b) =
      \nu(b), \\
      & \langle T'(a_1, \ldots, a_{k-1}), b \rangle = \nu(a_1, \ldots,
      a_{k-1}, b), \quad \forall\, k \geq 2.
   \end{aligned}
\end{equation}
\subsubsection{Pull back of $A_{\infty}$-modules}\label{subsubec:pull-back}
Let $\phi: \mathcal{A}'\to \mathcal{A}$ be a morphism of $A_{\infty}$ categories
and let $\mathcal{M}$ be an $A_{\infty}$-module over $\mathcal{A}$.
The pull back $\mathcal{M}'=\phi^{\ast}\mathcal{M}$ of $\mathcal{M}$ is an $A_{\infty}$ module
over $\mathcal{A}'$ defined by $\mathcal{M}'(X)=\mathcal{M}(\phi(X))$ for each object $X$ 
of $\mathcal{A}'$ and with the higher compositions given by
$$\mu_{\mathcal{M}'}(a_{1},\ldots, a_{k}, m)=\sum_{i_1<i_{2}\ldots} \mu_{\mathcal{M}}(\phi^{i_{1}}(a_{1},\ldots, a_{i_{1}}),
\phi^{i_{2}-i_{1}}(a_{i_{1}+1},\ldots, a_{i_{2}}), \ldots, m)~.~$$
It is easy to check that this is indeed an $A_{\infty}$-module.

\subsubsection{Homologically unital modules} \label{sbsb:hu-mod} Let
$\mathcal{A}$ be a hu $A_{\infty}$-category. A hu $\mathcal{A}$-module
is a hu $A_{\infty}$-functor $\mathcal{M}: \mathcal{A} \to
Ch^{\textnormal{opp}}$. Alternatively, a nu $\mathcal{A}$-module
$\mathcal{M}$ is called hu if for every $L \in
\textnormal{Ob}(\mathcal{A})$ the homology $H(\mathcal{M}(L),
\mu_1^{\mathcal{M}})$ is a unital module over the (unital) ring
$H(L,L)$. The full subcategory of $\textnormal{\sl
  nu-mod}_{\mathcal{A}}$ consisting of hu $\mathcal{A}$-modules will
be denoted by $\textnormal{hu-mod}_{\mathcal{A}}$. A pull back of
a hu module over a morphism of hu $A_{\infty}$ categories is also homologically unital.

Note that since $Ch^{\textnormal{opp}}$ is strictly unital, both
$A_{\infty}$-categories $\textnormal{\sl hu-mod}_{\mathcal{A}}$ and
$\textnormal{\sl nu-mod}_{\mathcal{A}}$ are strictly unital. The
identity module morphism $\nu: \mathcal{M} \to \mathcal{M}$ being the
one with $\nu(b) = b$ and $\nu(a_1, \ldots, a_{k-1}, b) = 0$ for every
$k \geq 1$.

\subsection{The Yoneda embedding} \label{sbsb:Yoneda} Let
$\mathcal{A}$ be an $A_{\infty}$-category. The Yoneda functor is the
nu $A_{\infty}$-functor $\mathcal{Y} : \mathcal{A} \to \textnormal{\sl
  nu-mod}_{\mathcal{A}}$, defined as follows. On the level of objects,
define $\mathcal{Y}(L):= \mathcal{M}_L$, where $\mathcal{M}_L$ is the
nu module that associates to the object $K \in
\textnormal{Ob}(\mathcal{A})$ the vector space $\mathcal{M}_L(K) =
C_{\mathcal{A}}(K, L)$. The mixed extended multilinear map of
$\mathcal{M}_L$ is defined as
$$\mu^{\mathcal{M}_L}(a_1, \ldots, a_{k-1}, b) = 
\mu^{\mathcal{A}}(a_1, \ldots, a_{k-1}, b), \quad \forall k \geq 1.$$
The extended multilinear map of the nu functor $\mathcal{Y}$ is
described as follows. Let $k\geq 2$ and $L_0, L_1, \ldots, L_k \in
\textnormal{Ob}(\mathcal{A})$. Let $a_j \in C_{\mathcal{A}}(L_{j-1},
L_j)$, $j=1, \ldots, k$. We need to specify a pre-module morphism
$\mathcal{Y}(a_1, \ldots, a_k): \mathcal{M}_{L_0} \to
\mathcal{M}_{L_{k}}$.  For simplicity of notation we write $\nu =
\mathcal{Y}(a_1, \ldots, a_k)$. To define $\nu$, let $d \geq 1$ and
$K_0, \ldots, K_{d-1} \in \textnormal{Ob}(\mathcal{A})$. We put
\begin{equation} \label{eq:yoneda-nu}
   \begin{aligned}
      & \nu: C_{\mathcal{A}}(K_0, K_1) \otimes \cdots \otimes
      C_{\mathcal{A}}(K_{d-2}, K_{d-1}) \otimes C(K_{d-1}, L_0)
      \longrightarrow C(K_0, L_k), \\
      & \nu(c_1, \ldots, c_{d-1}, b) := \mu^{\mathcal{A}}(c_1, \ldots,
      c_{d-1}, b, a_1, \ldots, a_k).
   \end{aligned}
\end{equation}

The Yoneda functor has better properties for homologically unital
categories. Assume that $\mathcal{A}$ is hu. Then the Yoneda functor
takes values inside the subcategory of hu modules, $\mathcal{Y}:
\mathcal{A} \to \textnormal{\sl hu-mod}_{\mathcal{A}}$.  Moreover, the
homological functor $H(\mathcal{Y}): H(\mathcal{A}) \to
H(\textnormal{\sl hu-mod}_{\mathcal{A}})$ is unital and moreover full
and faithful. This does not follow immediately from the definitions,
and we refer the reader to~\cite{Se:book-fukaya-categ}, Section~2g,
for the proofs.

Due to its properties in the hu case, we call $\mathcal{Y}$ the Yoneda
{\em embedding}. Note also that for the $A_{\infty}$-categories that
will occur in our applications (Fukaya categories) the Yoneda
embedding will generally be injective on objects.

From now on, we will implicitly assume our $A_{\infty}$-categories to
be homologically unital unless otherwise stated. For hu functors and
modules, we will drop from now on the wording ``hu'', calling then
simply functors and modules and denoting their respective categories
by $\textnormal{\sl fun}(\mathcal{A}, \mathcal{B})$ and
$\textnormal{\sl mod}_{\mathcal{A}}$.

\subsection{Exact triangles and derived categories}
\label{sbsb:triang-derived}

Let $\mathcal{A}$ be an $A_{\infty}$-category. Let $\mathcal{M}',
\mathcal{M}''$ be $\mathcal{A}$-modules and $\nu: \mathcal{M}' \to
\mathcal{M}''$ a module morphism. We define a new $\mathcal{A}$-module,
$\mathrm{Cone}(\nu)$, called the mapping cone of $\nu$ as follows.  On
the level of objects $\mathrm{Cone}(\nu)(L) = \mathcal{M}'(L) \oplus
\mathcal{M}''(L)$. The mixed extended multilinear map
$\mu^{\mathrm{Cone}(\nu)}$ is defined as
\begin{equation} \label{eq:cone-nu}
   \begin{aligned}
      \mu^{\mathrm{Cone}(\nu)} & (a_1, \ldots, a_{k-1}, (b', b'')) = \\
      & \Bigl(\mu^{\mathcal{M}'}(a_1, \ldots, a_{k-1}, b'),
      \mu^{\mathcal{M}''}(a_1, \ldots, a_{k-1}, b'') + \nu(a_1, \ldots,
      a_{k-1}, b') \Bigr).
   \end{aligned}
\end{equation}

The mapping cone comes with two module
morphisms: $i: \mathcal{M}'' \to \mathrm{Cone}(\nu)$ and $\pi:
\mathrm{Cone}(\nu) \to \mathcal{M}'$ defined by :
\begin{equation} \label{eq:maps-i-pi}
   \begin{aligned}
      & i_1(b'') = (0, b''), \quad i_k=0 \; \forall\, k \geq 2, \\
      & \pi_1(b', b'') = b', \quad \pi_k = 0 \; \forall\, k \geq 2.
   \end{aligned}
\end{equation}

We call the diagram $$\xymatrix{ \mathcal{M}' \ar[r]^{\nu} &
  \mathcal{M}'' \ar[r]^i & \mathrm{Cone}(\nu) \ar[r]^{\pi} &
  \mathcal{M}'}$$ an exact triangle. We extend the notion of exact
triangles to other diagrams as follows.  A diagram of
$\mathcal{A}$-modules
\begin{equation} \label{eq:ex-tr-1}
   \xymatrix{ \mathcal{M}' \ar[r]^{\nu} &
     \mathcal{M}'' \ar[r]^j & \mathrm{Cone} \ar[r]^{p} & \mathcal{M}'}
\end{equation}
(where $\nu$, $j$, $p$ are module morphisms) is called exact if there
exists a module morphism $t: \mathcal{C} \to \mathrm{Cone}(\nu)$ such
that $[t]: \mathcal{C} \to \mathrm{Cone}(\nu)$ is an isomorphism in the
homological category $H(\textnormal{\sl mod}_{\mathcal{A}})$ and such
that the following diagram commutes in $H(\textnormal{\sl
  mod}_{\mathcal{A}})$:
\begin{equation}
   \xymatrix{
     \mathcal{M}'' \ar[r]^{[j]} \ar[rd]_{[i]}
     & \mathcal{C} \ar[r]^{[p]} \ar[d]^{[t]}_{\approx}  & \mathcal{M}' \\
     & \mathrm{Cone}(\nu) \ar[ru]_{[\pi]}
   }
\end{equation}

Note that the $A_{\infty}$-category $\textnormal{\sl
  mod}_{\mathcal{A}}$ is triangulated in the sense that every module
morphism $\nu : \mathcal{M}' \to \mathcal{M}''$ can be completed to an
exact triangle~\eqref{eq:ex-tr-1}.

\begin{rem}
   As explained in~\cite{Se:book-fukaya-categ} one can generalize the
   notion of exact triangles in any $A_{\infty}$-category
   $\mathcal{B}$ (not just for modules). Let $L', L'', L''' \in
   \textnormal{Ob}(\mathcal{B})$ and $\nu \in
   C_{\mathcal{B}}(L',L'')$, $j \in C_{\mathcal{B}}(L'', L''')$, $p
   \in C_{\mathcal{B}}(L''', L')$ be cycles (i.e.
   $\mu_1^{\mathcal{B}}$ vanishes on them). The diagram $$\xymatrix{
     L' \ar[r]^{\nu} & L'' \ar[r]^j & \mathrm{Cone}(\nu) \ar[r]^p &
     L'}$$ is called an exact triangle if its image under the Yoneda
   embedding is an exact triangle in the sense defined above. In fact,
   being an exact triangle depends only on the homology classes of
   $\nu$, $j$, $p$ hence it is a property of diagrams in the
   homological category $H(\mathcal{B})$.
   See~\cite{Se:book-fukaya-categ} for more details on that.

   Apriori this definition of exact triangles might contradict with
   the previous one, e.g. if we take $\mathcal{B} = \textnormal{\sl
     mod}_{\mathcal{A}}$ to start with, then we have two apriori
   different definitions of exact triangles in $\mathcal{B}$. However,
   this is not the case (see Corollaries~3.9 and~3.10
   in~\cite{Se:book-fukaya-categ}) and the two definitions are
   actually compatible.

   An $A_{\infty}$-category $\mathcal{B}$ is called triangulated if
   every cycle $\nu \in C_{\mathcal{B}}(L',L'')$ can be completed to
   an exact triangle. Usually one adds to this an axiom regarding a
   shift functor. As we are working in the paper in an ungraded framework we
   can ignore this axiom. 
\end{rem}

We now briefly recall how to derive an $A_{\infty}$-category. Let
$\mathcal{A}$ be an $A_{\infty}$-category (recall that we implicitly
assume $\mathcal{A}$ to be homologically unital). Let $\mathcal{Y}:
\mathcal{A} \to \textnormal{\sl mod}_{\mathcal{A}}$ be the Yoneda
embedding and denote by $\mathcal{Y}(\mathcal{A})$ the image of
$\mathcal{A}$. 

We now take the triangulated closure of $\mathcal{Y}(\mathcal{A})$ in
$\textnormal{\sl mod}_{\mathcal{A}}$, namely a minimal full
subcategory $\mathcal{Y}(\mathcal{A})^{\wedge} \subset \textnormal{\sl
  mod}_{\mathcal{A}}$ with the following properties:
\begin{enumerate}
  \item $\textnormal{Ob}(\mathcal{Y}(\mathcal{A})^{\wedge})$ is closed
   under quasi-isomorphisms.
  \item $\mathcal{Y}(\mathcal{A})^{\wedge}$ is triangulated.
\end{enumerate}
A constructive realization of $\mathcal{Y}(\mathcal{A})^{\wedge}$ is
to first to take the full subcategory (not just homologically) of
$\textnormal{\sl mod}_{\mathcal{A}}$ containing the objects of
$\mathcal{Y}(\mathcal{A})$ and add all quasi-isomorphic objects to it
(plus all morphisms between these new objects and the old ones). Next,
form all possible mapping cones between objects of the previous
category and then iterate this procedures inductively arbitrary number
of times. Finally, one defines the derived category $D(\mathcal{A})$
of $\mathcal{A}$ as the homological category
$H\bigl(\mathcal{Y}(\mathcal{A})^{\wedge}\bigr)$. Note that
$D(\mathcal{A})$ is a triangulated category (in the usual sense).

As explained in~\cite{Se:book-fukaya-categ} there are many other
realizations of $D(\mathcal{A})$, but they all lead to equivalent
categories.

\subsection{Families of $A_{\infty}$-categories and equivalences}
\label{sbsb:families-A-infty}

The discussion here follows Chapter~10 of~\cite{Se:book-fukaya-categ}
where one can find more details and proofs.

Let $\mathcal{A}^i$, $i \in \mathcal{I}$, be a family of
$A_{\infty}$-categories indexed by a set $\mathcal{I}$. Suppose we
also have one $A_{\infty}$-category $\mathcal{A}^{\textnormal{tot}}$
such that for every $i \in \mathcal{I}$, $\mathcal{A}^i$ is a full
$A_{\infty}$-subcategory of $\mathcal{A}^{\textnormal{tot}}$. Denote
the embedding functor by $\mathcal{H}^i : \mathcal{A}^i \to
\mathcal{A}^{\textnormal{tot}}$. We assume further that each
$\mathcal{H}^i$ is a quasi-equivalence. Under these assumption there
exists a family of quasi equivalences $\mathcal{K}^i:
\mathcal{A}^{\textnormal{tot}} \to \mathcal{A}^i$, with $\mathcal{K}^i
\circ \mathcal{H}^i = \textnormal{Id}_{\mathcal{A}^i}$ for every $i
\in \mathcal{I}$. Moreover, the family $\mathcal{A}^i$, $i \in
\mathcal{I}$, admits a structure of a so called {\em coherent system
of $A_{\infty}$-categories} (see~\cite{Se:book-fukaya-categ} for the
precise definition). In particular we obtain a family of
quasi-equivalences $\mathcal{F}^{i_1, i_0}: \mathcal{A}^{i_0} \to
\mathcal{A}^{i_1}$ for every $i_0, i_1 \in \mathcal{I}$ with
$\mathcal{F}^{i,i} = \textnormal{Id}_{\mathcal{A}^i}$ and such that
$\mathcal{F}^{i_2, i_1} \circ \mathcal{F}^{i_1, i_0} \cong
\mathcal{F}^{i_2, i_0}$, where $\cong$ means that the two functors are
isomorphic via a natural transformation. We call the
$A_{\infty}$-functors $\mathcal{F}^{i_1, i_0}$ comparison functors.

Passing to the derived categories $D(\mathcal{A}^i)$, $i \in
\mathcal{I}$, we obtain a coherent system of (ordinary) categories
with equivalences $F^{i_1, i_0}: D(\mathcal{A}^{i_0}) \to
D(\mathcal{A}^{i_1})$ induced from the $\mathcal{F}^{i_1,
i_0}$'s. Moreover, the homology level functors $H(\mathcal{K}^i):
H(\mathcal{A}^{\textnormal{tot}}) \to H(\mathcal{A}^i)$ induced by the
$\mathcal{K}^i$'s uniquely determine the equivalences $F^{i_1, i_0}$.

Of course, the same applies not only to the derived categories
$D(\mathcal{A}^i)$, $i \in \mathcal{I}$, but also to the homological
categories $H(\mathcal{A}^i)$ (which can be viewed as full
subcategories of $D(\mathcal{A}^i)$). Namely, $H(\mathcal{A}^i)$, $i
\in \mathcal{I}$, becomes a coherent system of categories via the same
comparison functors $F^{i_1, i_0}$.


\bibliography{bibliography}

%
%
%

\end{document}